\tikzset{node distance=2cm, auto}
\setlist[description]{font=\normalfont\bfseries\:\!}
\renewcommand\labelenumi{(\alph{enumi})}
\renewcommand\theenumi\labelenumi
\numberwithin{equation}{section}
\newtheorem{theorem}{Theorem}[section]
\newtheorem{proposition}[theorem]{Proposition}
\newtheorem{lemma}[theorem]{Lemma}
\newtheorem{corollary}[theorem]{Corollary}
\theoremstyle{definition}
\newtheorem{definition}[theorem]{Definition}
\newtheorem{example}[theorem]{Example}
\newtheorem{remark}[theorem]{Remark}
\newenvironment{acknowledgements}{\vskip .5cm\noindent\small\textbf{Acknowledgements}.\quad\noindent}{}
\newcommand{\CategoryFont}[1]{\mathsf{\uppercase{#1}}}
\newcommand{\FunctorFont}[1]{\mathsf{#1}}
\renewcommand{\bar}[1]{\mkern 1.5mu\overline{\mkern-1.5mu#1\mkern-1.5mu}\mkern 1.5mu}
\renewcommand{\,}{,\dots,}
\newcommand{\1}{\mathds{1}}
\renewcommand{\=}{\:\colon\!=}
\renewcommand{\epsilon}{\varepsilon}
\renewcommand{\o}{\circ}
\renewcommand{\b}{\bullet}
\newcommand{\<}{\langle}
\renewcommand{\>}{\rangle}
\newcommand{\op}{\mathsf{op}}
\newcommand{\co}{\mathsf{co}}
\renewcommand{\^}[1]{^{(#1)}}
\renewcommand{\*}{\ast}
\newcommand{\nto}{\nrightarrow}
\newcommand{\Comp}{\mathsf{Comp}}
\newcommand{\Unit}{\mathsf{Unit}}
\newcommand{\Inc}{\mathsf{Inc}}
\newcommand{\lax}{\mathsf{lax}}
\newcommand{\N}{\mathbb{N}}
\newcommand{\Z}{\mathbb{Z}}
\newcommand{\id}{\mathsf{id}}
\newcommand{\CC}{{\mathbf{K}}}
\newcommand{\X}{\mathbb{X}}
\newcommand{\Y}{\mathbb{Y}}
\newcommand{\End}{\CategoryFont{End}}
\newcommand{\Cat}{\CategoryFont{Cat}}
\newcommand{\Mnd}{\CategoryFont{Mnd}}
\newcommand{\Alg}{\CategoryFont{Alg}}
\newcommand{\Fib}{\CategoryFont{Fib}}
\newcommand{\DFib}{{\dagger}\!-\!\CategoryFont{Fib}}
\newcommand{\Indx}{\CategoryFont{Indx}}
\newcommand{\RestrCat}{\CategoryFont{Rstr}}
\newcommand{\sRestrCat}{\CategoryFont{sRstr}}
\newcommand{\TwoCat}{\CategoryFont{TwoCat}}
\newcommand{\DsplyCat}{\Dsply\text{-}\CategoryFont{Cat}}
\newcommand{\dDsplyCat}{\Dsply\text{-}\CategoryFont{Cat}^\*}
\newcommand{\MCat}{\M\text{-}\CategoryFont{Cat}}
\newcommand{\MTngCat}{\M\text{-}\CategoryFont{TngCat}}
\newcommand{\TngCat}{\CategoryFont{TngCat}}
\newcommand{\TTngCat}{\mathbb{TNG}\CategoryFont{Cat}}
\newcommand{\TngMnd}{\CategoryFont{TngMnd}}
\newcommand{\Weil}{\CategoryFont{Weil}_1}
\newcommand{\T}{\mathrm{T}}
\newcommand{\TT}{\mathbb{T}}
\newcommand{\Tng}{\CategoryFont{Tng}}
\newcommand{\TTng}{\mathbb{T}\CategoryFont{ng}}
\newcommand{\Cart}{\CategoryFont{Cart}}
\newcommand{\Leung}{\mathfrak{L}}
\newcommand{\q}{\mathbf{q}}
\newcommand{\p}{\mathbf{p}}
\newcommand{\TngFib}{\CategoryFont{TngFib}}
\newcommand{\TngIndx}{\CategoryFont{TngIndx}}
\newcommand{\TngsRestrCat}{\CategoryFont{TngsRstr}}
\newcommand{\U}{\CategoryFont{U}}
\newcommand{\M}{\mathscr{M}}
\newcommand{\Dsply}{\mathscr{D}}
\newcommand{\IND}{\mathscr{I}}
\newcommand{\f}{\flat}
\newcommand{\pp}{\mathrm{pp}}
\newcommand{\DSpan}{\CategoryFont{DSpan}}
\newcommand{\Exp}{\CategoryFont{Exp}}
\newcommand{\LMnd}{\CategoryFont{LMnd}}
\newcommand{\RMnd}{\CategoryFont{RMnd}}
\newcommand{\El}{\mathrm{El}}
\newcommand{\AdjTng}{\CategoryFont{AdjTng}}
\newcommand{\FF}{\mathscr{F}}
\newcommand{\Par}{\FunctorFont{Par}}
\newcommand{\DB}{\CategoryFont{DB}}
\title{Tangentads: a formal approach to tangent categories}
\author{\uppercase{Marcello Lanfranchi}}
\affil{\normalsize\textit{Macquarie University, School of Mathematical and Physical Sciences}}
\date{}
\begin{document}

\maketitle

\begin{abstract}\noindent
Tangent category theory is a well-established categorical context for differential geometry. In a previous paper, a formal approach was adopted to provide a genuine Grothendieck construction in the context of tangent categories by introducing tangentads. A tangentad is to a tangent category as a formal monad is to a monad of a category. In this paper, we discuss the formal notion of tangentads, construct a $2$-comonad structure on the $2$-functor of tangentads, and introduce Cartesian, adjunctable, and representable tangentads. We also reinterpret the subtangent structure with negatives of a tangent structure as a right Kan extension. Furthermore, we present numerous examples of tangentads, such as tangent (split) restriction categories, tangent fibrations, tangent monads, display tangent categories, and infinitesimal objects. Finally, we employ the formal approach to prove that every tangent monad admits the construction of algebras, provided the underlying monad does, and show that tangent split restriction categories are $2$-equivalent to tangent $\M$-categories.
\end{abstract}

\begin{acknowledgements}
The first notion of tangentads was introduced in~\cite{lanfranchi:grothendieck-tangent-cats} under the name of \textit{tangent objects}. We thank Ross Street, Alexander Campbell, Steve Lack, Jean-Simon Lemay, Robin Cockett, Dorette Pronk, Geoff Vooys, Rory Lucyshyn-Wright, and Geoffrey Cruttwell for suggesting alternative names and participating in the discussion on naming. We opted for \textit{tangentads}, in agreement with Street's suggestion of adopting the ending \textit{-ad}. We would also like to thank Rory Lucyshyn-Wright and Michael Ching for useful discussions, which helped clarify this paper. This material is based upon work supported by the AFOSR under award number FA9550-24-1-0008.
\end{acknowledgements}


\tableofcontents

\section{Introduction}
\label{section:introduction}
Fibred, indexed, internal, and enriched categories represent different flavours of category theory. The multitude of these flavours is reflected in the theory of tangent categories, introduced by Rosick\'y ``\textit{to axiomatize properties of the tangent functor on the category of smooth manifolds}''~\cite{rosicky:tangent-cats} and later generalized and extended by Cockett and Cruttwell in~\cite{cockett:tangent-cats}.

\par Cartesian tangent categories~\cite[Definition~2.8]{cockett:tangent-cats}, tangent restriction categories~\cite[Definition~6.14]{cockett:tangent-cats}, tangent fibrations~\cite[Definition~5.2]{cockett:differential-bundles}, and tangent monads~\cite[Definition~19]{cockett:tangent-monads} represent different evolutions of tangent category theory.

\par Formal category theory offers a common theoretical framework for different constructions of category theory, which captures the similarities of the various theories. A formal approach to tangent category theory will be beneficial in keeping track of the several directions in which the theory is developing.

\par In a recent paper~\cite{lanfranchi:grothendieck-tangent-cats}, a formal approach to tangent category theory was adopted to characterize tangent fibrations as fibrations equipped with a \textit{tangent structure}. This approach was employed to construct a Grothendieck construction for tangent fibrations (\cite[Theorem~5.5]{lanfranchi:grothendieck-tangent-cats}).

\par The goal of this paper is to take further steps into this intuition and explore a formal theory of tangentads, previously introduced with the name of tangent objects.\\
\textit{A tangentad is to a tangent category as a formal monad is to a monad of a category}.

\par Using a formal approach to axiomatize categorical constructions is not a new idea. In particular, our work is inspired by Street's formal theory of monads~\cite{street:formal-theory-monads} in which he introduced the notion of a monad in a $2$-category and formalized the construction of algebras.

\par For starters, we recall and revisit the definition of a tangentad in an arbitrary $2$-category; we study the global structures and properties of tangentads; we equip the $2$-functor $\Tng_\co$ with a $2$-comonad structure; we characterize Cartesian tangentads and compare with tangentads of Cartesian objects, and we introduce adjunctable and representable tangentads.
\par We also present numerous examples of tangentads, such as tangent fibrations, reverse tangent categories, tangent restriction categories, and display tangent categories. We employ these characterizations to prove that tangent monads admit the construction of algebras provided the underlying monads do. Finally, we adopt the formal approach to proving that tangent split restriction categories are $2$-equivalent to $\M$-tangent categories. In particular, this $2$-equivalence shows that the restriction domain of a morphism $f\colon M\to N$ in the context of a tangent restriction category is an open subobject of $M$ (Remark~\ref{remark:open-subobjects}).

\par The formal theory of tangentads proves to be useful in the following circumstances:
\begin{itemize}
\item When two $2$-categories are known to be $2$-equivalent and the tangentads of at least one of the two are known, one can lift the $2$-equivalence to the tangentads of the two $2$-categories. In~\cite{lanfranchi:grothendieck-tangent-cats}, the formal approach was used to construct a genuine Grothendieck equivalence between tangent fibrations and tangent indexed categories (Theorem~\ref{theorem:full-Grothendieck}). In this paper, this technique is employed to prove the $2$-equivalence between tangent split restriction categories and $\M$-tangent categories (Theorem~\ref{theorem:split-restriction-cats-vs-M-cats});

\item To lift a construction of a given $2$-category to the corresponding tangentads. We adopt this technique to prove that tangent monads admit the construction of algebras (in the sense of Street~\cite{street:formal-theory-monads}) in the $2$-category of tangentads, provided the underlying monads do;

\item To simplify definitions which involve several structures. This, for example, applies to the notion of a tangent indexed category~\ref{definition:tangent-indexed-category}, in which several structures and axioms are required to define this concept.
\end{itemize}
In upcoming work, we aim to investigate this formal approach further to address the following goals:
\begin{itemize}
\item Extending vector fields, differential objects, differential bundles, and connections, to new tangent-like notions. In particular, when the tangentads of a given $2$-category can be characterized as tangent categories with extra structure, one needs to ``carry around'' such a structure;

\item Formalizing some key constructions of tangent category theory, such as the Lie bracket of vector fields, the covariant derivative, Riemann and the torsion tensors of connections.
\end{itemize}


\section{A formal theory of tangentads}
\label{section:formal-theory}
The objects and morphisms of a tangent category can be regarded as locally linear geometric spaces and locally linear functions, respectively, where the notion of local linearity is not intrinsic but contextual and dictated by the tangent structure. Inspired by algebraic and differential geometry, one of the goals of tangent category theory is to categorify geometric constructions in the context of arbitrary tangent categories. As part of this research program, some categorical concepts, such as fibration and monad, have been translated into tangent-theoretic versions. One is left to wonder if the interpretation of local linear behaviour for these concepts still applies to these notions.
\par In this section, we first recall the notion of tangent category and associated morphisms and recall an equivalent presentation of tangent categories due to Leung~\cite{leung:weil-algebras} which comes in handy in our story. We employ such a construction to introduce tangentad, unwrap this definition, and introduce the related morphisms.

\subsection{A brief overview on tangent categories}
\label{subsection:tangent-categories}
Tangent categories were first introduced by Rosick\'y in~\cite{rosicky:tangent-cats}, and later revisited and generalized by Cockett and Cruttwell in~\cite{cockett:tangent-cats}.

\par For starters, we recall the definition of additive bundle and related morphisms.

\begin{definition}
\label{definition:additive-bundle}
In a category $\X$, an \textbf{additive bundle} consists of a pair of objects $B$ and $E$ together with three morphisms $q\colon E\to B$, $z_q\colon B\to E$, and $s_q\colon E_2\to E$, respectively called the \textbf{projection}, the \textbf{zero morphism}, and the \textbf{sum morphism}, where $E_n$ denotes the $n$-fold pullback of $q$ along itself. Moreover, $q$, $z_q$, and $s_q$ need to satisfy the following conditions:
\begin{enumerate}
\item For every positive integer $n$, the $n$-fold pullback
\begin{equation*}
\begin{tikzcd}
{E_n} & E \\
E & M
\arrow["{\pi_n}", from=1-1, to=1-2]
\arrow["{\pi_1}"', from=1-1, to=2-1]
\arrow["\lrcorner"{anchor=center, pos=0.125}, draw=none, from=1-1, to=2-2]
\arrow["\dots"{marking, allow upside down}, shift right=3, draw=none, from=1-2, to=2-1]
\arrow["q", from=1-2, to=2-2]
\arrow["q"', from=2-1, to=2-2]
\end{tikzcd}
\end{equation*}
of the projection $q$ along itself exists;

\item The zero morphism is a section of the projection:
\begin{equation*}
\begin{tikzcd}
M & E \\
& M
\arrow["{z_q}", from=1-1, to=1-2]
\arrow[equals, from=1-1, to=2-2]
\arrow["q", from=1-2, to=2-2]
\end{tikzcd}
\end{equation*}

\item The sum morphism is compatible with the projection. Moreover, $s_q$ is associative, unital, and commutative
\begin{equation*}
\begin{tikzcd}
{E_2} & E \\
E & M
\arrow["{s_q}", from=1-1, to=1-2]
\arrow["{\pi_1}"', from=1-1, to=2-1]
\arrow["q", from=1-2, to=2-2]
\arrow["q"', from=2-1, to=2-2]
\end{tikzcd}\hfill\quad
\begin{tikzcd}
{E_3} & {E_2} \\
{E_2} & E
\arrow["{\id_E\times_Ms_q}", from=1-1, to=1-2]
\arrow["{s_q\times_M\id_E}"', from=1-1, to=2-1]
\arrow["{s_q}", from=1-2, to=2-2]
\arrow["{s_q}"', from=2-1, to=2-2]
\end{tikzcd}\hfill\quad
\begin{tikzcd}
E & {E_2} \\
& E
\arrow["{\<z_q\o q,\id_E\>}", from=1-1, to=1-2]
\arrow[equals, from=1-1, to=2-2]
\arrow["{s_q}", from=1-2, to=2-2]
\end{tikzcd}\hfill\quad
\begin{tikzcd}
{E_2} & {E_2} \\
& E
\arrow["\tau", from=1-1, to=1-2]
\arrow["{s_q}"', from=1-1, to=2-2]
\arrow["{s_q}", from=1-2, to=2-2]
\end{tikzcd}
\end{equation*}
where $\tau\colon E_2\to E_2$ denotes the canonical symmetry $\pi_2\times\pi_1$.
\end{enumerate}
In the following, an additive bundle $(q\colon E\to B,z_q,s_q)$ will be denoted by $\q\colon E\to B$ or by $\q^E_B$. When the symbol that denotes the additive bundle is decorated with a superscript or a subscript, the same decoration is applied to the projection, the zero morphism, and the sum morphism. For instance, the additive bundle $\q'\colon E'\to B'$ consists of the projection $q'$, the zero morphism $z_q'$, and the sum morphism $s_q'$.
\end{definition}

\begin{definition}
\label{definition:additive-bundle-morphism}
An \textbf{additive bundle morphism} from an additive bundle $\q\colon E\to B$ to an additive bundle $\q'\colon E'\to B'$ consists of a pair of morphisms $f\colon B\to B'$ and $g\colon E\to E'$ satisfying the following conditions:
\begin{equation*}
\begin{tikzcd}
E & {E'} \\
B & {B'}
\arrow["g", from=1-1, to=1-2]
\arrow["q"', from=1-1, to=2-1]
\arrow["{q'}", from=1-2, to=2-2]
\arrow["f"', from=2-1, to=2-2]
\end{tikzcd}\hfill\quad
\begin{tikzcd}
E & {E'} \\
B & {B'}
\arrow["g", from=1-1, to=1-2]
\arrow["{z_q}", from=2-1, to=1-1]
\arrow["f"', from=2-1, to=2-2]
\arrow["{z_q'}"', from=2-2, to=1-2]
\end{tikzcd}\hfill\quad
\begin{tikzcd}
{E_2} & {E'_2} \\
E & {E'}
\arrow["{g\times_fg}", from=1-1, to=1-2]
\arrow["{s_q}"', from=1-1, to=2-1]
\arrow["{s_q'}", from=1-2, to=2-2]
\arrow["g"', from=2-1, to=2-2]
\end{tikzcd}
\end{equation*}
An additive bundle morphism is denoted by $(f,g)\colon\q\to\q'$, where, in the first position $f$ indicates the base morphism and in the second one $g$ indicates the top morphism. 
\end{definition}

The ``correct'' notion of limits in a tangent category is the notion of $\T$-limits.

\begin{definition}
\label{definition:tangent-limit}
Let $\X$ be a category and $\T\colon\X\to\X$ be an endofunctor of $\X$. A \textbf{$\T$-limit diagram} consists of a limit diagram in $\X$ whose universal property is preserved by all iterates $\T^n$ of $\T$. In the following, we refer to pullbacks, equalizers, etc., whose underlying limit diagram is a $\T$-limit, as \textbf{$\T$-pullbacks}, \textbf{$\T$-equalizers}, etc. When the endofunctor $\T$ is the tangent bundle functor of a tangent structure, $\T$-limits are also called \textbf{tangent limits} and the convention extends to $\T$-pullbacks, $\T$-equalizers, etc.
\end{definition}

\begin{definition}
\label{definition:T-additive-bundle}
In a category $\X$ equipped with an endofunctor $\T$, a \textbf{$\T$-additive bundle} consists of an additive bundle $\q\colon E\to B$ such that the $n$-fold pullback $E_n$ of the projection $q$ along itself is an $n$-fold $\T$-pullback diagram.
\end{definition}

\begin{definition}
\label{definition:tangent-category}
A \textbf{tangent structure} on a category $\X$ consists of the following data:
\begin{description}
\item[Tangent bundle functor] An endofunctor $\T\colon\X\to\X$;

\item[Projection]  A natural transformation $p_M\colon\T M\to M$, natural in $M$, which admits all $n$-fold $\T$-pullbacks
\begin{equation*}
\begin{tikzcd}
{\T_nM} & {\T M} \\
{\T M} & M
\arrow["{\pi_n}", from=1-1, to=1-2]
\arrow["{\pi_1}"', from=1-1, to=2-1]
\arrow["\lrcorner"{anchor=center, pos=0.125}, draw=none, from=1-1, to=2-2]
\arrow["\dots"{marking, allow upside down}, shift right=3, draw=none, from=1-2, to=2-1]
\arrow["{p_M}", from=1-2, to=2-2]
\arrow["{p_M}"', from=2-1, to=2-2]
\end{tikzcd}
\end{equation*}
along itself;

\item[Zero morphism] A natural transformation $z_M\colon M\to\T M$, natural in $M$;

\item[Sum morphism] A natural transformation $s_M\colon\T_2M\to\T M$, natural in $M$;
\end{description}
such that, for each $M\in\X$, $\p_M\=(p_M,z_M,s_M)$ is a $\T$-additive bundle of $\X$;
\begin{description}
\item[Vertical lift] A natural transformation $l_M\colon\T M\to\T^2M$, natural in $M$, where $\T^2M\=\T\T M$, such that
\begin{align*}
&(z_M,l_M)\colon\p_M\to\T\p_M
\end{align*}
is an additive bundle morphism;

\item[Canonical flip] A natural transformation $c_M\colon\T^2M\to\T^2M$ such that
\begin{align*}
&(\id_M,c_M)\colon\T\p_M\to\p_{\T M}
\end{align*}
is an additive bundle morphism;
\end{description}
Moreover, the following coherence conditions must hold:
\begin{enumerate}
\item The vertical lift is coassocative, and compatible with the canonical flip:
\begin{equation*}
\begin{tikzcd}
{\T M} & {\T^2M} \\
{\T^2M} & {\T^3M}
\arrow["l_M", from=1-1, to=1-2]
\arrow["lM"', from=1-1, to=2-1]
\arrow["{\T l_M}", from=1-2, to=2-2]
\arrow["{l_{\T M}}"', from=2-1, to=2-2]
\end{tikzcd}\hfill\quad
\begin{tikzcd}
{\T M} & {\T^2M} \\
& {\T^2M}
\arrow["l_M", from=1-1, to=1-2]
\arrow["l_M"', from=1-1, to=2-2]
\arrow["c_M", from=1-2, to=2-2]
\end{tikzcd}\hfill\quad
\begin{tikzcd}
{\T^2M} & {\T^3M} & {\T^3M} \\
{\T^2M} && {\T^3M}
\arrow["{l_{\T M}}", from=1-1, to=1-2]
\arrow["c_M"', from=1-1, to=2-1]
\arrow["{\T c_M}", from=1-2, to=1-3]
\arrow["{c_{\T M}}", from=1-3, to=2-3]
\arrow["{\T l_M}"', from=2-1, to=2-3]
\end{tikzcd}
\end{equation*}

\item The canonical flip is a symmetric braiding:
\begin{equation*}
\begin{tikzcd}
{\T^2M} & {\T^2M} \\
& {\T^2M}
\arrow["c_M", from=1-1, to=1-2]
\arrow[equals, from=1-1, to=2-2]
\arrow["c_M", from=1-2, to=2-2]
\end{tikzcd}\hfill\quad
\begin{tikzcd}
{\T^3M} & {\T^3M} & {\T^3M} \\
{\T^3M} & {\T^3M} & {\T^3M}
\arrow["{\T c_M}", from=1-1, to=1-2]
\arrow["{c_{\T M}}"', from=1-1, to=2-1]
\arrow["{c_{\T M}}", from=1-2, to=1-3]
\arrow["{\T c_M}", from=1-3, to=2-3]
\arrow["{\T c_M}"', from=2-1, to=2-2]
\arrow["{c_{\T M}}"', from=2-2, to=2-3]
\end{tikzcd}
\end{equation*}

\item The tangent bundle is locally linear, namely, the following is a pullback diagram:
\begin{equation*}
\begin{tikzcd}
{\T_2M} & {\T\T_2M} & {\T^2M} \\
M && {\T M}
\arrow["{z_{\T M}\times l_M}", from=1-1, to=1-2]
\arrow["{\pi_1p_M}"', from=1-1, to=2-1]
\arrow["{\T s_M}", from=1-2, to=1-3]
\arrow["{\T p_M}", from=1-3, to=2-3]
\arrow["z_M"', from=2-1, to=2-3]
\end{tikzcd}
\end{equation*}
\end{enumerate}
A \textbf{tangent category} consists of a category $\X$ equipped with a tangent structure $\TT\=(\T,p,z,s,l,c)$. A tangent structure \textbf{has negatives} when it is equipped with:
\begin{description}
\item[Negation] A natural transformation $n_M\colon\T M\to\T M$, natural in $M$, satisfying the following condition:
\begin{equation*}
\begin{tikzcd}
\T M & {\T_2M} \\
{\id_M} & \T M
\arrow["{\<n_M,\id_{\T M}\>}", from=1-1, to=1-2]
\arrow["p_M"', from=1-1, to=2-1]
\arrow["s_M", from=1-2, to=2-2]
\arrow["z_M"', from=2-1, to=2-2]
\end{tikzcd}
\end{equation*}
\end{description}
\end{definition}

In the following, we denote the tangent bundle functor of a tangent structure with the same symbol adopted to denote a tangent structure, e.g., $\TT$, in the font $\T$. The projection, the zero morphism, the sum morphism, the vertical lift, and the canonical flip are respectively denoted by the letters $p$, $z$, $s$, $l$, and $c$, and when the tangent structure has negatives, the negation is denoted by $n$. When the symbol denoting the tangent structure is decorated with a subscript or with a superscript, the tangent bundle functor and each of the structural natural transformations are decorated in the same way, e.g., for a tangent structure $\TT'_\o$, the tangent bundle functor is denoted by $\T'_\o$, the projection by $p'_\o$ and so on.

\begin{definition}
\label{definition:fundamental-tangent-limits-preservation}
Given a tangent category $(\X,\TT)$, a functor $G\colon\X\to\X'$ \textbf{preserves the fundamental tangent limits} when it preserves the universality of the $n$-fold tangent pullback diagram of the projection along itself and the local linearity of the tangent bundle. Concretely, these two conditions amount to say that the following diagram:
\begin{equation*}
\begin{tikzcd}
{G\T_nM} & {G\T M} \\
{G\T M} & GM
\arrow["{G\pi_n}", from=1-1, to=1-2]
\arrow["{G\pi_1}"', from=1-1, to=2-1]
\arrow["p", from=1-2, to=2-2]
\arrow["{{\dots}}"{marking, allow upside down}, shift left=4, draw=none, from=2-1, to=1-2]
\arrow["p"', from=2-1, to=2-2]
\end{tikzcd}
\end{equation*}
is an $n$-fold tangent pullback diagram in $\X'$ and that:
\begin{equation*}
\begin{tikzcd}
{G\T_2M} & {G\T\T_2M} & {G\T^2M} \\
GM && {G\T M}
\arrow["{G(z_\T\times_zl)}", from=1-1, to=1-2]
\arrow["{G(\pi_1p)}"', from=1-1, to=2-1]
\arrow["{G\T s}", from=1-2, to=1-3]
\arrow["{G\T p}", from=1-3, to=2-3]
\arrow["Gz"', from=2-1, to=2-3]
\end{tikzcd}
\end{equation*}
is a pullback diagram in $\X'$.
\end{definition}

A tangent $1$-morphism from a tangent category $(\X,\TT)$ to a tangent category $(\X',\TT')$ consists of a functor $F\colon\X\to\X'$ together with a distributive lax $\alpha$ between the functor $F$ and the tangent bundle functors, which is compatible with the tangent structures. Tangent $1$-morphisms come in different flavours. In the next definition, we clarify this distinction.

\begin{definition}
\label{definition:tangent-morphisms-tangent-categories}
Let $(\X,\TT)$ and $(\X',\TT')$ be two tangent categories. A \textbf{lax tangent morphism} from $(\X,\TT)$ to $(\X',\TT')$ consists of a functor $F\colon\X\to\X'$ together with a natural transformation $\alpha_M\colon F\T M\to\T'FM$, natural in $M$, called a \textbf{lax distributive law} satisfying the following conditions:
\begin{enumerate}
\item Additivity:
\begin{align*}
&(\id_F,\alpha)\colon F\p\to\p'_F
\end{align*}
is an additive bundle morphism, where $F\p$ denotes $(Fp,Fz,Fs)$ and $\p'_F$ denotes $(p'_F,z'_F,s'_F)$;

\item Compatibility with the vertical lift:
\begin{equation*}
\begin{tikzcd}
{F\T^2M} & {\T' F\T M} & {\T'^2FM} \\
{F\T M} && {\T' FM}
\arrow["{\alpha\T}", from=1-1, to=1-2]
\arrow["{\T'\alpha}", from=1-2, to=1-3]
\arrow["Fl", from=2-1, to=1-1]
\arrow["\alpha"', from=2-1, to=2-3]
\arrow["{l'F}"', from=2-3, to=1-3]
\end{tikzcd}
\end{equation*}

\item Compatibility with the canonical flip:
\begin{equation*}
\begin{tikzcd}
{F\T^2M} & {\T' F\T M} & {\T'^2FM} \\
{F\T^2M} & {\T' F\T M} & {\T'^2FM}
\arrow["{\alpha\T}", from=1-1, to=1-2]
\arrow["{\T'\alpha}", from=1-2, to=1-3]
\arrow["Fc", from=2-1, to=1-1]
\arrow["{\alpha\T}"', from=2-1, to=2-2]
\arrow["{\T'\alpha}"', from=2-2, to=2-3]
\arrow["{c'F}"', from=2-3, to=1-3]
\end{tikzcd}
\end{equation*}
\end{enumerate}
A \textbf{colax tangent morphism} from $(\X,\TT)$ to $(\X',\TT')$ consists of a functor $G\colon\X\to\X'$ together with a natural transformation $\beta_M\colon\T'GM\to G\T M$, natural in $M$, called a \textbf{colax distributive law}, which satisfies the dual of the conditions of a lax distributive law, namely, additivity and the compatibility with the vertical lift and the canonical flip, in which the direction of the distributive law is now reversed.\newline
A \textbf{strong tangent morphism} is a lax tangent morphism whose distributive law is invertible. Finally, a \textbf{strict tangent morphism} is a strong tangent morphism whose distributive law is the identity.\newline
A tangent morphism \textbf{preserves the fundamental tangent limits} when its underlying functor does it.
\par A lax tangent morphism is denoted by $(F,\alpha)\colon(\X,\TT)\to(\X',\TT')$ while for colax tangent morphisms we adopt the notation $(F,\alpha)\colon(\X,\TT)\nto(\X',\TT')$. Finally, a strict tangent morphism is simply denoted by its underlying functor.
\end{definition}

\begin{remark}
\label{remark:conventions-colax-tangent-morphism}
Different conventions have been adopted in the literature around the definitions of colax and strong tangent morphisms. In particular, our convention aligns with~\cite{lanfranchi:differential-bundles-operadic-affine-schemes}, but differs from~\cite{cockett:tangent-cats}, in which strong tangent morphisms were required to preserve the fundamental tangent limits.
\end{remark}

\begin{remark}
\label{remark:strong-colax-tangent-morphisms}
As pointed out in~\cite[Section~4]{cockett:differential-bundles}, a strong tangent morphism is also a colax tangent morphism (called a \textit{comorphism} in that paper).
\end{remark}

Let us recall the definition of a tangent natural transformation between tangent morphisms.

\begin{definition}
\label{definition:tangent-natural-transformation}
Given two lax tangent morphisms $(F,\alpha),(G,\beta)\colon(\X,\TT)\to(\X',\TT')$, a \textbf{tangent natural transformation} from $(F,\alpha)$ to $(G,\beta)$ consists of a natural transformation $\varphi_M\colon FM\to GM$, natural in $M$, satisfying the following compatibility with the distributive laws:
\begin{equation*}
\begin{tikzcd}
{F\T M} & {\T'FM} \\
{G\T M} & {\T'GM}
\arrow["\alpha", from=1-1, to=1-2]
\arrow["{\varphi\T}"', from=1-1, to=2-1]
\arrow["{\T'\varphi}", from=1-2, to=2-2]
\arrow["\beta"', from=2-1, to=2-2]
\end{tikzcd}
\end{equation*}
A tangent natural transformation between two colax tangent morphisms $(F,\alpha),(G,\beta)\colon(\X,\TT)\nto(\X',\TT')$ consists of a natural transformation $\varphi_M\colon FM\to GM$, natural in $M$, which satisfies the same compatibilities as a tangent natural transformation between lax tangent morphisms, where the distributive laws are reversed.\newline
Finally, given two lax tangent morphisms $(F_\o,\alpha_\o)\colon(\X_\o,\TT_\o)\to(\X_\o',\TT_\o')$ and two colax tangent morphisms $(G,\beta)\colon(\X_\o,\TT_\o)\nto(\X_\b,\TT_\b)$ and $(G',\beta')\colon(\X_\o',\TT_\o')\nto(\X_\b',\TT_\b')$, a \textbf{double tangent morphism}
\begin{equation*}
\begin{tikzcd}
{(\X_\o,\TT_\o)} & {(\X_\b,\TT_\b)} \\
{(\X'_\o,\TT'_\o)} & {(\X_\b',\TT_\b')}
\arrow["{(G,\beta)}", "\shortmid"{marking}, from=1-1, to=1-2]
\arrow["{(F_\o,\alpha_\o)}"', from=1-1, to=2-1]
\arrow["\varphi"{description}, Rightarrow, from=1-2, to=2-1]
\arrow["{(F_\b,\alpha_\b)}", from=1-2, to=2-2]
\arrow["{(G,',\beta')}"', "\shortmid"{marking}, from=2-1, to=2-2]
\end{tikzcd}
\end{equation*}
consists of a natural transformation $\varphi_M\colon F_\b GM\to G'F_\o M$, natural in $M$, satisfying the following compatibility with the distributive laws:
\begin{equation*}
\begin{tikzcd}
{F_\b\T_\b GM} & {F_\b G\T_\o M} & {G'F_\o\T_\o M} \\
{\T'_\b F_\b GM} & {\T_\b'G'F_\o M} & {G'\T'_\o F_\o M}
\arrow["{F_\b\beta}", from=1-1, to=1-2]
\arrow["{\alpha_\b G}"', from=1-1, to=2-1]
\arrow["{\varphi\T}", from=1-2, to=1-3]
\arrow["{G'\alpha_\o}", from=1-3, to=2-3]
\arrow["{\T'_\b\varphi}"', from=2-1, to=2-2]
\arrow["{\beta'F_\o}"', from=2-2, to=2-3]
\end{tikzcd}
\end{equation*}
\end{definition}

Tangent categories with their tangent $1$ and $2$-morphisms can be organized into four distinct $2$-categories:
\begin{itemize}
\item Tangent categories, lax tangent morphisms, and corresponding tangent natural transformations form the $2$-category $\TngCat$;

\item Tangent categories, colax tangent morphisms, and corresponding tangent natural transformations form the $2$-category $\TngCat_\co$;

\item Tangent categories, strong tangent morphisms, and corresponding tangent natural transformations form the $2$-category $\TngCat_\cong$;

\item Tangent categories, strict tangent morphisms, and corresponding tangent natural transformations form the $2$-category $\TngCat_=$.
\end{itemize}
Furthermore,
\begin{itemize}
\item Tangent categories, lax tangent morphisms for vertical morphisms, colax tangent morphisms for horizontal morphisms, and double tangent morphisms for double morphisms form a double category denoted by $\TTngCat$.
\end{itemize}

In~\cite{leung:weil-algebras}, Leung presented a different approach to defining tangent categories, which comes in handy for our story. Let us briefly recall it here.

\par First, let $W_n$ denote the rig (a.k.a., semiring) freely generated by $n$ generators $\epsilon_1\,\epsilon_n$, subject to the following relations:
\begin{align*}
&\epsilon_i\epsilon_j=0
\end{align*}
for every $i,j=1\,n$. For $n=0$, $W_0$ denotes the rig $\N$ of natural numbers and for $n=1$, $W_1$ is simply denoted by $W$. $\Weil$ denotes the strict symmetric monoidal category freely generated by all $W_n$ for every positive integer $n$ and by the following morphisms:
\begin{align*}
&p\colon W\to\N         &&z\colon\N\to W         &&s\colon W_2\to W\\
&p(\epsilon)=0          &&z(1)=1                 &&s(\epsilon_1)=\epsilon=s(\epsilon_2)\\
&&&&&\\
&l\colon W\to W\otimes W=W^2         &&c\colon W^2\to W^2    &&\\
&l(\epsilon)=\epsilon'\epsilon       &&c(\epsilon)=\epsilon'\qquad c(\epsilon')=\epsilon
\end{align*}
where $W^2$ denotes the rig freely generated by two generators $\epsilon$ and $\epsilon'$ subject to the following relations
\begin{align*}
&\epsilon^2=0=\epsilon'^2
\end{align*}
and where the monoidal product coincides with the usual tensor product of rigs, namely, the coproduct in the category of rigs.
\par The monoidal category $\Weil$ comes equipped with a tangent structure whose tangent bundle functor is the functor $W\otimes(-)$ and whose structural natural transformations are induced by the structural morphisms listed above. Leung's classification theorem states that a tangent structure $\TT$ on a category $\X$ is equivalent to a strong monoidal functor
\begin{align*}
&\Leung[\TT]\colon\Weil\to\End(\X)
\end{align*}
from the monoidal category $\Weil$ to the monoidal category $\End(\X)$ of endofunctors of $\X$, which preserves the fundamental tangent limits. Moreover, these limits are preserved in a pointwise way, meaning that the limit operator commutes with the evaluation functor.

\par Furthermore, a tangent structure with negatives $\TT$ is equivalent to a strong monoidal functor
\begin{align*}
&\Leung^-[\TT]\colon\Weil^-\to\End(\X)
\end{align*}
preserving the fundamental pointwise tangent limits, where $\Weil^-$ is obtained by replacing the generators $W_n$ of $\Weil$ with the rings $W_n^-$ (not just rigs, so in particular, now $W_0^-$ denotes the ring $\Z$ of integers). In particular, $\Weil^-$ comes equipped with an extra structural morphism:
\begin{align*}
&n\colon W^-\to W^-\\
&n(\epsilon)=-\epsilon
\end{align*}
Let us introduce the following naming convention.

\begin{definition}
\label{definition:leung-functor}
A \textbf{Leung functor} consists of a strong monoidal functor
\begin{align*}
&\Leung[\TT]\colon\Weil\to\End(\X)
\end{align*}
which preserves the fundamental pointwise tangent limits, namely, which commutes with the evaluation functor. Moreover, a \textbf{Leung functor with negatives} is a strong monoidal functor
\begin{align*}
&\Leung^-[\TT]\colon\Weil^-\to\End(\X)
\end{align*}
which preserves the fundamental pointwise tangent limits.
\end{definition}

\subsection{Tangentads formalize tangent categories}
\label{subsection:tangent-objects}
In this section, we introduce the protagonist of this paper: the formal notion of a tangentad. The first definition of this concept appears in~\cite{lanfranchi:grothendieck-tangent-cats} in the context of tangent fibrations, with the name of \textit{tangent object}.

\par In this section, we start by recalling the definition of a tangentad, by extending Leung's classification of tangent categories, and by spelling out the details in a more concrete form. Secondly, Lax, colax, strong, and strict tangent $1$-morphisms are introduced, together with the corresponding notions of $2$ and double morphisms.

\par For this section, let $\CC$ denote a strict $2$-category. In order to define the notion of a tangentad, first we need to clarify the notion of \textit{pointwise limits} in a given $2$-category.

\begin{definition}
\label{definition:pointwise-limit}
Given a strict $2$-category $\CC$ and two objects $\X$ and $\Y$ of $\CC$, a limit in the category $\CC(\X,\Y)$ is \textbf{pointwise} when it is preserved by each functor $\CC(f,\Y)\colon\CC(\X,\Y)\to\CC(\X',\Y)$ induced by $1$-morphisms $f\colon\X'\to\X$ of $\CC$.
\end{definition}

\begin{remark}
\label{remark:pointwise-limit}
The author is grateful to Rory Lucyshyn-Wright for pointing out the importance of this assumption for tangentads during an informal discussion. This aspect was missing in the original definition provided by the author.
\end{remark}

When the $2$-category $\CC$ is the $2$-category $\Cat$ of categories, pointwise limits of $\CC(\X,\Y)$ are those limit diagrams in the category of functors of type $F\colon\X\to\Y$ that are preserved by the evaluation functor. Concretely, this means that, for an object $X$ of $\X$, and a diagram $D\colon I\to\CC(\X,\Y)$, the functor $\lim D\colon\X\to\Y$ evaluated at $X$ is isomorphic to the object $\lim D(X)$ of $\Y$, where $D(X)$ represents the diagram $I\to\Y$ obtained by evaluating each functor $D_A\colon\X\to\Y$, for each $A$ of $I$, at $X$.

\par When the target category $\Y$ has all finite limits, so does the category of functors $\CC(\X,\Y)$ and each limit is pointwise. However, when the target category is not known to be finitely complete, there is no guarantee for the limits of $\CC(\X,\Y)$ to be pointwise. A counterexample can be found in~\cite[Section~3.3]{kelly:enriched-cats}.

\par Unfortunately, in tangent category theory, the requirement of the existence of limits is a subtle matter since in differential geometry, not every pair of morphisms admits a pullback. In particular, a tangent category cannot be required to be finitely complete since this would rule out one of the main examples of a tangent category. Therefore, to make our definition of tangentads compatible with the usual notion of a tangent category, the limits involved in the definition of a tangentad must be pointwise.

\begin{definition}
\label{definition:tangent-object}
A \textbf{tangentad} in a $2$-category $\CC$ consists of an object $\X$ of $\CC$ together with a strong monoidal functor:
\begin{align*}
&\Leung[\TT]\colon\Weil\to\End_\CC(\X)
\end{align*}
which preserves the fundamental pointwise tangent limits. Moreover, a tangentad \textbf{has negatives} when the strong monoidal functor $\Leung[\TT]$ factors through the inclusion $\Weil\to\Weil^-$.
\end{definition}

\begin{remark}
\label{remark:poon-and-christine-approaches}
In~\cite[Definition~14.2]{leung:weil-algebras}, Leung already suggested a generalization of a tangent category, by introducing the notion of a \textit{tangent structure internal} to a monoidal category $\mathcal{E}$ as a strong monoidal functor $\Leung[\TT]\colon\Weil\to\mathcal{E}$ which preserves the fundamental pointwise tangent limits. In this sense, a tangentad consists of an object $\X$ in a $2$-category $\CC$ together with a tangent structure internal to the monoidal category $\End(\X)$.

\par Another approach to generalize tangent structures was employed in~\cite{bauer:infinity-tangent-cats} in which $\infty$-tangent category is defined as a strong monoidal functor from the $\infty$-category of Weil algebras $\Weil^\infty$ to the monoidal category $\End(\X)$ of endomorphisms of an $\infty$-category $\X$.

\par As pointed out by Michael Ching in an informal discussion with the author, $\infty$-tangent categories generalize to strong monoidal functors from $\Weil^\infty$ to the monoidal category $\End_\CC(\X)$, where $\CC$ is a $(\infty,2)$-category, which preserve the fundamental tangent limits. Such functors split along the homotopy category of $\Weil^\infty$, which coincides with $\Weil$. Thus, every such functor gives rise to a tangentad.
\end{remark}

Definition~\ref{definition:tangent-object} is rather abstract; to have a better grasp of this concept, let us unwrap it into a more concrete form. A tangentad of $\CC$ comprises:
\begin{description}
\item[Base object] An object $\X$;

\item[Tangent bundle $1$-morphism] A $1$-endomorphism $\T\colon\X\to\X$;

\item[Projection]  A $2$-morphism $p\colon\T\Rightarrow\id_\X$, which admits all pointwise $n$-fold pullbacks
\begin{equation*}
\begin{tikzcd}
	{\T_n} & \T \\
	\T & {\id_\X}
	\arrow["{\pi_n}", from=1-1, to=1-2]
	\arrow["{\pi_1}"', from=1-1, to=2-1]
	\arrow["\lrcorner"{anchor=center, pos=0.125}, draw=none, from=1-1, to=2-2]
	\arrow["\dots"{marking, allow upside down}, shift right=3, draw=none, from=1-2, to=2-1]
	\arrow["p", from=1-2, to=2-2]
	\arrow["p"', from=2-1, to=2-2]
\end{tikzcd}
\end{equation*}
which are preserved  by all iterates $\T^n$ of $\T$;

\item[Zero morphism] A $2$-morphism $z\colon\id_\X\Rightarrow\T$;

\item[Sum morphism] A $2$-morphism $s\colon\T_2\Rightarrow\T$;
\end{description}
such that, $\p\=(p,z,s)$ is a $\bar\T$-additive bundle of $\End(\X)$, where $\bar\T$ sends an endomorphism $F\colon\X\to\X$ to $\T\o F$;
\begin{description}
\item[Vertical lift] A $2$-morphism $l\colon\T\Rightarrow\T^2$ such that
\begin{align*}
&(z,l)\colon\p\to\T\p
\end{align*}
is an additive bundle morphism;

\item[Canonical flip] A $2$-morphism $c\colon\T^2\Rightarrow\T^2$ such that
\begin{align*}
&(\id_\X,c)\colon\T\p\to\p_\T
\end{align*}
is an additive bundle morphism;
\end{description}
Moreover, the following coherence conditions must hold:
\begin{enumerate}
\item The vertical lift is coassocative, and compatible with the canonical flip:
\begin{equation*}
\begin{tikzcd}
{\T} & {\T^2} \\
{\T^2} & {\T^3}
\arrow["l", from=1-1, to=1-2]
\arrow["l"', from=1-1, to=2-1]
\arrow["{\T l}", from=1-2, to=2-2]
\arrow["{l_\T}"', from=2-1, to=2-2]
\end{tikzcd}\hfill\quad
\begin{tikzcd}
{\T} & {\T^2} \\
& {\T^2}
\arrow["l", from=1-1, to=1-2]
\arrow["l"', from=1-1, to=2-2]
\arrow["c", from=1-2, to=2-2]
\end{tikzcd}\hfill\quad
\begin{tikzcd}
{\T^2} & {\T^3} & {\T^3} \\
{\T^2} && {\T^3}
\arrow["{l_\T}", from=1-1, to=1-2]
\arrow["c"', from=1-1, to=2-1]
\arrow["{\T c}", from=1-2, to=1-3]
\arrow["{c_\T}", from=1-3, to=2-3]
\arrow["{\T l}"', from=2-1, to=2-3]
\end{tikzcd}
\end{equation*}

\item The canonical flip is a symmetric braiding:
\begin{equation*}
\begin{tikzcd}
{\T^2} & {\T^2} \\
& {\T^2}
\arrow["c", from=1-1, to=1-2]
\arrow[equals, from=1-1, to=2-2]
\arrow["c", from=1-2, to=2-2]
\end{tikzcd}\hfill\quad
\begin{tikzcd}
{\T^3} & {\T^3} & {\T^3} \\
{\T^3} & {\T^3} & {\T^3}
\arrow["{\T c}", from=1-1, to=1-2]
\arrow["{c_\T}"', from=1-1, to=2-1]
\arrow["{c_\T}", from=1-2, to=1-3]
\arrow["{\T c}", from=1-3, to=2-3]
\arrow["{\T c}"', from=2-1, to=2-2]
\arrow["{c_\T}"', from=2-2, to=2-3]
\end{tikzcd}
\end{equation*}

\item The tangent bundle is locally linear, namely, the following is a pointwise pullback diagram:
\begin{equation*}
\begin{tikzcd}
{\T_2} & {\T\o\T_2} & {\T^2} \\
\id_\X && {\T}
\arrow["{z_\T\times l}", from=1-1, to=1-2]
\arrow["{\pi_1p}"', from=1-1, to=2-1]
\arrow["{\T s}", from=1-2, to=1-3]
\arrow["{\T p}", from=1-3, to=2-3]
\arrow["z"', from=2-1, to=2-3]
\end{tikzcd}
\end{equation*}
\end{enumerate}
Furthermore, a tangentad admits negatives when it is equipped with:
\begin{description}
\item[Negation] A $2$-morphism $n\colon\T\Rightarrow\T$ such that:
\begin{equation*}
\begin{tikzcd}
\T & {\T_2} \\
{\id_\X} & \T
\arrow["{\<n,\id_\T\>}", from=1-1, to=1-2]
\arrow["p"', from=1-1, to=2-1]
\arrow["s", from=1-2, to=2-2]
\arrow["z"', from=2-1, to=2-2]
\end{tikzcd}
\end{equation*}
\end{description}

We can now define morphisms of tangentads.

\begin{definition}
\label{definition:tangent-morphisms}
Let $(\X,\TT)$ and $(\X',\TT')$ be two tangentads of $\CC$. A \textbf{lax tangent $1$-morphism} from $(\X,\TT)$ to $(\X',\TT')$ consists of a $1$-morphism $F\colon\X\to\X'$ together with a $2$-morphism $\alpha\colon F\o\T\Rightarrow\T'\o F$, called a \textbf{lax distributive law} satisfying the following conditions:
\begin{enumerate}
\item Additivity:
\begin{align*}
&(\id_F,\alpha)\colon F\p\to\p'_F
\end{align*}
is an additive bundle morphism, where $F\p$ denotes $(Fp,Fz,Fs)$ and $\p'_F$ denotes $(p'_F,z'_F,s'_F)$;

\item Compatibility with the vertical lift:
\begin{equation*}
\begin{tikzcd}
{F\o\T^2} & {\T'\o F\o\T} & {\T'^2\o F} \\
{F\o\T} && {\T'\o F}
\arrow["{\alpha\T}", from=1-1, to=1-2]
\arrow["{\T'\alpha}", from=1-2, to=1-3]
\arrow["Fl", from=2-1, to=1-1]
\arrow["\alpha"', from=2-1, to=2-3]
\arrow["{l'F}"', from=2-3, to=1-3]
\end{tikzcd}
\end{equation*}

\item Compatibility with the canonical flip:
\begin{equation*}
\begin{tikzcd}
{F\o\T^2} & {\T'\o F\o\T} & {\T'^2\o F} \\
{F\o\T^2} & {\T'\o F\o\T} & {\T'^2\o F}
\arrow["{\alpha\T}", from=1-1, to=1-2]
\arrow["{\T'\alpha}", from=1-2, to=1-3]
\arrow["Fc", from=2-1, to=1-1]
\arrow["{\alpha\T}"', from=2-1, to=2-2]
\arrow["{\T'\alpha}"', from=2-2, to=2-3]
\arrow["{c'F}"', from=2-3, to=1-3]
\end{tikzcd}
\end{equation*}
\end{enumerate}
A \textbf{colax tangent $1$-morphism} from $(\X,\TT)$ to $(\X',\TT')$ consists of a $1$-morphism $G\colon\X\to\X'$ together with a $2$-morphism $\beta\colon\T'\o G\Rightarrow G\o\T$, called a \textbf{colax distributive law}, which satisfies the dual conditions of a lax distributive law, namely, additivity and the compatibility with the vertical lift and the canonical flip, in which the direction of the distributive law is now reversed.
\par A \textbf{strong tangent $1$-morphism} is a lax tangent $1$-morphism whose distributive law is invertible. A \textbf{strict tangent $1$-morphism} is a strong tangent $1$-morphism whose distributive law is the identity.\newline
Finally, a tangent morphism $(F,\alpha)\colon(\X,\TT)\to(\X',\TT')$ \textbf{preserves the fundamental tangent limits} when the functor $\CC(\X,F)\colon\End(\X)\to\CC(\X,\X')$ preserves the fundamental pointwise tangent limits of $(\X,\TT)$.
A lax tangent $1$-morphism is denoted by $(F,\alpha)\colon(\X,\TT)\to(\X',\TT')$ while for colax tangent $1$-morphisms we adopt the notation $(F,\alpha)\colon(\X,\TT)\nto(\X',\TT')$. Finally, a strict tangent $1$-morphism is denoted by its underlying $1$-morphism.
\end{definition}

\begin{definition}
\label{definition:tangent-2-morphisms}
Given two lax tangent morphisms $(F,\alpha),(G,\beta)\colon(\X,\TT)\to(\X',\TT')$ of tangentads in $\CC$, a \textbf{tangent $2$-morphism} from $(F,\alpha)$ to $(G,\beta)$ consists of a $2$-morphism $\varphi\colon F\Rightarrow G$ satisfying the following compatibility between the distributive laws:
\begin{equation*}
\begin{tikzcd}
{F\o\T} & {\T'\o F} \\
{G\o\T} & {\T'\o G}
\arrow["\alpha", from=1-1, to=1-2]
\arrow["{\varphi\T}"', from=1-1, to=2-1]
\arrow["{\T'\varphi}", from=1-2, to=2-2]
\arrow["\beta"', from=2-1, to=2-2]
\end{tikzcd}
\end{equation*}
A tangent $2$-morphism between two colax tangent morphisms $(F,\alpha),(G,\beta)\colon(\X,\TT)\nto(\X',\TT')$ consists of a $2$-morphism $\varphi\colon F\Rightarrow G$ which satisfies the same compatibility as a tangent $2$-morphism between lax tangent morphisms, where the distributive laws are reversed.\newline
Finally, given two lax tangent morphisms $(F_\o,\alpha_\o)\colon(\X_\o,\TT_\o)\to(\X_\o',\TT_\o')$ and two colax tangent morphisms $(G,\beta)\colon(\X_\o,\TT_\o)\nto(\X_\b,\TT_\b)$ and $(G',\beta')\colon(\X_\o',\TT_\o')\nto(\X_\b',\TT_\b')$, a \textbf{double tangent morphism}
\begin{equation*}
\begin{tikzcd}
{(\X_\o,\TT_\o)} & {(\X_\b,\TT_\b)} \\
{(\X'_\o,\TT'_\o)} & {(\X_\b',\TT_\b')}
\arrow["{(G,\beta)}", "\shortmid"{marking}, from=1-1, to=1-2]
\arrow["{(F_\o,\alpha_\o)}"', from=1-1, to=2-1]
\arrow["\varphi"{description}, Rightarrow, from=1-2, to=2-1]
\arrow["{(F_\b,\alpha_\b)}", from=1-2, to=2-2]
\arrow["{(G,',\beta')}"', "\shortmid"{marking}, from=2-1, to=2-2]
\end{tikzcd}
\end{equation*}
consists of a $2$-morphism $\varphi\colon F_\b\o G\Rightarrow G'\o F_\o$ satisfying the following compatibility with the distributive laws:
\begin{equation*}
\begin{tikzcd}
{F_\b\o\T_\b\o G} & {F_\b\o G\o\T_\o} & {G'\o F_\o\o\T_\o} \\
{\T'_\b\o F_\b\o G} & {\T_\b'\o G'\o F_\o} & {G'\o\T'_\o\o F_\o}
\arrow["{F_\b\beta}", from=1-1, to=1-2]
\arrow["{\alpha_\b G}"', from=1-1, to=2-1]
\arrow["{\varphi\T}", from=1-2, to=1-3]
\arrow["{G'\alpha_\o}", from=1-3, to=2-3]
\arrow["{\T'_\b\varphi}"', from=2-1, to=2-2]
\arrow["{\beta'F_\o}"', from=2-2, to=2-3]
\end{tikzcd}
\end{equation*}
\end{definition}

Tangentads with their tangent $1$ and $2$-morphisms can be organized into four distinct $2$-categories:
\begin{itemize}
\item Tangentads of $\CC$, lax tangent morphisms, and corresponding $2$-morphisms form the $2$-category $\Tng(\CC)$;

\item Tangentads of $\CC$, colax tangent morphisms, and corresponding $2$-morphisms form the $2$-category $\Tng_\co(\CC)$;

\item Tangentads of $\CC$, strong tangent morphisms, and corresponding $2$-morphisms form the $2$-category $\Tng_\cong(\CC)$;

\item Tangent categories, strict tangent morphisms, and corresponding $2$-morphisms form the $2$-category $\Tng_=(\CC)$.
\end{itemize}
Furthermore:
\begin{itemize}
\item Tangentads of $\CC$, lax tangent $1$-morphisms for vertical morphisms, colax tangent $1$-morphisms for horizontal morphisms, and double tangent morphisms for double morphisms form a double category denoted by $\TTng(\CC)$.
\end{itemize}

\begin{example}
\label{example:tangent-categories}
The archetypal example of a tangentad is a tangent category. Indeed, tangent categories are precisely tangentads in the $2$-category $\Cat$ of categories.
\end{example}

\begin{example}
\label{example:trivial-tangent-object}
In a $2$-category $\CC$, every object $\X$ is trivially a tangentad once equipped with the trivial tangent structure $\1$, whose tangent $1$-morphism is the identity over $\X$ and all the structural $2$-morphisms are the identity over $\id_\X$.
\end{example}

Example~\ref{example:trivial-tangent-object} induces a $2$-functor:
\begin{align*}
&\Inc\colon\CC\to\Tng(\CC)\\
&\Inc(\X)\=(\X,\1)\\
&\Inc(F\colon\X\to\X')\=F\colon(\X,\1)\to(\X',\1)\\
&\Inc(\varphi\colon F\Rightarrow G)\=\varphi\colon F\Rightarrow G
\end{align*}
which sends each object to the trivial tangentad.

\begin{proposition}
\label{proposition:Inc-U-adjunction}
The forgetful $2$-functor $\U\colon\Tng(\CC)\to\CC$, which forgets the tangent structure, is both a left and a right $2$-adjoint to the inclusion $2$-functor $\Inc\colon\CC\to\Tng(\CC)$.
\end{proposition}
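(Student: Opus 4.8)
The plan is to establish the two $2$-adjunctions $\U\dashv\Inc$ and $\Inc\dashv\U$ separately, using crucially that $\U\o\Inc=\id_\CC$ holds strictly: forgetting the trivial tangent structure returns the underlying object, $1$-morphism, and $2$-morphism unchanged. I would therefore take the counit of $\U\dashv\Inc$ and the unit of $\Inc\dashv\U$ to be the identity, and produce the remaining unit and counit explicitly, reducing the two adjunctions to isomorphisms of hom-categories that are $2$-natural in both variables.

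For $\U\dashv\Inc$, the key observation is that the target $\Inc(\Y)=(\Y,\1)$ has tangent $1$-morphism $\id_\Y$ and all structural $2$-cells equal to identities. Hence a lax tangent $1$-morphism $(F,\alpha)\colon(\X,\TT)\to(\Y,\1)$ has $\alpha\colon F\o\T\Rightarrow F$, and the projection clause of the additivity axiom — which reads $p'_F\circ\alpha=Fp$ with $p'_F=\id_F$ — forces $\alpha=Fp$. First I would check that, conversely, $(F,Fp)$ satisfies all the lax axioms: the zero and sum clauses reduce respectively to $p\circ z=\id$ and to the projection–sum compatibility $p\circ s=p\circ\pi_1$ of the additive bundle, while the lift and flip compatibilities reduce, after the collapse $\T'=\id$, to the identities $p_\T\circ l=\id_\T$ and $p\circ\T p=p\circ p_\T$ (the latter an instance of the interchange law, both sides being $p\ast p$). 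I would also check that every $2$-cell $\varphi\colon F\Rightarrow G$ is automatically a tangent $2$-morphism, since the required equation $\T'\varphi\circ\alpha=\beta\circ\varphi\T$ becomes $\varphi\circ Fp=Gp\circ\varphi\T$, again an instance of interchange. These facts yield an isomorphism $\Tng(\CC)\big((\X,\TT),\Inc(\Y)\big)\cong\CC(\X,\Y)$, $2$-natural by functoriality of whiskering; equivalently, the unit is the lax tangent morphism $\eta_{(\X,\TT)}=(\id_\X,p)$, and the triangle identities hold on the nose because on the trivial side every structure cell is an identity.

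For $\Inc\dashv\U$ I would argue dually: now the source $\Inc(\Y)=(\Y,\1)$ carries the trivial structure, so a lax tangent $1$-morphism $(F,\alpha)\colon(\Y,\1)\to(\X,\TT)$ has $\alpha\colon F\Rightarrow\T\o F$, and the zero clause of additivity, $\alpha\circ Fz_\1=z_F$ with $z_\1=\id$, forces $\alpha=zF$. Checking that $(F,zF)$ is a lax tangent morphism reduces once more to tangent axioms (for instance $p\circ z=\id$ for the projection clause and $l\circ z=\T z\circ z$ for the lift clause), and every $2$-cell is compatible by naturality of $z$. This gives $\Tng(\CC)\big(\Inc(\Y),(\X,\TT)\big)\cong\CC(\Y,\X)$, with counit $\epsilon'_{(\X,\TT)}=(\id_\X,z)$ and identity unit, and the triangle identities are again immediate.

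The routine-but-essential part — and the only place any real checking happens — is verifying that $(\id_\X,p)$ and $(\id_\X,z)$ genuinely satisfy the vertical-lift and canonical-flip compatibilities; everything else is forced or follows from interchange. I expect no genuine obstacle here, since each such compatibility collapses to a standard consequence of the tangent axioms once one side is trivial. The two points to keep straight are the lax-versus-colax orientation, so that the forced $2$-cells come out as the (non-invertible) $p$ and $z$ and land among the lax morphisms of $\Tng(\CC)$, and the fact that no pointwise-limit-preservation condition is imposed on the morphisms of $\Tng(\CC)$, so that subtlety plays no role in this proof.
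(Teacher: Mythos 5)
Your proposal is correct and follows essentially the same route as the paper: the paper's proof simply exhibits the same unit/counit data — identities in the trivial slots together with $(\id_\X,p)\colon(\X,\TT)\to(\X,\1)$ and $(\id_\X,z)\colon(\X,\1)\to(\X,\TT)$ — and your hom-isomorphism formulation, with $\alpha$ forced to equal $Fp$ (resp.\ $zF$) by the projection (resp.\ zero) clause of additivity, is just a more explicit rendering of that argument. One small slip worth fixing: the vertical-lift clause for $(\id_\X,p)$ does not reduce to $p_\T\o l=\id_\T$ (which is false in general, since $p_\T\o l=z\o p$); the equation actually needed is $p\o p_\T\o l=p$, which follows from $p_\T\o l=z\o p$ together with $p\o z=\id$, so the verification still goes through.
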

\begin{proof}
First, let us prove that $\Inc\dashv\U$ forms an adjunction. The unit
\begin{align*}
&\eta\colon(\X,\TT)\to(\X,\1)=\Inc(\U(\X,\TT))
\end{align*}
is the lax tangent morphism $(\id_\X,p)\colon(\X,\TT)\to(\X,\1)$, induced by the projection $p$ of $(\X,\TT)$. While the counit
\begin{align*}
&\epsilon\colon\U(\Inc(\X))=\X\to\X
\end{align*}
is the identity. Let us now show that $\U\dashv\Inc$ forms a second adjunction. The unit
\begin{align*}
&\theta\colon\X\to\X=\U(\Inc(\X))
\end{align*}
is the identity, while the counit
\begin{align*}
&\tau\colon\Inc(\U(\X,\TT))=(\X,\1)\to(\X,\TT)
\end{align*}
is the lax tangent morphism $(\id_\X,z)\colon(\X,\1)\to(\X,\TT)$, induced by the zero morphism $z$ of $(\X,\TT)$.
\end{proof}

Dually, one can also observe that the $2$-functor $\U_\co\colon\Tng_\co(\CC)\to\CC$ which forgets the tangent structure also admits a left and a right adjoint $\Inc_\co\colon\CC\to\Tng_\co(\CC)$. Since $\U_\co$ plays an important role in the next section, let us state this in a proposition.

\begin{proposition}
\label{proposition:co-Inc-U-adjunction}
The forgetful $2$-functor $\U_\co\colon\Tng_\co(\CC)\to\CC$ which forgets the tangent structure is both a left and a right $2$-adjoint to the inclusion $2$-functor $\Inc_\co\colon\CC\to\Tng_\co(\CC)$.
\end{proposition}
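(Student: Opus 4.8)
The plan is to prove Proposition~\ref{proposition:co-Inc-U-adjunction} by adapting the proof of Proposition~\ref{proposition:Inc-U-adjunction} to the colax setting, with the directions of the distributive laws reversed throughout. Since the structure of the argument is entirely dual, I would first observe that the inclusion $2$-functor $\Inc_\co\colon\CC\to\Tng_\co(\CC)$ is defined exactly as $\Inc$, sending each object $\X$ to the trivial tangentad $(\X,\1)$, each $1$-morphism $F$ to the colax tangent $1$-morphism with identity colax distributive law, and each $2$-morphism $\varphi$ to itself; I would note that the identity $2$-morphism vacuously satisfies the colax compatibility conditions, so this is well defined.

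Next I would exhibit the two adjunctions explicitly by describing their units and counits, just as in the lax case. For the adjunction $\Inc_\co\dashv\U_\co$, the counit is again the identity, while the unit $\eta\colon(\X,\TT)\to\Inc_\co(\U_\co(\X,\TT))=(\X,\1)$ must be a colax tangent $1$-morphism. The key point is that in the colax setting the roles of the projection $p$ and the zero morphism $z$ are interchanged relative to the lax case: a colax distributive law into the trivial tangentad $(\X,\1)$ is a $2$-morphism $\T\Rightarrow\T'\o F$ with $\T'=\id$, so the unit should be $(\id_\X,z)$, induced by the zero morphism, whereas the lax case used $p$. Dually, for the adjunction $\U_\co\dashv\Inc_\co$, the unit is the identity and the counit $\tau\colon(\X,\1)\to(\X,\TT)$ should be the colax tangent $1$-morphism $(\id_\X,p)$ induced by the projection.

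I would then verify that the candidate $1$-morphisms $(\id_\X,z)$ and $(\id_\X,p)$ are genuine colax tangent morphisms, i.e.\ that $z$ and $p$ satisfy additivity and compatibility with the vertical lift and the canonical flip in the reversed direction. This amounts to reading off the axioms of a tangentad from the unwrapped definition following Definition~\ref{definition:tangent-object}: the compatibility of $z$ and $p$ with $l$, $c$, and the additive bundle structure is precisely encoded in the requirements that $(z,l)\colon\p\to\T\p$ and $(\id_\X,c)\colon\T\p\to\p_\T$ be additive bundle morphisms and that $\p=(p,z,s)$ be an additive bundle. Finally I would check the two triangle identities for each adjunction, which reduce to identities already verified in the lax case together with the observation that composing with the identity distributive law is trivial.

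The main obstacle, though a mild one, is bookkeeping: ensuring that the reversal of the distributive law genuinely swaps which structural $2$-morphism ($z$ versus $p$) plays the role of unit and counit in each of the two adjunctions, and confirming that the triangle identities still close up after this swap. Once the directions are tracked carefully, the verification is formally dual to Proposition~\ref{proposition:Inc-U-adjunction}, so I would present it concisely rather than repeating every diagram chase.
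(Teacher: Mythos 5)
Your proposal is correct and matches the paper's intent exactly: the paper offers no separate proof of this proposition, presenting it as the dual of Proposition~\ref{proposition:Inc-U-adjunction}, and your key observation — that reversing the distributive law forces the zero morphism $z$ to furnish the structural $2$-cell $(\X,\TT)\to(\X,\1)$ and the projection $p$ to furnish $(\X,\1)\to(\X,\TT)$, the opposite of the lax case — is precisely the content of that dualization. The remaining verifications (that $z$ and $p$ give genuine colax distributive laws, and the triangle identities) follow from the tangentad axioms just as you describe.
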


\subsection{The monad structure of a tangentad}
\label{subsection:T-monad}
In~\cite{cockett:tangent-cats}, Cockett and Cruttwell showed that the tangent bundle functor $\T\colon\X\to\X$ of a tangent category $(\X,\TT)$ comes equipped with a monad structure for which $c\colon\T^2\Rightarrow\T^2$ defines a distributive law and such that $l\colon\T\Rightarrow\T^2$ becomes a monad morphism between $\T$ and the monad $\T^2$ induced by the distributive law $c$. In particular, these are the data of a tangent monad, as introduced in~\cite[Definition~19]{cockett:tangent-monads}, which is a monad internal to the $2$-category $\TngCat$.
\par This construction can be easily formalized within the context of tangentad theory. In this section, we briefly recall this construction.

\begin{lemma}
\label{lemma:T-monad}
The tangent bundle $1$-morphism $\T\colon\X\to\X$ of a tangentad $(\X,\TT)$ in a $2$-category $\CC$ comes equipped with a tangent monad structure on $(\X,\TT)$. Concretely, this structure comprises a monad structure on $\T$, whose unit is defined by the zero morphism $z\colon\id_\X\Rightarrow\T$ and whose multiplication is defined as follows:
\begin{align*}
&\mu\colon\T^2\xrightarrow{\<\T p,p_\T\>}\T_2\xrightarrow{s}\T
\end{align*}
Moreover, such a structure also includes a distributive law, represented by the canonical flip $c$, between the monad $(\T,z,\mu)$ and the tangent bundle $1$-morphism $\T$, which makes $\T$ into a strong tangent morphism $(\T,c)\colon(\X,\TT)\to(\X,\TT)$.
\end{lemma}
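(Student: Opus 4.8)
The plan is to exhibit all three pieces of structure---the monad $(\T,z,\mu)$, the distributive law $c$, and the strong tangent morphism $(\T,c)$---as the image under the Leung functor $\Leung[\TT]\colon\Weil\to\End_\CC(\X)$ of a single piece of structure carried by the object $W$ inside the tangent category $\Weil$ of Weil algebras, and then to invoke the formal properties of $\Leung[\TT]$ to transport it. The key point is that every identity we must verify is an equation between morphisms of $\Weil$, supplemented by the preservation of certain pullbacks, and $\Leung[\TT]$, being strong monoidal and preserving the fundamental pointwise tangent limits, carries all of these to the corresponding statements in $\End_\CC(\X)$, i.e.\ among the $2$-morphisms of $\CC$. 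Equivalently---and this is the concrete content I would spell out---the original argument of Cockett and Cruttwell for tangent categories is purely diagrammatic: it uses only the tangent-structure axioms and the universal property of the pullbacks $\T_n$, both of which are available verbatim in an arbitrary tangentad once these pullbacks are required to be pointwise.

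First I would treat the monad structure. To make sense of $\mu=s\o\<\T p,p_\T\>$ one checks that $p\o\T p=p\o p_\T$ as $2$-morphisms $\T^2\Rightarrow\id_\X$, so that the pair $(\T p,p_\T)$ factors through the pointwise pullback $\T_2$. The unit laws $\mu\o z_\T=\id_\T=\mu\o\T z$ follow from the fact that $z$ is a section of $p$ together with the unitality of the sum $s$ in the additive bundle $\p$. Associativity $\mu\o\T\mu=\mu\o\mu_\T$ is the one computation that genuinely uses the pullback: both composites are maps into $\T$ built from triples of ``parallel'' tangent vectors, and they agree by the associativity and commutativity of $s$ together with naturality. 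In the Weil picture this is transparent: $z\colon\N\to W$ is the rig unit and $\mu\colon W\x W\to W$ sends both generators to $\epsilon$, so $(W,z,\mu)$ is visibly a monoid in $(\Weil,\x)$; since strong monoidal functors send monoids to monoids, $\T=\Leung[\TT](W)$ is a monad.

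Next I would verify that $(\T,c)$ is a strong tangent morphism and that $c$ is a distributive law for this monad. The three defining conditions of a lax tangent morphism, evaluated at $F=\T$ and $\alpha=c$, unwind to coherence axioms already present in the tangentad structure: additivity of $(\id,c)\colon\T\p\to\p_\T$ is part of the definition of the canonical flip, while compatibility of $c$ with the vertical lift and with the canonical flip reduce---after using the involution $c\o c=\id$, which simultaneously shows $c$ is invertible and hence that the morphism is strong---to the lift/flip coherence and the hexagon (Yang--Baxter) identity for $c$. That $c$ is a distributive law of $(\T,z,\mu)$ over $\T$ amounts to two further compatibilities of $c$ with $z$ and with $\mu$; these follow from naturality of $c$, the additive-bundle-morphism property of $(\id,c)$, and the definition of $\mu$, and in the Weil picture are immediate, since there $c\colon W\x W\to W\x W$ is simply the symmetry exchanging the two generators.

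The step I expect to be the main obstacle is the associativity of $\mu$, precisely because it is the only place where one cannot reason elementwise and must instead exploit the pointwise hypothesis built into the definition of a tangentad. Defining $\mu$ and checking its associativity requires comparing maps into $\T_2$ and $\T_3$, and this comparison is legitimate only if the iterates $\T^k$ preserve these pullbacks; this is exactly the content of the $\T$-limit condition together with pointwiseness (Definition~\ref{definition:pointwise-limit}). Once this preservation is recorded, the remaining verifications are the same equational manipulations as in the classical case and amount to routine bookkeeping, so I would compress them by appealing to the Leung-functor transfer rather than reproducing each diagram chase.
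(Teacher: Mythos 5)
Your proposal is correct, but it is packaged differently from the paper. The paper's entire proof is a one-line deferral: it asserts that the argument of Cockett and Cruttwell (Propositions~3.4 and~3.5 of their paper) goes through verbatim, since that argument is purely equational and uses only the tangent-structure axioms and the universal property of the pullbacks $\T_n$, all of which are present in a tangentad. Your second paragraph's ``concrete content'' is exactly this, so that part matches the paper. What you add on top is the Leung-functor transfer: observing that $z\colon\N\to W$ and $\mu=s\o\<\T p,p_\T\>$ come from a monoid structure $(W,z,\mu)$ in $(\Weil,\x)$ (with $\mu\colon W\x W\to W$ sending both generators to $\epsilon$), that $c$ is the symmetry of $W\x W$, and that a strong monoidal functor preserving the fundamental limits carries all of this to $\End_\CC(\X)$. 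This is a genuinely cleaner route in the formal setting: it replaces the diagram chases by a finite check on generators of rigs inside $\Weil$, and it makes visible why the result holds for tangentads in any $2$-category rather than just for $\Cat$. The only thing to be careful about is that the identification of $\Leung[\TT](W_2)$ with the pullback $\T_2$ in $\End_\CC(\X)$ --- needed for $\<\T p,p_\T\>$ to be the image of the corresponding map $W\x W\to W_2$ --- is exactly the ``preserves the fundamental pointwise tangent limits'' clause, so you should cite it explicitly at that step. One minor imprecision: the associativity of $\mu$ uses the universal property of $\T_2$ (and its preservation by the iterates $\T^n$, which is built into the definition of a tangentad), not pointwiseness in the sense of Definition~\ref{definition:pointwise-limit}; pointwiseness concerns stability under precomposition with $1$-cells $\X'\to\X$ and plays no role in this particular verification.
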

\begin{proof}
The proof is precisely the same as the one of~\cite[Propositions~3.4 and~3.5]{cockett:tangent-cats}.
\end{proof}

\begin{lemma}
\label{lemma:functoriality-T-monad}
The operation defined by Lemma~\ref{lemma:T-monad} which sends a tangentad $(\X,\TT)$ to a tangent monad $(\X,\TT;\T,z,\mu,c)$ extends to a $2$-functor:
\begin{align*}
&\mathrm{M}\colon\Tng_\co(\CC)\to\TngMnd(\CC)
\end{align*}
\end{lemma}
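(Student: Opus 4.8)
The plan is to recognise the codomain $\TngMnd(\CC)$ as Street's $2$-category $\Mnd(\Tng_\co(\CC))$ of formal monads in $\Tng_\co(\CC)$ (see~\cite{street:formal-theory-monads}), so that the task reduces to promoting the object-level assignment of Lemma~\ref{lemma:T-monad} to $1$- and $2$-cells. On objects, $\mathrm{M}$ is already fixed by that lemma: it sends $(\X,\TT)$ to the monad carried by the strong---hence colax, by Remark~\ref{remark:strong-colax-tangent-morphisms}---tangent endomorphism $(\T,c)$, with unit $z$ and multiplication $\mu=s\o\<\T p,p_\T\>$.

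On $1$-cells, given a colax tangent morphism $(G,\beta)\colon(\X,\TT)\nto(\X',\TT')$, I would declare $\mathrm{M}(G,\beta)$ to be the monad functor whose underlying $1$-cell of $\Tng_\co(\CC)$ is $(G,\beta)$ itself and whose comparison $2$-cell is the colax distributive law $\beta\colon\T'\o G\Rightarrow G\o\T$. Three conditions then need checking. First, $\beta$ must be a genuine $2$-cell of $\Tng_\co(\CC)$ from $(\T',c')\o(G,\beta)$ to $(G,\beta)\o(\T,c)$; expanding the two composite distributive laws to $(\T'\beta)\o(c'G)$ and $(Gc)\o(\beta\T)$ and unwinding Definition~\ref{definition:tangent-2-morphisms}, the required square is exactly the compatibility of $\beta$ with the canonical flip from the definition of a colax tangent morphism, so this is immediate. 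Second, compatibility of $\beta$ with the monad units reads $\beta\o(z'G)=Gz$, which is precisely the zero-clause of the additivity condition on $\beta$. Third, compatibility with the monad multiplications, namely $\beta\o(\mu'G)=(G\mu)\o(\beta\T)\o(\T'\beta)$, must be extracted from the remaining clauses of additivity after substituting $\mu=s\o\<\T p,p_\T\>$.

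On $2$-cells, a tangent $2$-morphism $\varphi\colon(G,\beta)\Rightarrow(G',\beta')$ is sent to itself: the monad-transformation axiom of $\Mnd(\Tng_\co(\CC))$ demands $(\varphi\T)\o\beta=\beta'\o(\T'\varphi)$, which is verbatim the defining compatibility of a tangent $2$-morphism in Definition~\ref{definition:tangent-2-morphisms}, so nothing further is required there. Functoriality is then bookkeeping: $\mathrm{M}$ preserves identities on the nose, and it respects composition of $1$-cells because Street's formula for composing the comparison cells of two monad functors coincides with the formula for composing two colax distributive laws; since $\mathrm{M}$ is the identity on underlying $2$-cells, it automatically preserves their vertical and horizontal composites.

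I expect the main obstacle to be the multiplication axiom of the second paragraph. Unlike the unit and canonical-flip conditions, which each reduce to a single clause of the colax structure, verifying $\beta\o(\mu'G)=(G\mu)\o(\beta\T)\o(\T'\beta)$ requires expanding $\mu$ through the pullback $\<\T p,p_\T\>$ and combining the projection- and sum-clauses of additivity with the naturality of $\beta$. This is the formal, $2$-categorical counterpart of the naturality and monad computations in~\cite{cockett:tangent-cats}, and I would dispatch it by the same diagram chase, now read in the relevant hom-categories of $\CC$.
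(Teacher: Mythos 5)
Your proposal is correct and follows essentially the same route as the paper: the comparison cell of the monad functor is the colax distributive law itself, its status as a $2$-morphism of tangentads between $(\T,c)$-composites is exactly the canonical-flip clause, the unit and multiplication axioms fall out of the additivity clauses (with $\mu=s\o\<\T p,p_\T\>$ expanded as in the Cockett--Cruttwell computation), and $2$-cells are sent to themselves since the monad-transformation axiom is verbatim the tangent $2$-morphism condition. The paper's proof is merely a terser statement of the same checks.
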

\begin{proof}
The compatibility of the colax distributive law of a lax tangent morphism $(F,\alpha)\colon(\X,\TT)\nto(\X',\TT')$ with the tangent structures makes $\alpha\colon\T'\o F\Rightarrow F\o\T$ into a $2$-morphism of tangentads:
\begin{equation*}
\begin{tikzcd}
{(\X,\TT)} & {(\X',\TT')} \\
{(\X,\TT)} & {(\X',\TT')}
\arrow["{(F,\alpha)}", from=1-1, to=1-2]
\arrow["{(\T,c)}"', from=1-1, to=2-1]
\arrow["\alpha"{description}, Rightarrow, from=1-2, to=2-1]
\arrow["{(\T',c')}", from=1-2, to=2-2]
\arrow["{(F,\alpha)}"', from=2-1, to=2-2]
\end{tikzcd}
\end{equation*}
Moreover, $\alpha$ is compatible with the multiplications and the units of the monads $(\T,\mu,z)$ and $(\T',\mu',z')$. Thus, $(F,\alpha)$ becomes a morphism of tangent monads. Finally, a $2$-morphism $\varphi\colon(F,\alpha)\Rightarrow(G,\beta)\colon(\X,\TT)\nto(\X',\TT')$ of tangentads defines a $2$-morphism of tangent monads.
\end{proof}


\section{Global properties and structures of tangentads}
\label{section:global-properties-structures}
In previous sections, we introduced tangentads and discussed some examples. In this section, we investigate some of the global structures and the global properties of the operation which sends a $2$-category to the $2$-category of tangentads. 

\par First, in Section~\ref{subsection:functoriality-TNG}, we show that this assignment is $2$-functorial. In particular, we establish that pointwise pullback-preserving $2$-functors $\Gamma$ are sent to $2$-functors $\Tng_\co(\Gamma)$.

\par In Section~\ref{subsection:comonad-TNG}, we prove that each $\Tng_\co(\Gamma)$ is also pointwise pullback-preserving and that $\Tng_\co$ becomes an endofunctor on the $2$-category $\TwoCat_\pp$. We use this argument to construct a $2$-comonad structure on $\Tng_\co$.

\par In Section~\ref{subsection:Cartesian-tangent-objects}, we introduce Cartesian tangentads and discuss the relationship between Cartesian objects in $\Tng(\CC)$ and tangentads in $\Cart(\CC)$, proving they are $2$-equivalent.

\par In Section~\ref{subsection:adjunctable-representable-tangent-objects}, we extend the notion of adjunctability and representability to the formal context of tangentads and in~\ref{subsection:negatives-Kan-extension}, we formalize Cockett and Cruttwell's construction of the subtangent category with negatives~\cite[Proposition~3.12]{cockett:tangent-cats} as a right Kan extension.

\subsection{The functoriality of tangentads and some technical lemmas}
\label{subsection:functoriality-TNG}
For starters, let u prove a useful lemma.

\begin{lemma}
\label{lemma:U-reflects-pointwise-limits}
The $2$-functor $\U_\co\colon\Tng_\co(\CC)\to\CC$ reflects pointwise limits. Concretely, given a diagram
\begin{align*}
D&\colon I\to\Tng_\co(\CC)((\X,\TT),(\X',\TT'))
\end{align*}
and a cone $\lambda^c\colon L\to D(c)$ of $D$, if the cone
\begin{align*}
&\U[\lambda^c]\colon\U[L]\to\U[D(c)]
\end{align*}
is a pointwise limit cone of the diagram
\begin{align*}
&\U[D]\colon I\xrightarrow{D}\Tng_\co(\CC)((\X,\TT),(\X',\TT'))\xrightarrow{\U}\CC(\X,\X')
\end{align*}
then $\lambda^c$ is a pointwise limit cone of $D$.
\end{lemma}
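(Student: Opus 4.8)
The plan is to split the statement into a self-contained \emph{reflection of limit-ness} and then bootstrap the pointwise condition from it. First I would reduce to the following sub-claim, which makes no reference to pointwise-ness of $\lambda^c$ itself: if $\U[\lambda^c]$ is a pointwise limit cone in $\CC(\X,\X')$, then $\lambda^c$ is a limit cone in the hom-category $\Tng_\co(\CC)((\X,\TT),(\X',\TT'))$. Write $D(c)=(F_c,\alpha_c)$ and $L=(L_0,\ell)$, so that each leg is a tangent $2$-morphism $\lambda^c\colon(L_0,\ell)\Rightarrow(F_c,\alpha_c)$ satisfying the colax compatibility $(\lambda^c\T)\cdot\ell=\alpha_c\cdot(\T'\lambda^c)$. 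To verify the universal property I take an arbitrary test cone $\kappa^c\colon(M_0,\mu)\Rightarrow(F_c,\alpha_c)$; applying $\U$ and using that $\U[\lambda^c]$ is a limit in $\CC(\X,\X')$ produces a unique $2$-morphism $\theta\colon M_0\Rightarrow L_0$ with $\lambda^c\cdot\theta=\kappa^c$ for every $c$.

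The crux, and the step I expect to be the main obstacle, is showing that $\theta$ is a tangent $2$-morphism, i.e. $(\theta\T)\cdot\mu=\ell\cdot(\T'\theta)$; note both sides are $2$-morphisms into $L_0\T$. This is exactly where pointwise-ness is essential: since a pointwise limit is preserved by every precomposition functor $\CC(f,\X')$, taking $f=\T\colon\X\to\X$ shows that the whiskered cone $\{\lambda^c\T\colon L_0\T\Rightarrow F_c\T\}$ is again a limit cone, so it suffices to check the desired equality after post-composing with each $\lambda^c\T$. Post-composing the left-hand side gives $(\kappa^c\T)\cdot\mu=\alpha_c\cdot(\T'\kappa^c)$ by the law for $\kappa^c$; post-composing the right-hand side gives $\alpha_c\cdot(\T'\lambda^c)\cdot(\T'\theta)=\alpha_c\cdot(\T'\kappa^c)$ by the law for $\lambda^c$, functoriality of $\T'$, and $\lambda^c\cdot\theta=\kappa^c$. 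The two agree, so $\theta$ is a tangent $2$-morphism, and its uniqueness is inherited from the underlying limit because $\U$ is faithful on $2$-cells. This establishes the sub-claim, and it simultaneously explains Remark~\ref{remark:pointwise-limit}: without pointwise-ness one could not test equality of $2$-morphisms landing in $L_0\T$ against the cone $\lambda^c$.

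Finally I would bootstrap to the full statement. The sub-claim already yields that $\lambda^c$ is a limit cone, so it remains to check it is pointwise, i.e. preserved by each precomposition functor $\Tng_\co(\CC)((H,\chi),(\X',\TT'))$ for a colax tangent morphism $(H,\chi)\colon(\X'',\TT'')\nto(\X,\TT)$. Since $\U$ commutes with precomposition, $\U\circ\Tng_\co(\CC)((H,\chi),(\X',\TT'))=\CC(H,\X')\circ\U$, the underlying cone of the image is $\CC(H,\X')(\U[\lambda^c])$. This remains a pointwise limit: it is a limit because $\U[\lambda^c]$ is preserved by $\CC(H,\X')$, and it is pointwise because $\CC(g,\X')\circ\CC(H,\X')=\CC(H\circ g,\X')$ preserves the original pointwise limit for every $g$. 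Applying the sub-claim to the hom-category $\Tng_\co(\CC)((\X'',\TT''),(\X',\TT'))$ then shows the image of $\lambda^c$ is a limit cone, which is exactly the required preservation. There is no circularity, since the sub-claim is invoked only with underlying cones already known to be pointwise and never uses pointwise-ness of $\lambda^c$ to prove its universal property.
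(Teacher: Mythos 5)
Your proposal is correct and uses essentially the same argument as the paper: the mediating $2$-cell is produced from the underlying pointwise limit, and its compatibility with the colax distributive laws is verified by whiskering the cone with the tangent $1$-morphism (legitimate precisely because the limit is pointwise) and testing the two sides of the square against the whiskered limit cone — which, as you note, only works in the colax direction. The only difference is organizational: the paper proves the universal property directly for an arbitrary precomposition $(G,\beta)$ in one pass, whereas you isolate the identity case as a sub-claim and recover the general case by a formal bootstrap; both are sound.
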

\begin{proof}
Consider a diagram $D\colon I\to\Tng_\co(\CC)((\X,\TT),(\X',\TT'))$ and let $\lambda^c\colon L\to D(c)$ be a cone of $D$. Since $D$ is a diagram in $\Tng_\co(\CC)((\X,\TT),(\X',\TT'))$ each $D(c)$ consists of a colax tangent morphism $(F^c,\alpha^c)\colon(\X,\TT)\to(\X',\TT')$, and each morphism $D(f\colon c\to d)$ consists of a $2$-morphism of tangentads $\varphi^f\colon(F^c,\alpha^c)\to(F^d,\alpha^d)$. Similarly, $L$ consists of a colax tangent morphism $(L,\kappa)\colon(\X,\TT)\to(\X',\TT')$ and $\lambda^c\colon(L,\kappa)\to(F^c,\alpha^c)$ is a $2$-morphism of tangentads.
\par Suppose that $\U[\lambda^c]\colon\U[L]\to\U[D(c)]$ is a pointwise limit cone of $\CC(\X,\X')$. Being pointwise means that for each morphism $G\colon\X''\to\X$, the cone
\begin{align*}
&\U[\lambda^c]_G\colon\U[L]\o G\to\U[D(c)]\o G
\end{align*}
is a limit cone of $\U[D]\o\CC(G,\X')$ in $\CC(\X'',\X')$.
\par Consider a colax tangent morphism $(G,\beta)\colon(\X'',\TT'')\to(\X,\TT)$. We want to show that
\begin{align*}
&\lambda^c_{(G,\beta)}\colon(L,\kappa)\o(G,\beta)\to(F^c,\alpha^c)\o(G,\beta)
\end{align*}
is a limit cone in $\Tng_\co(\CC)((\X'',\TT''),(\X',\TT'))$. Consider a cone $\gamma^c\colon(H,\theta)\to(F^c,\alpha^c)\o(G,\beta)$. Since $\U[\lambda^c]$ is pointwise, $\U[\lambda^c_{(G,\beta)}]=\U[\lambda^c]_G$ is a limit cone in $\CC(\X'',\X')$, thus, by the universal property of $\U[\lambda^c]_G$, we obtain a unique morphism of cones
\begin{align*}
\varphi&\colon\U[\gamma^c]\to\U[\lambda^c_{(G,\beta)}]
\end{align*}
which corresponds to a $2$-morphism $\varphi\colon H\Rightarrow L\o G$ of $\CC$, satisfying
\begin{equation*}
\begin{tikzcd}
{L\o G} & {F^c\o G} \\
H
\arrow["{\lambda^c}", from=1-1, to=1-2]
\arrow["\varphi", dashed, from=2-1, to=1-1]
\arrow["{\gamma^c}"', from=2-1, to=1-2]
\end{tikzcd}
\end{equation*}
for every $c\in I$. We need to show that $\varphi$ is a $2$-morphism of tangentads. Let us start by considering the following diagram:
\begin{equation}
\label{equation:diagram-reflection-pointwise-limits}
\begin{tikzcd}
{\T'\o H} &&&& {H\o\T''} \\
& {\T'\o L\o G} & {L\o\T\o G} & {L\o G\o\T''} \\
{\T'\o F^c\o G} && {F^c\o\T\o G} && {F^c\o G\o\T''}
\arrow["\theta", from=1-1, to=1-5]
\arrow["{\T'\varphi}"', from=1-1, to=2-2]
\arrow["{\T'\gamma^c}"', from=1-1, to=3-1]
\arrow["{\varphi\T''}", from=1-5, to=2-4]
\arrow["{\gamma^c\T''}", from=1-5, to=3-5]
\arrow["{\kappa G}", from=2-2, to=2-3]
\arrow["{\T'\lambda^c_{(G,\beta)}}"', from=2-2, to=3-1]
\arrow["{L\beta}", from=2-3, to=2-4]
\arrow["{\lambda^c_{\T G}}"{description}, from=2-3, to=3-3]
\arrow["{\lambda^c_{(G,\beta)}\T''}", from=2-4, to=3-5]
\arrow["{\alpha^cG}"', from=3-1, to=3-3]
\arrow["{F^c\beta}"', from=3-3, to=3-5]
\end{tikzcd}
\end{equation}
The two bottom trapeze diagrams commute since $\lambda^c$ is a $2$-morphism of tangentads and so does the outer square since $\gamma^c$ is a $2$-morphism of tangentads. The left and right triangles commute by definition of $\varphi$. Using the universal property of $\U[\lambda^c_{(G,\beta)}]_{\T''}$ we prove that the top trapeze diagram commutes as well, implying that $\varphi$ is a $2$-morphism of tangentads. Finally, the uniqueness of $\varphi$ is implied by the universal property of $\U[\lambda^c_{(G,\beta)}]_{\T''}$.
\end{proof}

\begin{remark}
\label{remark:reflection-pointwise-limits}
One could wonder why, in Lemma~\ref{lemma:U-reflects-pointwise-limits}, we consider colax tangent morphisms instead of lax ones. The reason lies in Diagram~\eqref{equation:diagram-reflection-pointwise-limits}. Indeed, to prove the commutativity of the upper trapeze diagram, we invoked the universal property of the limit cone $\lambda^c_{(G,\beta)}\T''$. When one considers lax tangent morphisms, the distributive laws $\theta$, $\kappa$, $\alpha^c$, and $\beta$ go in the opposite direction, so to prove the commutativity of the upper trapeze, one needs the cone $\T'\lambda^c_{(G,\beta)}$ to be a limit cone. This is true for tangent limits, but it would not be true in general.
\end{remark}

Remark~\ref{remark:reflection-pointwise-limits} suggests a weaker version of Lemma~\ref{lemma:U-reflects-pointwise-limits} in the context of lax tangent morphisms.

\begin{proposition}
\label{proposition:U-reflects-pointwise-tangent-limits}
The $2$-functor $\U\colon\Tng(\CC)\to\CC$ reflects pointwise tangent limits. Concretely, given a diagram $D\colon I\to\Tng(\CC)((\X,\TT),(\X',\TT'))$ and a cone $\lambda^c\colon L\to D(c)$ of $D$, if the cone
\begin{align*}
&\U[\lambda^c]\colon\U[L]\to\U[D(c)]
\end{align*}
is a pointwise limit cone of the diagram
\begin{align*}
&\U[D]\colon I\xrightarrow{D}\Tng(\CC)((\X,\TT),(\X',\TT'))\xrightarrow{\U}\CC(\X,\X')
\end{align*}
and its universal property is preserved by all functors $\CC(\X,{\T'}^n)$, where $\T'$ is the tangent bundle $1$-morphism of $(\X',\TT')$, then $\lambda^c$ is a pointwise limit cone of $D$ and it is preserved by all functors $\Tng(\CC)((\X,\TT),(\T',c'))$.
\end{proposition}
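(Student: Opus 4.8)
The plan is to mirror the proof of Lemma~\ref{lemma:U-reflects-pointwise-limits} verbatim in structure, replacing the colax distributive laws by lax ones and reversing every distributive law accordingly. Concretely, I would unwind the data: each $D(c)$ is a lax tangent morphism $(F^c,\alpha^c)\colon(\X,\TT)\to(\X',\TT')$ with $\alpha^c\colon F^c\o\T\Rightarrow\T'\o F^c$, each $D(f)$ a tangent $2$-morphism $\varphi^f$, the apex $L$ a lax tangent morphism $(L,\kappa)$, and $\lambda^c\colon(L,\kappa)\to(F^c,\alpha^c)$ tangent $2$-morphisms. To establish that $\lambda^c$ is a \emph{pointwise} limit of $D$ in $\Tng(\CC)$, I would fix an arbitrary lax tangent morphism $(G,\beta)\colon(\X'',\TT'')\to(\X,\TT)$ and a test cone $\gamma^c\colon(H,\theta)\to(F^c,\alpha^c)\o(G,\beta)$. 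Since $\U[\lambda^c]$ is pointwise, the whiskered family $\lambda^c G=\U[\lambda^c]_G$ is a limit cone in $\CC(\X'',\X')$, so its universal property yields a unique $2$-morphism $\varphi\colon H\Rightarrow L\o G$ of $\CC$ with $\lambda^c G\o\varphi=\gamma^c$ for all $c$. Everything then reduces to checking that this $\varphi$ is a tangent $2$-morphism, i.e.\ that it is compatible with the colax distributive laws $\theta$ and $\kappa G\o L\beta$.

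The heart of the argument is the commutativity of the square expressing this compatibility, which I would obtain as the ``top trapeze'' of a large diagram formed exactly as in~\eqref{equation:diagram-reflection-pointwise-limits} but with all distributive laws reversed: the outer square commutes because $\gamma^c$ is a tangent $2$-morphism, the two lower trapezes because $\lambda^c$ is, and the two side triangles by the defining equation of $\varphi$. To deduce commutativity of the top trapeze I would test the two composites $H\o\T''\Rightarrow\T'\o L\o G$ against the legs $\T'\lambda^c G=\T'\o\lambda^c_{(G,\beta)}$, computing each side. On one side I use $\T'(\lambda^c G\o\varphi)=\T'\gamma^c$ together with the tangent-$2$-morphism identity for $\gamma^c$; on the other I use the tangent-$2$-morphism identity $\T'\lambda^c\o\kappa=\alpha^c\o\lambda^c\T$ for $\lambda^c$ (whiskered by $G$), the interchange identity $\lambda^c\T G\o L\beta=F^c\beta\o\lambda^c G\T''$, and again $\lambda^c G\T''\o\varphi\T''=\gamma^c\T''$. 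Both reduce to $(\alpha^cG\o F^c\beta)\o\gamma^c\T''$, so the two composites agree after postcomposition with every leg $\T'\lambda^c G$, and the top trapeze commutes provided $\T'\lambda^c G$ is itself a limit cone.

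This last proviso is exactly where the extra hypothesis enters, and it is the step I expect to be the main obstacle, precisely for the reason flagged in Remark~\ref{remark:reflection-pointwise-limits}: in the lax setting the distributive laws force the relevant limit cone to be $\T'\lambda^c_{(G,\beta)}$ (a postcomposition by $\T'$) rather than $\lambda^c_{(G,\beta)}\T''$ (a precomposition), so pointwiseness alone no longer suffices. I would argue that $\T'\lambda^c G=(\T'\lambda^c)G$ is again a pointwise limit cone by combining the two preservation properties: precomposition by $G$ preserves pointwise limits (this is pointwiseness itself, and it keeps the cone pointwise), while the assumed preservation of the universal property by $\CC(\X,\T'^n)$ supplies preservation under postcomposition by $\T'$, these two operations commuting by interchange. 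The uniqueness of $\varphi$ as a tangent $2$-morphism is then inherited from its uniqueness in $\CC$, completing the proof that $\lambda^c$ is a pointwise limit of $D$.

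Finally, for the preservation claim I would simply re-apply what has just been proved. The cone $\Tng(\CC)((\X,\TT),(\T',c'))(\lambda^c)$ obtained by postcomposing with the strong tangent morphism $(\T',c')$ of Lemma~\ref{lemma:T-monad} has underlying cone $\T'\o\U[\lambda^c]$; this is again a pointwise limit cone in $\CC(\X,\X')$ whose universal property is preserved by every $\CC(\X,\T'^n)$, since $\T'^n\o\T'\o\U[\lambda^c]=\T'^{\,n+1}\o\U[\lambda^c]$ is a limit by hypothesis. Hence the first part of the proposition, applied to this shifted cone, shows that $(\T',c')\o\lambda^c$ is a pointwise limit cone in $\Tng(\CC)$, and the same reasoning for the iterates $(\T'^n,c'^{(n)})$ gives preservation by all the functors $\Tng(\CC)((\X,\TT),(\T',c'))$.
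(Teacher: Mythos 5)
Your proposal is correct and follows essentially the same route as the paper: the same unwinding of the data, the same large diagram whose top trapeze is verified by testing against the cone $\T'\lambda^c_{(G,\beta)}$, and the same use of the extra hypothesis (preservation by $\CC(\X,\T'^n)$) to know that this postcomposed cone is still a limit. The only organizational difference is that the paper runs the whole argument with the exponent $n$ as a parameter, proving at once that every $(\T',c')^n\lambda^c_{(G,\beta)}$ is a limit cone, whereas you prove the $n=0$ case and then bootstrap to the iterates by applying the result to the shifted cone; this is an equivalent packaging of the same proof.
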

\begin{proof}
Consider a diagram $D\colon I\to\Tng(\CC)((\X,\TT),(\X',\TT'))$ and let $\lambda^c\colon L\to D(c)$ be a cone of $D$. Since $D$ is a diagram in $\Tng(\CC)((\X,\TT),(\X',\TT'))$ each $D(c)$ consists of a lax tangent morphism $(F^c,\alpha^c)\colon(\X,\TT)\to(\X',\TT')$, and each morphism $D(f\colon c\to d)$ consists of a $2$-morphism of tangentads $\varphi^f\colon(F^c,\alpha^c)\to(F^d,\alpha^d)$. Similarly, $L$ consists of a lax tangent morphism $(L,\kappa)\colon(\X,\TT)\to(\X',\TT')$ and $\lambda^c\colon(L,\kappa)\to(F^c,\alpha^c)$ a $2$-morphism of tangentads.
\par Suppose that $\U[\lambda^c]\colon\U[L]\to\U[D(c)]$ is a pointwise tangent limit cone of $\CC(\X,\X')$. Being pointwise and tangent means that for each morphism $G\colon\X''\to\X$ and each positive integer $n$, the cone
\begin{align*}
&{\T'}^n\U[\lambda^c]_G\colon{\T'}^n\o\U[L]\o G\to{\T'}^n\o\U[D(c)]\o G
\end{align*}
is a limit cone of ${\T'}^n\o\U[D]\o\CC(G,\X')$ in $\CC(\X'',\X')$.
\par Consider a lax tangent morphism $(G,\beta)\colon(\X'',\TT'')\to(\X,\TT)$. We want to prove that
\begin{align*}
&(\T',c')^n\lambda^c_{(G,\beta)}\colon(\T',c')^n\o(L,\kappa)\o(G,\beta)\to(\T',c')^n\o(F^c,\alpha^c)\o(G,\beta)
\end{align*}
is a limit cone in $\Tng(\CC)((\X'',\TT''),(\X',\TT'))$. Consider a cone $\gamma^c\colon(H,\theta)\to(\T',c')^n\o(F^c,\alpha^c)\o(G,\beta)$. Since $\U[\lambda^c]$ is pointwise, ${\T'}^n\U[\lambda^c_{(G,\beta)}]={\T'}^n\U[\lambda^c]_G$ is a limit cone in $\CC(\X'',\X')$, thus, by its universal property we obtain a unique morphism of cones
\begin{align*}
\varphi&\colon\U[\gamma^c]\to{\T'}^n\U[\lambda^c_{(G,\beta)}]
\end{align*}
which corresponds to a $2$-morphism $\varphi\colon H\Rightarrow{\T'}^n\o L\o G$ of $\CC$, satisfying
\begin{equation*}
\begin{tikzcd}
{{\T'}^n\o L\o G} & {{\T'}^n\o F^c\o G} \\
H
\arrow["{{\T'}^n\lambda^c}", from=1-1, to=1-2]
\arrow["\varphi", dashed, from=2-1, to=1-1]
\arrow["{\gamma^c}"', from=2-1, to=1-2]
\end{tikzcd}
\end{equation*}
for every $c\in I$. We need to show that $\varphi$ is a $2$-morphism of tangentads. Let us start by considering the following diagram:
\begin{equation*}
\adjustbox{width=\linewidth,center}{
\begin{tikzcd}
{H\o\T''} &&&& {\T'\o H} \\
& {{\T'}^n\o L\o G\o\T''} & {{\T'}^n\o L\o\T\o G} & {{\T'}^{n+1}\o L\o G} \\
{{\T'}^n\o F^c\o G\o\T''} && {{\T'}^n\o F^c\o\T\o G} && {{\T'}^{n+1}\o F^c\o G}
\arrow["\theta", from=1-1, to=1-5]
\arrow["{\varphi\T''}"', from=1-1, to=2-2]
\arrow["{\gamma^c\T''}"', from=1-1, to=3-1]
\arrow["{\T'\varphi}", from=1-5, to=2-4]
\arrow["{\T'\gamma^c}", from=1-5, to=3-5]
\arrow["{{\T'}^nL\beta}", from=2-2, to=2-3]
\arrow["{{\T'}^n\lambda^c_{(G,\beta)}\T''}"', from=2-2, to=3-1]
\arrow["{{\T'}^n\kappa G}", from=2-3, to=2-4]
\arrow["{{\T'}^n\lambda^c_{\T G}}"{description}, from=2-3, to=3-3]
\arrow["{{\T'}^{n+1}\lambda^c_{(G,\beta)}}", from=2-4, to=3-5]
\arrow["{{\T'}^nF^c\beta}"', from=3-1, to=3-3]
\arrow["{{\T'}^n\alpha^cG}"', from=3-3, to=3-5]
\end{tikzcd}
}
\end{equation*}
The two bottom trapeze diagrams commute since $\lambda^c$ is a $2$-morphism of tangentads, and so does the outer square since $\gamma^c$ is a $2$-morphism of tangentads. The left and right triangles commute by definition of $\varphi$. Using the universal property of ${\T'}^n\U[\lambda^c_{(G,\beta)}]$, we prove that the top trapeze diagram commutes as well, implying that $\varphi$ is a $2$-morphism of tangentads. Finally, the uniqueness of $\varphi$ is implied by the universal property of ${\T'}^n\U[\lambda^c_{(G,\beta)}]$.
\end{proof}

To extend the operation $\Tng$ which sends a $2$-category $\CC$ into the $2$-category $\Tng(\CC)$ of tangentads of $\CC$, first, we need to introduce the ``correct'' type of $2$-functor which behaves well with the operation $\Tng$.

\begin{definition}
\label{definition:pullback-preserving}
A $2$-functor $\Gamma\colon\CC\to\CC'$ is \textbf{pointwise pullback-preserving} if it preserves every pointwise pullback in each hom category $\End(\X)$.
\par In the following, we denote by $\TwoCat_\pp$ the $2$-category of $2$-categories, whose $1$-morphisms are pointwise pullback-preserving $2$-functors, and whose $2$-morphisms are $2$-natural transformations.
\end{definition}

In the following, we repeatedly make use of the following technical lemma.

\begin{lemma}
\label{lemma:right-adjoints-preserve-pointwise-limits}
If a $2$-functor $\Gamma\colon\CC'\to\CC$ admits a left $2$-adjoint $\Omega\colon\CC\to\CC'$, $\Gamma$ preserves pointwise limits. Concretely, this means that if a diagram $D\colon I\to\CC'(\X,\X')$ admits a pointwise limit, so does the diagram:
\begin{align*}
&\Gamma[D]\colon I\xrightarrow{D}\CC'(\X,\X')\xrightarrow{\Gamma}\CC(\Gamma[\X],\Gamma[\X'])
\end{align*}
In particular, $\Gamma$ is pointwise pullback-preserving.
\end{lemma}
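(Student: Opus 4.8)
The plan is to show that $\Gamma[\lambda]$ is a pointwise limit cone of $\Gamma[D]$, where $\lambda\colon L\to D$ denotes the given pointwise limit cone of $D$ in $\CC'(\X,\X')$. The essential tool is the hom-category isomorphism furnished by the $2$-adjunction $\Omega\dashv\Gamma$,
\begin{align*}
\phi_{\Y,\X'}\colon\CC(\Y,\Gamma[\X'])\xrightarrow{\;\cong\;}\CC'(\Omega[\Y],\X'),
\end{align*}
natural in $\Y\in\CC$ and $\X'\in\CC'$. Being an isomorphism of categories (an equivalence would also suffice), $\phi$ and its inverse both preserve and reflect limits, and the whole argument amounts to transporting the pointwise limit $\lambda$ through $\phi$.

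First I would unwind what must be checked: by Definition~\ref{definition:pointwise-limit}, $\Gamma[\lambda]$ is a pointwise limit of $\Gamma[D]$ precisely when, for every $1$-morphism $h\colon\Y\to\Gamma[\X]$ of $\CC$, the precomposed cone $\CC(h,\Gamma[\X'])[\Gamma[\lambda]]$ is a limit cone in $\CC(\Y,\Gamma[\X'])$ (the case $h=\id_{\Gamma[\X]}$ recovering the assertion that $\Gamma[\lambda]$ is itself a limit cone). Writing $\bar h=\phi_{\Y,\X}(h)\colon\Omega[\Y]\to\X$ for the transpose of $h$, the key observation is the commutativity, through $\phi$, of the square of functors whose two legs send a $1$-morphism $g\colon\X\to\X'$ of $\CC'$ to $\Gamma[g]\o h$ and to $g\o\bar h$ respectively; concretely,
\begin{align*}
\phi_{\Y,\X'}\bigl(\Gamma[g]\o h\bigr)=g\o\bar h,
\end{align*}
which is nothing but naturality of the counit $\epsilon\colon\Omega\Gamma\Rightarrow\id_{\CC'}$, since $\bar h=\epsilon_\X\o\Omega[h]$. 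Whiskering, the same identity holds for the $2$-morphisms making up the cone, so that $\phi$ carries the precomposed cone $\CC(h,\Gamma[\X'])[\Gamma[\lambda]]$ to the precomposed cone $\CC'(\bar h,\X')[\lambda]$.

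With this in hand the conclusion is immediate: since $\lambda$ is a \emph{pointwise} limit cone of $D$, precomposition with the $1$-morphism $\bar h\colon\Omega[\Y]\to\X$ sends it to a limit cone $\CC'(\bar h,\X')[\lambda]$ in $\CC'(\Omega[\Y],\X')$; applying the isomorphism $\phi_{\Y,\X'}^{-1}$ (which preserves limits) together with the identification above shows that $\CC(h,\Gamma[\X'])[\Gamma[\lambda]]$ is a limit cone in $\CC(\Y,\Gamma[\X'])$. As $h$ and $\Y$ were arbitrary, $\Gamma[\lambda]$ is a pointwise limit cone of $\Gamma[D]$, proving that $\Gamma$ preserves pointwise limits. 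Finally, a pointwise pullback is just a pointwise limit over the cospan-shaped index category, so this already yields that $\Gamma$ is pointwise pullback-preserving. I expect the only delicate point to be the bookkeeping in the transpose/mate correspondence of the previous paragraph—keeping the variances of $\phi$ straight and verifying that the counit-naturality identity indeed lifts from $1$-morphisms to the $2$-cells of the cone—while the remainder is a formal consequence of limits being preserved by an isomorphism of categories.
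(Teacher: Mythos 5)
Your proof is correct, and at its core it makes the same move as the paper's: transpose the test $1$-morphism $h\colon\Y\to\Gamma[\X]$ across the adjunction to $\bar h=\epsilon_\X\o\Omega[h]$ and invoke the pointwiseness of $\lambda$ at $\bar h$. Where you differ is in the packaging. The paper proceeds in two stages: it first asserts that the local functor $\Gamma_{\X,\X'}$ carries the limit cone $\lambda_{\bar h}$ to a limit cone (on the grounds that $\Gamma$ is a right $2$-adjoint), and then verifies by hand, via the triangle identities, that precomposition with the unit $\eta$ turns $\Gamma[\lambda_{\bar h}]$ into a limit cone for $\CC(h,\Gamma[\X'])\o\Gamma[D]$, including an explicit existence-and-uniqueness check for the induced morphism of cones. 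You instead collapse both stages into the single observation that $\phi_{\Y,\X'}\o\CC(h,\Gamma[\X'])\o\Gamma_{\X,\X'}=\CC'(\bar h,\X')$ — which is exactly $2$-naturality of the counit — and then conclude at once because the hom-category isomorphism $\phi_{\Y,\X'}$ of the $2$-adjunction preserves and reflects limits. This buys a noticeably shorter argument that sidesteps the paper's intermediate claim about $\Gamma$ preserving limits in hom-categories (which, stated baldly, is the more delicate point of the paper's version, since the local functors of a right $2$-adjoint are not themselves right adjoints) as well as the subsequent cone-chasing. The one obligation your route incurs — checking that the counit identity lifts from $1$-morphisms to the $2$-cells making up the cone, i.e.\ that $\epsilon$ is $2$-natural — you correctly flag and it holds for any strict $2$-adjunction, so there is no gap.
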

\begin{proof}
Consider a diagram $D\colon I\to\CC'(\X,\Y)$ and let $\lambda^c\colon L\to D(c)$ be a pointwise limit cone of $D$. The limit cone being pointwise means that for each morphism $G\colon\X'\to\X$, the cone
\begin{align*}
&\lambda^c_G\colon L\o G\to D(c)\o G
\end{align*}
is a limit cone for $\CC'(G,\Y)\o D\colon I\to\CC'(\X',\Y)$. Showing that $\Gamma$ preserves pointwise limits amounts to prove that for each $G\colon\X'\to\Gamma[\X]$ in $\CC$, the cone
\begin{align*}
&\Gamma[\lambda^c]_G\colon\Gamma[L]\o G\to\Gamma[D(c)]\o G
\end{align*}
is a limit cone for:
\begin{align*}
&I\xrightarrow{\Gamma[D]}\CC(\Gamma[\X],\Gamma[\Y])\xrightarrow{\CC(G,\Y)}\CC(\X',\Gamma[\Y])
\end{align*}
Let $\eta\colon\id_\CC\to\Gamma\o\Omega$ and $\epsilon\colon\Omega\o\Gamma\to\id_{\CC'}$ denote the unit and the counit of the $2$-adjunction, respectively. We can start by invoking the $2$-adjunction and define $G^\f$ as follows:
\begin{align*}
&G^\f\colon\Omega[\X']\xrightarrow{\Omega[G]}\Omega\Gamma[\X]\xrightarrow{\epsilon}\X
\end{align*}
Since the limit cone $\lambda^c$ is pointwise
\begin{align*}
&\lambda^c_{G^\f}\colon L\o G^\f\to D(c)\o G^\f
\end{align*}
is a limit cone of:
\begin{align*}
&D\o G^\f\colon I\xrightarrow{D}\CC'(\X,\Y)\xrightarrow{\CC'(G^\f,\Y)}\CC'(\Omega[\X],\Y)
\end{align*}
Furthermore, since $\Gamma$ is a right $2$-adjoint it preserves limits, therefore
\begin{align*}
&\Gamma[\lambda^c_{G^\f}]\colon\Gamma[L\o G^\f]\to\Gamma[D(c)\o G^\f]
\end{align*}
is a limit cone of $\Gamma[D\o G^\f]$. By employing the $2$-functoriality of $\Gamma$ and the triangle identities, one can show that
\begin{align*}
&I\xrightarrow{\Gamma[D\o G^\f]}\CC(\Gamma\Omega[\X],\Gamma[\Y])\xrightarrow{\CC(\eta,\Gamma[\Y])}\CC(\X,\Gamma[\Y])
\end{align*}
is exactly $\CC(G,\Y)\o\Gamma[D]$. So, we need to show that
\begin{align*}
&\Gamma[\lambda^c_{G^\f}]_\eta\colon\Gamma[L\o G^\f]\o\eta\to\Gamma[D(c)\o G^\f]\o\eta
\end{align*}
is a limit cone for $\CC(G,\Y)\o\Gamma[D]$. Consider a cone $\gamma^c\colon H\to\Gamma[D(c)\o G^\f]\o\eta$ in $\CC(\X',\Gamma[\Y])$. Using triangle identities, one can see that:
\begin{align*}
&\Gamma[\epsilon]\o\Gamma\Omega\left[\Gamma[D(c)\o G^\f]\o\eta\right]=\Gamma[D(c)\o G^\f]
\end{align*}
Therefore, by the universal property of $\Gamma[\lambda^c_{G^\f}]$, we obtain a unique morphism of cones $\varphi$ satisfying:
\begin{equation*}
\begin{tikzcd}
{\Gamma[L\o G^\b]} & {\Gamma[D(c)\o G^\f]} \\
{\Gamma[\epsilon]\o\Gamma\Omega[H]}
\arrow["{\Gamma[\lambda^c_{G^\f}]}", from=1-1, to=1-2]
\arrow["\varphi", dashed, from=2-1, to=1-1]
\arrow["{\Gamma[\epsilon](\Gamma\Omega[\gamma^c])}"', from=2-1, to=1-2]
\end{tikzcd}
\end{equation*}
However, thanks to the triangle identities, the functor $\CC(\eta,\Gamma[\Y])$ sends this diagram to:
\begin{equation*}
\begin{tikzcd}
{\Gamma[L\o G^\f]\o\eta} & {\Gamma[D(c)\o G^\f]\o\eta} \\
H
\arrow["{\Gamma[\lambda^c_{G^\f}]}", from=1-1, to=1-2]
\arrow["{\varphi_\eta}", dashed, from=2-1, to=1-1]
\arrow["{\Gamma[\epsilon](\Gamma\Omega[\gamma^c])}"', from=2-1, to=1-2]
\end{tikzcd}
\end{equation*}
To prove uniqueness of $\varphi_\eta$, one can consider another morphism $\psi\colon H\to\Gamma[L\o G^\f]\o\eta$ of cones, use the counit to define a morphism $\Gamma[\epsilon](\Gamma\Omega[\psi])\colon\Gamma[\epsilon]\o\Gamma\Omega[H]\to\Gamma[L\o G^\f]$ of cones, which by the universality much coincide with $\varphi$. Using the triangle identites, one can show that $\psi=\Gamma[\epsilon](\Gamma\Omega[\psi])_\eta=\varphi_\eta$.
\end{proof}

\begin{lemma}
\label{lemma:functoriality-TNG}
A pointwise pullback-preserving $2$-functor $\Gamma\colon\CC\to\CC'$ sends every tangentad $(\X,\TT)$ of $\CC$ to a tangentad $(\Gamma[\X],\Gamma[\TT])$ of $\CC'$ whose tangent structure is obtained by applying $\Gamma$ to the tangent $1$-morphism $\T$ and to the structural $2$-morphisms of $\TT$.\newline
Furthermore, $\Gamma$ sends a (co)lax (strong, strict) tangent $1$-morphism $(F,\alpha)$ to a (co)lax (strong, strict) tangent $1$-morphism $(\Gamma[F],\Gamma[\alpha])$. Finally, a tangent $2$-morphism $\varphi\colon(F,\alpha)\Rightarrow(G,\beta)$ of (co)lax tangent $1$-morphisms defines a tangent $2$-morphism $\Gamma[\varphi]$.
\end{lemma}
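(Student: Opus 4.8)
The plan is to observe that a strict $2$-functor $\Gamma\colon\CC\to\CC'$ restricts, for each object $\X$, to a functor between endomorphism categories
\[
\bar\Gamma_\X\colon\End_\CC(\X)\to\End_{\CC'}(\Gamma[\X]),\qquad F\mapsto\Gamma[F],\quad\phi\mapsto\Gamma[\phi],
\]
and that this functor is \emph{strict monoidal} for the composition monoidal structure. Indeed, $2$-functoriality gives $\Gamma[G\o F]=\Gamma[G]\o\Gamma[F]$ and $\Gamma[\id_\X]=\id_{\Gamma[\X]}$ on $1$-morphisms, together with preservation of horizontal composition on $2$-morphisms. Since a strong monoidal functor postcomposed with a strict monoidal one is again strong monoidal, the composite $\bar\Gamma_\X\o\Leung[\TT]$ is a strong monoidal functor $\Weil\to\End_{\CC'}(\Gamma[\X])$, which I take as the candidate Leung functor $\Leung[\Gamma[\TT]]$. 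By construction its values on the generators are exactly $\Gamma[\T]$, $\Gamma[p]$, $\Gamma[z]$, $\Gamma[s]$, $\Gamma[l]$, $\Gamma[c]$ (and $\Gamma[n]$ when $(\X,\TT)$ has negatives), which is the claimed structural data.

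The only remaining condition for $(\Gamma[\X],\Gamma[\TT])$ to be a tangentad is that $\Leung[\Gamma[\TT]]$ preserves the fundamental pointwise tangent limits, and this is the crux. By definition of a tangentad, $\Leung[\TT]$ sends the defining diagrams (the $n$-fold pullback of $p$ along itself and the local-linearity pullback) to pointwise tangent limits in $\End_\CC(\X)$; recall such a limit is a pointwise limit whose universality is preserved by every iterate of the tangent bundle. Since $\Gamma$ is pointwise pullback-preserving, $\bar\Gamma_\X$ sends these pointwise pullbacks to pointwise pullbacks in $\End_{\CC'}(\Gamma[\X])$. The delicate point is the tangency clause: for such a diagram $D$, the diagram $\T^n\o D$ is again a pointwise pullback, and $2$-functoriality gives $\Gamma[\T^n\o D]=\Gamma[\T]^n\o\Gamma[D]$, so each iterate $\Gamma[\T]^n$ of the new tangent bundle preserves the image pullback, making it a genuine pointwise tangent limit. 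The remaining coherence conditions—coassociativity of $l$, the symmetric-braiding axioms for $c$, and the additive-bundle axioms for $\p$ together with the compatibilities making $(z,l)$ and $(\id_\X,c)$ additive bundle morphisms—are equations between pasted $2$-morphisms, hence automatically preserved by the functoriality of $\Gamma$; and the gap maps involved, such as $s\colon\T_2\Rightarrow\T$ and the symmetry $\tau$, are transported correctly because $\Gamma$ preserves the pullback defining $\T_2$, giving $\Gamma[\T_2]=(\Gamma[\T])_2$.

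For the morphism part I would argue entirely by functoriality. Given a lax tangent $1$-morphism $(F,\alpha)$, set $\Gamma[\alpha]\colon\Gamma[F]\o\Gamma[\T]\Rightarrow\Gamma[\T']\o\Gamma[F]$ using $\Gamma[F\o\T]=\Gamma[F]\o\Gamma[\T]$ and $\Gamma[\T'\o F]=\Gamma[\T']\o\Gamma[F]$. The additivity, vertical-lift, and canonical-flip compatibilities are each equalities of $2$-morphisms, so $\Gamma$ sends them to the corresponding equalities for $(\Gamma[F],\Gamma[\alpha])$; the only point requiring the pullback hypothesis is the additivity square involving the gap map $g\times_f g$, which $\Gamma$ sends to the corresponding gap map because it preserves the defining pullbacks. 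The colax case is dual. For strong morphisms one uses that a functor preserves invertibility of $2$-morphisms, so $\Gamma[\alpha]$ is invertible; for strict ones $\Gamma[\id]=\id$. Finally, a tangent $2$-morphism $\varphi$ is characterised by a single commuting square of $2$-morphisms, preserved by $\Gamma$, so $\Gamma[\varphi]$ is a tangent $2$-morphism.

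I expect the main obstacle to be precisely the tangency (iterate-preservation) clause in the second paragraph: upgrading ``$\Gamma$ preserves the pointwise pullback'' to ``the image is a pointwise \emph{tangent} limit'' by combining pointwise-pullback-preservation with the identity $\Gamma[\T^n\o D]=\Gamma[\T]^n\o\Gamma[D]$. Everything else reduces to the mechanical fact that a strict $2$-functor transports equations between composites of $2$-morphisms and preserves the gap maps of the pullbacks it already preserves.
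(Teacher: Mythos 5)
Your proposal is correct and follows essentially the same route as the paper: both pass through Leung's classification, postcompose $\Leung[\TT]$ with the strong monoidal functor $\End_\CC(\X)\to\End_{\CC'}(\Gamma[\X])$ induced by $\Gamma$, use pointwise-pullback-preservation to see the composite is again a Leung functor, and dispose of the morphism and $2$-morphism clauses by noting the compatibilities are equational. Your explicit treatment of the tangency clause via $\Gamma[\T^n\o D]=\Gamma[\T]^n\o\Gamma[D]$ fills in a step the paper leaves implicit, but it is the same argument.
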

\begin{proof}
Let us consider a tangentad $(\X,\TT)$ of $\CC$, which corresponds to an object $\X$ of $\CC$ together with a Leung functor $\Leung[\TT]\colon\Weil\to\End_\CC(\X)$. A $2$-functor $\Gamma\colon\CC\to\CC'$ induces a strong monoidal functor $\End_\CC(\X)\to\End_{\CC'}(\Gamma[\X])$ for every object $\X$ of $\CC$. Moreover, since $\Gamma$ is pointwise pullback-preserving, the composition
\begin{align*}
&\Weil\xrightarrow{\Leung[\TT]}\End_\CC(\X)\to\End_{\CC'}(\Gamma[\X])
\end{align*}
is a strong monoidal functor which preserves the fundamental tangent limits, namely, a Leung functor. By unwrapping this construction, one finds out that this tangentad is precisely $(\Gamma[\X],\Gamma[\TT])$.
\par It is also not hard to see that $\Tng[\Gamma]$ sends (co)lax (strong/strict) tangent morphisms to (co)lax (strong/strict) tangent morphisms, since the compatibilities between the distributive laws and the tangent structures are equational.
\par Now, consider a $2$-morphism $\varphi\colon\Gamma\Rightarrow\Gamma'$ and let $\Tng[\varphi]_{(\X,\TT)}$ to be just $\varphi_\X\colon(\Gamma[\X],\Gamma[\TT])\to(\Gamma'[\X],\Gamma'[\TT])$. Thanks to the $2$-naturality, $\varphi_\X$ is a strict tangent morphism between $(\Gamma[\X],\Gamma[\TT])$ and $(\Gamma'[\X],\Gamma'[\TT])$.
\end{proof}

Lemma~\ref{lemma:pullback-preserving} induces a $2$-functor
\begin{align*}
&\Tng\colon\TwoCat_\pp\to\TwoCat
\end{align*}
where $\TwoCat$ denotes the $2$-category of $2$-categories with all $2$-functors. Adopting a similar approach, one obtains three other $2$-functors
\begin{align*}
&\Tng_\co\colon\TwoCat_\pp\to\TwoCat\\
&\Tng_\cong\colon\TwoCat_\pp\to\TwoCat\\
&\Tng_=\colon\TwoCat_\pp\to\TwoCat
\end{align*}
which send a $2$-category $\CC$ to the $2$-categories $\Tng_\co(\CC)$, $\Tng_\cong(\CC)$, and $\Tng_=(\CC)$, respectively.

\subsection{The comonad structure of tangentads}
\label{subsection:comonad-TNG}
Lemma~\ref{lemma:functoriality-TNG} establishes that the operation which sends a $2$-category $\CC$ to the $2$-category $\Tng_\co(\CC)$ of tangentads extends to a functor. In this section, we show that not only this functor restricts to an endofunctor on the $2$-category $\TwoCat_\pp$ of $2$-categories and pointwise pullback-preserving $2$-functors, but it also comes equipped with a comonad structure.

\par For starters, let us prove a technical but useful lemma.

\begin{lemma}
\label{lemma:pullback-preserving}
A $2$-functor $\Gamma\colon\CC\to\Tng_\co(\CC')$ is pointwise pullback-preserving if and only if so is the composite:
\begin{align*}
&\CC\xrightarrow{\Gamma}\Tng_\co(\CC')\xrightarrow{\U_\co}\CC'
\end{align*}
\end{lemma}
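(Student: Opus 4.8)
The plan is to reduce both implications to two properties of the forgetful $2$-functor $\U_\co\colon\Tng_\co(\CC')\to\CC'$ that are already available: it \emph{preserves} pointwise limits and it \emph{reflects} pointwise limits. Preservation follows from Proposition~\ref{proposition:co-Inc-U-adjunction}, which exhibits $\U_\co$ as a right $2$-adjoint (to $\Inc_\co$), together with Lemma~\ref{lemma:right-adjoints-preserve-pointwise-limits}, by which any $2$-functor admitting a left $2$-adjoint preserves pointwise limits, in particular pointwise pullbacks. Reflection is precisely the content of Lemma~\ref{lemma:U-reflects-pointwise-limits}. With these two ingredients in hand, the biconditional becomes a short formal argument.

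For the forward direction, suppose $\Gamma$ is pointwise pullback-preserving. Given a pointwise pullback $P$ in a hom category $\End_\CC(\X)$, its image $\Gamma[P]$ is a pointwise pullback in $\End_{\Tng_\co(\CC')}(\Gamma[\X])$ by hypothesis. Applying $\U_\co$, which preserves pointwise pullbacks, yields a pointwise pullback in $\CC'$. Hence $\U_\co\o\Gamma$ carries $P$ to a pointwise pullback, so it is pointwise pullback-preserving. This is just the observation that a composite of two pointwise-pullback-preserving $2$-functors is again pointwise pullback-preserving.

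For the converse, suppose $\U_\co\o\Gamma$ is pointwise pullback-preserving, and let $P$ be a pointwise pullback in $\End_\CC(\X)$; we must show $\Gamma[P]$ is a pointwise pullback in $\End_{\Tng_\co(\CC')}(\Gamma[\X])$. By assumption $\U_\co[\Gamma[P]]=(\U_\co\o\Gamma)[P]$ is a pointwise limit cone in $\CC'$. Since $\U_\co$ reflects pointwise limits by Lemma~\ref{lemma:U-reflects-pointwise-limits}, applied with both endpoints equal to the tangentad $\Gamma[\X]$, the cone $\Gamma[P]$ is itself a pointwise pullback. As $P$ was arbitrary, $\Gamma$ is pointwise pullback-preserving.

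The only genuinely non-trivial input is the reflection property, but that has already been established in Lemma~\ref{lemma:U-reflects-pointwise-limits}; once it is available, the remaining steps are purely formal. The main point to keep track of is that the colax setting is essential here: the reflection argument relies on the direction of the colax distributive laws, as highlighted in Remark~\ref{remark:reflection-pointwise-limits}, which is why the statement is phrased for $\Tng_\co$ and $\U_\co$ rather than for the lax variant.
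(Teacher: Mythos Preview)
Your proof is correct and follows essentially the same approach as the paper: both directions rely on $\U_\co$ preserving pointwise pullbacks (via Proposition~\ref{proposition:co-Inc-U-adjunction} and Lemma~\ref{lemma:right-adjoints-preserve-pointwise-limits}) for the forward implication, and on $\U_\co$ reflecting pointwise limits (Lemma~\ref{lemma:U-reflects-pointwise-limits}) for the converse. Your added remark on the role of the colax setting via Remark~\ref{remark:reflection-pointwise-limits} is a helpful clarification not made explicit in the paper's proof.
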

\begin{proof}
First, let us suppose that $\Gamma$ is pointwise pullback-preserving. By Proposition~\ref{proposition:co-Inc-U-adjunction}, $\U_\co$ admits a left $2$-adjoint $\Inc_\co$. Thus, by Lemma~\ref{lemma:right-adjoints-preserve-pointwise-limits}, $\U_\co$ preserves pointwise limits, so in particular is pointwise pullback-preserving. Since pointwise pullback-preserving $2$-functors are closed under composition, $\U_\co\o\Gamma$ is pointwise pullback-preserving.
\par Conversely, suppose that the composition $\U_\co\o\Gamma$ is pointwise pullback-preserving. Consider a pullback diagram $D\colon I\to\End_\CC(\X)$ which admits a pointwise limit cone $\lambda^c\colon L\to D(c)$. Its image along $\U_\co\o\Gamma$, namely, the diagram
\begin{align*}
\U_\co\Gamma[D]&\colon I\to\End_{\CC'}(\U_\co\Gamma(\X))
\end{align*}
admits a pointwise limit cone
\begin{align*}
\U_\co\Gamma[\lambda^c]&\colon\U_\co\Gamma[L]\to\U_\co\Gamma[D(c)]
\end{align*}
since $\U_\co\o\Gamma$ preserves pointwise pullbacks. However, by Lemma~\ref{lemma:U-reflects-pointwise-limits}, $\U_\co$ reflects pointwise limits, thus the cone $\Gamma[\lambda^c]\colon\Gamma[L]\to\Gamma[D(c)]$ must be a pointwise limit cone in $\Tng_\co(\CC')$, proving that $\Gamma$ is pointwise pullback-preserving.
\end{proof}

Thanks to Lemma~\ref{lemma:pullback-preserving}, we can show that $\Tng_\co(\Gamma)$ is pointwise pullback-preserving provided that $\Gamma$ is.

\begin{lemma}
\label{lemma:Tng_co-endofunctor}
If $\Gamma\colon\CC\to\CC'$ is a pointwise pullback-preserving $2$-functor, the $2$-functor $\Tng_\co(\Gamma)\colon\Tng_\co(\CC)\to\Tng(\CC')$ is also pointwise pullback-preserving.
\end{lemma}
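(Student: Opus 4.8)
The plan is to reduce the statement to results already established, via the characterisation of pointwise pullback-preservation for $2$-functors landing in $\Tng_\co(\CC')$. Since $\Tng_\co(\Gamma)$ is a $2$-functor from $\Tng_\co(\CC)$ into $\Tng_\co(\CC')$, Lemma~\ref{lemma:pullback-preserving} applies directly, with $\Tng_\co(\CC)$ playing the role of the generic source $2$-category: the functor $\Tng_\co(\Gamma)$ is pointwise pullback-preserving if and only if the composite $\U_\co\o\Tng_\co(\Gamma)\colon\Tng_\co(\CC)\to\CC'$ is. So it suffices to prove that this composite preserves pointwise pullbacks.

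The key observation is that the two forgetful $2$-functors assemble into a strictly commuting square, namely $\U_\co\o\Tng_\co(\Gamma)=\Gamma\o\U_\co$ as $2$-functors $\Tng_\co(\CC)\to\CC'$. This is immediate from the construction in Lemma~\ref{lemma:functoriality-TNG}: $\Tng_\co(\Gamma)$ sends a tangentad $(\X,\TT)$ to $(\Gamma[\X],\Gamma[\TT])$, whose underlying object is $\Gamma[\X]=\Gamma[\U_\co(\X,\TT)]$, and on $1$- and $2$-morphisms it likewise applies $\Gamma$ to the underlying data; forgetting the tangent structure afterwards therefore agrees with forgetting first and then applying $\Gamma$.

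Granting this identity, I would finish by a composition argument. The $2$-functor $\U_\co$ admits a left $2$-adjoint $\Inc_\co$ by Proposition~\ref{proposition:co-Inc-U-adjunction}, hence by Lemma~\ref{lemma:right-adjoints-preserve-pointwise-limits} it preserves pointwise limits and is in particular pointwise pullback-preserving; $\Gamma$ is pointwise pullback-preserving by hypothesis; and pointwise pullback-preserving $2$-functors are closed under composition. Thus $\Gamma\o\U_\co=\U_\co\o\Tng_\co(\Gamma)$ is pointwise pullback-preserving, and Lemma~\ref{lemma:pullback-preserving} then yields the claim.

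The argument is essentially formal, so I expect no substantial obstacle; the only point requiring care is the verification of the commuting square $\U_\co\o\Tng_\co(\Gamma)=\Gamma\o\U_\co$, which one should check holds \emph{strictly} rather than merely up to isomorphism, by inspecting the definition of $\Tng_\co(\Gamma)$ on objects, $1$-morphisms, and $2$-morphisms. Everything else is an application of the adjunction from Proposition~\ref{proposition:co-Inc-U-adjunction} together with the preservation result of Lemma~\ref{lemma:right-adjoints-preserve-pointwise-limits}.
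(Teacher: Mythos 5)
Your proof is correct and follows essentially the same route as the paper's: both reduce the claim, via Lemma~\ref{lemma:pullback-preserving}, to showing that $\U_\co\o\Tng_\co(\Gamma)$ preserves pointwise pullbacks, and both ultimately rest on the hypothesis that $\Gamma$ preserves them on the underlying data. The only difference is presentational — you package the middle step as the strict identity $\U_\co\o\Tng_\co(\Gamma)=\Gamma\o\U_\co$ together with Proposition~\ref{proposition:co-Inc-U-adjunction} and Lemma~\ref{lemma:right-adjoints-preserve-pointwise-limits}, whereas the paper verifies the universal property of the image square by a direct cone-chasing argument; your factorization is a clean and valid shortcut.
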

\begin{proof}
Let us consider a tangentad $(\X,\TT)$ of $\CC$, four lax tangent morphisms
\begin{align*}
(A,\alpha),(B,\beta),(C,\gamma),(D,\delta)\colon(\X,\TT)\to(\X,\TT)
\end{align*}
and four tangent $2$-morphisms
\begin{align*}
f&\colon(B,\beta)\to(A,\alpha)          &g\colon&(C,\gamma)\to(A,\alpha)\\
\pi_1&\colon(D,\delta)\to(B,\beta)      &\pi_2&\colon(D,\delta)\to(C,\gamma)
\end{align*}
making the following commutative diagram
\begin{equation*}
\begin{tikzcd}
{(D,\delta)} & {(C,\gamma)} \\
{(B,\beta)} & {(A,\alpha)}
\arrow["{\pi_2}", from=1-1, to=1-2]
\arrow["{\pi_1}"', from=1-1, to=2-1]
\arrow["\lrcorner"{anchor=center, pos=0.125}, draw=none, from=1-1, to=2-2]
\arrow["g", from=1-2, to=2-2]
\arrow["f"', from=2-1, to=2-2]
\end{tikzcd}
\end{equation*}
a pointwise pullback diagram of $\End_{\Tng(\CC)}(\X,\TT)$. We need to prove that the $2$-functor $\Tng(\Gamma)$ preserves such a pullback. In order to simplify notation, we decorate each term by $'$ to indicate the application of $\Gamma$ on it, e.g., $A'$ indicates $\Gamma(A)$. With this notation, we need to prove that the following diagram is a pointwise pullback diagram of $\End_{\Tng(\CC')}(\X',\TT')$, where $(\X',\TT')=\Tng(\Gamma)(\X,\TT)$:
\begin{equation*}
\begin{tikzcd}
{(D',\delta')} & {(C',\gamma')} \\
{(B',\beta')} & {(A',\alpha')}
\arrow["{\pi_2'}", from=1-1, to=1-2]
\arrow["{\pi_1'}"', from=1-1, to=2-1]
\arrow["\lrcorner"{anchor=center, pos=0.125}, draw=none, from=1-1, to=2-2]
\arrow["{g'}", from=1-2, to=2-2]
\arrow["{f'}"', from=2-1, to=2-2]
\end{tikzcd}
\end{equation*}
Consider two tangent $2$-morphisms $b\colon(K,\kappa)\to(B',\beta')$ and $c\colon(K,\kappa)\to(C',\gamma')$ for which $f'\o b=g'\o c$. Since $\Gamma$ preserves pointwise pullbacks, by forgetting the tangent structure we obtain a $2$-morphism $d\colon K\to D$ of $\CC$ which makes the following diagram commutes:
\begin{equation*}
\begin{tikzcd}
K \\
& {D'} & {C'} \\
& {B'} & {A'}
\arrow["d"{description}, dashed, from=1-1, to=2-2]
\arrow["c", bend left, from=1-1, to=2-3]
\arrow["b"', bend right, from=1-1, to=3-2]
\arrow["{\pi_2'}", from=2-2, to=2-3]
\arrow["{\pi_1'}"', from=2-2, to=3-2]
\arrow["\lrcorner"{anchor=center, pos=0.125}, draw=none, from=2-2, to=3-3]
\arrow["{g'}", from=2-3, to=3-3]
\arrow["{f'}"', from=3-2, to=3-3]
\end{tikzcd}
\end{equation*}
Therefore, the $2$-functor $\U_\co\o\Tng_\co(\Gamma)$ preserves pointwise pullbacks. By Lemma~\ref{lemma:pullback-preserving}, we conclude that also $\Tng_\co(\Gamma)$ must preserve pointwise pullbacks.
\end{proof}

\begin{proposition}
\label{proposition:functoriality-TNG}
The $2$-functor $\Tng_\co\colon\TwoCat_\pp\to\TwoCat$ splits along the inclusion
\begin{align*}
&\TwoCat_\pp\to\TwoCat
\end{align*}
by defining a $2$-endofunctor $\Tng_\co\colon\TwoCat_\pp\to\TwoCat_\pp$.
\end{proposition}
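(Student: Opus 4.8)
The plan is to recognize that this proposition merely packages Lemma~\ref{lemma:Tng_co-endofunctor} together with the functoriality of $\Tng_\co\colon\TwoCat_\pp\to\TwoCat$ already established after Lemma~\ref{lemma:functoriality-TNG}. The key structural observation is that the inclusion $\TwoCat_\pp\to\TwoCat$ is the identity on objects (both have $2$-categories as objects) and on $2$-cells (both have $2$-natural transformations as $2$-cells), and is only a locally full inclusion on $1$-cells, singling out the pointwise pullback-preserving $2$-functors among all $2$-functors. Hence, to exhibit the splitting $\Tng_\co=\iota\o\overline\Tng_\co$ for the inclusion $\iota\colon\TwoCat_\pp\to\TwoCat$ and a restricted endofunctor $\overline\Tng_\co\colon\TwoCat_\pp\to\TwoCat_\pp$, the only thing I need to check is that $\Tng_\co$ sends each $1$-cell of $\TwoCat_\pp$ to a $1$-cell of $\TwoCat_\pp$, i.e.\ that it sends pointwise pullback-preserving $2$-functors to pointwise pullback-preserving $2$-functors.

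First I would invoke Lemma~\ref{lemma:Tng_co-endofunctor}: for a pointwise pullback-preserving $2$-functor $\Gamma\colon\CC\to\CC'$, the induced $\Tng_\co(\Gamma)\colon\Tng_\co(\CC)\to\Tng_\co(\CC')$ is again pointwise pullback-preserving, so it is a legitimate $1$-cell of $\TwoCat_\pp$; this defines the action of $\overline\Tng_\co$ on $1$-cells. On objects the assignment $\CC\mapsto\Tng_\co(\CC)$ is literally unchanged, since the objects of $\TwoCat_\pp$ and of $\TwoCat$ coincide. On $2$-cells, since the existing $2$-functor $\Tng_\co\colon\TwoCat_\pp\to\TwoCat$ already sends each $2$-natural transformation to a $2$-natural transformation, and these are precisely the $2$-cells of $\TwoCat_\pp$, the same assignment restricts without modification. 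Thus $\overline\Tng_\co$ is well-defined on objects, $1$-cells, and $2$-cells.

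Finally I would note that the functoriality axioms for $\overline\Tng_\co$ — preservation of identities and of vertical, horizontal, and whiskering compositions — hold automatically, being inherited verbatim from $\Tng_\co\colon\TwoCat_\pp\to\TwoCat$, because $\iota$ is faithful on all cells and the defining equations are reflected along it. The identity $\iota\o\overline\Tng_\co=\Tng_\co$ is then immediate once the $1$-cell image condition is checked. Consequently, the only genuine mathematical content of this proposition is Lemma~\ref{lemma:Tng_co-endofunctor}, which in turn reduces—via Lemma~\ref{lemma:pullback-preserving}—to the fact that $\U_\co\o\Tng_\co(\Gamma)$ preserves pointwise pullbacks. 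I therefore expect no further obstacle here: the substantive step was carried out in that lemma, and this proposition is essentially a corollary assembling those pieces into the desired factorization.
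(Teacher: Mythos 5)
Your proposal is correct and matches the paper's treatment: the paper states this proposition with no separate proof, treating it as an immediate consequence of Lemma~\ref{lemma:Tng_co-endofunctor}, exactly as you argue. Your additional remarks on the inclusion being the identity on objects and $2$-cells and on functoriality being inherited are the right (routine) justification for why no further argument is needed.
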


\begin{remark}
\label{remark:functoriality-TNG}
Since a strong tangent morphism is also a colax tangent morphism, the splitting of Proposition~\ref{proposition:functoriality-TNG} restricts to strong and strict tangent morphisms. In particular, this yields two $2$-endofunctors $\Tng_\cong$ and $\Tng_=$ of $\TwoCat_\pp$.
\end{remark}

In order to equip $\Tng_\co$ with a comonad structure, we need two ingredients: a counit $\epsilon\colon\Tng_\co(\CC)\to\CC$, and a comultiplication $\Delta\colon\Tng_\co(\CC)\to\Tng_\co(\Tng_\co(\CC))$. To define the counit, we can use the forgetful $2$-functor $\U_\co\colon\Tng_\co(\CC)\to\CC$ which forgets the tangent structure. To define comultiplication, we first need the following lemma.

\begin{lemma}
\label{lemma:comultiplication-TNG}
A tangentad $(\X,\TT)$ of $\CC$ is a tangentad in the $2$-category $\Tng_\co(\CC)$ of tangentads of $\CC$. In particular, the tangent bundle $1$-morphism of $(\X,\TT)$ in $\Tng_\co(\CC)$ is $(\T,c)\colon(\X,\TT)\to(\X,\TT)$, and the structural $2$-morphisms are the same as the ones of $\TT$.
\end{lemma}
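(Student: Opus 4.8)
The plan is to display the claimed data and then verify the axioms of Definition~\ref{definition:tangent-object} inside the $2$-category $\Tng_\co(\CC)$ by transporting everything, through the forgetful $2$-functor $\U_\co\colon\Tng_\co(\CC)\to\CC$, down to the tangentad $(\X,\TT)$ in $\CC$ where the axioms are already known to hold. First I would take as tangent bundle $1$-morphism the pair $(\T,c)\colon(\X,\TT)\to(\X,\TT)$, which by Lemma~\ref{lemma:T-monad} is a strong, hence (Remark~\ref{remark:strong-colax-tangent-morphisms}) colax, tangent endomorphism of $(\X,\TT)$, so a genuine $1$-endomorphism of the object $(\X,\TT)$ inside $\Tng_\co(\CC)$. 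As candidate structural $2$-cells I take the very $2$-morphisms $p,z,s,l,c$ of $\TT$ (together with $n$ when $\TT$ has negatives), now read as $2$-cells of $\Tng_\co(\CC)$.

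The crucial observation is that these candidate $2$-cells are automatically tangent $2$-morphisms in the sense of Definition~\ref{definition:tangent-2-morphisms}: the compatibility square required of a colax tangent $2$-morphism is, for $p$, $z$ and $s$, exactly the projection-, zero- and sum-condition of the additive bundle morphism $(\id_\X,c)\colon\T\p\to\p_\T$, and, for $l$ and $c$, exactly the compatibility of the distributive law $c$ with the vertical lift and with the canonical flip. All of these are already part of the assertion, proved in Lemma~\ref{lemma:T-monad}, that $(\T,c)$ is a strong tangent morphism; concretely they are the three conditions of Definition~\ref{definition:tangent-morphisms} applied to $(\T,c)$, once the source and target colax endomorphisms (the identity $(\id_\X,\id)$, the pullbacks $(\T,c)_2$, and the composite $(\T,c)^2$) have been assigned their induced distributive laws.

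Those induced distributive laws and the pullbacks themselves are supplied by the limit behaviour of $\U_\co$. Since $\U_\co$ has both adjoints (Proposition~\ref{proposition:co-Inc-U-adjunction}), Lemma~\ref{lemma:right-adjoints-preserve-pointwise-limits} shows it preserves pointwise limits, while Lemma~\ref{lemma:U-reflects-pointwise-limits} shows it reflects them. Hence the pointwise $n$-fold pullbacks of $p$ along itself, and the local-linearity pullback, which exist in $\End_\CC(\X)$ and are preserved by all iterates of $\T$, lift to pointwise limit cones in $\End_{\Tng_\co(\CC)}(\X,\TT)$ lying over them; in particular $(\T,c)_n$ has underlying $1$-morphism $\T_n$ and carries the colax law induced from $c$ by the universal property, and stability of these pullbacks under the iterates $(\T,c)^n$ is inherited from their stability under $\T^n$.

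Finally, the remaining tangentad axioms — that $(z,l)$ and $(\id_\X,c)$ are additive bundle morphisms in $\End_{\Tng_\co(\CC)}(\X,\TT)$, the coassociativity of $l$, the braiding identities for $c$, and local linearity — are equations between tangent $2$-morphisms and universal pointwise-pullback diagrams. Applying the faithful, pointwise-limit-reflecting $\U_\co$ turns each of them into the corresponding axiom for $\TT$ in $\CC$, which holds by hypothesis; faithfulness and reflection then return the axiom upstairs. The step I expect to cause the most trouble is the third one: one must check that the additive-bundle and local-linearity axioms even typecheck in $\Tng_\co(\CC)$, that is, that the formal pullbacks $(\T,c)_n$ exist and are stable under $(\T,c)$, and that the distributive law each carries is precisely the whiskering of $c$ dictated by the universal property. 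This is where the interplay of Lemmas~\ref{lemma:U-reflects-pointwise-limits} and~\ref{lemma:right-adjoints-preserve-pointwise-limits} with the tangent-limit hypotheses on $\TT$ does the real work, and where the direction reversal intrinsic to the colax convention must be tracked carefully.
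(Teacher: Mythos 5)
The paper itself does not spell out a proof of this lemma (it is explicitly left to the reader as a routine computation), so there is no argument of record to compare against; your outline is a correct and well-organised way to carry out exactly that verification. You rightly identify the two substantive points: that the compatibility squares making $p,z,s,l,c$ into $2$-cells of $\Tng_\co(\CC)$ are precisely the additive-bundle, vertical-lift and canonical-flip conditions already packaged in the statement that $(\T,c)$ is a strong (hence colax) tangent endomorphism from Lemma~\ref{lemma:T-monad}, and that the formal pullbacks $(\T,c)_n$ and the local-linearity square must first be \emph{exhibited} in $\Tng_\co(\CC)$ --- by inducing the colax law $c_n\colon\T\o\T_n\Rightarrow\T_n\o\T$ from the universal property of the pointwise pullback $\T_n\o\T$ and checking the $\pi_k$ are tangent $2$-morphisms --- before Lemma~\ref{lemma:U-reflects-pointwise-limits} can be invoked to reflect their universality, with preservation under iterates of $(\T,c)$ following by the same reflection argument from the tangent-limit hypothesis on $\T^m$. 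The one detail worth making explicit when writing this up is the distributive law on the composite $(\T,c)\o(\T,c)$ (the whiskered composite $\T c\o c\T$), since the source and target of $l$ and $c$ as $2$-cells of $\Tng_\co(\CC)$ must carry exactly that law for the compatibility squares to coincide with conditions (b) and (c) of Definition~\ref{definition:tangent-morphisms} applied to $(\T,c)$.
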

\begin{proof}
The proof is a straightforward, yet tedious, computation that we leave to the reader to spell out.
\end{proof}

Lemma~\ref{lemma:comultiplication-TNG} induces a $2$-functor:
\begin{align*}
&\Delta_\co\colon\Tng_\co(\CC)\to\Tng_\co(\Tng_\co(\CC))
\end{align*}
To understand how $\Delta_\co$ is defined, first notice that an object of $\Tng_\co^2(\CC)$ can be equivalently described as an object $\X$ of $\CC$ together with two tangent structures $\TT$ and $\TT'$ on $\X$ and a distributive law $\gamma\colon\T\o\T'\Rightarrow\T'\o\T$, compatible with the two tangent structures. Moreover, a morphism of $\Tng_\co^2(\CC)$ from $(\X_\o,\TT_\o,\TT_\o',\gamma_\o)$ to $(\X_\b,\TT_\b,\TT_\b',\gamma_\b)$ consists of a colax tangent morphism $(F,\alpha)\colon(\X_\o,\TT_\o)\nto(\X_\b,\TT_\b)$, together with a $2$-morphism $\beta\colon\T_\b'\o F\Rightarrow F\o\T_\o'$, compatible with the tangent structures. Finally, a $2$-morphism of $\Tng_\co^2(\CC)$ from $(F,\alpha,\beta)$ to $(F',\alpha',\beta')$ consists of a $2$-morphism $\varphi\colon F\Rightarrow F'$ compatible with the distributive laws $\alpha$, $\alpha'$, $\beta$, and $\beta'$.
\par Therefore, the $2$-functor $\Delta_\co$ sends a tangentad $(\X,\TT)$ to:
\begin{align*}
&\Delta_\co(\X,\TT)\=(\X;\TT,\TT,c)
\end{align*}
Moreover, it sends a colax tangent morphism $(F,\alpha)$ to
\begin{align*}
&\Delta(F,\alpha)\=(F,\alpha,\alpha)
\end{align*}
and a $2$-morphism $\varphi$ to:
\begin{align*}
&\Delta(\varphi)\=\varphi
\end{align*}

\begin{lemma}
\label{lemma:comultiplication-comonad-TNG}
The $2$-functor $\Delta_\co\colon\Tng_\co(\CC)\to\Tng_\co(\Tng_\co(\CC))$ preserves pointwise pullbacks.
\end{lemma}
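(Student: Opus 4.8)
The plan is to avoid any explicit pullback computation in the doubly-nested $2$-category $\Tng_\co(\Tng_\co(\CC))$ and instead reduce the claim to Lemma~\ref{lemma:pullback-preserving}, by observing that the composite of $\Delta_\co$ with the relevant forgetful $2$-functor collapses to the identity. Conceptually, this is the \emph{section} property of $\Delta_\co$ against the counit $\U_\co$, i.e.\ one of the counit laws of the comonad we are about to build.

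First I would apply Lemma~\ref{lemma:pullback-preserving} with the source $2$-category taken to be $\Tng_\co(\CC)$ and with the ``inner'' $2$-category $\CC'$ of that lemma \emph{also} taken to be $\Tng_\co(\CC)$, so that $\Tng_\co(\CC')=\Tng_\co(\Tng_\co(\CC))$ is exactly the codomain of $\Delta_\co$. The lemma then asserts that $\Delta_\co$ is pointwise pullback-preserving if and only if the composite
\begin{align*}
&\Tng_\co(\CC)\xrightarrow{\Delta_\co}\Tng_\co(\Tng_\co(\CC))\xrightarrow{\U_\co}\Tng_\co(\CC)
\end{align*}
is pointwise pullback-preserving, where this $\U_\co$ forgets the outer tangent structure of a tangentad in $\Tng_\co(\CC)$.

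Next I would compute this composite explicitly from the definitions of $\Delta_\co$. On objects, $\Delta_\co(\X,\TT)=(\X;\TT,\TT,c)$, whose underlying object of $\Tng_\co(\CC)$, obtained by discarding the outer tangent structure together with the distributive law $c$, is precisely the base tangentad $(\X,\TT)$; hence $\U_\co\Delta_\co(\X,\TT)=(\X,\TT)$. On $1$-morphisms, $\Delta_\co(F,\alpha)=(F,\alpha,\alpha)$, and $\U_\co$ discards the third component to return $(F,\alpha)$; on $2$-morphisms $\Delta_\co(\varphi)=\varphi$ and $\U_\co(\varphi)=\varphi$. Therefore $\U_\co\o\Delta_\co=\id_{\Tng_\co(\CC)}$.

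Finally, the identity $2$-functor trivially preserves every pointwise pullback, so the equivalence supplied by Lemma~\ref{lemma:pullback-preserving} lets us conclude that $\Delta_\co$ preserves pointwise pullbacks. I expect the only delicate point to be bookkeeping in the first two steps: one must invoke Lemma~\ref{lemma:pullback-preserving} at the correct level of nesting (with $\CC'=\Tng_\co(\CC)$, not $\CC$) and recognise that the pertinent $\U_\co$ is the one forgetting the \emph{outer} of the two coincident tangent structures, which is exactly the component of which $\Delta_\co$ is a section. No genuine pullback in $\Tng_\co(\Tng_\co(\CC))$ ever needs to be constructed by hand.
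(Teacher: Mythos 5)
Your proposal is correct and is essentially identical to the paper's own proof: the paper likewise observes that $(\U_\co)_{\Tng_\co}\o\Delta_\co$ is the identity $2$-functor and then invokes Lemma~\ref{lemma:pullback-preserving} (in its reflection direction, with $\CC'=\Tng_\co(\CC)$) to conclude. Your version merely makes explicit the object/$1$-morphism/$2$-morphism bookkeeping that the paper leaves to the reader.
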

\begin{proof}
It is easy to see that the composite $(\U_\co)_{\Tng_\co}\o\Delta_\co\colon\Tng_\co(\CC)\to\Tng_\co(\CC)$ is the identity $2$-functor, which trivially preserves pointwise pullbacks. Therefore, by Lemma~\ref{lemma:pullback-preserving}, $\Delta_\co$ preserves pointwise pullbacks.
\end{proof}

\begin{theorem}
\label{theorem:comonad-structure-TNG}
The $2$-endofunctor $\Tng_\co$ on the $2$-category $\TwoCat_\pp$ comes equipped with a $2$-comonad structure whose counit is the forgetful $2$-functor $\U_\co\colon\Tng_\co(\CC)\to\CC$ and whose comultiplication is the $2$-functor $\Delta_\co\colon\Tng_\co(\CC)\to\Tng_\co(\Tng_\co(\CC))$.
\end{theorem}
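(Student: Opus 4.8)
The plan is to verify directly that the pair $(\U_\co,\Delta_\co)$ furnishes the data of a $2$-comonad on the $2$-endofunctor $\Tng_\co$ of $\TwoCat_\pp$. Concretely, I would check four things: that $\U_\co$ and $\Delta_\co$ are $2$-natural transformations whose components again live in $\TwoCat_\pp$, and that the two counit identities and the coassociativity identity hold. That the components are pointwise pullback-preserving is immediate from earlier results: $\U_\co$ preserves pointwise limits since it is a right $2$-adjoint (Proposition~\ref{proposition:co-Inc-U-adjunction} together with Lemma~\ref{lemma:right-adjoints-preserve-pointwise-limits}), and $\Delta_\co$ preserves pointwise pullbacks by Lemma~\ref{lemma:comultiplication-comonad-TNG}; in particular $\Tng_\co(\U_\co)$ is defined, and $\Tng_\co$ restricts to an endofunctor of $\TwoCat_\pp$ by Proposition~\ref{proposition:functoriality-TNG}.

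For $2$-naturality I would fix a pointwise pullback-preserving $2$-functor $\Gamma\colon\CC\to\CC'$ and verify the two squares by inspection, using that $\Tng_\co(\Gamma)$ acts by applying $\Gamma$ to every piece of structural data (Lemma~\ref{lemma:functoriality-TNG}). On an object, $\Gamma\o\U_{\co,\CC}$ sends $(\X,\TT)$ to $\Gamma[\X]$, and so does $\U_{\co,\CC'}\o\Tng_\co(\Gamma)$, which first forms $(\Gamma[\X],\Gamma[\TT])$ and then forgets the tangent structure; the same matching holds on $1$- and $2$-morphisms. For $\Delta_\co$, both $\Tng_\co(\Tng_\co(\Gamma))\o\Delta_{\co,\CC}$ and $\Delta_{\co,\CC'}\o\Tng_\co(\Gamma)$ send $(\X,\TT)$ to $(\Gamma[\X];\Gamma[\TT],\Gamma[\TT],\Gamma[c])$, since $\Gamma$ carries the canonical flip $c$ of $(\X,\TT)$ to the canonical flip $\Gamma[c]$ of $(\Gamma[\X],\Gamma[\TT])$, and the action on morphisms agrees for the same reason.

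Next I would dispatch the counit laws. The left identity $(\U_\co)_{\Tng_\co(\CC)}\o\Delta_{\co,\CC}=\id$ is essentially the observation already exploited in Lemma~\ref{lemma:comultiplication-comonad-TNG}: forgetting the outer tangent structure of $\Delta_{\co,\CC}(\X,\TT)=(\X;\TT,\TT,c)$ returns $(\X,\TT)$. The right identity $\Tng_\co(\U_{\co,\CC})\o\Delta_{\co,\CC}=\id$ is the dual bookkeeping: applying $\U_{\co,\CC}$ inside $\Tng_\co$ forgets the base tangent structure of $(\X;\TT,\TT,c)$, sending the outer tangent $1$-morphism $(\T,c)$ to $\T$ and each outer structural tangent $2$-morphism to its underlying $2$-morphism in $\CC$, which again yields $(\X,\TT)$. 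In both cases the action on $1$- and $2$-morphisms is visibly the identity.

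The hard part will be coassociativity, $\Delta_{\co,\Tng_\co(\CC)}\o\Delta_{\co,\CC}=\Tng_\co(\Delta_{\co,\CC})\o\Delta_{\co,\CC}$, though the difficulty is bookkeeping rather than conceptual. An object of $\Tng_\co(\Tng_\co(\Tng_\co(\CC)))$ amounts to an object $\X$ of $\CC$ carrying three tangent structures together with pairwise distributive laws subject to the expected Yang--Baxter compatibility. I would compute both composites on $(\X,\TT)$ and show they produce the same datum, namely $\X$ equipped with three copies of $\TT$ and all three distributive laws equal to the canonical flip $c$. The crucial input is Lemma~\ref{lemma:comultiplication-TNG}: the canonical flip of $(\X,\TT)$ regarded as a tangentad in $\Tng_\co(\CC)$ is again $c$, so the ``nested'' flip produced by the outer $\Delta_{\co,\Tng_\co(\CC)}$ coincides with the one produced by $\Tng_\co(\Delta_{\co,\CC})$, which doubles the tangent $1$-morphism $(\T,c)$ into $(\T,c,c)$ via the rule $\Delta_\co(F,\alpha)=(F,\alpha,\alpha)$. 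The Yang--Baxter relation among the three copies of $c$ is then exactly the symmetric-braiding coherence (condition~(b)) of the canonical flip, and the agreement on $1$- and $2$-morphisms follows because $\Delta_\co$ leaves underlying morphisms untouched. Matching these two descriptions carefully, and confirming that every auxiliary distributive law reduces to $c$, is the only genuinely laborious step.
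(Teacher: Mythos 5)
Your proposal is correct and follows essentially the same route as the paper: establish that $\U_\co$ and $\Delta_\co$ are pointwise pullback-preserving via Lemma~\ref{lemma:right-adjoints-preserve-pointwise-limits} and Lemma~\ref{lemma:comultiplication-comonad-TNG}, then verify the counit laws and coassociativity by direct computation, with both triple composites landing on $(\X;\TT,\TT,\TT,c,c,c)$. Your additional remarks on $2$-naturality and on the Yang--Baxter coherence reducing to the symmetric-braiding axiom for $c$ are details the paper leaves implicit, but they do not change the argument.
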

\begin{proof}
Lemma~\ref{lemma:comultiplication-comonad-TNG} proves that $\Delta_\co$ is pointwise pullback-preserving. Furthermore, $\U_\co$ is also pointwise pullback-preserving since it is a $2$-right adjoint by Proposition~\ref{proposition:Inc-U-adjunction} and thanks to Lemma~\ref{lemma:right-adjoints-preserve-pointwise-limits}. Let us prove that $\U_\co$ and $\Delta_\co$ define a comonad structure for $\Tng_\co$. First, notice that $(\U_\co)_{\Tng_\co}$ and $\Tng_\co(\U_\co)$ project $(\X;\TT,\TT',\gamma)$ to $(\X,\TT)$ and $(\X,\TT')$, respectively. Similarly, they send a morphism $(F,\alpha;\beta)$ to $(F,\alpha)$ and $(F,\beta)$, respectively and a $2$-morphism $\varphi$ to itself. Thus, it is not hard to see that, $\Delta_\co$ composed with either $(\U_\co)_{\Tng_\co}$ or $\Tng_\co(\U_\co)$ is just the identity. Let us prove that $\Delta_\co$ is coassociative. Consider a tangentad $(\X,\TT)$. Thus:
\begin{align*}
&\Tng_\co[\Delta_\co](\Delta_\co(\X,\TT))=\Tng_\co[\Delta_\co](\X;\TT,\TT,c)=(\Delta_\co(\X,\TT);\Delta_\co(\TT,c))\\
&\quad=(\X;\TT,\TT,c;\TT,c,c)=(\X;\TT,\TT,\TT,c,c,c)
\end{align*}
where we denote the generic object of $\Tng_\co^3(\CC)$ as $(\X;\TT,\TT',\TT'',\gamma,\gamma',\gamma'')$. Conversely:
\begin{align*}
&(\Delta_\co)_{\Tng_\co}(\Delta_\co(\X,\TT))=(\Delta_\co)_{\Tng_\co}(\X;\TT,\TT,c)=(\X;\TT,\TT,\TT,c,c,c)
\end{align*}
One can prove a similar result for colax tangent morphisms and $2$-morphisms, concluding that $\Delta_\co$ is coassociative.
\end{proof}

\begin{remark}
\label{remark:TNG-not-monad}
In~\cite{street:formal-theory-monads}, Street showed that the $2$-functor $\Mnd$ which sends a $2$-category $\CC$ to the associated $2$-category of monads of $\CC$ has a $2$-monad structure, whose unit sends an object $\X$ of $\CC$ to the trivial monad $(\X,\1)$ on $\X$ and whose multiplication takes a distributive law $(\X;S,S',\gamma)$ between two monads (which is equivalent to a monad in $\Mnd(\CC)$) and composes the two monads together by the distributive law $S'\o_\gamma S$. $\Tng_\co$, and similarly $\Tng_\cong$ and $\Tng_=$, has a unit, corresponding to the $2$-functor $\Inc$. Moreover, the objects of $\Tng_\co^2(\CC)$ look similar to distributive laws: they are pairs of tangent structures together with a distributive law of tangent structures between them. So it is natural to wonder whether or not, $\Tng_\co$ could come with a multiplication. However, by composing two tangent structures on an object $\X$ with a distributive law one does not obtain a tangent structure on $\X$. A counterexample is offered by the composition of the tangent bundle functor $\T$ with itself via $c$. The result is not a tangent structure on $\X$. Therefore, $\Tng_\co$ fails to be a $2$-monad.
\end{remark}

In future work, we would like to explore the classification of lax, colax, and pseudo coalgebras of the $2$-comonad $\Tng_\co$.

\subsection{Cartesian tangentads}
\label{subsection:Cartesian-tangent-objects}
In~\cite[Definition~2.8]{cockett:tangent-cats}, Cockett and Cruttwell introduced Cartesian tangent categories as tangent categories with finite products (in particular, with a terminal object), preserved by the tangent bundle functor.

\par This section is devoted to extending this notion for tangentads. For starters, we recall the notion of a Cartesian object in a $2$-category. In this section, the ambient $2$-category $\CC$ is assumed to have a strict $2$-terminal object denoted by $\*$ and finite strict $2$-products.

\begin{definition}
\label{definition:Cartesian-object}
A \textbf{Cartesian object} of $\CC$ is an object $\X$ of $\CC$ for which the unique $1$-morphism $!\colon\X\to\*$ and the diagonal $1$-morphism $\Delta\colon\X\to\X\times\X$ admit right adjoints in $\CC$, denoted by
\begin{align*}
\1&\colon\*\to\X        &\times&\colon\X\times\X\to\X
\end{align*}
respectively.
\end{definition}

Cartesian objects of $\CC$ form a $2$-category, denoted by $\Cart(\CC)$, whose $1$-morphisms are $1$-morphisms $F\colon\X\to\X'$ of $\CC$ such that, the two canonical $2$-morphisms:
\begin{equation*}
\begin{tikzcd}
\X & \X \\
\ast & {\X'}
\arrow[equals, from=1-1, to=1-2]
\arrow["{\alpha^\1}"', Rightarrow, from=1-2, to=2-1]
\arrow["F", from=1-2, to=2-2]
\arrow["\1", from=2-1, to=1-1]
\arrow["{\1'}"', from=2-1, to=2-2]
\end{tikzcd}\hfill\quad
\begin{tikzcd}
{\X\times\X} & \X \\
{\X'\times\X'} & {\X'}
\arrow["\times", from=1-1, to=1-2]
\arrow["{F\times F}"', from=1-1, to=2-1]
\arrow["{\alpha^\times}"', Rightarrow, from=1-2, to=2-1]
\arrow["F", from=1-2, to=2-2]
\arrow["{\times'}"', from=2-1, to=2-2]
\end{tikzcd}
\end{equation*}
are invertible. Notice that $\alpha^\1\colon\T\o\1\Rightarrow\1'$ and $\alpha^\times\colon F\o\times\Rightarrow\times\o(F\times F)$ are defined as follows:
\begin{align*}
&\alpha^\1\colon F\o\1\xrightarrow{\eta^\1 F\1}\1\:\o\:!\o F\o\1=\1\:\o\:!\o\1\xrightarrow{\1\epsilon^\1}\1\\
&\alpha^\times\colon F\o\times\xrightarrow{\eta^\times 
F\times}\times\o\Delta\o F\o\times=\times\o(F\times F)\o\Delta\o\times\xrightarrow{\times(F\times F)\epsilon^\times}\times\o(F\times F)
\end{align*}
where:
\begin{equation}
\begin{aligned}
\label{equation:units-counits-Cartesian}
&\eta^\1\colon\id_\X\Rightarrow\1\:\o\:!        &&\eta^\times\colon\id_\X\Rightarrow\times\o\Delta\\
&\epsilon^\1\colon!\o\1\Rightarrow\id_\1    &&\epsilon^\times\colon\Delta\o\times\Rightarrow\id_{\X\times\X}
\end{aligned}
\end{equation}
are the units and the counits of the adjunctions $\1\leftrightarrows\X\colon!$ and $\times\colon\X\times\X\leftrightarrows\X\colon\Delta$.

\par Furthermore, $2$-morphisms $\varphi\colon F\Rightarrow G\colon\X\to\X'$ of Cartesian objects of $\CC$ are $2$-morphisms of $\CC$ that commute with the $2$-isomorphisms $\alpha^\1$ and $\alpha^\times$:
\begin{equation*}
\begin{tikzcd}
{F\o\1} & {\1'} \\
{G\o\1}
\arrow["{!}", from=1-1, to=1-2]
\arrow["{\varphi\1}"', from=1-1, to=2-1]
\arrow["{!}"', from=2-1, to=1-2]
\end{tikzcd}\hfill\quad
\begin{tikzcd}
{F\o\times} & {\times'\o(F\times F)} \\
{G\o\times} & {\times'\o(G\times G)}
\arrow["\alpha^\times", from=1-1, to=1-2]
\arrow["{\varphi\times}"', from=1-1, to=2-1]
\arrow["{\times'(\varphi\times\varphi)}", from=1-2, to=2-2]
\arrow["\alpha^\times"', from=2-1, to=2-2]
\end{tikzcd}
\end{equation*}

\begin{proposition}
\label{proposition:Cartesian-tangent-categories}
A Cartesian tangent category $(\X,\TT)$ is equivalent to a tangentad in the $2$-category of Cartesian objects $\Cart(\CC)$ of $\CC$.
\end{proposition}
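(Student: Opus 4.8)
The plan is to unwind the definition of a tangentad in $\Cart(\CC)$ and match it, datum by datum, against Cockett and Cruttwell's notion of a Cartesian tangent category, reading the statement in the case where ``Cartesian tangent category'' is literally meaningful, i.e. $\CC=\Cat$. By Definition~\ref{definition:tangent-object}, a tangentad in $\Cart(\CC)$ is a Cartesian object $\X$ of $\CC$ equipped with a Leung functor $\Leung[\TT]\colon\Weil\to\End_{\Cart(\CC)}(\X)$. The first step is to identify the hom-category $\End_{\Cart(\CC)}(\X)$: its objects are the product-preserving $1$-endomorphisms of $\X$ (those $F$ for which $\alpha^\1$ and $\alpha^\times$ are invertible), and its morphisms are the $\Cart(\CC)$-$2$-morphisms. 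Hence the tangent bundle $1$-morphism $\T$ is forced to preserve finite products, and \emph{a priori} the structural $2$-morphisms $p,z,s,l,c$ (and $n$ when negatives are present) must each be $2$-morphisms of Cartesian objects.

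The conceptual heart of the argument is the claim that this last requirement is vacuous: every $2$-morphism $\varphi\colon F\Rightarrow G$ between product-preserving $1$-morphisms is automatically a $2$-morphism of Cartesian objects. I would establish the two compatibility squares separately. The square against $\alpha^\1$ commutes because both composites are $2$-cells into the terminal $1$-morphism $\1'$, and are therefore forced to agree by the universal property of the $!\dashv\1$ adjunction. The square against $\alpha^\times$ is the familiar fact that a natural transformation between finite-product-preserving functors is automatically monoidal for the cartesian structure; I would prove its $2$-categorical version by a naturality and triangle-identity computation, using that the diagonal $\Delta$ and $!$ are strictly $2$-natural for strict $2$-products, so that $\alpha^\times$ is the mate of the canonical comparison under $\Delta\dashv\times$. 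It follows that $\End_{\Cart(\CC)}(\X)$ is the full subcategory of $\End_\CC(\X)$ spanned by the product-preserving endomorphisms, and that a Leung functor valued in it imposes no extra Cartesian coherence on $p,z,s,l,c$ beyond those of an ordinary tangent structure.

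Next I would reconcile the two notions of pointwise tangent limit. The fundamental pointwise tangent limits are the $n$-fold $\T$-pullbacks of $\p$ defining $\T_n$ and the local-linearity pullback. The key observation is that a pullback of product-preserving endomorphisms, computed as in $\End_\CC(\X)$, is again product-preserving, since finite products commute with these pullbacks; thus the full inclusion $\End_{\Cart(\CC)}(\X)\hookrightarrow\End_\CC(\X)$ creates them. For $\CC=\Cat$ this pullback is moreover objectwise, and an objectwise limit in a functor category is automatically preserved by every precomposition, hence pointwise in the sense of Definition~\ref{definition:pointwise-limit}. This coincidence is what lets the weaker pointwise condition of $\Cart(\Cat)$ — tested only against product-preserving precompositions — upgrade to the genuine objectwise tangent pullbacks demanded of a Cartesian tangent category. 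Consequently the Leung functor preserves the fundamental pointwise tangent limits in $\Cart(\Cat)$ precisely when it does so in $\Cat$.

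Finally I would assemble the equivalence: the data just described — a category $\X$ with finite products, a tangent structure $\TT$, and a tangent bundle functor $\T$ preserving finite products — is exactly a Cartesian tangent category, and the two assignments (reading a tangentad in $\Cart(\Cat)$ as a Cartesian tangent category, and lifting a Cartesian tangent category back into $\Cart(\Cat)$) are mutually inverse by construction. The two genuinely non-bookkeeping steps, and thus the main obstacles, are the automatic-compatibility lemma of the second paragraph, which is what permits discarding the \emph{a priori} extra Cartesian coherence on the structural $2$-morphisms, and the pointwise-limit reconciliation of the third paragraph, whose delicacy lies in matching the weaker $\Cart$-level pointwise condition with the objectwise tangent pullbacks; both rest on the single fact that pullbacks of product-preserving functors are themselves product-preserving and objectwise.
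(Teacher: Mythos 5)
The paper states this proposition without a proof (the formal content is carried by the subsequent Proposition~\ref{proposition:Cartesian-vs-tangent}), so there is no argument of record to compare against; your unwinding strategy is clearly the intended one, and its first half is sound. In particular, your central lemma is correct: the $1$-cells $\X\to\X$ of $\Cart(\Cat)$ are the finite-product-preserving endofunctors, and every natural transformation between two such functors is automatically compatible with $\alpha^\1$ (both legs of the triangle are $2$-cells into the functor picking out a terminal object, hence coincide) and with $\alpha^\times$ (naturality against the two product projections), so $\End_{\Cart(\Cat)}(\X)$ is the full subcategory of $\End_{\Cat}(\X)$ spanned by the product-preserving endofunctors. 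Combined with the fact that finite products commute with the tangent pullbacks, this does establish that a Cartesian tangent category gives rise to a tangentad in $\Cart(\Cat)$.

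The gap is in the converse direction, precisely at the point you flag as delicate and then dispatch with the phrase ``precisely when''. For a tangentad in $\Cart(\Cat)$ you only know that the $n$-fold pullbacks of $p$ and the local-linearity square are limits in $\End_{\Cart(\Cat)}(\X)$ that are pointwise in the sense of Definition~\ref{definition:pointwise-limit} \emph{relative to $\Cart(\Cat)$}, i.e.\ preserved by precomposition with product-preserving functors only. Your observation that ``an objectwise limit is preserved by every precomposition'' runs in the wrong direction: it upgrades objectwise limits to pointwise ones, whereas here you must descend from $\Cart(\Cat)$-pointwise to objectwise, and a limit in the full subcategory of product-preserving endofunctors need not be computed objectwise a priori (note that the only product-preserving functor out of the terminal category picks out the terminal object of $\X$, so that test detects essentially nothing). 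The missing step can be supplied: writing $\mathbb{F}^\op$ for the free category with finite products on one generator, evaluation at any object $A$ of $\X$ factors, up to the equivalence $\Cart(\Cat)(\mathbb{F}^\op,\X)\simeq\X$, through precomposition with the product-preserving functor $\mathbb{F}^\op\to\X$ classifying $A$; since equivalences reflect limits, $\Cart(\Cat)$-pointwise limits in $\End_{\Cart(\Cat)}(\X)$ are objectwise after all. With that lemma in place, your two assignments are indeed mutually inverse and the equivalence closes up.
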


\begin{proposition}
\label{proposition:Cartesian-vs-tangent}
If $\CC$ has finite $2$-products and a $2$-terminal object, so does the $2$-category $\Tng(\CC)$. Moreover, under these conditions, the $2$-category $\Cart(\Tng(\CC))$ of Cartesian objects in the $2$-category of tangentads of $\CC$ is isomorphic to the $2$-category $\Tng(\Cart(\CC))$ of tangentads in the $2$-category of Cartesian objects of $\CC$.
\end{proposition}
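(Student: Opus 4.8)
The plan is to prove the two assertions separately. For the first, I would show that finite $2$-products and a $2$-terminal object lift from $\CC$ to $\Tng(\CC)$. The $2$-terminal object of $\Tng(\CC)$ should be the $2$-terminal object $\*$ of $\CC$ equipped with the trivial tangent structure $\1$ (Example~\ref{example:trivial-tangent-object}); any tangentad $(\X,\TT)$ admits the unique $1$-morphism $!\colon\X\to\*$, and since all structural $2$-morphisms of $\1$ are identities, the distributive law is forced to be the identity, so $(!,\id)$ is the unique lax tangent morphism into $(\*,\1)$, and similarly there is a unique tangent $2$-morphism between any two such. For the binary $2$-product of $(\X,\TT)$ and $(\Y,\TT_\Y)$, I would take $\X\times\Y$ in $\CC$ with the tangent structure obtained componentwise: the tangent bundle $1$-morphism is $\T\times\T_\Y$ and each structural $2$-morphism is the $2$-product of the corresponding structural $2$-morphisms of $\TT$ and $\TT_\Y$. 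One must check the pointwise tangent pullbacks and the local linearity pullback survive—this is where the hypothesis that $\CC$ has (strict) $2$-products is used, since $2$-products of hom-categories compute limits componentwise, so pointwise pullbacks and their preservation by iterates of $\T\times\T_\Y$ reduce to the corresponding facts for $\TT$ and $\TT_\Y$. The universal property of the $2$-product in $\Tng(\CC)$ then follows because a pair of lax tangent morphisms into the two factors, compatible nowhere in particular, assembles into a single lax tangent morphism into the product by the universal property in $\CC$ applied levelwise to the underlying $1$-morphisms and to the distributive laws.

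For the isomorphism $\Cart(\Tng(\CC))\cong\Tng(\Cart(\CC))$, the strategy is to unwind both sides and exhibit a bijection on objects, $1$-morphisms, and $2$-morphisms that is manifestly $2$-functorial in each direction and mutually inverse. An object of $\Tng(\Cart(\CC))$ is a Cartesian object $\X$ of $\CC$ together with a Leung functor $\Leung[\TT]\colon\Weil\to\End_{\Cart(\CC)}(\X)$; since the hom-objects of $\Cart(\CC)$ are full subcategories (or at least subcategories) of those of $\CC$ consisting of the product-preserving $1$-morphisms, this is the same datum as a tangentad $(\X,\TT)$ of $\CC$ all of whose structural $1$- and $2$-morphisms are compatible with the Cartesian structure. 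On the other side, an object of $\Cart(\Tng(\CC))$ is a tangentad $(\X,\TT)$ that is a Cartesian object in $\Tng(\CC)$, i.e.\ for which $!\colon(\X,\TT)\to(\*,\1)$ and $\Delta\colon(\X,\TT)\to(\X,\TT)\times(\X,\TT)$ admit right adjoints in $\Tng(\CC)$. Using the first half of the proposition, the product $(\X,\TT)\times(\X,\TT)$ is computed componentwise, so a right adjoint to $\Delta$ in $\Tng(\CC)$ is precisely a right adjoint $\times\colon\X\times\X\to\X$ in $\CC$ together with the data making it a lax (in fact strong) tangent morphism $(\T\times\T)\Rightarrow\T\o\times$ whose unit and counit are tangent $2$-morphisms—which is exactly the statement that $\X$ is Cartesian in $\CC$ and that $\T$, $p$, $z$, $s$, $l$, $c$ are all compatible with the product. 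I would make this precise by observing that adjunctions in $\Tng(\CC)$ lie over adjunctions in $\CC$ (the forgetful $2$-functor $\U$ preserves them, being a $2$-right adjoint by Proposition~\ref{proposition:Inc-U-adjunction}), and that the adjoint $1$-cells and unit/counit $2$-cells acquire canonical tangent structure by the usual mates/doctrinal-adjunction argument.

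The crux, and the step I expect to be the main obstacle, is establishing that the two descriptions of the Cartesian-compatibility conditions genuinely coincide on the nose, i.e.\ that the $2$-isomorphisms $\alpha^\1$ and $\alpha^\times$ witnessing that $\T$ (and each structural $2$-morphism) is a Cartesian $1$-morphism in $\CC$ are the \emph{same} data as the coherence making $\times$ and $\1$ into tangent morphisms in $\Tng(\CC)$. Concretely, I would verify that the canonical comparison $2$-morphism $\alpha^\times\colon\T\o\times\Rightarrow\times\o(\T\times\T)$ defined via the units and counits in \eqref{equation:units-counits-Cartesian} is precisely the distributive law exhibiting $\times$ as a strong tangent morphism, and dually for $\1$; and that the tangent-morphism axioms (additivity, compatibility with $l$ and with $c$) for these distributive laws translate exactly into the requirement that $p$, $z$, $s$, $l$, $c$ be $2$-morphisms of Cartesian objects. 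This is a doctrinal-adjunction computation in the spirit of Kelly: a right adjoint in a $2$-category of pseudo/lax morphisms carries a canonical lax structure via mates, and one must check this canonical structure is strong and matches the componentwise tangent structure. Once the object-level bijection is pinned down, the $1$-morphism and $2$-morphism levels follow by the same mate calculus together with the equational nature of all the compatibility conditions (as already exploited in Lemma~\ref{lemma:functoriality-TNG}), and the two assignments are inverse by construction, yielding a strict $2$-isomorphism rather than a mere $2$-equivalence.
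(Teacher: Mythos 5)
Your plan follows the paper's proof essentially step for step: the terminal object and binary products of $\Tng(\CC)$ are constructed componentwise exactly as in the paper, and the isomorphism $\Cart(\Tng(\CC))\cong\Tng(\Cart(\CC))$ is obtained in both directions by showing that the lax distributive laws of the right adjoints $\1$ and $\times$ in $\Tng(\CC)$ are forced, via the triangle identities, to be the inverses of the canonical comparison $2$-cells $\alpha^\1$ and $\alpha^\times$ --- which is precisely the mate/doctrinal-adjunction computation you identify as the crux. The only difference is presentational: the paper carries out that computation explicitly rather than citing doctrinal adjunction as a general principle.
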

\begin{proof}
First, it is easy to show that, if $\CC$ has a terminal object $\*$, then $\*\=(\*,\1)$, the trivial tangentad on the terminal object $\*$ of $\CC$, is terminal in $\Tng(\CC)$, and that if $\CC$ has finite products, then $(\X\times\X',\TT\times\TT')$ defines the Cartesian product between $(\X,\TT)$ and $(\X',\TT')$ in $\Tng(\CC)$, where the tangent bundle functor of $\TT\times\TT'$ is $\T\times\T'$ and the structural $2$-morphisms are defined in a similar manner.

\par Let us consider a tangentad $(\X,\TT)$ in the $2$-category $\Cart(\CC)$ of Cartesian objects of $\CC$ and let us show that $(\X,\TT)$ is a Cartesian object in $\Tng(\CC)$. The tangent bundle $1$-morphism $\T\colon\X\to\X$ preserves the Cartesian structure of $\X$. In particular, the $2$-morphisms $\alpha^\1\colon\T\o\*\Rightarrow\*$ and $\alpha^\times\colon\T\o\times\Rightarrow\times\o(\T\times\T)$ are invertible. This allows us to define the following two strong tangent morphisms:
\begin{align*}
&\*\xrightarrow{(\1,{\alpha^\1}^{-1})}(\X,\TT)\\
&(\X\times\X,\TT\times\TT)\xrightarrow{(\times,{\alpha^\times}^{-1})}(\X,\TT)
\end{align*}
The compatibility between the distributive laws and the structural $2$-morphisms of the tangent structures are a direct consequence of the structural $2$-morphisms being $2$-morphisms in $\Cart(\CC)$. Moreover, by using the triangle identities, it is easy to show that the units and the counits of Equation~\eqref{equation:units-counits-Cartesian} are compatible with ${\alpha^\1}^{-1}$ and ${\alpha^\times}^{-1}$. This shows that:
\begin{align*}
&(\1,{\alpha^\1}^{-1})\colon\*\leftrightarrows(\X,\TT)\colon!\\
&(\times,{\alpha^\times}^{-1})\colon(\X\times\X,\TT\times\TT)\leftrightarrows(\X,\TT)\colon\Delta
\end{align*}
form two adjunctions in $\Tng(\CC)$. Therefore, $(\X,\TT)$ is a Cartesian object in $\Tng(\CC)$.\\
Conversely, consider a Cartesian object of $\Tng(\CC)$. This consists of a tangentad $(\X,\TT)$ of $\CC$ for which the $1$-morphisms $\alpha^\1\colon(\X,\TT)\to\*$ and $\Delta\colon(\X,\TT)\to(\X\times\X,\TT\times\TT)$ admit right adjoints $(\*,\kappa)\colon\*\to(\X,\TT)$ and $(\times,\beta)\colon(\X\times\X,\TT\times\TT)\to(\X,\TT)$ in $\Tng(\CC)$, respectively. Since the forgetful $2$-functor $\U\colon\Tng(\CC)\to\CC$ preserves adjunctions, $\X$ is a Cartesian object in $\CC$. The goal is to show that the two lax distributive laws:
\begin{align*}
&\kappa\colon\1\Rightarrow\T\o\1\\
&\beta\colon\times\o(\T\times\T)\Rightarrow\T\o\times
\end{align*}
invert $\alpha^\1$ and $\alpha^\times$, respectively, which are induced by the Cartesian sructure of $\X$.

\par Let us start with $\kappa$. First, notice that the unit $\eta^\1$ of the adjunction $(\1,\kappa)\dashv\:!$ is compatible with $\kappa$, since the adjunction is in $\Tng(\CC)$:
\begin{equation*}
\begin{tikzcd}
\T & {\1\:\o\:!\o\T} & {\1\:\o\:!} \\
\T && {\T\o\1\:\o\:!}
\arrow["{\eta\T}", from=1-1, to=1-2]
\arrow[equals, from=1-1, to=2-1]
\arrow[equals, from=1-2, to=1-3]
\arrow["{\kappa!}", from=1-3, to=2-3]
\arrow["{\T\eta^\1}"', from=2-1, to=2-3]
\end{tikzcd}
\end{equation*}
By the universal property of the terminal object $\*$ of $\CC$, $\kappa\alpha^\1\colon\1\Rightarrow\1$ is the identity. Let us show that also $\alpha^\times\kappa\colon\T\o\1\Rightarrow\T\o\1$ is the identity:
\begin{align*}
&\quad\kappa\o\:!
&&(!=\epsilon^\1\:\o\1\:\o\eta_{\T\1}^\1)\\
=&\quad\kappa\o\epsilon^\1\:\o\1\:\o\eta_{\T\1}^\1
&&(\kappa\o\epsilon^\1\:\o\1=\epsilon^\1\:\o\T\1\:\o\kappa_{!\1})\\
=&\quad\epsilon^\1\:\o\T\1\:\o\kappa_{!\1}\o\eta^\1_{\T\1}
&&(!\o\1=!\o\T\1)\\
=&\quad\epsilon^\1\:\o\T\1\:\o\kappa_{!\T\1}\o\eta^\1_{\T\1}
&&(\kappa_!\o\eta^\1_\T=\T\eta^\1)\\
=&\quad\epsilon^\1\:\o\T\1\:\o\T\eta^\1_\1
&&(\text{triangle identity})\\
=&\quad\id_{\T\1}
\end{align*}
Let us now prove that $\beta$ inverts $\alpha^\times$. Let us start by noticing that the unit $\eta^\times$ and the counit $\epsilon^\times$ of the adjunction $\times\dashv\Delta$ are compatible with $\beta$:
\begin{equation*}
\begin{tikzcd}
\T & {\times\o\Delta\o\T} & {\times\o(\T\times\T)\o\Delta} \\
\T && {\T\o\Delta}
\arrow["{\eta^\times\T}", from=1-1, to=1-2]
\arrow[Rightarrow, no head, from=1-1, to=2-1]
\arrow[Rightarrow, no head, from=1-2, to=1-3]
\arrow["{\beta\Delta}", from=1-3, to=2-3]
\arrow["{\T\eta^\times}"', from=2-1, to=2-3]
\end{tikzcd}\hfill\quad
\begin{tikzcd}
{\Delta\o\times\o(\T\times\T)} & {\T\times\T} \\
{\Delta\o\T\o\times} & {(\T\times\T)\o\Delta\o\times}
\arrow["{\epsilon(\T\times\T)}", from=1-1, to=1-2]
\arrow["{\Delta\beta}"', from=1-1, to=2-1]
\arrow[Rightarrow, no head, from=2-1, to=2-2]
\arrow["{(\T\times\T)\epsilon}"', from=2-2, to=1-2]
\end{tikzcd}
\end{equation*}
Thus:
\begin{align*}
&\quad\alpha^\times\o\beta
&&(\alpha^\times=(\times(\T\times\T)\epsilon^\times)\o\eta^\times_{\T\times})\\
=&\quad(\times(\T\times\T)\epsilon^\times)\o\eta^\times_{\T\times}\o\beta
&&(\eta^\times_{\T\times}\o\beta=\Delta\beta\o\times\o\eta^\times_{\times(\T\times\T)})\\
=&\quad(\times(\T\times\T)\epsilon^\times)\o\Delta\beta\o\times\o\eta^\times_{\times(\T\times\T)}
&&(((\T\times\T)\times\epsilon^\times)\o\Delta\beta=\epsilon^\times_{\T\times\T})\\
=&\quad((\T\times\T)\times\epsilon^\times)\o\eta^\times_{\times(\T\times\T)}
&&(\text{triangle identity})\\
=&\quad\id_{\times(\T\times\T)}
\end{align*}
Moreover:
\begin{align*}
&\quad\beta\o\alpha^\times
&&(\alpha^\times=\epsilon^\times\o(\T\times\T)\o\times\o\eta^\times_{\T\times})\\
=&\quad\beta\o\epsilon^\times\o(\T\times\T)\o\times\o\eta^\times_{\T\times}
&&(\beta\o\epsilon^\times\o(\T\times\T)\o\times=(\T\times\epsilon^\times)\o\beta_{\Delta\times})\\
=&\quad(\T\times\epsilon^\times)\o\beta_{\Delta\times}\o\eta^\times_{\T\times}
&&\beta_\Delta\o\eta^\times_\T=\T\eta^\times_\times)\\
=&\quad(\T\times\epsilon^\times)\o\T\eta^\times_\times
&&(\text{triangle identity})\\
=&\quad\id_{\T\times}
\end{align*}
This proves that $(\X,\TT)$ is a tangentad in $\Cart(\X,\TT)$. This correspondence extends to a $2$-isomorphism between the $2$-categories $\Cart(\Tng(\CC))$ and $\Tng(\Cart(\CC))$.
\end{proof}

Propositions~\ref{proposition:Cartesian-tangent-categories} and~\ref{proposition:Cartesian-vs-tangent} suggest defining a Cartesian tangentad as follows.

\begin{definition}
\label{definition:Cartesian-tangent-object}
A \textbf{Cartesian tangentad} in $\CC$ is a Cartesian object of the $2$-category $\Tng(\CC)$ of tangentads of $\CC$, or equivalently, a tangentad of the $2$-category $\Cart(\CC)$ of Cartesian objects of $\CC$.
\end{definition}

\subsection{Adjunctable and representable tangentads}
\label{subsection:adjunctable-representable-tangent-objects}
\cite[Proposition~5.17]{cockett:tangent-cats} states that the opposite category of a tangent category $(\X,\TT)$ comes equipped with a ``\textit{dual}'' tangent structure provided the tangent bundle functor $\T$ and each functor $\T_n$ admit left adjoints, $\T^\b$, and $\T^\b_n$, respectively.
\par In this section, we formalize this construction for tangentads in a $2$-category which admits a suitable notion of ``\emph{opposite}'' objects. Let us start by introducing this concept.

\begin{definition}
\label{definition:opposite-objects}
A $2$-category with \textbf{opposite objects} comprises a $2$-category $\CC$, a $2$-equivalence $(-)^\op\colon\CC^\co\to\CC$ from the $2$-cocategory of $\CC$, namely, with inverted $2$-morphisms, to $\CC$, and a $2$-isomorphism:
\begin{equation*}
\begin{tikzcd}
{\CC^\co} & \CC \\
& {\CC^\co}
\arrow["{(-)^\op}", from=1-1, to=1-2]
\arrow[""{name=0, anchor=center, inner sep=0}, equals, from=1-1, to=2-2]
\arrow["{((-)^\op)^\co}", from=1-2, to=2-2]
\arrow["\xi"', shorten <=2pt, Rightarrow, from=0, to=1-2]
\end{tikzcd}
\end{equation*}
\end{definition}

\begin{example}
\label{example:categories}
The $2$-category $\Cat$ of categories admits opposite objects once equipped with the $2$-functor $(-)^\op$ which sends each category to its opposite, where $((-)^\op)^\co\o(-)^\op=\id_\Cat$.
\end{example}

\begin{example}
\label{example:adjunctable-monad}
Let $\CC$ be a $2$-category with opposite objects $(-)^\op$, and consider the full $2$-subcategory $\LMnd(\CC)$ of the $2$-category $\Mnd(\CC)$ of monads of $\CC$ spanned by those monads $(\X,S)$ of $\CC$ whose underlying endomorphism $S\colon\X\to\X$ admits a right adjoint $S^\b\colon\X\to\X$. Let $\eta\colon\id_\X\Rightarrow S^\b\o S$ and $\epsilon\colon S\o S^\b\Rightarrow\id_\X$ denote the unit and the counit of the adjunction, respectively. Using mates, one can define a comonad structure on $S^\b$ as follows:
\begin{align*}
&1^\b\colon S^\b\xrightarrow{1S^\b}S\o S^\b\xrightarrow{\epsilon}\id_\X\\
&\mu^\b\colon S^\b\xrightarrow{\eta S^\b}S^\b\o S\o S^\b\xrightarrow{S^\b\eta SS^\b}S^\b\o S^\b\o S\o S\o S^\b\xrightarrow{S^\b S^\b\mu S^\b}S^\b\o S^\b\o S\o S^\b\xrightarrow{S^\b S^\b\epsilon}S^\b\o S^\b
\end{align*}
where $1\colon\id_\X\Rightarrow S$ and $\mu\colon S^2\Rightarrow S$ denote the unit and the multiplication of $S$, respectively. Since $\CC$ admits opposite objects, $(-)^\op$ sends $\X$ to $\X^\op$, $S^\b$ to $(S^\b)^\op\colon\X^\op\to\X^\op$, and the comonad of $S^\b$ to a monad structure:
\begin{align*}
&(1^\b)^\op\colon\id_{\X^\op}\Rightarrow(S^\b)^\op\\
&(\mu^\b)^\op\colon(S^\b)^\op\o(S^\b)^\op\to(S^\b)^\op
\end{align*}
on $(S^\b)^\b$. Furthermore, $(-)^\op$ sends adjunctions to (co)adjunctions, therefore, $S^\op$ becomes a right adjoint to $(S^\b)^\op$, where the unit and the counit of the adjunction are given respectively by $\epsilon^\op$ and $\eta^\op$ (notice that the unit becomes the counit and vice versa). Let us define $(\X;S,1,\mu)^\op\=(\X^\b;(S^\b)^\op,(1^\b)^\op,(\mu^\b)^\op)$.
\par Given a morphism $(F,\alpha)\colon(\X,S)\to(\X',S')$ of monads, let the underlying $\CC$ $1$-morphism of the corresponding morphism of monads $(F,\alpha)^\op\colon(\X,S)^\op\to(\X',S')^\op$ be $F^\op$ and the corresponding distributive law to be $(\alpha^\b)^\op\colon(S^\b)^\op\o F^\op\Rightarrow F^\op\o(S^\b)^\op$, where $\alpha^\b$ is the mate of $\alpha\colon S\o F\Rightarrow F\o S$ along the adjunction:
\begin{align*}
&\alpha^\b\colon F\o S^\b\xrightarrow{\eta FS^\b}S^\b\o S\o F\o S^\b\xrightarrow{S^\b\alpha S^\b}S^\b\o F\o S\o S^\b\xrightarrow{S^\b\o F\epsilon}S^\b\o F
\end{align*}
Since the mate of a pasting diagram is the pasting diagram of the mates of each subdiagram provided the mate of each of them is well-defined (cf.~\cite{kelly:2-categories}), $(F,\alpha^\b)\colon(\X,S^\b)\to(\X',{S'}^\b)$ becomes a morphism of comonads and therefore, $(F^\op,(\alpha)^\op)$ is a morphism of monads. Finally, if $\varphi\colon(F,\alpha)\Rightarrow(G,\beta)$ is a $2$-morphism of monads, one can show that $\varphi$ is also a morphism of comonads $\varphi\colon(F,\alpha^\b)\Rightarrow(G,\beta^\b)\colon(\X,S^\b)\to(\X',{S'}^\b)$. Thus, $\varphi^\op\colon(F,\alpha)^\op\Rightarrow(G,\beta)^\op\colon(\X,S)^\op\to(\X',S')^\op$ becomes a morphism of monads.
\par Finally, we need to provide a $2$-isomorphism $\xi\colon\id_{\CC^\co}\Rightarrow((-)^\op)^\co)\o(-)^\op$. However, $(\X^\op,{S^\b}^\op)^\op$ is given by $(\X^\op)^\op$ and $(((S^\b)^\op)^\b)^\op$. However, since mates are preserved by $2$-functors, the monad becomes $(((S^\b)^\b)^\op)^\op$. By using triangle identities, one can show that $(S^\b)^\b$ is the original monad $S$. Therefore, $\LMnd(\CC)$ admits opposite objects provided that $\CC$ does.
\par Notice that, the $2$-subcategory $\RMnd(\CC)$ of $\Mnd(\CC)$ of monads which admit a left adjoint does not admit opposite objects, since the distributive law of the $1$-morphisms does not admit mate. One might hope to consider the $2$-category $\RMnd_\cong(\CC)$ of monads of $\CC$ admitting a left adjoint, whose $1$-morphisms are strong morphisms of monads, namely, with invertible distributive law, and whose $2$-morphisms are just usual $2$-morphisms of monads. However, the mate of the distributive law fails to be invertible, since in general, the inverse does not admit any mate.
\end{example}

\begin{example}
\label{example:dual-indexed-category}
The $2$-category $\Indx(\X)$ of indexed categories over a fixed indexing category $\X$ admits opposite objects. Concretely, the opposite of an indexed category $I\colon\X^\op\to\Cat$ which sends each $A$ of $\X$ to $\X_A$ and each $f\colon A\to B$ to $f^\*\colon\X_B\to\X_A$ can be defined as the indexed category $I^\b\colon\X^\op\to\Cat$ which sends $A$ to $\X_A^\op$ and $f\colon A\to B$ to $(f^\*)^\op\colon\X_B^\op\to\X_A^\op$. Similarly, the opposite of an indexed functor $F\colon I\to I'$ which sends each $A$ to a functor $F_A\colon\X_A\to\X_A'$ is the indexed functor $F^\b\colon I^\b\to{I'}^\b$ which sends each $A$ to $F_A^\op\X_A^\op\to{\X_A'}^\op$. Finally, the opposite of an indexed natural transformation $\varphi\colon F\Rightarrow G\colon I\to I'$ which sends $A$ to a natural transformation $\varphi_A\colon F_A\Rightarrow G_A\colon\X_A\to\X_A'$ is the indexed natural transformation $\varphi^\b\colon F^\b\Rightarrow G^\b\colon I^\b\to{I'}^\b$ which sends $A$ to $\varphi^\op_A\colon F_A^\op\leftarrow G_A^\op$.
\end{example}

\begin{example}
\label{example:dual-fibration}
Thanks to the Grothendieck $2$-equivalence $\Indx(\X)\simeq\Fib(\X)$, the construction of Example~\ref{example:dual-indexed-category} induces opposite objects on the $2$-category $\Fib(\X)$ of cloven fibrations over the fixed base category $\X$. In particular, if $\Pi\colon\X'\to\X$ is a fibration over $\X$, the fibres of $\Pi^\b$ are the opposite categories of the fibres of $\Pi$. A more general construction of the \textit{dual} of a (non-necessarily cloven) fibration is described in~\cite{kock:dual-fibration}.
\end{example}

\begin{definition}
\label{definition:adjunctable-tangent-structure}
A tangent structure $\TT$ on an object $\X$ of a $2$-category $\CC$ is \textbf{adjunctable} provided that the tangent bundle $1$-morphism $\T$ and each $1$-morphism $\T_n$ admit left adjoints $\T^\b$ and $\T^\b_n$ in the $2$-category $\CC$, respectively. By extension, a tangentad $(\X,\TT)$ is adjunctable provided its tangent structure is.
\end{definition}

\begin{definition}
\label{definition:op-tangent-structure}
An \textbf{op-tangent structure} on an object $\X$ of a $2$-category $\CC$ is a tangent structure on $\X$ in the $2$-cocategory $\CC^\co$ of $\CC$. An op-tangent structure \textbf{admits conegatives} if the corresponding tangent structure on $\CC^\co$ admits negatives.
\end{definition}

\begin{proposition}
\label{proposition:adjunctable-tangent-structure}
Let $(\X,\TT)$ be an adjunctable tangentad, where
\begin{align*}
\eta_n&\colon\id_\X\Rightarrow\T_n\o\T_n^\b         &\epsilon_n&\colon\T_n^\b\o\T_n\Rightarrow\id_\X
\end{align*}
denote the units and the counits of the adjunctions $\T_n^\b\dashv\T_n$ ($\eta\=\eta_1$ and $\epsilon\=\epsilon_1$), respectively. Thus, $\X$ carries the structure of an op-tangentad $(\X,\TT^\b)$ so defined:
\begin{description}
\item[Tangent bundle $1$-morphism] The tangent bundle $1$-morphism of $\TT^\b$ is the  left adjoint $\T^\b$ to $\T$;

\item[Coprojection] The coprojection $p^\b\colon\id_\X\Rightarrow\T^\b$ is the mate of the projection $p\colon\T\Rightarrow\id_\X$:
\begin{align*}
&p^\b\colon\id_\X\xrightarrow{\eta}\T\o\T^\b\xrightarrow{p\T^\b}\T^\b
\end{align*}

\item[Cozero morphism] The cozero morphism $z^\b\colon\T^\b\Rightarrow\id_\X$ is the mate of the zero morphism $z\colon\id_\X\Rightarrow\T$:
\begin{align*}
&z^\b\colon\T^\b\xrightarrow{\T^\b z}\T^\b\o\T\xrightarrow{\epsilon}\id_\X
\end{align*}

\item[Cosum morphism] The cosum morphism $s^\b\colon\T^\b\Rightarrow\T_2^\b$ is the mate of the sum morphism $s\colon\T_2\Rightarrow\T$:
\begin{align*}
&s^\b\colon\T^\b\xrightarrow{\T^\b\eta_2}\T^\b\o\T_2\o\T^\b_2\xrightarrow{\T^\b s\T^\b_2}\T^\b\o\T\o\T^\b_2\xrightarrow{\epsilon\T^\b_2}\T^\b_2
\end{align*}

\item[Vertical colift] The vertical colift $l^\b\colon{\T^\b}^2\Rightarrow\T^\b$ is the mate of the vertical lift $l\colon\T\Rightarrow\T^2$:
\begin{align*}
&l^\b\colon\T^\b\o\T^\b\xrightarrow{\T^\b\T^\b\eta}\T^\b\o\T^\b\o\T\o\T^\b\xrightarrow{\T^\b\T^\b l\T^\b}\T^\b\o\T^\b\o\T\o\T\o\T^\b\xrightarrow{\T^\b\epsilon\T\T^\b}\T^\b\o\T\o\T^\b\xrightarrow{\epsilon\T^\b}\T^\b
\end{align*}

\item[Canonical coflip] The canonical coflip $c^\b\colon{\T^\b}^2\Rightarrow{\T^\b}^2$ is the mate of the canonical flip $c\colon\T^2\Rightarrow\T^2$:
\begin{align*}
&c^\b\colon\T^\b\o\T^\b\xrightarrow{\T^\b\T^\b\eta}\T^\b\o\T^\b\o\T\o\T^\b\xrightarrow{\T^\b\T^\b\T\eta\T^\b}\T^\b\o\T^\b\o\T\o\T\o\T^\b\o\T^\b\to\\
&\qquad\xrightarrow{\T^\b\T^\b c\T^\b\T^\b}\T^\b\o\T^\b\o\T\o\T\o\T^\b\o\T^\b\xrightarrow{\T^\b\epsilon\T\T^\b\T^\b}\T^\b\o\T\o\T^\b\o\T^\b\xrightarrow{\epsilon\T^\b\T^\b}\T^\b\o\T^\b
\end{align*}
\end{description}
Furthermore, if $(\X,\TT)$ has negatives,$\TT^\b$ admits conegatives:
\begin{description}
\item[Conegation] The conegation $n^\b\colon\T^\b\Rightarrow\T^\b$ is the mate of the negation $n\colon\T\Rightarrow\T$:
\begin{align*}
&n^\b\colon\T^\b\xrightarrow{\T^\b\eta}\T^\b\o\T\o\T^\b\xrightarrow{\T^\b n\T^\b}\T^\b\o\T\o\T^\b\xrightarrow{\epsilon\T^\b}\T^\b
\end{align*}
\end{description}
\end{proposition}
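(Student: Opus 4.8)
The plan is to run the entire argument through the calculus of mates (cf.~\cite{kelly:2-categories}). By construction each structural $2$-morphism of $\TT^\b$ is the mate of the corresponding structural $2$-morphism of $\TT$ along the chosen adjunctions: $p^\b$, $z^\b$, and $n^\b$ are mates along $\T^\b\dashv\T$; $s^\b$ along $\T^\b\dashv\T$ together with $\T_2^\b\dashv\T_2$; and $l^\b$, $c^\b$ along the composite adjunction $\T^\b\o\T^\b\dashv\T\o\T$. Since taking a mate reverses the direction of a $2$-morphism, each resulting cell points the way demanded of a tangent-structure morphism in the $2$-cocategory $\CC^\co$, so the bookkeeping of directions is automatic. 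The one structural fact I would invoke repeatedly is the functoriality of mates: the mate of a pasting diagram is the pasting of the mates of its cells, whenever all those mates are defined --- which is exactly what adjunctability guarantees, i.e.\ the existence of every $\T_n^\b$.

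With this in hand, the coherence axioms split off as a routine batch. Coassociativity and canonical-(co)flip compatibility of the vertical (co)lift, the symmetric-braiding identities for the canonical (co)flip, the additive-bundle-morphism conditions for $(z^\b,l^\b)$ and $(\id_\X,c^\b)$, and the conegation identity are all equalities of pasting diagrams in $\End_\CC(\X)$. Applying functoriality of mates to each such equality for $\TT$ yields verbatim the corresponding equality for $\TT^\b$; I would spell out a single representative case, say the coassociativity of $l^\b$, and leave the rest to the same mechanism. The additive-bundle laws for $\p^\b=(p^\b,z^\b,s^\b)$ read in $\CC^\co$ --- the section law relating $p^\b$ and $z^\b$, and the associativity, unitality, and commutativity of $s^\b$ --- transport identically.

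The main obstacle is the limit condition, which is genuinely more than formal mate-juggling: I must show that in $\CC^\co$ the projection $p^\b$ admits the required $n$-fold pullbacks, realized by $\T_n^\b$, that they are pointwise and preserved by all iterates of $\T^\b$, and that the local-linearity square is a pointwise pullback in $\CC^\co$. The key step is to prove that $\T_n^\b$, equipped with the mates $\pi_i^\b\colon\T^\b\Rightarrow\T_n^\b$ of the projections $\pi_i$, is the $n$-fold pushout of $p^\b\colon\id_\X\Rightarrow\T^\b$ in $\End_\CC(\X)$, equivalently an $n$-fold pullback in $\CC^\co$. To establish this I would translate the pushout universal property through the adjunction $\T^\b\dashv\T$: a cocone on $\T_n^\b$ with apex $F$ corresponds to a cone on $\T_n\o F$ over $F$, and the latter is a pullback precisely because $\T_n$ is a \emph{pointwise} pullback (precomposed with $F$). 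Uniqueness and compatibility then follow from that pullback's universal property. Preservation by the iterates $(\T^\b)^m$ is obtained the same way, using that $\T_n\o\T^m$ is again a pullback by pointwiseness (now precomposing with $\T^m$), together with the tangent/iterates condition on $\TT$. Finally, the local-linearity pullback of $\TT^\b$ is the mate of the local-linearity pullback of $\TT$ and follows by the same dualization. I would isolate the correspondence ``mate of a pointwise pullback cone equals pointwise pushout cocone of the adjoints'' as a single lemma and reuse it both for the $n$-fold pullbacks and for the local-linearity square.
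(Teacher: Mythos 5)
Your proposal is correct and follows essentially the same route as the paper's proof: the equational axioms are transported by the fact that mates preserve pasting diagrams, and the pushout universal property of $\T_n^\b$ (and the dual of local linearity) is obtained by transposing cocones through the adjunctions into cones and invoking the pointwise universal property of $\T_n$. Your explicit attention to pointwiseness and preservation by iterates of $\T^\b$ is slightly more careful than the paper's terse treatment, but it is the same argument.
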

\begin{proof}
The proof of this result is a straightforward generalization of that of~\cite[Proposition~5.17]{cockett:tangent-cats}. In particular, since mates preserve pasting diagrams, all equational axioms hold. To prove that $\T_n^\b$ is the $n$-fold pushout of $p^\b$ of itself, consider $n$ $2$-morphisms $f_1\,f_n\colon\T^\b\to H$ such that $f_i\o p^\b=f_j\o p^\b$, for each $i,j=1\,n$. Let us define:
\begin{align*}
&f_k^\b\colon\id_\X\xrightarrow{\eta}\T\o\T^\b\xrightarrow{\T f_k}\T\o H
\end{align*}
Since mates preserve pasting diagrams:
\begin{align*}
&p\o f_i^\b=(p^\b\o f_i)^\b=(p^\b\o f_j)^\b=p\o f_j^\b
\end{align*}
Thus, we can define the unique morphism $\<f_1^\b\,f_n^\b\>\colon\id_\X\to\T_n\o H$. It is not hard to prove that the mate of the morphism
\begin{align*}
&[f_1\,f_n]\colon\T_n^\b\xrightarrow{\T_n^\b\<f_1^\b\,f_n^\b\>}\T^\b_n\o\T_n\o H\xrightarrow{\epsilon_nH}H
\end{align*}
is the unique morphism, for which $[f_1\,f_n]\o\pi_k^\b=f_k$, where $\pi_k^\b\colon\T^\b\to\T^\b_n$ is the mate of $\pi_k\colon\T_n\to\T$. Similarly, the universality of the vertical lift is preserved by mates.
\end{proof}

When a $2$-category admits opposite objects, the op-tangent structure $\TT^\b$ of Proposition~\ref{proposition:adjunctable-tangent-structure} of an adjunctable tangentad $(\X,\TT)$ becomes a tangent structure on the opposite object of $\X$. To see this, recall that a tangentad $(\X,\TT)$ can be defined by a Leung functor:
\begin{align*}
&\Leung[\TT]\colon\Weil\to\End_\CC(\X)=\CC(\X,\X)
\end{align*}
Similarly, an op-tangentad $(\X,\TT^\b)$ can be regarded as a Leung functor:
\begin{align*}
&\Leung[\TT^\b]\colon\Weil\to\End_{\CC^\co}(\X)=\CC^\co(\X,\X)
\end{align*}
When $\CC$ admits opposite objects, we have an equivalence of monoidal categories $\CC^\co(\X,\X)\simeq\CC(\X^\op,\X^\op)$. Thus, every op-tangentad on $\X$ defines a tangentad on $\X^\op$ whose Leung functor is given by:
\begin{align*}
&\Leung[\TT]\colon\Weil\xrightarrow{\Leung[\TT^\b]}\End_{\CC^\co}(\X)=\CC^\co(\X,\X)\simeq\CC(\X^\op,\X^\op)=\End_\CC(\X^\op)
\end{align*}

\begin{lemma}
\label{lemma:op-tangent-objects}
If $(\X,\TT^\b)$ is an op-tangentad of $\CC$ and $\CC$ admits opposite objects, then $(\X,\TT)$ defines a tangentad, where:
\begin{align*}
&\Leung[\TT]\colon\Weil\xrightarrow{\Leung[\TT^\b]}\End_{\CC^\co}(\X)=\CC^\co(\X,\X)\simeq\CC(\X^\op,\X^\op)=\End_\CC(\X)
\end{align*}
\end{lemma}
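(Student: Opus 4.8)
The plan is to obtain $\Leung[\TT]$ as the composite of the Leung functor of the op-tangentad with the monoidal equivalence on endomorphism categories induced by the opposite-objects structure, and then to verify that this composite is again a Leung functor, i.e.\ strong monoidal and preserving the fundamental pointwise tangent limits.

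First I would unwind the hypotheses. By Definition~\ref{definition:op-tangent-structure}, an op-tangentad $(\X,\TT^\b)$ is exactly a tangentad of $\CC^\co$, hence a strong monoidal functor $\Leung[\TT^\b]\colon\Weil\to\End_{\CC^\co}(\X)=\CC^\co(\X,\X)$ preserving the fundamental pointwise tangent limits. The opposite-objects data of Definition~\ref{definition:opposite-objects} supply a $2$-equivalence $(-)^\op\colon\CC^\co\to\CC$ sending $\X$ to $\X^\op$; taking its action on $2$-cells yields an equivalence of hom-categories
\[
(-)^\op_{\X,\X}\colon\CC^\co(\X,\X)\xrightarrow{\sim}\CC(\X^\op,\X^\op)=\End_\CC(\X^\op).
\]
Since $(-)^\op$ is a $2$-functor it preserves horizontal composition of $1$-endomorphisms (up to coherent isomorphism), so $(-)^\op_{\X,\X}$ is a monoidal equivalence. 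Composing it after $\Leung[\TT^\b]$ defines $\Leung[\TT]$, which is strong monoidal as a composite of strong monoidal functors; this gives the first half of Definition~\ref{definition:tangent-object}.

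It then remains to check that $\Leung[\TT]$ preserves the fundamental pointwise tangent limits. Because any equivalence of categories preserves every limit that exists, the monoidal equivalence $(-)^\op_{\X,\X}$ carries the limit cones witnessing the $n$-fold pullback of the projection and the local-linearity condition — which $\Leung[\TT^\b]$ already produces in $\CC^\co(\X,\X)$ — to limit cones in $\End_\CC(\X^\op)$; preservation under the iterates of the tangent bundle transports along the equivalence as well, since these iterates correspond to one another under $(-)^\op_{\X,\X}$. The only point requiring care is the qualifier \emph{pointwise}: I must show that a limit preserved by every precomposition functor $\CC^\co(g,\X)$, for $g\colon\X'\to\X$ in $\CC^\co$, is sent to a limit preserved by every precomposition functor $\CC(h,\X^\op)$, for $h\colon\Y\to\X^\op$ in $\CC$. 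The key observation is that, $(-)^\op$ being a $2$-functor, one has $(-)^\op(F\circ g)=(-)^\op(F)\circ g^\op$, so $(-)^\op_{\X,\X}$ intertwines precomposition by $g$ with precomposition by $g^\op$; and, $(-)^\op$ being moreover a $2$-equivalence, every $h\colon\Y\to\X^\op$ is, up to the coherence isomorphism $\xi$ of Definition~\ref{definition:opposite-objects}, of the form $g^\op$. Hence pointwiseness on the $\CC^\co$ side transfers to pointwiseness on the $\CC$ side.

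I expect this last bookkeeping — matching the precomposition actions across $(-)^\op$ so that the \emph{pointwise} condition, and not merely the existence of the limits, is transported — to be the only real obstacle; the monoidality and the bare preservation of limits are both formal consequences of $(-)^\op_{\X,\X}$ being a monoidal equivalence. Once the naturality relating $\CC^\co(g,\X)$ and $\CC(g^\op,\X^\op)$ is recorded, the conclusion that $(\X^\op,\TT)$ is a tangentad follows at once.
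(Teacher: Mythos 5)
Your argument is correct and follows essentially the same route as the paper's (one-line) proof: compose $\Leung[\TT^\b]$ with the monoidal equivalence $\CC^\co(\X,\X)\simeq\CC(\X^\op,\X^\op)$ induced by $(-)^\op$, and observe that this equivalence is strong monoidal and preserves pointwise limits. Your extra bookkeeping on how the \emph{pointwise} qualifier transports — matching precomposition by $g$ with precomposition by $g^\op$ via the $2$-functoriality and essential surjectivity of $(-)^\op$ — is exactly the detail the paper leaves implicit.
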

\begin{proof}
Since $(-)^\op$ is an equivalence, the induced functor $\CC^\co(\X,\X)\to\CC(\X^\op,\X^\op)$ is a strong monoidal functor and preserves pointwise limits; thus, $\Leung[\TT]$ is a Leung functor.
\end{proof}

\begin{theorem}
\label{theorem:opposite-tangent-objects}
If the $2$-category $\CC$ admits opposite objects, so does the full $2$-subcategory $\AdjTng(\CC)$ of $\Tng(\CC)$ of adjunctable tangentads, where the opposite of an adjunctable tangentad $(\X,\TT)$ is defined by $(\X^\op,(\TT^\b)^\op)$ and $\TT^\b$ is the op-tangent structure of Proposition~\ref{proposition:adjunctable-tangent-structure}.
\end{theorem}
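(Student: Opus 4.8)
The plan is to mirror the construction of Example~\ref{example:adjunctable-monad} for monads, now carrying the entire tangent structure rather than a single multiplication, and to assemble the object-, $1$-morphism-, and $2$-morphism-level assignments into a single $2$-functor $(-)^\op\colon\AdjTng(\CC)^\co\to\AdjTng(\CC)$ together with the coherence isomorphism $\xi$ required by Definition~\ref{definition:opposite-objects}. On objects the assignment is already supplied: given an adjunctable tangentad $(\X,\TT)$, Proposition~\ref{proposition:adjunctable-tangent-structure} produces the op-tangent structure $\TT^\b$ on $\X$ by replacing each structural $2$-morphism with its mate along the adjunctions $\T_n^\b\dashv\T_n$, and Lemma~\ref{lemma:op-tangent-objects} transports this op-tangentad across the $2$-equivalence $(-)^\op\colon\CC^\co\to\CC$ to a genuine tangentad $(\X^\op,(\TT^\b)^\op)$. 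First I would check that this target is again adjunctable: since $\T^\b\dashv\T$ in $\CC$ becomes $\T\dashv\T^\b$ in $\CC^\co$ and $(-)^\op$ preserves adjunctions, $(\T^\b)^\op$ acquires the left adjoint $\T^\op$ (and likewise each $(\T_n^\b)^\op$), so the assignment lands in $\AdjTng(\CC)$ and can be iterated.

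Next I would define the $2$-functor on morphisms by the same mate recipe. Given a lax tangent $1$-morphism $(F,\alpha)\colon(\X,\TT)\to(\X',\TT')$ with $\alpha\colon F\o\T\Rightarrow\T'\o F$, adjunctability of the two objects lets me form the mate $\alpha^\b\colon{\T'}^\b\o F\Rightarrow F\o\T^\b$. The essential bookkeeping is the behaviour of variance: $\alpha^\b$ is a \emph{colax} distributive law for the op-tangent structures read in $\CC$, hence a \emph{lax} distributive law once read in $\CC^\co$, and applying $(-)^\op$ returns a genuine lax tangent $1$-morphism $(F^\op,(\alpha^\b)^\op)$ between the opposite tangentads. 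That $(F,\alpha^\b)$ satisfies the additivity, vertical-lift, and canonical-flip compatibilities for $\TT^\b$ is automatic: each such axiom is the mate of the corresponding axiom for $(F,\alpha)$, and mates send a commuting pasting diagram to the commuting pasting diagram of the mates (the fact from~\cite{kelly:2-categories} already invoked in Proposition~\ref{proposition:adjunctable-tangent-structure} and Example~\ref{example:adjunctable-monad}). The same principle handles $2$-morphisms: a tangent $2$-morphism $\varphi$ stays compatible with the mate distributive laws, so $\varphi^\op$ is a tangent $2$-morphism between the opposite $1$-morphisms, and $2$-functoriality in composition follows from the compatibility of mates with horizontal and vertical pasting.

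Finally I would exhibit the coherence isomorphism and conclude that $(-)^\op$ is a $2$-equivalence. Iterating the construction, the tangent bundle of $(\X^\op,(\TT^\b)^\op)$ is $(\T^\b)^\op$ with left adjoint $\T^\op$, so its own op-tangent structure has tangent bundle $\T^\op$, and a further application of $(-)^\op$ returns $(\T^\op)^\op$. Composing the $2$-isomorphism $\xi_\CC\colon\id_{\CC^\co}\Rightarrow((-)^\op)^\co\o(-)^\op$ of the ambient $2$-category with the fact that a double mate recovers the original structure morphism (the analogue of $(S^\b)^\b=S$ in Example~\ref{example:adjunctable-monad}) yields a $2$-isomorphism $\xi\colon\id_{\AdjTng(\CC)^\co}\Rightarrow((-)^\op)^\co\o(-)^\op$. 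Invertibility of $\xi$ together with the symmetric isomorphism on the other side makes $((-)^\op)^\co$ a quasi-inverse, so $(-)^\op$ is the required $2$-equivalence and $\AdjTng(\CC)$ admits opposite objects.

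I expect the main obstacle to be precisely the variance bookkeeping of the second paragraph --- keeping straight how ``lax'' turns into ``colax'' and back as one passes through $\CC^\co$ and applies $(-)^\op$, and verifying that $\xi$ is genuinely $2$-natural --- rather than any new computation, since Proposition~\ref{proposition:adjunctable-tangent-structure} already discharges all the equational content at the object level and the preservation-of-pasting property of mates trivialises the morphism- and $2$-morphism-level axioms.
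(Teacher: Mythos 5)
Your proposal is correct and follows essentially the same route as the paper's proof: adjunctability of the opposite via preservation of adjunctions, mates of the distributive laws (with the same type ${\T'}^\b\o F\Rightarrow F\o\T^\b$ and the same lax/colax variance bookkeeping through $\CC^\co$), preservation of pasting diagrams by mates to discharge the axioms, and the triangle identities to identify $(\TT^\b)^\b$ with $\TT$ for the coherence isomorphism $\xi$. The only difference is presentational — you spell out the assembly into a $2$-functor and the $2$-naturality of $\xi$ more explicitly than the paper does.
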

\begin{proof}
Since $(-)^\op$ sends adjunctions to (co)adjunctions, if $(\X,\TT)$ is adjunctable, so is $(\X^\op,(\TT^\b)^\op)$. Moreover, by employing the triangle identities of the unit and the counit, one can show that $(\TT^\b)^\b$ coincides with $\TT$. Thus, $((\X,\TT)^\op)^\op\cong(\X,\TT)$. Given a lax morphism of adjunctable tangentads $(F,\alpha)\colon(\X,\TT)\to(\X',\TT')$, by taking the mate $\alpha^\b$ of the distributive law $\alpha\colon F\o\T\Rightarrow\T'\o F$, one defines a colax op-tangent morphism $(F,\alpha^\b)\colon(\X,\TT^\b)\to(\X',{\TT'}^\b)$ between the corresponding op-tangent morphisms. In particular, $\alpha^\b$ is of type ${\T'}^\b\o F\Rightarrow F\o\T^\b$. By taking the opposite, one obtains a lax tangent morphism:
\begin{align*}
&(F^\op,(\alpha^\b)^\op)\colon(\X^\op,(\TT^\b)^\op)\to({\X'}^\op,({\TT'}^\b)^\op)
\end{align*}
Finally, a $2$-morphism $\varphi\colon(F,\alpha)\Rightarrow(G,\beta)$ of tangentads defines a $2$-morphism of tangentads $\varphi^\op\colon(F^\op,(\alpha^\b)^\op)\Rightarrow(G^\op,(\beta^\b)^\op)$.
\end{proof}

An important class of adjunctable tangent categories is the one of \textit{representable} tangent categories, whose tangent bundle functor is a representable functor. Here, we extend this notion to Cartesian tangentads.

\begin{definition}
\label{definition:representable-tangent-object}
A \textbf{representable tangentad} of a Cartesian $2$-category $\CC$ is a Cartesian tangentad $(\X,\TT)$ of $\CC$ equipped with a sequence of $1$-morphisms $D_n\colon\*\to\X$ of $\CC$ from the terminal object $\*$ of $\CC$ to $\X$ such that, for every positive integer $n$, the $1$-morphism:
\begin{align*}
&D_n\times-\colon\X\cong\*\times\X\xrightarrow{D_n\times\id_\X}\X\times\X\xrightarrow{\times}\X
\end{align*}
is a left adjoint to the $1$-morphism $\T_n\colon\X\to\X$.
\end{definition}

\begin{corollary}
\label{corollary:opposite-representable}
When $\CC$ is a Cartesian $2$-category with opposite objects, the opposite of a representable tangentad is an adjunctable tangentad.
\end{corollary}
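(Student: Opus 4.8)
The plan is to observe that representability is a strengthening of adjunctability, so that the opposite tangentad is already furnished by Theorem~\ref{theorem:opposite-tangent-objects}, and then to verify that this opposite again carries the defining left adjoints of Definition~\ref{definition:adjunctable-tangent-structure}. The whole argument is short because the substantive work has been done in Proposition~\ref{proposition:adjunctable-tangent-structure} and Theorem~\ref{theorem:opposite-tangent-objects}; the corollary merely combines them with the fact that $(-)^\op$ interchanges left and right adjoints.

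First I would note that a representable tangentad $(\X,\TT)$ is in particular adjunctable: by Definition~\ref{definition:representable-tangent-object}, each $\T_n$ admits the left adjoint $\T_n^\b=D_n\times-$, which is precisely the requirement of Definition~\ref{definition:adjunctable-tangent-structure}. Consequently Proposition~\ref{proposition:adjunctable-tangent-structure} produces the op-tangent structure $\TT^\b$ on $\X$, whose tangent bundle $1$-morphism is $\T^\b$, and Theorem~\ref{theorem:opposite-tangent-objects} yields the opposite tangentad $(\X^\op,(\TT^\b)^\op)$. It remains only to show that this opposite is adjunctable.

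Next I would identify the $n$-fold tangent bundle $1$-morphisms of the opposite. In the op-tangent structure $\TT^\b$ the $n$-fold structure morphism is $\T_n^\b$, realised as the $n$-fold pushout of the coprojection $p^\b$ along itself, exactly as in the proof of Proposition~\ref{proposition:adjunctable-tangent-structure}. Passing through the equivalence $\CC^\co(\X,\X)\simeq\CC(\X^\op,\X^\op)$ of Lemma~\ref{lemma:op-tangent-objects}, the corresponding $1$-morphism of $(\X^\op,(\TT^\b)^\op)$ is $(\T_n^\b)^\op=(D_n\times-)^\op$. To establish adjunctability of the opposite it therefore suffices to exhibit a left adjoint for each $(\T_n^\b)^\op$ in $\CC$.

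The key step is that $(-)^\op$ interchanges left and right adjoints. Since $\T_n^\b\dashv\T_n$ in $\CC$, reading this adjunction in the $2$-cocategory $\CC^\co$ swaps the two adjoints, and applying the covariant $2$-equivalence $(-)^\op\colon\CC^\co\to\CC$, which preserves adjunctions, produces the adjunction $(\T_n)^\op\dashv(\T_n^\b)^\op$ in $\CC$. Thus each $(\T_n^\b)^\op$ admits the left adjoint $(\T_n)^\op$, so $(\X^\op,(\TT^\b)^\op)$ is adjunctable, as claimed. The only real care required lies in the bookkeeping: tracking the reversal of $2$-cells between $\CC$ and $\CC^\co$, and confirming that the $n$-fold morphism of the opposite structure is genuinely $(\T_n^\b)^\op$ rather than some other composite. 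Once that is pinned down, the adjunction is immediate, since a left adjoint is in particular a $1$-morphism possessing a right adjoint, and $(-)^\op$ turns the one into the other.
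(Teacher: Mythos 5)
Your proposal is correct and follows the route the paper intends: the corollary is stated without proof precisely because a representable tangentad is adjunctable by definition (each $\T_n$ has left adjoint $D_n\times-$), and the first line of the proof of Theorem~\ref{theorem:opposite-tangent-objects} already records that $(-)^\op$ turns these adjunctions around, making $(\X^\op,(\TT^\b)^\op)$ adjunctable. Your more detailed bookkeeping of the passage through $\CC^\co$ and the identification of $(\T_n^\b)^\op$ with the $n$-fold structural morphism of the opposite is accurate and simply makes explicit what the paper leaves implicit.
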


Let $\El(\X)$ denote the category of global elements of $\X$, namely, the category whose objects are $1$-morphisms $a\colon\*\to\X$ and morphisms are $2$-morphisms $a\Rightarrow b\colon\*\to\X$.

\begin{definition}
\label{definition:infinitesimal-element}
A \textbf{microscopic element} of a Cartesian object $\X$ of a Cartesian $2$-category $\CC$ consists of an element $D\colon\*\to\X$ together with the following data:
\begin{description}
\item[Coprojection] A coprojection $\zeta\colon\1\to D$, where $\1\colon\*\to\X$ denotes the terminal element of $\X$, and for which the $n$-fold pointwise pushout $D_n\colon\1\to\X$ of $\zeta$ along itself is well-defined in the category $\El(\X)$ of elements of $\X$ and preserved by each functor $D^n\times-$, with $D^n\=D\times\dots\times D\colon\1\to\X$;

\item[Cosum morphism] A cosum $\delta\colon D\to D_2$, which makes $\zeta$ a cocommutative comonoid of the coslice category $\1/\El(\X)$;

\item[Multiplication] A multiplication $\mu\colon D\times D\to D$, which is coassociative, cocommutative, compatible with the coprojection, coadditive with respect to the cosum, and which satisfies the following condition:
\begin{equation*}
\begin{tikzcd}
D & {D\times D} & D \\
{\*} & D & {\*}
\arrow["{\<!\zeta,\id_D\>}", from=1-1, to=1-2]
\arrow["{!}"', from=1-1, to=2-1]
\arrow["\mu", from=1-2, to=2-2]
\arrow["{\<\id_D,!\zeta\>}"', from=1-3, to=1-2]
\arrow["{!}", from=1-3, to=2-3]
\arrow["\zeta"', from=2-1, to=2-2]
\arrow["\zeta", from=2-3, to=2-2]
\end{tikzcd}
\end{equation*}
\end{description}
Furthermore, the multiplication is universal, namely, the diagram
\begin{equation*}
\begin{tikzcd}
D & {D\times D} \\
{\*} & {D_2}
\arrow["{\<\id_D,\zeta\>}", from=1-1, to=1-2]
\arrow["{!}"', from=1-1, to=2-1]
\arrow["\xi", from=1-2, to=2-2]
\arrow["{\zeta\iota_1}"', from=2-1, to=2-2]
\end{tikzcd}
\end{equation*}
is a pushout diagram, where:
\begin{align*}
&\xi\colon D\times D\xrightarrow{\id_D\times\delta}D\times D_2\xrightarrow{[\mu\iota_1,\pi_1\iota_2]}D_2
\end{align*}
A microscopic object \textbf{has negatives} when it comes equipped with an extra morphism:
\begin{description}
\item[Conegation] $\kappa\colon D\to D$ such that:
\begin{equation*}
\begin{tikzcd}
D & {D_2} \\
{\*} & D
\arrow["\delta", from=1-1, to=1-2]
\arrow["{!}"', from=1-1, to=2-1]
\arrow["{[\id_D,\kappa]}", from=1-2, to=2-2]
\arrow["\zeta"', from=2-1, to=2-2]
\end{tikzcd}
\end{equation*}
\end{description}
An \textbf{infinitesimal element} (with negatives) of $\X$ is a microscopic element $D$ (with negatives) of $\X$, such that for every positive integer $n$, $D_n\times-$ admits a left adjoint.
\end{definition}

As proved in~\cite[Proposition~5.7]{cockett:tangent-cats}, representable tangent structures are equivalent to infinitesimal objects in the underlying category. We can extend this result for tangentads.

\begin{proposition}
\label{proposition:representable}
If $(\X,\TT)$ is a representable tangentad (with negatives) and $D\colon\*\to\X$ represents the tangent bundle morphism, $D$ comes equipped with the structure of an infinitesimal element (with negatives) of $\X$. Moreover, if $D$ is an infinitesimal element of $\X$ (with negatives), then the left adjoint of $D\times-$ defines a representable tangent structure on $\X$.
\end{proposition}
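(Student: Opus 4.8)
The plan is to read both implications off a single mate correspondence, reusing Proposition~\ref{proposition:adjunctable-tangent-structure}. A representable tangentad is in particular adjunctable in the sense of Definition~\ref{definition:adjunctable-tangent-structure}, since by Definition~\ref{definition:representable-tangent-object} each $\T_n$ admits the left adjoint $D_n\times-$; hence Proposition~\ref{proposition:adjunctable-tangent-structure} equips $\X$ with an op-tangent structure $\TT^\b$ whose tangent bundle is $\T^\b=D\times-$, whose $n$-fold analogue is $\T_n^\b=D_n\times-$, and whose coprojection, cozero, cosum, vertical colift, canonical coflip (and conegation, when negatives are present) are the mates $p^\b,z^\b,s^\b,l^\b,c^\b$ (and $n^\b$) of the tangent data of $(\X,\TT)$. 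The first step I would carry out is to set up the representability dictionary: because $\X$ is a Cartesian object, evaluation at the terminal element $\1\colon\*\to\X$ exhibits a functorial, product-compatible bijection between $2$-morphisms $a\times-\Rightarrow b\times-$ of $\End_\CC(\X)$ and morphisms $a\to b$ of $\El(\X)$. Transporting the structural $2$-morphisms of $\TT^\b$ through this dictionary produces precisely the data of a microscopic element: $p^\b$ becomes the coprojection $\zeta\colon\1\to D$, $z^\b$ the terminal morphism $!\colon D\to\*$, $s^\b$ the cosum $\delta\colon D\to D_2$, $l^\b$ the multiplication $\mu\colon D\times D\to D$, $c^\b$ the symmetry of $D\times D$, and $n^\b$ the conegation $\kappa\colon D\to D$.

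With this dictionary in place, the equational axioms transport automatically. Since mates preserve pasting diagrams (the fact already invoked in Proposition~\ref{proposition:adjunctable-tangent-structure}; cf.~\cite{kelly:2-categories}), $\TT^\b$ satisfies the dual of every tangent axiom, and the functoriality and product-compatibility of the dictionary convert each such identity between the $D_n\times-$ into the matching identity between the $D_n$. Concretely, the additive-bundle structure $(p^\b,z^\b,s^\b)$ becomes the cocommutative comonoid condition exhibiting $\zeta$ as a comonoid of the coslice $\1/\El(\X)$; the additive-bundle-morphism condition on the vertical colift becomes the coassociativity, cocommutativity, coadditivity, and coprojection-compatibility of $\mu$, together with the compatibility relating $\mu$, $\zeta$, and $!$; the symmetric-braiding axioms on $c^\b$ become the commutativity of $\mu$; and the negation equation for $n^\b$ becomes the conegation equation for $\kappa$. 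Each is obtained by applying the dictionary to the corresponding op-tangent axiom.

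The delicate point, and the step I expect to be the main obstacle, is the transport of the universality conditions, which on the tangent side are limit statements and on the infinitesimal side colimit statements. The defining data of $\T_n$ as the pointwise $n$-fold $\T$-pullback of $p$, preserved by all iterates $\T^m$, together with the local-linearity pullback, must correspond, through the adjunctions $D_n\times-\dashv\T_n$ and the dictionary, to the $n$-fold pushout of $\zeta$ in $\El(\X)$ preserved by each $D^m\times-$, and to the pushout expressing the universality of $\mu$ (the diagram for $\xi$ in Definition~\ref{definition:infinitesimal-element}). Carrying out these dualizations exactly—tracking the pointwise qualifier and the variable in which preservation is required—is the crux; once it is done, $D$ is a microscopic element, and it is infinitesimal because each $D_n\times-=\T_n^\b$ admits the adjoint $\T_n$. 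For the converse I would run the dictionary in reverse: given an infinitesimal element $D$, the adjoint $\T_n$ of $D_n\times-$ furnished by the hypothesis is taken as the tangent bundle, so that $D_n\times-\dashv\T_n$; the inverse mates of $\zeta,\delta,\mu$, the symmetry, and $\kappa$ define the structural $2$-morphisms; the same pasting-and-dictionary argument verifies the tangent axioms; the universality of $\mu$ and of the pushouts $D_n$ yields the required pointwise $\T$-limits; and $\T_n$, being a right adjoint, preserves finite products, so the resulting tangentad is Cartesian and representable.
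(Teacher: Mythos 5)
Your proposal is correct and follows essentially the same route as the paper: both directions transport the structure through the adjunctions $D_n\times-\dashv\T_n$ using the mate calculus of Proposition~\ref{proposition:adjunctable-tangent-structure}, and then identify $2$-morphisms between the $1$-morphisms $D_n\times-$ with morphisms between the elements $D_n$ themselves. The paper's own proof is far terser; the intermediate op-tangent structure $\TT^\b$ and the limit-to-colimit transport of the universality conditions that you rightly flag as the delicate step are exactly what it compresses into the phrase ``the structural morphisms of $\TT$ contravariantly translate into structural morphisms on $D$.''
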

\begin{proof}
If $\TT$ is representable, $\T$ is exponentiable; thus, the structural morphisms of $\TT$ contravariantly translate into structural morphisms on $D$, which equip $D$ with an infinitesimal structure.
\par Conversely, one can see that $D\times-$ together with the structure maps listed above define an op-tangent structure on $\X$, namely, a tangent structure $[D\times-]$ on $\X$ in $\CC^\co$. Moreover, since each $D_n\times-$ admits a right adjoint in $\CC$, such a tangent structure is adjunctable in $\CC^\co$. Thus, by Proposition~\ref{proposition:adjunctable-tangent-structure}, we define an op-tangent structure on $\X$ in $\CC^\co$, adjoint to $[D\times-]$, namely, a tangent structure on $\X$ in $\CC$, whose morphisms $\T_n$ are right adjoint to $D_n\times-$. Thus, such a tangent structure is representable.
\end{proof}

\subsection{Tangentads with negatives as right Kan extensions}
\label{subsection:negatives-Kan-extension}
In~\cite[Section~3.3]{cockett:tangent-cats}, Cockett and Cruttwell showed that, under mild conditions, a tangent category $(\X,\TT)$ admits a subtangent structure $\TT^-$ of $\TT$ with negatives. Here, we would like to extend this construction in the formal context of tangentads and reinterpret it formally as a suitable right Kan extension.

\par Let us start by recalling Cockett and Cruttwell's construction. Let us assume $(\X,\TT)$ to be a tangent category for which, for every object $M$ and for each positive integer $k$, the tangent pullback of $s\colon\T_2M\to\T M$ along $z\colon M\to\T M$ exists:
\begin{equation}
\label{equation:negative-pullbacks}
\begin{tikzcd}
{\T^-_kM} & {\T_{2k}M} \\
M & {\T_kM}
\arrow["e_k", from=1-1, to=1-2]
\arrow["{p_k^-}"', from=1-1, to=2-1]
\arrow["\lrcorner"{anchor=center, pos=0.125}, draw=none, from=1-1, to=2-2]
\arrow["s_k", from=1-2, to=2-2]
\arrow["z_k"', from=2-1, to=2-2]
\end{tikzcd}
\end{equation}
The tangent structure $\TT^-$ is obtained by restricting $\TT$ along the natural transformation:
\begin{align*}
&\epsilon\colon\T^-M\xrightarrow{e}\T_2M\xrightarrow{\pi_1}\T M
\end{align*}
Furthermore, $\TT^-$ comes equipped with a negation morphism defined as follows:
\begin{equation*}
\begin{tikzcd}
{\T^-M} && {\T_2M} \\
& {\T^-M} & {\T_2M} \\
& M & {\T M}
\arrow["e", from=1-1, to=1-3]
\arrow["n"{description}, dashed, from=1-1, to=2-2]
\arrow["{p^-}"', bend right, from=1-1, to=3-2]
\arrow["\tau", from=1-3, to=2-3]
\arrow["e", from=2-2, to=2-3]
\arrow["{p^-}"', from=2-2, to=3-2]
\arrow["\lrcorner"{anchor=center, pos=0.125}, draw=none, from=2-2, to=3-3]
\arrow["s", from=2-3, to=3-3]
\arrow["z"', from=3-2, to=3-3]
\end{tikzcd}
\end{equation*}
The main result of this section consists of showing that $\TT^-$ can be constructed via a right Kan extension.

\begin{proposition}
\label{proposition:negatives-via-right-kan-extension}
If a tangent category $(\X,\TT)$ admits the tangent pullbacks of Equation~\eqref{equation:negative-pullbacks}, the Leung functor:
\begin{align*}
&\Leung[\TT]\colon\Weil\to\End(\X)
\end{align*}
admits a pointwise right Kan extension:
\begin{align*}
&\Leung^-[\TT]\colon\Weil^-\to\End(\X)
\end{align*}
along the inclusion $\Weil\to\Weil^-$ and the corresponding tangent structure with negatives is precisely $\TT^-$.
\end{proposition}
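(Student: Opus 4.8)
The strategy is to build the extension directly from the pointwise right Kan extension formula and to show that, at each object, the defining limit collapses onto the tangent pullbacks of~\eqref{equation:negative-pullbacks}. Write $\iota\colon\Weil\to\Weil^-$ for the inclusion; if the pointwise extension exists, its value at an object $V$ of $\Weil^-$ is
\[
\Leung^-[\TT](V)=\lim_{(U,f)\in(V\downarrow\iota)}\Leung[\TT](U),
\]
the limit over the comma category of pairs $(U,f)$ with $U$ a Weil rig and $f\colon V\to\iota U$ in $\Weil^-$. Since $\iota$ is faithful but not full, the counit $\Leung^-[\TT]\o\iota\Rightarrow\Leung[\TT]$ genuinely changes values. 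I would first pin down its component at $W$: the object $(W,\id_{W^-})$ is reached from $(W_2,\phi)$ — where $\phi\colon W^-\to W_2^-$, $\phi(\epsilon)=\epsilon_1-\epsilon_2$ — by the rig map $\epsilon_1\mapsto\epsilon$, $\epsilon_2\mapsto0$, which $\Leung[\TT]$ sends to $\pi_1$; so the counit at $W$ is $\pi_1\o e=\epsilon\colon\T^-\Rightarrow\T$, exactly the restriction map along which Cockett and Cruttwell define $\TT^-$.

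The core of the proof is the identification $\Leung^-[\TT](W_k^-)\cong\T_k^-$. I would exhibit the pullback $\T_k^-$ of~\eqref{equation:negative-pullbacks} as the limit by producing a cone and verifying its universality over the whole comma category. For each $(U,f)$ I send $\T_k^-$ to $\Leung[\TT](U)$ by $\Leung[\TT](g)\o e_k$, where $g\colon W_{2k}\to U$ is any rig map with $g^-\o\phi_k=f$ and $\phi_k\colon W_k^-\to W_{2k}^-$ is given by $\epsilon_i\mapsto\epsilon_i-\epsilon_{k+i}$. The conceptual crux is that this is independent of the choice of $g$: two such maps differ by a term that the defining relation $s_k\o e_k=z_k\o p_k^-$ absorbs into an additive zero, so the two restrictions to $\T_k^-$ agree. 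Granting this, universality is immediate — any cone determines a unique map into the pullback via its components at $(W_{2k},\phi_k)$ and $(W_0,0)$, since these agree over $(W_k,0)$ through $s_k$ and $z_k$ — provided every object of the comma category is dominated by $(W_{2k},\phi_k)$.

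Matching the remaining structure with $\TT^-$ should then be a matter of transporting the generators of $\Weil^-$ through the limit. The negation is the cleanest case: the generator $n\colon W^-\to W^-$, $n(\epsilon)=-\epsilon$, acts on the comma category by precomposition and carries $(W_2,\phi)$ to $(W_2,\tau^-\o\phi)$, with $\tau$ the symmetry of $W_2$; hence $\Leung^-[\TT](n)$ is induced by the flip $\tau$ on $\T_2$, which is precisely the negation Cockett and Cruttwell build. The coprojection, cozero, cosum, vertical colift and canonical coflip arise the same way, each recovered as the restriction along $\epsilon$ of the corresponding structure of $\TT$; since a tangentad with negatives is exactly a strong monoidal functor out of $\Weil^-$ preserving the fundamental pointwise tangent limits, this identifies $\Leung^-[\TT]$ with the Leung functor of $\TT^-$. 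Pointwiseness in $\End(\X)$ is inherited for free, since the pullbacks of~\eqref{equation:negative-pullbacks} are tangent, hence pointwise, limits by hypothesis.

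I expect the main obstacle to be the domination claim together with the choice-independence of the cone. Domination amounts to a decomposition lemma for Weil algebras: given a Weil rig $U$ and elements $u_1\,u_k$ of the augmentation ideal of $U^-$ that are square-zero and pairwise annihilating (as forced by $f$ being a $\Weil^-$-morphism out of $W_k^-$), one must write $u_i=a_i-b_i$ with $a_i,b_i$ in the augmentation ideal of the rig $U$ and with all the products $a_ia_j$, $a_ib_j$, and $b_ib_j$ vanishing, so that a single $g\colon W_{2k}\to U$ realises the whole family. Proving this joint decomposition, using the nilpotent filtration of $U$, and checking the choice-independence of the cone in full generality, is the genuinely technical heart; the strong monoidality and limit-preservation verifications are then routine once the values and structural morphisms are in hand.
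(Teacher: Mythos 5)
Your route is genuinely different from the paper's, and the difference matters. The paper never touches the pointwise limit formula over a comma category: it takes Cockett and Cruttwell's already-constructed subtangent structure $\TT^-$ as the \emph{candidate} for $\Leung^-[\TT]$, defines the counit componentwise by $\epsilon_n\colon\T_n^-\xrightarrow{e_n}\T_{2n}\xrightarrow{(\pi_1)_n}\T_n$, and verifies the universal property directly: a competing cone $\theta$ is factored by the map $\xi$ induced into the single hypothesised tangent pullback of Equation~\eqref{equation:negative-pullbacks} by $\theta_2\circ\langle\id,n\rangle$ and $p^-$, where $n$ is the negation of $\TT^-$; uniqueness follows from the same pullback. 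Nothing beyond the pullbacks in the hypothesis is ever invoked. You instead try to \emph{construct} $\Leung^-[\TT](W_k^-)$ as the limit over $(W_k^-\downarrow\iota)$ and then collapse that limit onto $\T_k^-$ by a domination argument.

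The gap is that the two steps you defer are not finishing touches but the entire content of the proposition on your route. First, existence: $\End(\X)$ is not complete, and the hypothesis only supplies the specific pullbacks of Equation~\eqref{equation:negative-pullbacks}, so the limit over the whole comma category is not known to exist until \emph{after} the domination claim is proved; you cannot first form the limit and then identify it. Second, the domination claim is a nontrivial assertion about arbitrary objects $U=W_{n_1}\otimes\dots\otimes W_{n_r}$ of $\Weil$: you must decompose every family of square-zero, pairwise-annihilating elements of the augmentation ideal of $U^-$ as $u_i=a_i-b_i$ with all products $a_ia_j$, $a_ib_j$, $b_ib_j$ vanishing in the rig $U$, and show the resulting cone leg is independent of the choices. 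Neither is proved, and the first is not obviously true: the relation $a^2+b^2=2ab$ in a rig does not force the three terms to vanish separately without an argument exploiting the monomial structure of $\Weil$'s objects. Finally, the extension asserted here is one of monoidal (Leung) functors --- the paper checks universality against monoidal natural transformations --- whereas the comma-category formula computes the Kan extension in $\Cat$; that the result carries a strong monoidal structure compatible with the counit is a further claim you label routine but do not address. As it stands, the proposal identifies the right objects but does not establish existence, the key identification, or monoidality; the paper's direct verification against the given pullback and the negation avoids all three issues.
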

\begin{proof}
Let us start by defining $\Leung^-[\TT]$ as the Leung functor associated with the tangent structure with negatives $\TT^-$, defined by Cockett and Cruttwell.

\par Since $\Weil$ is generated by all Weil algebras $W_n$, a monoidal natural transformation $\epsilon$:
\begin{equation*}
\begin{tikzcd}
\Weil && {\End(\X)} \\
& {\Weil^-}
\arrow[""{name=0, anchor=center, inner sep=0}, "{\Leung[\TT]}", from=1-1, to=1-3]
\arrow[from=1-1, to=2-2]
\arrow["{\Leung^-[\TT]}"', from=2-2, to=1-3]
\arrow["\epsilon"', shorten >=3pt, Rightarrow, from=2-2, to=0]
\end{tikzcd}
\end{equation*}
is fully specified by natural transformations:
\begin{align*}
&\epsilon_n\colon\Leung^-[\TT](W_n)\to\Leung[\TT](W_n^-)
\end{align*}
Let us define each $\epsilon_n$ as follows:
\begin{align*}
&\epsilon_n\colon\T_n^-M\xrightarrow{e_n}\T_{2n}M\xrightarrow{(\pi_1)_n}\T M
\end{align*}
As shown by Cockett and Cruttwell, $\epsilon_n$ makes $\TT^-$ into a subtangent structure of $\TT$, thus $\epsilon$ is a well-defined monoidal natural transformation. Let us show that $\epsilon$ satisfies the universal property of a right Kan extension. Consider another monoidal natural transformation:
\begin{equation*}
\begin{tikzcd}
\Weil && {\End(\X)} \\
& {\Weil^-}
\arrow[""{name=0, anchor=center, inner sep=0}, "{\Leung[\TT]}", from=1-1, to=1-3]
\arrow[from=1-1, to=2-2]
\arrow["{\Leung^-[\TT]}"', from=2-2, to=1-3]
\arrow["\theta"', shorten >=3pt, Rightarrow, from=2-2, to=0]
\end{tikzcd}
\end{equation*}
Evaluated on the Weil algebra $W$, $\theta$ is a natural transformation:
\begin{align*}
&\theta\colon\T^-M\to\T M
\end{align*}
Let us define $\xi\colon\T^-M\to\T^-M$ as the morphism induced by the universal property of the tangent pullbacks~\eqref{equation:negative-pullbacks}:
\begin{equation*}
\begin{tikzcd}
{\T^-M} && {\T^-_2M} \\
& {\T^-M} & {\T_2M} \\
& M & {\T M}
\arrow["{\<\id,n\>}", from=1-1, to=1-3]
\arrow["\xi"{description}, dashed, from=1-1, to=2-2]
\arrow["{p^-}"', bend right, from=1-1, to=3-2]
\arrow["{\theta_2}", from=1-3, to=2-3]
\arrow["e", from=2-2, to=2-3]
\arrow["{p^-}"', from=2-2, to=3-2]
\arrow["\lrcorner"{anchor=center, pos=0.125}, draw=none, from=2-2, to=3-3]
\arrow["s", from=2-3, to=3-3]
\arrow["z"', from=3-2, to=3-3]
\end{tikzcd}
\end{equation*}
Thus, we shall compute:
\begin{align*}
&\quad\epsilon\o\xi\\
=&\quad\pi_1\o e\o\xi\\
=&\quad\pi_1\o\theta_2\o\<\id,n\>\\
=&\quad\theta
\end{align*}
Therefore, by extending $\xi$ to each $W_n$ in the obvious way, we obtain a monoidal natural transformation $\xi\colon\Leung^-[\TT]\Rightarrow\Leung^-[\TT]$ such that $\epsilon\o\xi=\theta$. Finally, suppose that $\xi'\colon\Leung^-[\TT]\Rightarrow\Leung^-[\TT]$ satisfies the same equation as $\xi$. Thus:
\begin{align*}
&\quad e\o\xi\\
=&\quad\pi_1\o\theta_2\o\<\id,n\>\\
=&\quad\pi_1\o(\xi'e)_2\o\<\id,n\>\\
=&\quad e\o\xi'\o\pi_1\o\<\id,n\>\\
=&\quad e\o\xi'
\end{align*}
Moreover, $p^-\o\xi'=p^-$, since $\xi'$ needs to be compatible with the tangent structure $\TT^-$. Therefore, by the universal property of the tangent pullback~\eqref{equation:negative-pullbacks}, $\xi'=\xi$.
\end{proof}

\begin{definition}
\label{definition:right-deniable-tangent-object}
A \textbf{right deniable} tangentad is a tangentad $(\X,\TT)$ whose Leung functor $\Leung[\TT]$ admits a pointwise right Kan extension along the inclusion $\Weil\to\Weil^-$ as a Leung functor.
\end{definition}

Definition~\ref{definition:right-deniable-tangent-object} suggests another possibility of equipping a tangent structure with negatives.

\begin{definition}
\label{definition:left-deniable-tangent-object}
A \textbf{left deniable} tangentad is a tangentad $(\X,\TT)$ whose Leung functor $\Leung[\TT]$ admits a pointwise left Kan extension along the inclusion $\Weil\to\Weil^-$ as a Leung functor.
\end{definition}

Left deniability is yet to be explored. Future work will investigate this concept.


\section{Applications of the formal approach}
\label{section:known-examples}
In this section, we re-interpret some of the constructions of tangent category theory, such as tangent fibrations, tangent monads, or reverse tangent categories, from the viewpoint of tangentads. We exploit this reformulation to prove some results: we show that a tangent monad admits the construction of algebras provided the underlying monad does; we recall the Grothendieck equivalence between tangent fibrations and tangent indexed categories; we prove that tangent split restriction categories are equivalent to $\M$-tangent categories.

\subsection{Tangent monads and the construction of algebras}
\label{subsection:tangent-monads}
In~\cite{cockett:tangent-monads}, Cockett, Lemay, and Lucyshyn-Wright introduced the notion of a tangent monad. In this section, we prove that tangent monads are precisely tangentads in the $2$-category $\Mnd$ of monads. We employ this characterization to prove that the tangent category of algebras of a tangent monad is precisely the Eilenberg-Moore object of the tangent monad, as defined by Street in~\cite{street:formal-theory-monads}.

\par Since the $2$-category of monads $\Mnd(\CC)$ plays an important role in this section, let us start by recalling this construction. The objects of $\Mnd(\CC)$ are tuples $(\X;S,\eta,\mu)$ formed by an object $\X$ of $\CC$ together with a monad $(S,\eta,\mu)$ on $\X$, which is a monoid in the monoidal category $\End(\X)$.

\par A $1$-morphism of $\Mnd(\CC)$ from a monad $(\X,S)$ to a monad $(\X',S')$ consists of a $1$-morphism $F\colon\X\to\X'$ of $\CC$ together with a $2$-morphism $\alpha\colon S'\o F\Rightarrow S\o F$, satisfying the following compatibility with the unit $\eta\colon\id_\X\Rightarrow S$ and the multiplication $\mu\colon S^2\Rightarrow S$:
\begin{equation*}
\begin{tikzcd}
{S'\o F} && {F\o S} \\
F && F
\arrow["\alpha", from=1-1, to=1-3]
\arrow["{\eta'F}", from=2-1, to=1-1]
\arrow[Rightarrow, no head, from=2-1, to=2-3]
\arrow["{F\eta}"', from=2-3, to=1-3]
\end{tikzcd}\hfill\quad
\begin{tikzcd}
{S'^2\o F} & {S'\o F\o S} & {F\o S^2} \\
{S'\o F} && {F\o S}
\arrow["{S'\alpha}", from=1-1, to=1-2]
\arrow["{\mu\T}"', from=1-1, to=2-1]
\arrow["{\alpha S}", from=1-2, to=1-3]
\arrow["{\T\mu}", from=1-3, to=2-3]
\arrow["\alpha"', from=2-1, to=2-3]
\end{tikzcd}
\end{equation*}
Finally, a $2$-morphism $\varphi\colon(F,\alpha)\Rightarrow(G,\beta)\colon(\X,S)\to(\X',S')$ consists of a $2$-morphism $\varphi\colon F\Rightarrow G$ of $\CC$, compatible with the distributive laws:
\begin{equation*}
\begin{tikzcd}
{S'\o F} && {F\o S} \\
{S'\o G} && {G\o S}
\arrow["\alpha", from=1-1, to=1-3]
\arrow["{S'\varphi}"', from=1-1, to=2-1]
\arrow["{\varphi S}", from=1-3, to=2-3]
\arrow["\beta"', from=2-1, to=2-3]
\end{tikzcd}
\end{equation*}

\begin{definition}
\label{definition:tangent-monad}
A \textbf{tangent monad} in a $2$-category $\CC$ consists of a monad in the $2$-category $\Tng(\CC)$ of tangentads of $\CC$.
\end{definition}

Let us unwrap this definition. A monad of $\Tng(\CC)$ consists of a tangentad $(\X,\TT)$ of $\CC$ equipped with a monad $(S,\eta,\mu)$ on the category $\X$ with a distributive law $\alpha\colon S\o\T\Rightarrow\T\o S$ between the monad $S$ and the tangent bundle functor.

\par Moreover, $\alpha$ is required to be compatible with the tangent structure $\TT$ and with the unit $\eta\colon\id_\X\Rightarrow S$ and the multiplication $\mu\colon S^2\Rightarrow S$ of the monad:
\begin{equation*}
\begin{tikzcd}
{S\o\T} && {\T\o S} \\
\T && \T
\arrow["\alpha", from=1-1, to=1-3]
\arrow["{\eta\T}", from=2-1, to=1-1]
\arrow[Rightarrow, no head, from=2-1, to=2-3]
\arrow["{\T\eta}"', from=2-3, to=1-3]
\end{tikzcd}\hfill\quad
\begin{tikzcd}
{S^2\o\T} & {S\o\T\o S} & {\T\o S^2} \\
{S\o\T} && {\T\o S}
\arrow["{S\alpha}", from=1-1, to=1-2]
\arrow["{\mu\T}"', from=1-1, to=2-1]
\arrow["{\alpha S}", from=1-2, to=1-3]
\arrow["{\T\mu}", from=1-3, to=2-3]
\arrow["\alpha"', from=2-1, to=2-3]
\end{tikzcd}
\end{equation*}
When the ambient $2$-category $\CC$ is the $2$-category $\Cat$ of categories, tangent monads correspond precisely to the notion of tangent monads introduced by Cockett, Lemay, and Lucyshyn-Wright~\cite[Definition~19]{cockett:tangent-monads}.

\begin{proposition}
\label{proposition:tangent-monads-vs-tangent-objects}
The $2$-category $\Tng(\Mnd(\CC))$ of tangentads of the $2$-category of monads of a $2$-category $\CC$ is isomorphic to the $2$-category $\Mnd(\Tng(\CC))$ of monads of the $2$-category of tangentads of $\CC$.
\end{proposition}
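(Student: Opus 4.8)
The plan is to unwrap both $2$-categories into their defining data and observe that they describe \emph{literally the same} structure, so that the assignment which is the identity on all underlying $\CC$-data is a strict $2$-isomorphism. Concretely, I expect both an object of $\Mnd(\Tng(\CC))$ and an object of $\Tng(\Mnd(\CC))$ to amount to the following common package: an object $\X$ of $\CC$; a tangent structure $(\T;p,z,s,l,c)$ on $\X$; a monad $(S,\eta,\mu)$ on $\X$; and a distributive law $\gamma\colon S\o\T\Rightarrow\T\o S$ which is compatible with the unit and multiplication of $S$ (so $\gamma\o(\eta\T)=\T\eta$ together with the multiplication square) and compatible with the tangent structure (so that $(S,\gamma)$ is a lax tangent endomorphism in the sense of Definition~\ref{definition:tangent-morphisms}). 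The whole proof then reduces to matching these conditions from the two viewpoints and checking the $1$- and $2$-morphisms likewise.

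For objects, on the $\Mnd(\Tng(\CC))$ side (Definition~\ref{definition:tangent-monad}) a monad in $\Tng(\CC)$ is a tangentad $(\X,\TT)$, a lax tangent endomorphism $(S,\alpha)$, and tangent $2$-morphisms $\eta\colon\id\Rightarrow(S,\alpha)$ and $\mu\colon(S,\alpha)\o(S,\alpha)\Rightarrow(S,\alpha)$: here $\alpha$ supplies $\gamma$, the lax tangent axioms give the tangent-structure compatibility, and the two tangent $2$-morphism conditions (Definition~\ref{definition:tangent-2-morphisms}) give exactly $\gamma\o(\eta\T)=\T\eta$ and the multiplication compatibility. On the $\Tng(\Mnd(\CC))$ side a tangentad in $\Mnd(\CC)$ is a monad $(\X,S)$ with a tangent-bundle endomorphism $(\T,\gamma)$ of $(\X,S)$ in $\Mnd(\CC)$ and structural $2$-morphisms $p,z,s,l,c$ of $\Mnd(\CC)$: the hypothesis that $(\T,\gamma)$ is a monad morphism gives compatibility of $\gamma$ with $\eta,\mu$, the underlying $2$-morphisms assemble into the tangent structure, and the requirement that each of $p,z,s,l,c$ be a $2$-morphism of $\Mnd(\CC)$ unwinds into precisely the additivity and lift/flip clauses saying $(S,\gamma)$ is lax tangent. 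I would confirm the two equation lists coincide by spot-checking a sample (for instance, $p$ being an $\Mnd(\CC)$-$2$-morphism reads $S p=(p S)\o\gamma$, which is the projection clause of additivity for $(S,\alpha)$ once $\gamma=\alpha$), the remaining cases being identical.

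The one point that is \emph{not} purely formal, and which I expect to be the main obstacle, is the pointwise-limit requirement hidden in the word ``tangentad''. On the $\Tng(\Mnd(\CC))$ side the $n$-fold pullbacks of $p$ and the local-linearity pullback must be pointwise in $\End_{\Mnd(\CC)}(\X,S)$, whereas on the $\Mnd(\Tng(\CC))$ side they are pointwise in $\End_\CC(\X)$. I would reconcile these by proving that the forgetful hom-functor $\End_{\Mnd(\CC)}(\X,S)\to\End_\CC(\X)$ \emph{creates pointwise limits}. Given a diagram of monad endomorphisms whose underlying diagram has a pointwise limit $(L,\lambda)$ in $\End_\CC(\X)$, the $2$-morphisms $\gamma_c\o(S\lambda^c)\colon S\o L\Rightarrow D(c)\o S$ form a cone over $D(-)\o S$ (using that $\lambda$ is a cone and each $D(f)$ is an $\Mnd(\CC)$-morphism); since $\lambda$ is pointwise, precomposition with $S$ sends it to a limit cone $\lambda S$ (Definition~\ref{definition:pointwise-limit}), so this cone induces a unique $\gamma_L\colon S\o L\Rightarrow L\o S$ with $(\lambda^c S)\o\gamma_L=\gamma_c\o(S\lambda^c)$, making $(L,\gamma_L)$ a monad endomorphism (the $\eta,\mu$ compatibilities following from uniqueness) and each $\lambda^c$ an $\Mnd(\CC)$-morphism; pointwiseness upstairs then follows because precomposition by $(G,\delta)$ reduces to precomposition by $G$. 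This is the monad analogue of Lemma~\ref{lemma:U-reflects-pointwise-limits}, and it is exactly where the pointwiseness hypothesis (Remark~\ref{remark:pointwise-limit}) is indispensable; with it, the two tangentad conditions single out the same pullbacks.

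Finally, for $1$-morphisms both sides unwind to a $1$-morphism $F\colon\X\to\X'$ carrying a tangent distributive law $\phi\colon F\o\T\Rightarrow\T'\o F$ (satisfying the lax tangent axioms) and a monad distributive law $\theta\colon S'\o F\Rightarrow F\o S$ (satisfying the monad-morphism axioms), subject to a single interchange identity in $\phi,\theta,\gamma,\gamma'$; read one way this identity says $\theta$ is a tangent $2$-morphism, read the other way it says $\phi$ is a $2$-morphism of $\Mnd(\CC)$, so it holds on one side iff on the other. A $2$-morphism on either side is a single $\psi\colon F\Rightarrow G$ of $\CC$ compatible with both distributive laws, with identical conditions. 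Since the assignment is the identity on all underlying $\CC$-data it preserves identities and both composites and is its own inverse, yielding the strict $2$-isomorphism $\Tng(\Mnd(\CC))\cong\Mnd(\Tng(\CC))$. Thus the two genuinely substantive steps are the creation-of-pointwise-limits argument above and the bookkeeping verifying that the single middle-four interchange equation for $1$-morphisms really coincides from the two viewpoints; everything else is a routine translation of equational axioms.
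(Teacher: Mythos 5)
Your proposal is correct and follows the same basic strategy as the paper's proof --- unwrap both sides into the common package $(\X,\TT,S,\gamma)$ plus the evident matching of $1$- and $2$-morphisms --- but it is genuinely more careful on the one non-equational point. The paper's proof is a pure translation of equational data: it observes that the structural $2$-morphisms of a tangentad in $\Mnd(\CC)$ are $2$-morphisms of $\CC$ compatible with $\gamma$ and silently identifies the limit conditions on the two sides, never addressing the fact that pointwise pullbacks in $\End_{\Mnd(\CC)}(\X,S)$ and in $\End_\CC(\X)$ are a priori different requirements. You isolate exactly this as the substantive step and supply the missing lemma: the forgetful hom-functor $\End_{\Mnd(\CC)}(\X,S)\to\End_\CC(\X)$ creates pointwise limits, with the cone $\gamma^c\o(S\lambda^c)$ inducing $\gamma_L$ against the limit cone $\lambda^cS$. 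This is precisely the monad analogue of Lemma~\ref{lemma:U-reflects-pointwise-limits}, and the direction of the monad distributive law ($S\o F\Rightarrow F\o S$ for endomorphisms of $(\X,S)$) is what makes the argument go through, just as colaxness does in that lemma. The only residual point worth nailing down is that creation, as you state it, directly handles the passage $\Mnd(\Tng(\CC))\to\Tng(\Mnd(\CC))$ (limits downstairs lift uniquely upstairs); for the converse passage one should also record why a tangentad of $\Mnd(\CC)$ has underlying cones that are already pointwise limits in $\End_\CC(\X)$ --- e.g.\ by building the correspondence so that the upstairs pullbacks are exactly the created ones --- since a limit that merely exists upstairs is not automatically preserved by the forgetful functor. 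With that caveat made explicit, your argument is complete and in fact closes a gap the paper leaves open.
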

\begin{proof}
First, consider a monad $S$ on an object $\X$ of $\CC$. A tangent structure on $S$ comprises a morphism $(\T,\alpha)\colon(\X,S)\to(\X,S)$ of monads, which is a $1$-morphism $\T\colon\X\to\X$ of $\CC$ together with a $2$-morphism $\alpha\colon S\o\T\Rightarrow S\o\T$ of $\CC$ compatible with the multiplication and the unit of $S$. The structural $2$-morphisms of the tangent structure are $2$-morphisms $p,z,s,l,$ and $c$, respectively, of $\CC$, compatible with the distributive law $\alpha$, and satisfying the axioms of a tangent structure.

\par The tuple $(\T,p,z,s,l,c)$ defines a tangent structure of $\X$, and $(S,\alpha)$ becomes a monad over $(\X,\TT)$ in the $2$-category $\Tng(\CC)$ of tangentads over $\CC$. So, every object $(\X,S;\TT,\alpha)$ of $\Tng(\Mnd(\CC))$ defines an object $(\X,\TT;S,\alpha)$ of $\Mnd(\Tng(\CC))$.

\par Let $(F,\varphi;\beta)\colon(\X,S;\TT,\alpha)\to(\X',S';\TT',\alpha')$ be a lax tangent morphism of $\Tng(\Mnd(\CC))$. This is a $1$-morphism $F\colon\X\to\X'$ of $\CC$ together with a distributive law $\varphi\colon S'\o F\Rightarrow F\o S$, compatible with the monad structures, and a lax distributive law $\beta\colon F\o\T\Rightarrow\T'\o F$, compatible with $\varphi,\alpha$, and $\alpha'$.

\par This corresponds to a morphism $(F,\beta;\varphi)\colon(\X,\TT;S,\alpha)\to(\X',\TT';S',\alpha')$ of monads of $\Tng(\CC)$. Finally, take into consideration a lax $2$-morphism $\theta\colon(F,\varphi;\beta)\Rightarrow(G,\psi;\gamma)$ between two lax tangent $1$-morphisms $(F,\varphi;\beta),(G,\psi;\gamma)\colon(\X,S;\TT,\alpha)\to(\X',S';\TT',\alpha')$.

\par This consists of a $2$-morphism $\theta\colon F\Rightarrow G$, compatible with $\varphi$ and $\psi$, and with $\alpha$ and $\alpha'$. However, this is also a $2$-morphism $\theta\colon(F,\beta;\varphi)\Rightarrow(G,\gamma;\psi)$ between the corresponding morphisms of monads of $\Tng(\CC)$.
\end{proof}

\begin{remark}
\label{remark:Tng-co-Mnd}
In light of Proposition~\ref{proposition:tangent-monads-vs-tangent-objects}, it is natural to wonder whether or not $\Tng_\co(\Mnd(\CC))$ and $\Mnd(\Tng_\co(\CC))$ are also isomorphic $2$-categories, for a $2$-category $\CC$. However, this is not the case. Notice that objects of $\Tng_\co(\Mnd(\CC))$ are tangentads over the $2$-category of monads over $\CC$. Thus, the objects of $\Tng_\co(\Mnd(\CC))$ are the same objects as those of $\Tng(\Mnd(\CC))$. On the contrary, the objects of $\Mnd(\Tng_\co(\CC))$ are monads over the $2$-category $\Tng_\co(\CC)$, that are tuples $(\X,\TT;S,\beta)$, where $\beta\colon\T\o S\Rightarrow S\o\T$, since $(S,\beta)\colon(\X,\TT)\nto(\X,\TT)$ is a colax tangent $1$-morphism. To fix this discrepancy, one can consider the $2$-category $\Mnd_\co(\CC)$ of monads, colax $1$-morphisms of monads $(F,\beta)\colon(\X,S)\nto(\X',S')$ which are $1$-morphisms $F\colon\X\to\X'$ together with a $2$-morphism:
\begin{equation*}
\begin{tikzcd}
\X & \X \\
{\X'} & {\X'}
\arrow["S", from=1-1, to=1-2]
\arrow["{S'}"', from=2-1, to=2-2]
\arrow["F"', from=1-1, to=2-1]
\arrow["F", from=1-2, to=2-2]
\arrow["\beta"{description}, Rightarrow, from=1-2, to=2-1]
\end{tikzcd}
\end{equation*}
compatible with the multiplication and the unit of the monads, and $2$-morphisms $\theta\colon(F,\beta)\Rightarrow(G,\gamma)$ which consists of $2$-morphisms $\theta\colon F\Rightarrow G$, compatible with $\beta$ and $\gamma$. Adopting a dual argument, one can show that $\Tng_\co(\Mnd_\co(\CC))$ and $\Mnd_\co(\Tng_\co(\CC))$ are isomorphic $2$-categories.
\end{remark}

Street in~\cite{street:formal-theory-monads} established the universal property of the Eilenberg-Moore category of a monad, a.k.a. the category of algebras of a monad, and formalized this construction for arbitrary monads.

\par When $\CC$ is the $2$-category $\Cat$ of categories, \cite[Proposition~20]{cockett:tangent-cats} establishes that the category of algebras $\Alg(S)$ of a tangent monad $(S,\alpha)$ over a tangent category $(\X,\TT)$ comes equipped with a tangent structure strictly preserved by the forgetful functor $\U\colon\Alg(S)\to\X$.

\par Concretely, the tangent bundle functor $\T^S\colon\Alg(S)\to\Alg(S)$ of the tangent structure $\TT^S$ on the category of algebras $\Alg(S)$ of a tangent monad $(S,\alpha)$ sends and algebra $A$ of $S$ with structure map $\theta\colon SA\to A$ to the algebra $\T A$ with structure map:
\begin{align*}
&S\T A\xrightarrow{\alpha}\T SA\xrightarrow{\T\theta}\T A
\end{align*}
Moreover, it sends a morphism $f\colon A\to B$ of algebras of $S$ to $\T f$. The structural natural transformations of $\TT^S$ are defined by the corresponding structural natural transformations of $\TT$. When $\TT$ has negatives, so does $\TT^S$, with negation $n$ as in $\TT$.

\par Thanks to the formal approach, we show that this construction corresponds to Street's formal notion of the Eilenberg-Moore object of a tangent monad. First, recall that a $2$-category $\CC$ \textit{admits the construction of algebras} when the $2$-functor
\begin{align*}
&\Inc\colon\CC\to\Mnd(\CC)
\end{align*}
which sends each object $\X$ of $\CC$ to the trivial monad $(\X,\1)$, $\1$ be the identity on $\X$, admits a right adjoint $\Alg\colon\Mnd(\CC)\to\CC$. When $\CC$ is the $2$-category $\Cat$ of categories, $\Alg$ is precisely the $2$-functor which sends a monad $(\X,S)$ to the category of algebras $\Alg(S)$ of $S$.

\begin{theorem}
\label{theorem:formal-tangent-monads-admit-algebras}
If the $2$-category $\CC$ admits the construction of algebras, so does the $2$-category $\Tng(\CC)$ of tangentads of $\CC$. In particular, the Eilenberg-Moore object $\Alg(S)$ of a tangent monad $(S,\alpha)$ on a tangentad $(\X,\TT)$ of $\CC$ comes equipped with a tangent structure strictly preserved by the forgetful $1$-morphism $\U\colon(\Alg(S),\TT^S)\to(\X,\TT)$.
\end{theorem}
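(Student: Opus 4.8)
The plan is to deduce the result by transporting the Eilenberg--Moore adjunction of $\CC$ through the $2$-functor $\Tng$ and then identifying the two ways of combining monads with tangentads supplied by Proposition~\ref{proposition:tangent-monads-vs-tangent-objects}. Write $\Phi\colon\Tng(\Mnd(\CC))\xrightarrow{\cong}\Mnd(\Tng(\CC))$ for the $2$-isomorphism of that proposition, and let $\Inc\colon\CC\to\Mnd(\CC)$ be left adjoint to $\Alg\colon\Mnd(\CC)\to\CC$, with unit $\eta$ and counit $\epsilon$, this being the hypothesis that $\CC$ admits the construction of algebras.

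First I would check that this adjunction lives inside $\TwoCat_\pp$. The right adjoint $\Alg$ preserves pointwise limits by Lemma~\ref{lemma:right-adjoints-preserve-pointwise-limits}, so it is pointwise pullback-preserving. For the left adjoint $\Inc$, the key observation is that the hom-category $\End_{\Mnd(\CC)}(\Inc(\X))$ is isomorphic to $\End_\CC(\X)$: an endomorphism of the trivial monad $(\X,\1)$ is a pair $(F,\alpha)$ whose distributive law $\alpha\colon F\Rightarrow F$ is forced to be the identity by compatibility with the trivial unit, and $\Inc$ realizes precisely this isomorphism, compatibly with precomposition; hence $\Inc$ preserves pointwise pullbacks as well. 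Since $\eta$ and $\epsilon$ are $2$-natural, the datum $\Inc\dashv\Alg$ is an adjunction in $\TwoCat_\pp$. Applying the $2$-functor $\Tng\colon\TwoCat_\pp\to\TwoCat$ (Lemma~\ref{lemma:functoriality-TNG}, Proposition~\ref{proposition:functoriality-TNG}), and using that $2$-functors preserve adjunctions, yields a $2$-adjunction $\Tng(\Inc)\dashv\Tng(\Alg)$.

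It remains to transport this along $\Phi$. Unwinding the definitions, $\Tng(\Inc)$ sends a tangentad $(\X,\TT)$ to the tangentad on the trivial monad $(\X,\1)$ whose tangent bundle is $\Inc(\T)=(\T,\id)$ and whose structural $2$-morphisms are $\Inc$ applied to those of $\TT$. Under $\Phi$ this object, namely $(\X,\1;\TT,\id)$, is carried to the trivial monad $(\X,\TT;\1,\id)$ on the tangentad $(\X,\TT)$, that is, to $\Inc(\X,\TT)$ in $\Mnd(\Tng(\CC))$; hence $\Phi\o\Tng(\Inc)=\Inc$. Consequently $\Inc\dashv\Tng(\Alg)\o\Phi^{-1}$ in $\Tng(\CC)$, which is exactly the assertion that $\Tng(\CC)$ admits the construction of algebras, with $\Alg_{\Tng(\CC)}=\Tng(\Alg)\o\Phi^{-1}$. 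For the concrete description, a tangent monad $(S,\alpha)$ on $(\X,\TT)$ is an object of $\Mnd(\Tng(\CC))$; through $\Phi^{-1}$ it becomes the tangentad $(\X,S;\TT,\alpha)$ on the monad $(\X,S)$, and $\Tng(\Alg)$ sends it to the tangentad $(\Alg(S),\TT^S)$ obtained by applying the $2$-functor $\Alg$ to each structural $2$-morphism of $\TT$, recovering the tangent bundle $\T^S$ on algebras described before the theorem. Finally, the forgetful $\U\colon\Alg(S)\to\X$ underlies a component of the counit $\epsilon$; since $\Tng$ sends $2$-natural transformations to $2$-natural transformations whose components are \emph{strict} tangent morphisms (last part of Lemma~\ref{lemma:functoriality-TNG}), the induced forgetful tangent morphism $(\Alg(S),\TT^S)\to(\X,\TT)$ is strict, i.e.\ $\U$ strictly preserves the tangent structure.

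The main obstacle I anticipate is not the abstract transfer of the adjunction but the bookkeeping that identifies $\Phi\o\Tng(\Inc)$ with the monad-inclusion of $\Tng(\CC)$ and, likewise, exhibits the forgetful $1$-morphism as the underlying tangent morphism of the strict lifted counit; both steps require carefully matching the two presentations of objects and $1$-morphisms of $\Tng(\Mnd(\CC))$ and $\Mnd(\Tng(\CC))$ coming from Proposition~\ref{proposition:tangent-monads-vs-tangent-objects}. A secondary delicate point is that $\Inc$, being only a left adjoint, must be shown pointwise pullback-preserving by the explicit hom-category computation above rather than via Lemma~\ref{lemma:right-adjoints-preserve-pointwise-limits}.
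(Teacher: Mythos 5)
Your proposal follows essentially the same route as the paper: both arguments place the adjunction $\Inc\dashv\Alg$ inside $\TwoCat_\pp$, apply the $2$-functor $\Tng$ (which preserves adjunctions), and then transport the result along the isomorphism $\Tng(\Mnd(\CC))\cong\Mnd(\Tng(\CC))$ of Proposition~\ref{proposition:tangent-monads-vs-tangent-objects}, identifying the lifted counit with the forgetful $1$-morphism to get strictness of $\U$. The one place you diverge is the point you yourself flag as delicate: showing that $\Inc$ is pointwise pullback-preserving. You do this by an explicit hom-category computation, observing that $\End_{\Mnd(\CC)}(\X,\1)\cong\End_\CC(\X)$; this works, though to make it pointwise you still need to check stability under precomposition with arbitrary $1$-morphisms $(\X',S')\to(\X,\1)$, whose distributive laws are not trivial. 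The paper avoids this entirely by recalling that $\Inc$ is itself a \emph{right} adjoint, namely to the forgetful $2$-functor $\Mnd(\CC)\to\CC$ (Street's Theorem~1), so Lemma~\ref{lemma:right-adjoints-preserve-pointwise-limits} applies to both $\Inc$ and $\Alg$ at once; that is the cleaner argument, but your version is a legitimate substitute.
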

\begin{proof}
First, notice that by definition the $2$-functor $\Alg\colon\Mnd(\CC)\to\CC$ is a right adjoint, thus, by Lemma~\ref{lemma:right-adjoints-preserve-pointwise-limits}, it preserves pointwise limits. Furthermore, the $2$-functor $\Inc\colon\CC\to\Mnd(\CC)$ is also a right adjoint, as proved in~\cite[Theorem~1]{street:formal-theory-monads}. In particular, it is the right adjoint of the forgetful $2$-functor $\Mnd(\CC)\to\CC$ which sends a pair $(\X,S)$ to $\X$.

\par Therefore, by Lemma~\ref{lemma:right-adjoints-preserve-pointwise-limits}, both $\Inc$ and $\Alg$ are pointwise pullback-preserving $2$-functors and they form an adjunction $\Inc\dashv\Alg$ in $\TwoCat_\pp$.

\par As a $2$-functor, $\Tng$ preserves adjunctions, thus $\Tng(\Inc)\colon\Tng(\CC)\leftrightarrows\TngMnd(\CC)\colon\Tng(\Alg)$ forms an adjunction. Thanks to Proposition~\ref{proposition:tangent-monads-vs-tangent-objects}, the $2$-category $\TngMnd(\CC)$ of tangent monads is isomorphic to the $2$-category $\Mnd\Tng(\CC)$ of monads in the $2$-category of tangentads of $\CC$. Therefore, $\Tng(\CC)$ admits the construction of algebras.

\par To prove that the right adjoint $\U\colon(\Alg(S),\TT^S)\to(\X,\TT)$ is a strict tangent morphism, first recall the construction of the adjunction $F\colon\X\leftrightarrows\Alg(S)\colon\U$ associated to a generic monad, as in~\cite{street:formal-theory-monads}. The $2$-functors $\Alg$ and $\Inc$ form an adjunction $\Inc\dashv\Alg$, whose counit, for every monad $S$ over an object $\X$, is a morphism of monads $(\Alg(S),\1_{\Alg(S)})\to(\X,S)$, where $\1_{\Alg(S)}$ denotes the trivial monad over $\Alg(S)$. In particular, the underlying $1$-morphism of the counit is the morphism $\U\colon\Alg(S)\to\X$, which represents the right adjoint in the adjunction $F\dashv\U$, associated with the monad $S$.

\par Notice that the counit of the induced adjunction $\Alg\colon\Tng(\CC)\leftrightarrows\Mnd(\Tng(\CC))\colon\Inc$ is precisely given by $\Tng(\epsilon)$. By definition, given a natural transformation $\theta\colon F\Rightarrow G$ from a (pointwise pullback-preserving) $2$-functor $F\colon\CC\to\CC'$ to another $2$-functor $G\colon\CC\to\CC'$, the corresponding natural transformation $\Tng(\theta)\colon\Tng(F)\Rightarrow\Tng(G)$ is defined, for every tangentad $(\X,\TT)$ of $\CC$, as the tangent morphism $(F\X,F\TT)\to(G\X,G\TT)$, whose underlying $1$-morphism is $\theta\colon F\X\to G\X$.

\par The distributive law is just the identity, since $\theta(F\T)=(G\T)\theta$. In particular, this applies to the counit of the adjunction $\Inc\dashv\Alg$ and therefore, the right adjoint $\U\colon(\Alg(S),\TT^S)\to(\X,\TT)$ is a strict tangent morphism.
\end{proof}

\begin{remark}
\label{remark:co-monad-do-not-have-algebras}
One of the key facts employed in the proof of Theorem~\ref{theorem:formal-tangent-monads-admit-algebras} is that $\TngMnd(\CC)\cong\Mnd(\Tng(\CC))$. One could wonder if, assuming that $\CC$ admits the construction of algebras, then also $\Tng_\co(\CC)$ would admit this construction. However, as pointed out in Remark~\ref{remark:Tng-co-Mnd}, $\Tng_\co(\Mnd(\CC))$ is not isomorphic to $\Mnd(\Tng_\co(\CC))$.
\end{remark}

As a consequence of Theorem~\ref{theorem:formal-tangent-monads-admit-algebras}, a tangent monad on a tangent category admits the construction of algebras. The next corollary shows that the Eilenberg-Moore object of a tangent monad is precisely the tangent category of algebras of the tangent monad, as described by Cockett, Lemay, and Lucyshyn-Wright.

\begin{corollary}
\label{corollary:algebras-tangent-monads}
The $2$-category $\TngCat$ of tangent categories admits the construction of algebras. Moreover, the Eilenberg-Moore object $(\Alg(S),\TT^S)$ of a tangent monad $(S,\alpha)$ on a given tangent category $(\X,\TT)$ is precisely the tangent category described by Cockett, Lemay, and Lucyshyn-Wright in~\cite{cockett:tangent-monads}.
\end{corollary}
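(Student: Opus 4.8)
The plan is to obtain both assertions as direct consequences of Theorem~\ref{theorem:formal-tangent-monads-admit-algebras} specialised to $\CC=\Cat$, followed by a concrete unwinding of the formal Eilenberg--Moore object. First I would record that $\Cat$ admits the construction of algebras: the classical Eilenberg--Moore assignment furnishes the right $2$-adjoint $\Alg\colon\Mnd(\Cat)\to\Cat$ to $\Inc$, as established by Street in~\cite{street:formal-theory-monads}. Since tangent categories are exactly tangentads in $\Cat$ by Example~\ref{example:tangent-categories}, we have $\Tng(\Cat)=\TngCat$, so Theorem~\ref{theorem:formal-tangent-monads-admit-algebras} immediately yields that $\TngCat$ admits the construction of algebras, with the forgetful $1$-morphism $\U\colon(\Alg(S),\TT^S)\to(\X,\TT)$ a strict tangent morphism.

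For the second assertion, I would trace how the formal Eilenberg--Moore object $(\Alg(S),\TT^S)$ is computed and match it against the explicit construction. By Proposition~\ref{proposition:tangent-monads-vs-tangent-objects}, the tangent monad $(S,\alpha)$ on $(\X,\TT)$, being a monad in $\Tng(\Cat)$, corresponds to a tangentad in $\Mnd(\Cat)$ whose underlying object is the monad $(\X,S)$ and whose tangent bundle $1$-morphism is the monad morphism $(\T,\alpha)\colon(\X,S)\to(\X,S)$, with distributive law $\alpha\colon S\o\T\Rightarrow\T\o S$. The tangent structure $\TT^S$ is then obtained by applying $\Tng(\Alg)$, that is, by applying $\Alg$ to the tangent $1$-morphism and to the structural $2$-morphisms of this tangentad in $\Mnd(\Cat)$.

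The remaining step is the concrete identification. Applying the classical description of $\Alg$ on a monad morphism, $\Alg(\T,\alpha)$ sends an $S$-algebra $(A,\theta\colon SA\to A)$ to the $S$-algebra on $\T A$ with structure map $S\T A\xrightarrow{\alpha}\T SA\xrightarrow{\T\theta}\T A$; this is exactly the tangent bundle functor $\T^S$ in the Cockett--Lemay--Lucyshyn-Wright description. The structural natural transformations of $\TT^S$ are $\Alg$ applied to $p,z,s,l,c$ (and to $n$ when $\TT$ has negatives); since Theorem~\ref{theorem:formal-tangent-monads-admit-algebras} guarantees that $\U$ strictly preserves them, each is the unique algebra $2$-cell lying over the corresponding structural transformation of $\TT$, hence coincides with $p,z,s,l,c$ as claimed in~\cite{cockett:tangent-monads}.

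The main obstacle is not conceptual but one of bookkeeping: one must verify that Street's formal right adjoint $\Alg$, applied to the monad morphism $(\T,\alpha)$, reproduces the hands-on functor with the stated algebra structure map, paying attention to the direction convention $\alpha\colon S'\o F\Rightarrow F\o S$ for $1$-morphisms of $\Mnd(\CC)$ and to the fact that the structural $2$-morphisms of the tangent monad are precisely $2$-cells of $\Mnd(\Cat)$, so that $\Alg$ sends them to algebra morphisms over $p,z,s,l,c$. Once these conventions are pinned down, the identification with the construction of~\cite{cockett:tangent-monads} is immediate.
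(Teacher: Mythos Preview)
Your proposal is correct and follows essentially the same route as the paper: deduce the first assertion from Theorem~\ref{theorem:formal-tangent-monads-admit-algebras} applied to $\CC=\Cat$ (using Street's result that $\Cat$ admits algebras and the identification $\Tng(\Cat)=\TngCat$), and for the second assertion unwind $\Tng(\Alg)$ via Proposition~\ref{proposition:tangent-monads-vs-tangent-objects} to see that $\Alg(\T,\alpha)$ equips $\T A$ with the structure map $\T\theta\o\alpha$ and that the structural transformations are those of $\TT$. Your remark that strictness of $\U$ forces the structural $2$-cells to be the expected ones is a slightly cleaner justification than the paper's, but the argument is the same.
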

\begin{proof}
Thanks to~\cite[Theorem~7]{street:formal-theory-monads}, the $2$-category $\Cat$ of categories admits the construction of algebras and the $2$-functor $\Alg$ sends a monad to the corresponding Eilenberg-Moore category. By Theorem~\ref{theorem:formal-tangent-monads-admit-algebras} also $\Tng(\Cat)$ admits the construction of algebras. However, as noticed in Example~\ref{example:tangent-categories}, $\Tng(\Cat)$ is the $2$-category $\TngCat$ of tangent categories.

\par To prove the second part, recall that the $2$-functor $\Alg\colon\Mnd\to\Cat$ sends a morphism of monads $(F,\alpha)\colon(\X,S)\to(\X,S)$ to the functor $\Alg(F,\alpha)$ which sends an algebra $A$ of $S$ with structure map $\theta\colon SA\to A$, to the algebra $FA$ of $S'$ with structure map:
\begin{align*}
&S'FA\xrightarrow{\alpha}FSA\xrightarrow{F\theta}FA
\end{align*}
Moreover, $\Alg$ sends a natural transformation $\varphi\colon(F,\alpha)\Rightarrow(G,\beta)$ between two morphisms of monads $(F,\alpha),(G,\beta)\colon(\X,S)\to(\X',S')$, to the natural transformation $\Alg(F,\alpha)\Rightarrow\Alg(G,\beta)$, defined by $\varphi$.

\par Recall also that the $2$-functor:
\begin{align*}
&\Alg\colon\Mnd(\TngCat)\to\TngCat
\end{align*}
defined in Theorem~\ref{theorem:formal-tangent-monads-admit-algebras}, under the identification $\Mnd(\TngCat)=\TngMnd$, sends a tangentad $(\X,\TT;S,\alpha)$ of $\Mnd$, whose tangent bundle morphism $(\T,\alpha)\colon(\X,S)\to(\X,S)$ is given by the tangent bundle functor $\T\colon\X\to\X$ together with the distributive law $\alpha\colon S\o\T\Rightarrow\T\o S$, to the tangentad $(\Alg(\X,S),\Alg(\TT))$. The tangent bundle morphism is given by $\Alg(\T,\alpha)$ which sends an algebra $A$ of $S$ with structure map $\theta\colon SA\to A$ to the algebra $\T A$ with structure map:
\begin{align*}
&S\T A\xrightarrow{\alpha}\T SA\xrightarrow{\T\theta}\T A
\end{align*}
Moreover, it sends a morphism $f\colon A\to B$ of algebras of $S$ to $\T f$. Finally, all the natural transformations $p,z,s,l$, and $c$ are precisely given by the corresponding natural transformations of $\TT$.
\end{proof}

\begin{remark}
\label{remark:Tng-co-Mnd-no-algebras}
Corollary~\ref{corollary:algebras-tangent-monads} explains why $\Tng_\co(\CC)$ does not admit the construction of algebras. Take $\CC$ to be the $2$-category $\Cat$ of categories and a monad $(S,\beta)$ over $\Tng_\co(\Cat)$, which consists of a monad $S$ over a category $\X$, together with a colax distributive law $\beta\colon\T\o S\Rightarrow S\o\T$. Let $A$ be an algebra with structure map $SA\to A$ of $S$. In order to lift the tangent structure to $\Alg(S)$, we employed the distributive law $\alpha\colon S\o\T\Rightarrow\T\o S$ and defined the tangent bundle of $A$ as the algebra $\T A$ with structure map $\T\theta\o\alpha\colon S\T A\to\T SA\to\T A$. However, the colax distributive law $\beta$ is pointing in the wrong direction.
\end{remark}

Street, in~\cite{street:formal-theory-monads}, noticed that a distributive law $\gamma\colon S'\o S\Rightarrow S\o S'$ between two monads $(S,\eta,\mu)$ and $(S',\eta',\mu')$ can be regarded as a monad in the $2$-category $\Mnd$ of monads. Since a tangent monad is a tangentad of $\Mnd$, we can introduce the notion of a distributive law between tangent monads as a tangent monad in the $2$-category $\Mnd$ of monads, namely, a monad in the $2$-category $\Tng(\Mnd)\cong\Mnd(\Tng)$.

\begin{definition}
\label{definition:distributive-law-tangent-monad}
A \textbf{distributive law} of tangent monads of $\CC$ consists of a tangent monad in the $2$-category $\Mnd(\CC)$ of monads of $\CC$.
\end{definition}

Let us unpack Definition~\ref{definition:distributive-law-tangent-monad}. Given two tangent monads $(S,\alpha)$ and $(S',\alpha')$ on a tangentad $(\X,\TT)$ of $\CC$, a distributive law $\gamma\colon(S',\alpha')\o(S,\alpha)\Rightarrow(S,\alpha)\o(S',\alpha')$ of tangent monads consists of a $2$-morphism $\gamma\colon S'\o S\Rightarrow S\o S'$, compatible with the monad structures:
\begin{equation*}
\begin{tikzcd}
S && {S\o S'} \\
\\
{S'\o S} && {S'}
\arrow["{S\eta'}", from=1-1, to=1-3]
\arrow["{\eta'S}"', from=1-1, to=3-1]
\arrow["\gamma"{description}, from=3-1, to=1-3]
\arrow["{\eta S'}"', from=3-3, to=1-3]
\arrow["{S'\eta}", from=3-3, to=3-1]
\end{tikzcd}\hfill\quad
\begin{tikzcd}
{S'^2\o S} & {S'\o S\o S'} & {S\o S'^2} \\
{S'\o S} && {S\o S'} \\
{S'\o S^2} & {S\o S'\o S} & {S^2\o S'}
\arrow["{S'\gamma}", from=1-1, to=1-2]
\arrow["{\mu'S}"', from=1-1, to=2-1]
\arrow["{\gamma S'}", from=1-2, to=1-3]
\arrow["{S\mu'}", from=1-3, to=2-3]
\arrow["\gamma"{description}, from=2-1, to=2-3]
\arrow["{S'\mu}", from=3-1, to=2-1]
\arrow["{\gamma S}"', from=3-1, to=3-2]
\arrow["{S\gamma}"', from=3-2, to=3-3]
\arrow["{\mu S'}"', from=3-3, to=2-3]
\end{tikzcd}
\end{equation*}
and with the distributive laws:
\begin{equation*}
\begin{tikzcd}
{S'\o S\o\T} & {S'\o\T\o S} & {\T\o S'\o S} \\
{S\o S'\o\T} & {S\o\T\o S'} & {\T\o S\o S'}
\arrow["{S'\alpha}", from=1-1, to=1-2]
\arrow["{\gamma\T}"', from=1-1, to=2-1]
\arrow["{\alpha' S}", from=1-2, to=1-3]
\arrow["{\T\gamma}", from=1-3, to=2-3]
\arrow["{S\alpha'}"', from=2-1, to=2-2]
\arrow["{\alpha S'}"', from=2-2, to=2-3]
\end{tikzcd}
\end{equation*}

A distributive law of monads allows one to compose two monads together. In particular, if $(S,\eta,\mu)$ and $(S',\eta',\mu')$ are two monads on an object $\X$ of a $2$-category $\CC$ and $\gamma\colon S'\o S\Rightarrow S\o S'$ is a distributive law between them, then $S\o S'$ comes equipped with the structure of a monad of $\CC$ whose unit and multiplication are defined as follows:
\begin{align*}
&\eta\o_\gamma\eta'\colon\id_\X\xrightarrow{\eta}S\xrightarrow{S\eta'}S\o S'\\
&\mu\o\gamma\mu'\colon S\o S'\o S\o S'\xrightarrow{S\gamma S'}S^2\o S'^2\xrightarrow{\mu S'^2}S\o S'^2\xrightarrow{S\mu'}S\o S'
\end{align*}
We can harness this formal argument to immediately prove that a distributive law of two tangent monads forms a new tangent monad.

\begin{proposition}
\label{proposition:composition-tangent-monads}
Given two tangent monads $(S,\alpha)$ and $(S',\alpha')$ on a tangentad $(\X,\TT)$ of $\CC$ and a distributive law $\gamma\colon(S',\alpha')\o(S,\alpha)\Rightarrow(S,\alpha)\o(S',\alpha')$ of tangent monads, the composition monad $S\o_\gamma S'$ induced by the distributive law $\gamma$ is again a tangent monad on $(\X,\TT)$.
\end{proposition}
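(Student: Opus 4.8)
The plan is to let the formal machinery do all the work, reducing the statement to Street's composition of monads along a distributive law, carried out inside the $2$-category $\Tng(\CC)$ rather than $\CC$.

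First I would recall Street's observation, already used in Remark~\ref{remark:TNG-not-monad}: in any $2$-category $\mathcal{K}$, a monad in $\Mnd(\mathcal{K})$ is exactly a distributive law $\gamma\colon S'\o S\Rightarrow S\o S'$ between two monads $S,S'$ of $\mathcal{K}$, and every such distributive law equips the composite $S\o S'$ with a canonical monad structure in $\mathcal{K}$, whose unit and multiplication are $\eta\o_\gamma\eta'$ and $\mu\o_\gamma\mu'$ as recalled just before the statement~\cite{street:formal-theory-monads}.

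Next I would identify, formally, what a distributive law of tangent monads is. By Definition~\ref{definition:distributive-law-tangent-monad} it is a tangent monad in $\Mnd(\CC)$, that is, a monad in $\Tng(\Mnd(\CC))$. By Proposition~\ref{proposition:tangent-monads-vs-tangent-objects} there is a $2$-isomorphism $\Tng(\Mnd(\CC))\cong\Mnd(\Tng(\CC))$, so this datum is the same as a monad in $\Mnd(\Tng(\CC))$. Applying Street's observation to the $2$-category $\mathcal{K}=\Tng(\CC)$, a monad in $\Mnd(\Tng(\CC))$ is precisely a distributive law between two monads in $\Tng(\CC)$; and by Definition~\ref{definition:tangent-monad} a monad in $\Tng(\CC)$ is exactly a tangent monad. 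Hence the two monads are our tangent monads $(S,\alpha)$ and $(S',\alpha')$ on $(\X,\TT)$, and $\gamma$ is their distributive law in $\Tng(\CC)$. I would then invoke Street's composition theorem in $\Tng(\CC)$: the distributive law $\gamma$ makes the composite $(S,\alpha)\o_\gamma(S',\alpha')$ into a monad in $\Tng(\CC)$, which by Definition~\ref{definition:tangent-monad} is a tangent monad on $(\X,\TT)$ — the desired conclusion.

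The only point requiring care, and the closest thing to an obstacle, is purely bookkeeping: unwinding the isomorphism of Proposition~\ref{proposition:tangent-monads-vs-tangent-objects} to confirm that the composite monad produced in $\Tng(\CC)$ has the expected concrete shape. Its underlying monad of $\CC$ is $S\o_\gamma S'$ with unit $\eta\o_\gamma\eta'$ and multiplication $\mu\o_\gamma\mu'$ as listed before the statement, and its tangent distributive law is the horizontal composite
\[
S\o S'\o\T\xrightarrow{S\alpha'}S\o\T\o S'\xrightarrow{\alpha S'}\T\o S\o S',
\]
which is exactly the distributive law obtained by composing the $1$-morphisms $(S,\alpha)$ and $(S',\alpha')$ in $\Tng(\CC)$. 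All the monad axioms and all the compatibilities with the tangent structure $\TT$ then hold automatically, being nothing but instances of the corresponding axioms already valid in $\Tng(\CC)$, so no separate equational verification is needed.
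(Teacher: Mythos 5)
Your proposal is correct and matches the paper's intended argument: the paper states the proposition as an immediate consequence of Definition~\ref{definition:distributive-law-tangent-monad}, the isomorphism $\Tng(\Mnd(\CC))\cong\Mnd(\Tng(\CC))$ from Proposition~\ref{proposition:tangent-monads-vs-tangent-objects}, and Street's composition of monads along a distributive law applied inside $\Tng(\CC)$, which is exactly the route you take. Your closing bookkeeping paragraph identifying the composite's underlying monad and its tangent distributive law is a welcome explicit supplement to what the paper leaves implicit.
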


\subsection{Tangent fibrations}
\label{subsection:tangent-fibrations}
In~\cite[Section~5]{cockett:differential-bundles}, Cockett and Cruttwell showed that each tangent category equipped with a class $\Dsply$ of morphisms stable under tangent pullbacks and under the tangent bundle functor defines a fibration whose fibre over an object $M$ contains the maps of $\Dsply$ whose target is $M$. Furthermore, such a fibration is compatible with the tangent bundle functors.

\par Cockett and Cruttwell distilled the property of this class of fibrations into the notion of a tangent fibration. In~\cite{lanfranchi:grothendieck-tangent-cats}, we proved that tangent fibrations are equivalent to tangentads in the $2$-category $\Fib$ of fibrations and exploited this classification to construct a genuine Grothendieck construction between tangent fibrations and tangent indexed categories.

\par This section is dedicated to recalling the notion of a tangent fibration and showing that tangent fibrations are precisely tangentads in the $2$-category $\Fib$ of fibrations.

\par For starters, let us briefly recall that a fibration $\Pi\colon\X'\to\X$ between two categories consists of a functor for which every morphism $f\colon M\to M'$ of the base category $\X$ admits a Cartesian lift $\varphi_f^{E'}\colon f^\*E'\to E'$ for every object $E'$ on the fibre over $M'$, namely, $\Pi(E')=M'$.

\par A Cartesian morphism $\varphi\colon E'\to E''$ is a morphism of the total category $\X'$ for which any morphism $\psi\colon E\to E''$ of $\X'$ and any morphism $g\colon\Pi(E)\to\Pi(E')$ making the following diagram commutes
\begin{equation*}
\begin{tikzcd}
& {\Pi(E')} \\
{\Pi(E)} && {\Pi(E'')}
\arrow["{\Pi(\varphi)}", from=1-2, to=2-3]
\arrow["g", from=2-1, to=1-2]
\arrow["{\Pi(\psi)}"', from=2-1, to=2-3]
\end{tikzcd}
\end{equation*}
induce a unique morphism $\xi\colon E\to E'$ of $\X'$ such that $\Pi(\xi)=g$ and the following diagram commutes:
\begin{equation*}
\begin{tikzcd}
& {E'} \\
E && {E''}
\arrow["\varphi", from=1-2, to=2-3]
\arrow["\xi", dashed, from=2-1, to=1-2]
\arrow["\psi"', from=2-1, to=2-3]
\end{tikzcd}
\end{equation*}
A fibration $\Pi\colon\X'\to\X$ equipped with a choice of Cartesian lift $\varphi_f^{E'}\colon f^\*E'\to E'$ for each morphism $f\colon M\to M'$ of $\X$ and each object $E'$ of the fibre over $M'$ is known as a \textbf{cloven fibration} and the choice $(f,E')\mapsto\varphi_f^{E'}$ is called the \textbf{cleavage} of the fibration.

\par In the following, every fibration is assumed to be cloven and we denote by $\varphi_f^{E'}$ the Cartesian lift of $f\colon M\to M'$ along $E'\in\Pi^{-1}(M')$ defined by the cleavage. Moreover, we omit the superscript $E'$ when $E'$ is clear from the context. A fibration $\Pi\colon\X'\to\X$ is denoted as a triple $(\X,\X',\Pi)$.\newline

\par Suppose $\Pi_\o\colon\X_\o'\to\X_\o$ and $\Pi_\b\colon\X_\b'\to\X_\b$ are two fibrations and $F'\colon\X'_\o\to\X'_\b$ and $F\colon\X_\o\to\X_\b$ are two functors such that $\Pi_\b\o F'=F\o\Pi_\o$. Given a morphism $f\colon M\to M'$ of $\X_\o$, one can lift $Ff\colon FM\to FM'$ to $\X_\b'$ and obtain a Cartesian lift $\varphi_{Ff}\colon(Ff)^\*(F'E')\to F'E'$ of $Ff$ over $F'E'\in\X_\b'$. \par However, the cleavage of $\Pi_\o$ also defines a Cartesian lift $\varphi_f\colon f^\*E'\to E'$ of $f$ over $E'$. Therefore, since $\Pi_\b F'\varphi_f=F\Pi_\o\varphi_f=Ff$, also $F'\varphi_f$ is a lift of $Ff$. By employing the universal property of the Cartesian lift $\varphi_{Ff}$, we obtain a unique morphism $\xi_f\colon F'(f^\*E')\dashrightarrow(Ff)^\*(F'E')$ making the following diagram commutes:
\begin{equation*}
\begin{tikzcd}
& {(Ff)^\*(F'E')} \\
{F'(f^\*E')} && {F'E'}
\arrow["{\varphi_{Ff}}", from=1-2, to=2-3]
\arrow["{\xi_f}", dashed, from=2-1, to=1-2]
\arrow["{F'\varphi_f}"', from=2-1, to=2-3]
\end{tikzcd}
\end{equation*}
A morphism of fibrations $(F,F')\colon(\X_\o,\X_\o',\Pi_\o)\to(\X_\b,\X_\b',\Pi_\b)$ consists of a pair of functors $F\colon\X_\o\to\X_\b$ and $F'\colon\X_\o'\to\X_\b'$ strictly commuting with the two fibrations, namely, $\Pi_\b\o F'=F\o\Pi_\o$, and preserving the Cartesian lifts, meaning the unique morphism $\xi_f\colon F'(f^\*E')\dashrightarrow(Ff)^\*(F'E')$ induced by the universality of $\varphi_{Ff}$ is an isomorphism for every morphism $f\colon M\to M'$ of $\X_\o$ and every object $E'\in\Pi_\o^{-1}(M')$.

\begin{definition}
\label{definition:tangent-fibration}
Given two tangent categories $(\X',\TT')$ and $(\X,\TT)$, a \textbf{tangent fibration} $\Pi\colon(\X',\TT')\to(\X,\TT)$ consists of a fibration $\Pi\colon\X'\to\X$ whose underlying functor is a strict tangent morphism and such that the tangent bundle functors preserve the Cartesian lifts, namely, $(\T,\T')\colon(\X,\X',\Pi)\to(\X,\X',\Pi)$ is a morphism of fibrations.
\end{definition}

To compare tangent fibrations with tangentads, let us recall the definition of the $2$-category $\Fib$ of fibrations.
\begin{description}
\item[Objects] An object of $\Fib$ consists of a (cloven) fibration $\Pi\colon\X'\to\X$;

\item[$1$-morphisms] A $1$-morphism is a morphism of fibrations $(F,F')\colon(\X_\o,\X_\o',\Pi_\o)\to(\X_\b,\X_\b',\Pi_\b)$;

\item[$2$-morphisms] A $2$-morphism $(F,F')\Rightarrow(G,G')\colon(\X_\o,\X_\o',\Pi_\o)\to(\X_\b,\X_\b',\Pi_\b)$ consists of pair of natural transformations $\varphi\colon F\Rightarrow G$ and $\varphi'\colon F'\Rightarrow G'$ such that:
\begin{align*}
&\Pi_\b\varphi=\varphi'_{\Pi_\o}
\end{align*}
\end{description}

\begin{proposition}
\label{proposition:tangent-fibrations-vs-tangent-objects}
A tangent fibration $\Pi\colon(\X',\TT')\to(\X,\TT)$ between two tangent categories is equivalent to a tangentad in the $2$-category $\Fib$ of fibrations.
\end{proposition}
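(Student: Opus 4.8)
The plan is to unwrap the definition of a tangentad in $\Fib$ and match the resulting data term by term against Definition~\ref{definition:tangent-fibration}, in the same spirit as Propositions~\ref{proposition:tangent-monads-vs-tangent-objects} and~\ref{proposition:Cartesian-vs-tangent}. A tangentad in $\Fib$ is an object $\Pi\colon\X'\to\X$ of $\Fib$ together with a Leung functor $\Leung[\TT]\colon\Weil\to\End_\Fib(\Pi)$. Recall that the hom-category $\End_\Fib(\Pi)=\Fib(\Pi,\Pi)$ has as objects the fibration endomorphisms $(F,F')\colon\Pi\to\Pi$, that is, pairs of functors with $\Pi\o F'=F\o\Pi$ preserving Cartesian lifts, and as morphisms the pairs $(\varphi,\varphi')$ of natural transformations compatible with $\Pi$.

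First I would unwrap the Leung functor into its concrete data, following the paragraph after Definition~\ref{definition:tangent-object}. The tangent bundle $1$-morphism is a fibration endomorphism $(\T,\T')\colon\Pi\to\Pi$, and the structural $2$-morphisms $p,z,s,l,c$ (and $n$, when negatives are present) are pairs $(p,p'),(z,z'),\dots$ of natural transformations. Projecting onto the base components yields a tuple $\TT=(\T,p,z,s,l,c)$ and projecting onto the total components yields $\TT'=(\T',p',z',s',l',c')$. Since all coherence axioms of a tangentad are equational identities holding in $\End_\Fib(\Pi)$, they hold componentwise; hence $\TT$ and $\TT'$ are genuine tangent structures on $\X$ and $\X'$, respectively.

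Next I would read off the compatibility with the fibration. The equation $\Pi\o\T'=\T\o\Pi$, built into $(\T,\T')$ being a $1$-morphism of $\Fib$, together with the defining compatibility of $\Fib$-$2$-morphisms, which forces the base and total components of each structural transformation to agree over $\Pi$, says exactly that $\Pi$ strictly commutes with the tangent bundle functors and with every structural natural transformation, i.e.\ $\Pi$ is a strict tangent morphism $(\X',\TT')\to(\X,\TT)$. Crucially, a $1$-morphism of $\Fib$ is by definition a morphism of fibrations preserving Cartesian lifts, so demanding that $(\T,\T')$ be a $1$-morphism of $\Fib$ is precisely the requirement that the tangent bundle functors preserve the Cartesian lifts in Definition~\ref{definition:tangent-fibration}. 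The converse reverses this: given a tangent fibration, the base and total tangent bundle functors assemble into $(\T,\T')$, the paired structural transformations form the $2$-morphisms, and these satisfy the tangentad axioms in $\Fib$ because they do so componentwise and are compatible with $\Pi$; the action on $1$- and $2$-morphisms follows the same template.

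The step I expect to be the main obstacle is matching the pointwise tangent limits. I would show that a pointwise pullback in $\End_\Fib(\Pi)$ is a pullback whose base component is a pointwise pullback in $\End(\X)$ and whose total component is a pointwise pullback in $\End(\X')$, compatibly with the cleavage. The condition that the $n$-fold pullback of the projection $p=(p,p')$ be a pointwise pullback preserved by all iterates of $(\T,\T')$ then translates simultaneously into the $n$-fold tangent pullback of $p$ in $\X$ being a $\T$-pullback and the $n$-fold tangent pullback of $p'$ in $\X'$ being a $\T'$-pullback, with $\Pi$ carrying the latter to the former. This is where the interaction between the cleavage and the limit cones needs the most care; once it is established, the two descriptions coincide and the correspondence extends to $1$- and $2$-morphisms, yielding the claimed equivalence (indeed an isomorphism of $2$-categories, as in~\cite{lanfranchi:grothendieck-tangent-cats}).
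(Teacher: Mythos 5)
Your outline follows essentially the same route as the paper: unwrap the tangentad data in $\Fib$, observe that the base and total components of the structural $2$-morphisms give tangent structures on $\X$ and $\X'$, and read off strictness of $\Pi$ and Cartesian-lift preservation of $(\T,\T')$ from the definitions of $1$- and $2$-morphisms in $\Fib$. That part is fine. However, you have correctly located the one non-routine step — the treatment of the pointwise tangent limits — and then deferred it (``this is where the interaction between the cleavage and the limit cones needs the most care; once it is established\dots''). That step is the actual content of the paper's proof, and as written your argument has a gap there. The issue is that the componentwise candidate for the $n$-fold pullback of the projection, namely the pair $(\T_n,\T_n')$, is not automatically an object of $\End_\Fib(\Pi)$: one must verify that $(\T_n,\T_n')$ preserves Cartesian lifts before one can even assert that it computes the pullback in $\Fib(\Pi,\Pi)$, and this is not a formal consequence of $(\T,\T')$ being a morphism of fibrations.

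The paper fills this gap concretely: for a morphism $f\colon A'\to A$ of $\X$ and an object $E$ over $A$, the comparison map $\xi_n\colon\T_n'(f^\*E)\to(\T_nf)^\*\T_n'E$ is inverted by first using each projection $\pi_k\colon\T_n\Rightarrow\T$ to produce maps $\xi_{\pi_k}\colon(\T_nf)^\*\T_n'E\to(\T f)^\*\T'E$, then composing with the inverse of the comparison $\xi\colon\T'(f^\*E)\to(\T f)^\*\T'E$ (invertible because $(\T,\T')$ preserves Cartesian lifts), and finally assembling these via the universal property of the pullback $\T_n'$ in the total category. You should supply this argument, or an equivalent one, rather than assume that pointwise pullbacks in $\End_\Fib(\Pi)$ are detected componentwise. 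Two smaller points: the phrase ``all coherence axioms \dots are equational identities'' overstates the case, since the tangentad axioms also include the limit conditions you address later; and the paper only claims an equivalence, so your parenthetical upgrade to an isomorphism of $2$-categories would need separate justification.
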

\begin{proof}
A tangentad in the $2$-category $\Fib$ consists of a fibration $\Pi\colon\X'\to\X$ together with an endofunctor $(\T,\T')\colon\Pi\to\Pi$ of fibrations and a family of $2$-morphisms of fibrations $(p,p')\colon(\T,\T')\Rightarrow\id_{\Pi}$, $(z,z')\colon\id_\Pi\Rightarrow(\T,\T')$, $(s,s')\colon(\T_2,\T_2')\Rightarrow(\T,\T')$, $(l,l')\colon(\T,\T')\Rightarrow(\T^2,{\T'}^2)$, and $(c,c')\colon(\T^2,{\T'}^2)\Rightarrow(\T^2,{\T'}^2)$, satisfying suitable axioms. Notice that the $n$-fold pullback of the projection $(p,p')$ along itself consists of the pair of functors $(\T_n,\T_n')$. To prove this statement, we need to show that $(\T_n,\T_n')$ preserves Cartesian lifts. Let us consider the diagram in the base category $\X$:
\begin{equation*}
\begin{tikzcd}
{\T_nA'} & {\T_nA} \\
{\T A'} & {\T A}
\arrow["{\T_nf}", from=1-1, to=1-2]
\arrow["{\pi_k}"', from=1-1, to=2-1]
\arrow["{\pi_k}", from=1-2, to=2-2]
\arrow["{\T f}"', from=2-1, to=2-2]
\end{tikzcd}
\end{equation*}
where $f\colon A'\to A$ is a morphism of $\X$ and let $E$ be an object in the fibre over $A$. We need to show that the morphism $\xi_n\colon\T_n'(f^\*E)\to(\T_nf)^\*\T'_nE$ induced by the universal property of the Cartesian lift $\varphi_{\T_nf}$ is invertible. First, notice that the $k$-th projection $\pi_k\colon\T_n\Rightarrow\T_n$ induces a unique morphism $\xi_{\pi_k}\colon(\T_nf)^\*\T_n'E\to(\T f)^\*\T'E$, making the following diagram commutes:
\begin{equation*}
\begin{tikzcd}
{(\T_nf)^\*\T_n'E} & {\T'_nE} \\
{(\T f)^\*\T'E} & {\T'E}
\arrow["{\varphi_{\T_nf}}", from=1-1, to=1-2]
\arrow["{\xi_{\pi_k}}"', dashed, from=1-1, to=2-1]
\arrow["{\pi_k'}", from=1-2, to=2-2]
\arrow["{\varphi_{\T f}}"', from=2-1, to=2-2]
\end{tikzcd}
\end{equation*}
Since $(\T,\T')$ preserves Cartesian lifts, the unique morphism $\xi\colon\T'(f^\*E)\to(\T f)^\*(\T'E)$ is invertible, therefore, we can define a family of morphisms:
\begin{align*}
&(\T_nf)^\*(\T_n'E)\xrightarrow{\xi_{\pi_k}}(\T f)^\*(\T'E)\xrightarrow{\xi}\T'(f^\*E)
\end{align*}
By employing the universal property of the pullback $\T_n'$, we obtain a morphism $(\T_nf)^\*(\T_n'E)\to(\T_nf)^\*\T'E$ which inverts $\xi_n$.
\par Since the $n$-fold pullback of the projection along itself is the morphism of fibrations $(\T_n,\T_n')$, it is not hard to prove that $(\X,\TT)$ and $(\X',\TT')$ are tangent categories. Furthermore, since $\Pi\o\T'=\T\o\Pi$, $\Pi p'=p_\Pi$, $\Pi z'=z_\Pi$, $\Pi s'=s_\Pi$, $\Pi l'=l_\Pi$, and $\Pi c'=c_\Pi$, $\Pi$ is a strict tangent morphism. Therefore, a tangentad of $\Fib$ is a tangent fibration.
\par Conversely, the data of a tangent fibration can easily be rearranged to produce a tangentad of $\Fib$.
\end{proof}

\subsection{Tangent indexed categories and the Grothendieck construction}
\label{subsection:tangent-indexed-categories}
In~\cite{lanfranchi:grothendieck-tangent-cats}, A Grothendieck construction in the context of tangent categories was introduced by lifting the $2$-equivalence between (cloven) fibrations and indexed categories to tangentads. In particular, tangent indexed categories were introduced as tangentads in the $2$-category $\Indx$ of indexed categories as the ``correct'' notion of indexed categories in the context of tangent category theory. In this section, we briefly recall this definition and the Grothendieck equivalence.
\par For starters, recall that $\Indx$ is the $2$-category so defined:
\begin{description}
\item[Objects] An object of $\Indx$ consists of a pair $(\X,\IND)$ formed by a category $\X$ and an indexed category, a.k.a. a pseudofunctor, $\IND\colon\X^\op\to\Cat$;

\item[$1$-morphisms] A $1$-morphism $(F,\{F^{A}\}_A,\{\xi^f\}_f)$ of $\Indx$ from $(\X_\o,\IND_\o)$ to $(\X_\b,\IND_\b)$ comprises a functor $F\colon\X_\o\to\X_\b$ together with a collection of functors $F^A\colon\X_\o^A\to\X_\b^{FA}$, indexed by the objects $A$ of $\X_\o$, where $\X_\o^A\=\IND_\o(A)$ and $\X_\b^{FA}\=\IND_\b(FA)$, and a collection of $2$-morphisms $\xi^f\colon F^A\o f^\*_\o\Rightarrow(Ff)^\*_\b\o F^B$, called \textbf{distributors}, indexed by the morphisms $f\colon A\to B$ of $\X_\o$, where $f^\*_\o\=\IND_\o(f)$ and $(Ff)^\*_\b\=\IND_\b(Ff)$. Moreover, the $\xi^f$ are compatible with the unitors and compositors of the indexed categories as follows:
\begin{equation*}
\begin{tikzcd}
{F^AE} & {(F\id_A)^\*_\b F^AE} \\
{F^A\id_{A\o}^\*E} & {(\id_{FA})^\*_\b F^AE}
\arrow["{\Unit_\b F^A}", from=1-1, to=1-2]
\arrow["{F^A\Unit_\o}"', from=1-1, to=2-1]
\arrow[Rightarrow, no head, from=1-2, to=2-2]
\arrow["{\kappa^{\id_A}}"', from=2-1, to=2-2]
\end{tikzcd}\hfill\quad
\begin{tikzcd}
{F^Af^\*_\o g^\*_\o K} & {(Ff)^\*_\b F^{B}g^\*_\o K} & {(Ff)^\*_\b (Fg)^\*_\b F^{C}K} \\
{F^A(g\o f)^\*K} & {(F(g\o f))^\*_\b F^{C}K} & {(FfFg)^\*_\b F^{C}K}
\arrow["{\kappa^f g^\*_\o}", from=1-1, to=1-2]
\arrow["{F^A\Comp_\o}"', from=1-1, to=2-1]
\arrow["{(Ff)^\*_\b\kappa^g}", from=1-2, to=1-3]
\arrow["{\Comp_\b F^{C}}", from=1-3, to=2-3]
\arrow["{\kappa^{g\o f}}"', from=2-1, to=2-2]
\arrow[Rightarrow, no head, from=2-2, to=2-3]
\end{tikzcd}
\end{equation*}
where $\Unit_\o$, $\Comp_\o$, $\Unit_\b$, and $\Comp_\b$ denote the unitors and the compositors of $\IND_\o$ and $\IND_\b$, respectively, and where $f\colon B\to C$ and $g\colon A\to B$ and $E\in\X_\o^A$, and $K\in\X_\o^C$;

\item[$2$-morphisms] A $2$-morphism $(\varphi,\{\varphi^A\}_A)$ of $\Indx$ from $(F,\{F^{A}\}_A,\{\xi^f\}_f)$ to $(G,\{G^A\}_A,\{\kappa^f\}_f)$ comprises a natural transformation $\varphi\colon F\Rightarrow G$ together with a collection of natural transformations $\varphi^A\colon F^A\Rightarrow\varphi_\b^\*\o G^A$ which are compatible with the distributors as follows:
\begin{equation*}
\begin{tikzcd}
{F^A\o f_\o^\*} & {\varphi^\*\o G^A\o f_\o^\*} \\
& {\varphi^\*\o(Gf)_\b^\*\o G^{B}} \\
{(Ff)_\b^\*\o F^{B}} & {(Ff)_\b^\*\o\varphi^\*\o G^{B}}
\arrow["{\varphi^Af_\o^\*}", from=1-1, to=1-2]
\arrow["{\xi^f}"', from=1-1, to=3-1]
\arrow["{\varphi^\*\kappa^f}", from=1-2, to=2-2]
\arrow["\cong", from=2-2, to=3-2]
\arrow["{(Ff)_\b^\*\varphi^{B}}"', from=3-1, to=3-2]
\end{tikzcd}
\end{equation*}
where $f\colon A\to B$.
\end{description}

\begin{definition}
\label{definition:tangent-indexed-category}
A \textbf{tangent indexed category} is a tangentad in the $2$-category $\Indx$ of indexed categories. Moreover, a tangent indexed category \textbf{with negatives} is a tangentad with negatives in the $2$-category $\Indx$.
\end{definition}

Let us unpack Definition~\ref{definition:tangent-indexed-category}. A tangent indexed category consists of the following data:
\begin{description}
\item[Base tangent category] A base tangent category $(\X,\TT)$;

\item[Indexed category] An indexed category $\IND\colon\X^\op\to\Cat$;

\item[Indexed tangent bundle functor] A collection of functors $\T\^A\colon\X\^A\to\X\^{\T A}$, indexed by the objects $A$ of $\X$;

\item[Tangent distributors] A collection of natural transformations $\kappa\^f$, indexed by the morphisms $f\colon A\to B$ of $\X$:
\begin{equation*}
\begin{tikzcd}
{\X\^{B}} & {\X\^A} \\
{\X\^{\T B}} & {\X\^{\T A}}
\arrow["{\T\^{B}}"', from=1-1, to=2-1]
\arrow["{\T\^A}", from=1-2, to=2-2]
\arrow["{f^\*}", from=1-1, to=1-2]
\arrow["{(\T f)^\*}"', from=2-1, to=2-2]
\arrow["{\kappa\^f}"{description}, Rightarrow, from=1-2, to=2-1]
\end{tikzcd}
\end{equation*}

\item[Indexed projection] A collection of natural transformations $p\^A\colon\T\^A\Rightarrow p^\*$, indexed by the objects $A$ of $\X$
\begin{equation*}
\begin{tikzcd}
{\X\^A} & {\X\^{\T A}} \\
{\X\^A} & {\X\^{\T A}}
\arrow["{\T\^A}", from=1-1, to=1-2]
\arrow["{p^\*}"', from=2-1, to=2-2]
\arrow[Rightarrow, no head, from=1-1, to=2-1]
\arrow[Rightarrow, no head, from=1-2, to=2-2]
\arrow["{p\^A}"{description}, Rightarrow, from=1-2, to=2-1]
\end{tikzcd}
\end{equation*}
satisfying the following property:
\begin{equation*}
\begin{tikzcd}
{\T\^A\o f^\*} & {p^\*\o f^\*} \\
{(\T f)^\*\o\T\^{B}} & {(\T f)^\*\o p^\*}
\arrow["{p\^A f^\*}", from=1-1, to=1-2]
\arrow["{\kappa\^f}"', from=1-1, to=2-1]
\arrow["\Comp", from=1-2, to=2-2]
\arrow["{(\T f)^\*p\^{B}}"', from=2-1, to=2-2]
\end{tikzcd}
\end{equation*}

\item[Indexed zero] A collection of natural transformations $z\^A\colon\id_{\X\^A}\Rightarrow z^\*\o\T\^A$, indexed by the objects $A$ of $\X$
\begin{equation*}
\begin{tikzcd}
{\X\^A} & {\X\^A} \\
{\X\^{\T A}} & {\X\^A}
\arrow[Rightarrow, no head, from=1-1, to=1-2]
\arrow["{\T\^A}"', from=1-1, to=2-1]
\arrow["{z^\*}"', from=2-1, to=2-2]
\arrow[Rightarrow, no head, from=1-2, to=2-2]
\arrow["{z\^A}"{description}, Rightarrow, from=1-2, to=2-1]
\end{tikzcd}
\end{equation*}
satisfying the following property:
\begin{equation*}
\begin{tikzcd}
{f^\*} & {z^\*\o\T\^A\o f^\*} \\
{f^\*\o z^\*\o\T\^{B}} & {z^\*\o(\T f)^\*\o\T\^{B}}
\arrow["{z\^A f^\*}", from=1-1, to=1-2]
\arrow["{f^\*z\^{B}}"', from=1-1, to=2-1]
\arrow["{z^\*\kappa\^f}", from=1-2, to=2-2]
\arrow["\Comp", from=2-2, to=2-1]
\end{tikzcd}
\end{equation*}

\item[Indexed $n$-fold pullback] For every positive integer $n$, a collection of functors $\T\^A_n\colon\X\^A\to\X\^{\T_nA}$, indexed by the objects $A$ of $\X$, together with a collection of natural transformations $\kappa_n\^f\colon\T\^A_n\o f^\*\Rightarrow(\T_nf)^\*\o\T_n\^{B}$, indexed by the morphism $f\colon A\to B$ of $\X$
\begin{equation*}
\begin{tikzcd}
{\X\^{B}} & {\X\^A} \\
{\X\^{\T_nB}} & {\X\^{\T_nA}}
\arrow["{f^\*}", from=1-1, to=1-2]
\arrow["{\T\^A_n}", from=1-2, to=2-2]
\arrow["{\T\^{B}_n}"', from=1-1, to=2-1]
\arrow["{(\T_nf)^\*}"', from=2-1, to=2-2]
\arrow["{\kappa_n\^f}"{description}, Rightarrow, from=1-2, to=2-1]
\end{tikzcd}
\end{equation*}
and a collection of natural tranformations $\pi\^A_k\colon\T\^A_n\Rightarrow\pi_k\^*\o\T\^A$, indexed by the objects $A$ of $\X$:
\begin{equation*}
\begin{tikzcd}
{\X\^A} & {\X\^{\T_nA}} \\
{\X\^{\T A}} & {\X\^{\T_nA}}
\arrow["{\T\^A}"', from=1-1, to=2-1]
\arrow["{\pi_k^\*}"', from=2-1, to=2-2]
\arrow[Rightarrow, no head, from=1-2, to=2-2]
\arrow["{\T\^A_n}", from=1-1, to=1-2]
\arrow["{\pi\^A_k}"{description}, Rightarrow, from=1-2, to=2-1]
\end{tikzcd}
\end{equation*}
satisfying the following property:
\begin{equation*}
\begin{tikzcd}
{\T_n\^A\o f^\*} & {\pi_k^\*\o\T\^A\o f^\*} \\
& {\pi_k^\*\o(\T f)^\*\o\T\^{B}} \\
{(\T_nf)^\*\o\T_n\^{B}} & {(\T_nf)^\*\o\pi_k^\*\o\T\^{B}}
\arrow["{\pi_k\^Af^\*}", from=1-1, to=1-2]
\arrow["{\kappa_n\^ff^\*}"', from=1-1, to=3-1]
\arrow["{\pi_k^\*\kappa\^f}", from=1-2, to=2-2]
\arrow["\Comp", from=2-2, to=3-2]
\arrow["{(\T_nf)^\*\pi_k\^{B}}"', from=3-1, to=3-2]
\end{tikzcd}
\end{equation*}

\item[Indexed sum morphism] A collection of natural transformations $s\^A\colon\T_2\^A\Rightarrow s^\*\o\T\^A$, indexed by the objects $A$ of $\X$
\begin{equation*}
\begin{tikzcd}
{\X\^A} & {\X\^{\T_2A}} \\
{\X\^{\T A}} & {\X\^{\T_2A}}
\arrow["{\T\^A_2}", from=1-1, to=1-2]
\arrow[Rightarrow, no head, from=1-2, to=2-2]
\arrow["{\T\^A}"', from=1-1, to=2-1]
\arrow["{s^\*}"', from=2-1, to=2-2]
\arrow["{s\^A}"{description}, Rightarrow, from=1-2, to=2-1]
\end{tikzcd}
\end{equation*}
satisfying the following property:
\begin{equation*}
\begin{tikzcd}
{\T_2\^A\o f^\*} & {s^\*\o\T\^A\o f^\*} \\
& {s^\*\o(\T f)^\*\o\T\^{B}} \\
{(\T_2f)^\*\o\T_2\^{B}} & {(\T_2f)^\*\o s^\*\o\T\^{B}}
\arrow["{s\^Af^\*}", from=1-1, to=1-2]
\arrow["{\kappa_2\^ff^\*}"', from=1-1, to=3-1]
\arrow["{s^\*\kappa\^f}", from=1-2, to=2-2]
\arrow["\Comp", from=2-2, to=3-2]
\arrow["{(\T_2f)^\*s\^{B}}"', from=3-1, to=3-2]
\end{tikzcd}
\end{equation*}

\item[Indexed vertical lift] A collection of natural transformations $l\^A\colon\T\^A\Rightarrow l^\*\o\T\^{\T A}\o\T\^A$, indexed by the objects $A$ of $\X$
\begin{equation*}
\begin{tikzcd}
{\X\^A} & {\X\^{\T A}} \\
{\X\^{\T A}} \\
{\X\^{\T^2A}} & {\X\^{\T A}}
\arrow["{\T\^A}", from=1-1, to=1-2]
\arrow[Rightarrow, no head, from=1-2, to=3-2]
\arrow["{\T\^A}"', from=1-1, to=2-1]
\arrow["{\T\^{\T A}}"', from=2-1, to=3-1]
\arrow["{l^\*}"', from=3-1, to=3-2]
\arrow["{l\^A}"{description}, Rightarrow, from=1-2, to=3-1]
\end{tikzcd}
\end{equation*}
satisfying the following property:
\begin{equation*}
\begin{tikzcd}
{\T\^A\o f^\*} & {l^\*\o\T\^{\T A}\o\T\^A\o f^\*} \\
& {l^\*\o\T\^{\T A}\o(\T f)^\*\o\T\^{B}} \\
& {l^\*\o(\T^2f)^\*\o\T\^{\T B}\o\T\^{B}} \\
{(\T f)^\*\o\T\^{B}} & {(\T f)^\*\o l^\*\o\T\^{\T B}\o\T\^{B}}
\arrow["{l\^Af^\*}", from=1-1, to=1-2]
\arrow["{\kappa\^f}"', from=1-1, to=4-1]
\arrow["{l^\*\T\^{\T A}\kappa\^f}", from=1-2, to=2-2]
\arrow["{l^\*\kappa\^{\T f}\T\^{B}}", from=2-2, to=3-2]
\arrow["\Comp", from=3-2, to=4-2]
\arrow["{(\T f)^\*l\^{B}}"', from=4-1, to=4-2]
\end{tikzcd}
\end{equation*}

\item[Indexed canonical flip] A collection of natural transformations $c\^A\colon\T\^{\T A}\o\T\^A\Rightarrow c^\*\o\T\^{\T A}\o\T\^A$, indexed by the objects $A$ of $\X$
\begin{equation*}
\begin{tikzcd}
{\X\^A} & {\X\^{\T A}} \\
{\X\^{\T A}} \\
{\X\^{\T^2A}} & {\X\^{\T^2A}}
\arrow["{\T\^A}", from=1-1, to=1-2]
\arrow["{\T\^A}", from=1-2, to=3-2]
\arrow["{c^\*}"', from=3-1, to=3-2]
\arrow["{c\^A}"{description}, Rightarrow, from=1-2, to=3-1]
\arrow["{\T\^A}"', from=1-1, to=2-1]
\arrow["{\T\^{\T A}}"', from=2-1, to=3-1]
\end{tikzcd}
\end{equation*}
satisfying the following property:
\begin{equation*}
\begin{tikzcd}
{\T\^{\T A}\o\T\^A\o f^\*} & {c^\*\o\T\^{\T A}\o\T\^A\o f^\*} \\
& {c^\*\o\T\^{\T A}\o(\T f)^\*\o\T\^{B}} \\
{\T\^{\T A}\o(\T f)^\*\o\T\^{B}} & {c^\*\o(\T^2f)^\*\o\T\^{\T B}\o\T\^{B}} \\
{(\T^2f)^\*\o\T\^{\T B}\o\T\^{B}} & {(\T^2f)^\*\o c^\*\o\T\^{\T B}\o\T\^{B}}
\arrow["{c\^Af^\*}", from=1-1, to=1-2]
\arrow["{\T\^{\T A}\kappa\^f}"', from=1-1, to=3-1]
\arrow["{c^\*\T\^{\T A}\kappa\^f}", from=1-2, to=2-2]
\arrow["{c^\*\kappa\^{\T f}\T\^{B}}", from=2-2, to=3-2]
\arrow["{\kappa\^{\T f}\T\^{B}}"', from=3-1, to=4-1]
\arrow["\Comp", from=3-2, to=4-2]
\arrow["{(\T^2f)^\*c\^{B}}"', from=4-1, to=4-2]
\end{tikzcd}
\end{equation*}
\end{description}
If the indexed tangent category has negatives, then we also have:
\begin{description}
\item[Indexed negation] A collection of natural transformations $n\^A\colon\T\^A\Rightarrow n^\*\o\T\^A$, indexed by the objects $A$ of $\X$
\begin{equation*}
\begin{tikzcd}
{\X\^A} & {\X\^{\T A}} \\
{\X\^{\T A}} & {\X\^{\T A}}
\arrow["{\T\^A}", from=1-1, to=1-2]
\arrow["{\T\^A}"', from=1-1, to=2-1]
\arrow["{n^\*}"', from=2-1, to=2-2]
\arrow[Rightarrow, no head, from=1-2, to=2-2]
\arrow["{n\^A}"{description}, Rightarrow, from=1-2, to=2-1]
\end{tikzcd}
\end{equation*}
satisfying the following property:
\begin{equation*}
\begin{tikzcd}
{\T\^A\o f^\*} & {n^\*\o\T\^A\o f^\*} \\
& {n^\*\o(\T f)^\*\o\T\^{B}} \\
{(\T f)^\*\o\T\^{B}} & {(\T f)^\*\o n^\*\o\T\^{B}}
\arrow["{n\^Af^\*}", from=1-1, to=1-2]
\arrow["{\kappa\^f}"', from=1-1, to=3-1]
\arrow["{n^\*\kappa\^f}", from=1-2, to=2-2]
\arrow["\Comp", from=2-2, to=3-2]
\arrow["{(\T f)^\*n\^{B}}"', from=3-1, to=3-2]
\end{tikzcd}
\end{equation*}
\end{description}
These morphisms also need to satisfy the axioms of a tangentad. For the sake of completeness, let us state the main result of~\cite{lanfranchi:differential-bundles-operadic-affine-schemes} which motivated Definition~\ref{definition:tangent-indexed-category}.

\begin{theorem}[Grothendieck construction in the context of tangent categories]
\label{theorem:full-Grothendieck}
The Grothendieck correspondence between fibrations and indexed categories lifts to a $2$-equivalence between the $2$-category $\TngFib$ of tangent fibrations and the $2$-category $\TngIndx\=\Tng(\Indx)$ of tangent indexed categories:
\begin{align*}
&\TngFib\simeq\TngIndx
\end{align*}
Similarly, the $2$-categories $\TngFib_\co$ and $\TngIndx_\co\=\Tng_\co(\Indx)$ are equivalent:
\begin{align*}
&\TngFib_\co\simeq\TngIndx_\co
\end{align*}
Moreover, these two equivalences can be restricted to strong and strict morphisms, yielding two other equivalences:
\begin{align*}
&\TngFib_\cong\simeq\TngIndx_\cong\=\Tng_\cong(\Indx)\\
&\TngFib_=\simeq\TngIndx_=\=\Tng_=(\Indx)
\end{align*}
\end{theorem}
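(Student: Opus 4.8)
The plan is to avoid reconstructing the tangent-theoretic Grothendieck correspondence by hand, and instead obtain it as the image of the classical (non-tangent) Grothendieck $2$-equivalence under the $2$-functor $\Tng$. Two facts make this possible and are already available: by Proposition~\ref{proposition:tangent-fibrations-vs-tangent-objects} a tangent fibration is precisely a tangentad in $\Fib$, so that $\TngFib\cong\Tng(\Fib)$, while $\TngIndx\=\Tng(\Indx)$ holds by Definition~\ref{definition:tangent-indexed-category}. It therefore suffices to produce a $2$-equivalence $\Tng(\Fib)\simeq\Tng(\Indx)$, and the most economical way to do this is to apply $\Tng$ to an equivalence $\Fib\simeq\Indx$ living in the $2$-category $\TwoCat_\pp$.

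First I would recall the classical Grothendieck correspondence as a $2$-equivalence $\Phi\colon\Indx\to\Fib$ with quasi-inverse $\Psi\colon\Fib\to\Indx$, between the global $2$-categories of indexed categories and of cloven fibrations with varying base, realized by strict $2$-functors. The only nontrivial point is to place this equivalence inside $\TwoCat_\pp$, namely to check that both $\Phi$ and $\Psi$ are pointwise pullback-preserving. I would obtain this formally rather than by computation: every $2$-equivalence can be promoted to an adjoint $2$-equivalence, in which each of the two $2$-functors is simultaneously a left and a right $2$-adjoint of the other. Consequently Lemma~\ref{lemma:right-adjoints-preserve-pointwise-limits} applies in both directions, so $\Phi$ and $\Psi$ preserve pointwise limits, and in particular pointwise pullbacks in every hom-category $\End(\X)$. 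Hence the equivalence is a genuine equivalence in $\TwoCat_\pp$.

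With this in hand I would invoke Proposition~\ref{proposition:functoriality-TNG}, which establishes $\Tng$ (and, by Remark~\ref{remark:functoriality-TNG}, also $\Tng_\co$, $\Tng_\cong$, and $\Tng_=$) as a $2$-endofunctor of $\TwoCat_\pp$. Since any $2$-functor preserves equivalences — an equivalence being specified by a quasi-inverse together with invertible coherence $2$-cells, all of which are transported functorially and sent to invertible $2$-cells — the image of the equivalence $\Phi,\Psi$ is a $2$-equivalence $\Tng(\Fib)\simeq\Tng(\Indx)$. Combining with $\TngFib\cong\Tng(\Fib)$ and $\TngIndx\=\Tng(\Indx)$ yields $\TngFib\simeq\TngIndx$. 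Running the identical argument through $\Tng_\co$, $\Tng_\cong$, and $\Tng_=$ produces the colax, strong, and strict variants $\TngFib_\co\simeq\TngIndx_\co$, $\TngFib_\cong\simeq\TngIndx_\cong$, and $\TngFib_=\simeq\TngIndx_=$, since the identification of tangent fibrations with tangentads in $\Fib$ restricts to each flavour of tangent morphism.

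The main obstacle is the middle step: verifying that the Grothendieck functors are pointwise pullback-preserving and, more delicately, that the classical equivalence is available at the global level and in strict enough form to feed into $\TwoCat_\pp$. The adjoint-equivalence argument reduces pullback-preservation to Lemma~\ref{lemma:right-adjoints-preserve-pointwise-limits}, but one must still confirm that the cleavage-dependent constructions underlying $\Phi$ and $\Psi$ assemble into strict $2$-functors rather than merely pseudofunctors, and that the identification $\TngFib\cong\Tng(\Fib)$ of Proposition~\ref{proposition:tangent-fibrations-vs-tangent-objects} is $2$-natural, so that it intertwines with the transported equivalence. Once strictness and $2$-naturality are secured, the remainder is the purely formal transport described above.
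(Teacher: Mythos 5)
Your proposal follows exactly the strategy the paper intends for this result: the theorem is stated here without proof (it is imported from the cited Grothendieck-construction paper), but the introduction describes precisely your method — identify tangent fibrations with tangentads in $\Fib$ via Proposition~\ref{proposition:tangent-fibrations-vs-tangent-objects}, place the classical Grothendieck correspondence in $\TwoCat_\pp$ via the adjoint-equivalence argument and Lemma~\ref{lemma:right-adjoints-preserve-pointwise-limits}, and apply $\Tng$ — and the paper runs this same argument verbatim for Theorem~\ref{theorem:split-restriction-cats-vs-M-cats}. The only citation-level quibble is that Proposition~\ref{proposition:functoriality-TNG} establishes $\Tng_\co$ (not the lax $\Tng$) as an endofunctor of $\TwoCat_\pp$; for your argument it suffices that $\Tng$ is a $2$-functor $\TwoCat_\pp\to\TwoCat$, which the paper does provide following Lemma~\ref{lemma:functoriality-TNG}.
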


\subsection{Reverse tangent categories}
\label{subsection:reverse-tangent-categories}
Reverse tangent categories were introduced by Cruttwell and Lemay in~\cite{cruttwell:reverse-tangent-cats} to axiomatize the operation of reverse differentiation in the context of tangent categories. Reverse differentiation, categorically introduced in~\cite{cockett:reverse-derivative-cats}, is a formalization of the back-propagation operation employed in machine-learning techniques. While Cartesian reverse differential categories describe reverse differentiation in the context of Euclidean spaces, reverse tangent categories extend this operation for arbitrary locally linear geometric spaces of parameters. Reverse tangent categories also axiomatize the cotangent bundle of a manifold in the context of tangent category theory.

\par In this section, we recall the definition of a reverse tangent category and show that reverse tangent categories form a special class of tangentads in the $2$-category of dagger fibrations. In Example~\ref{example:dual-fibration}, we have mentioned that the $2$-category $\Fib$ of fibrations comes with opposite objects, where the involution $(-)^\op$ sends a fibration to its dual.

\par Concretely, the fibres of the dual fibration of a cloven fibration $\Pi$ are the opposite of the corresponding fibres of $\Pi$. Kock in~\cite{kock:dual-fibration} discusses the notion of dual fibrations for arbitrary non-necessarily cloven fibrations. For the sake of simplicity, throughout the section, we assume that every fibration is cloven. However, we expect every result to extend easily in the general context.

\par In~\cite{cockett:differential-bundles}, Cockett and Cruttwell showed that (display) differential bundles of a tangent category can be organized into a tangent fibration. For a definition of differential bundles, we refer to the cited paper.

\par In~\cite{lanfranchi:grothendieck-tangent-cats}, the author showed that his construction yields an adjunction between tangent fibrations and indexed tangent categories, which, however, is not an equivalence, in contrast with the genuine Grothendieck equivalence between tangent fibrations and tangent indexed categories.

\par In general, the opposite of a tangent category is not again a tangent category, unless the tangent bundle functor and all $n$-fold pullbacks along the projection admit left adjoints (cf.~\cite[Proposition~5.17]{cockett:tangent-cats}). This makes the dual of a tangent fibration, not a tangent fibration anymore.

\par Despite that the dual of a tangent fibration cannot be equipped with a tangent structure, \cite[Lemma~32]{cruttwell:monoidal-reverse-differential-categories} establishes that each morphism of fibrations $(F,F')\colon\Pi_\o\to\Pi_\b$, induces a morphism of fibrations $(F,F')^\b=(F,F'^\b)\colon\Pi_\o^\b\to\Pi_\b^\b$ between the dual fibrations. In the context of cloven fibrations, $(F,F')^\b$ is the morphism of fibrations which corresponds to the morphism of indexed categories:
\begin{align*}
&(\X_\o)_M^\b=(\X_\o)_M^\op\xrightarrow{F_M^\op}(\X_\b)_{FM}^\op=(\X_\b)_{FM}^\b
\end{align*}
via the Grothendieck construction, where $(\X_\o)_M=\Pi_\o^{-1}(M)$ denotes the fibre over an object $M\in\X_\o$ of $\Pi_\o$, $(\X_\b)_{FM}=\Pi_\b^{-1}(FM)$ is the fibre over $FM$ of $\Pi_\b$, and $F_M\colon(\X_\o)_M\to(\X_\b)_{FM}$ indicates the functor induced by $(F,F')$ between the fibres.

\par Thanks to this correspondence of morphisms of fibrations, the tangent bundle functor $\bar\T\colon\DB(\X,\TT)\to\DB(\X,\TT)$ induces a functor $\bar\T^\b\colon\DB^\b(\X,\TT)\to\DB^\b(\X,\TT)$.

\par It is important to notice that the equivalence of categories $(-)^\op\colon\Cat\to\Cat$ does not preserve natural transformations. In particular, $(-)^\op$ extends to a $2$-equivalence between $\Cat$ and $\Cat^\co$, where the latter denotes the $2$-category of categories, functors, and the opposite of natural transformations. This is the main reason why the dual of a tangent fibration is not, in general, a tangent fibration, since the dualization does not preserve the structural natural transformations.

\par Let us recall the definition of a reverse tangent category, introduced by Cruttwell and Lemay in~\cite{cruttwell:reverse-tangent-cats}.

\begin{definition}
\label{definition:reverse-tangent-category}
A \textbf{reverse tangent category} consists of a tangent category $(\X,\TT)$ and an involution $(-)^\*\colon\DB(\X,\TT)\to\DB^\b(\X,\TT)$ on the tangent fibration $\DB(\X,\TT)\to(\X,\TT)$ of differential bundles of $(\X,\TT)$ (cf.~\cite[Corollary~5.9]{cockett:differential-bundles}). Concretely, the involution consists of a morphism of fibrations between the fibration $\DB(\X,\TT)\to(\X,\TT)$ and its dual fibration $\DB^\b(\X,\TT)\to(\X,\TT)$ whose base functor is the identity and whose total functor is the identity on objects, together with a natural isomorphism $\xi\colon(-)^\*\o(-)^\*\Rightarrow\id_{\DB(\X,\TT)}$. Moreover, the following diagram is required to commute:
\begin{equation*}
\begin{tikzcd}
{\DB(\X,\TT)} & {\DB(\X,\TT)} \\
{\DB^\b(\X,\TT)} & {\DB^\b(\X,\TT)}
\arrow["{\bar\T}", from=1-1, to=1-2]
\arrow["{(-)^\*}"', from=1-1, to=2-1]
\arrow["{(-)^\*}", from=1-2, to=2-2]
\arrow["{{\bar\T}^\b}"', from=2-1, to=2-2]
\end{tikzcd}
\end{equation*}
\end{definition}

A dagger fibration, introduced in~\cite{cockett:reverse-derivative-cats}, consists of a fibration equipped with an involution. Let us recall this definition.

\begin{definition}
\label{definition:dagger-fibration}
A \textbf{dagger} (\textbf{cloven}) \textbf{fibration} consists of a (cloven) fibration $\Pi\colon\X'\to\X$ together with a morphism $(-)^\*\colon\Pi\to\Pi^\b$ of fibrations, called the \textbf{involution}, between $\Pi$ and its dual fibration $\Pi^\b$, whose base functor is the identity and whose total functor $(-)^\*\colon\X'\to\X'^\b$ is the identity on objects, and with a natural isomorphism $\xi\colon(-)^\*\o(-)^\*\Rightarrow\id_{\X'}$. When the fibration is assumed to be cloven, the involution is required to preserve the cleavage, by sending Cartesian cloven morphisms to Cartesian cloven morphisms.
\end{definition}

\begin{remark}
\label{remark:dagger-fibration}
In~\cite[Definition~33]{cockett:reverse-derivative-cats}, the isomorphism $\xi\colon(-)^\*\o(-)^\*\Rightarrow\id_{\X'}$ of Definition~\ref{definition:dagger-fibration} was assumed to be the identity. However, to compare dagger fibrations with the notion of reverse tangent categories proposed in~\cite{cruttwell:reverse-tangent-cats}, we needed to weaken this condition and assume $\xi$ to be a, non-necessarily trivial, isomorphism.
\end{remark}

In order to compare reverse tangent categories with tangentads of dagger fibrations, first we need to introduce $1$ and $2$-morphisms of dagger fibrations.

\begin{definition}
\label{definition:morphism-dagger-fibration}
A \textbf{morphism of dagger fibrations} from a dagger fibration $\Pi_\o\colon\X_\o'\to\X_\o$ with involution $(-)_\o^\*$ to a dagger fibration $\Pi_\b\colon\X_\b'\to\X_\b$ with involution $(-)_\b^\*$, consists of a morphism $(F,F')\colon\Pi_\o\to\Pi_\b$ of fibrations for which the following diagram commutes:
\begin{equation*}
\begin{tikzcd}
{\X'} & {\X'} \\
{\X'^\b} & {\X'^\b}
\arrow["{F'}", from=1-1, to=1-2]
\arrow["{(-)^\*}"', from=1-1, to=2-1]
\arrow["{(-)^\*}", from=1-2, to=2-2]
\arrow["{F'^\b}"', from=2-1, to=2-2]
\end{tikzcd}
\end{equation*}
\end{definition}

\begin{definition}
\label{definition:2-morphism-dagger-fibration}
A \textbf{$2$-morphism of dagger fibrations} $(\varphi,\varphi')\colon(F,F')\Rightarrow(G,G')\colon\Pi_\o\to\Pi_\b$ consists of a $2$-morphism of fibrations $(\varphi,\varphi')\colon(F,F')\Rightarrow(G,G')$.
\end{definition}

Dagger fibrations together with their morphisms and $2$-morphisms form a $2$-category denoted by $\DFib$.

\begin{proposition}
\label{proposition:reverse-tangent-fibrations-vs-tangent-objects}
A reverse tangent category is a tangentad in the $2$-category $\DFib$ of dagger fibrations.
\end{proposition}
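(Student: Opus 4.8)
The plan is to leverage Proposition~\ref{proposition:tangent-fibrations-vs-tangent-objects}, which identifies tangent fibrations with tangentads in $\Fib$, and to upgrade that identification along the forgetful $2$-functor $\U\colon\DFib\to\Fib$ which discards the involution. I would start from a reverse tangent category: a tangent category $(\X,\TT)$ together with an involution $(-)^\*$ making the differential bundle tangent fibration $\DB(\X,\TT)\to(\X,\TT)$ into a dagger fibration $\Pi$ (Definitions~\ref{definition:reverse-tangent-category} and~\ref{definition:dagger-fibration}). Since $\DB(\X,\TT)\to(\X,\TT)$ is a tangent fibration, Proposition~\ref{proposition:tangent-fibrations-vs-tangent-objects} already presents it as a tangentad in $\Fib$, whose tangent bundle $1$-morphism is the morphism of fibrations $(\T,\bar\T)\colon\Pi\to\Pi$ and whose structural $2$-morphisms are $p$, $z$, $s$, $l$, and $c$. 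The task is then to check that this entire structure lives in $\DFib$.

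The decisive observation is that, because a $2$-morphism of dagger fibrations carries no condition beyond being a $2$-morphism of fibrations (Definition~\ref{definition:2-morphism-dagger-fibration}), the hom-category $\End_{\DFib}(\Pi)$ is the full subcategory of $\End_{\Fib}(\Pi)$ spanned by those endo-$1$-morphisms that commute with the involution in the sense of Definition~\ref{definition:morphism-dagger-fibration}. Hence I only need to verify that the $1$-morphisms appearing in the tangent structure land in this subcategory; the structural $2$-morphisms and all the tangentad axioms, being equations between $2$-cells that $\U$ leaves untouched, then transfer verbatim. For the tangent bundle $1$-morphism this is exactly the commuting square $(-)^\*\o\bar\T=\bar\T^\b\o(-)^\*$ imposed in Definition~\ref{definition:reverse-tangent-category}, so $(\T,\bar\T)$ is a morphism of dagger fibrations; and since morphisms of dagger fibrations are closed under composition, every iterate $\bar\T^k$ commutes with the involution as well.

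The main obstacle is the pointwise $n$-fold pullbacks. For the projection to admit its $n$-fold pullbacks inside the full subcategory $\End_{\DFib}(\Pi)$, the apex $1$-morphism $(\T_n,\bar\T_n)$ must itself commute with the involution, and this is the one piece not directly postulated, since Definition~\ref{definition:reverse-tangent-category} only constrains $\bar\T$. I would settle it by noting that $\bar\T_n$ is assembled from $\bar\T$ and the projection as a pointwise tangent pullback (Proposition~\ref{proposition:tangent-fibrations-vs-tangent-objects}), and that the dualization $(-)^\b$ of morphisms of fibrations (cf.~\cite[Lemma~32]{cruttwell:monoidal-reverse-differential-categories}) is computed fibrewise by $(-)^\op$ and therefore preserves the cleavage and these pointwise tangent pullbacks. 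Consequently $\bar\T_n^\b$ is the $n$-fold pullback built from $\bar\T^\b$, and the $n=1$ commuting square propagates through the universal property to yield $(-)^\*\o\bar\T_n=\bar\T_n^\b\o(-)^\*$. Since a full subcategory inherits any ambient limit whose apex it contains, these pullbacks are computed already in $\End_{\DFib}(\Pi)$ and remain preserved by the iterates of $(\T,\bar\T)$, so the pointwise tangent limits demanded of a tangentad are in place.

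Assembling these points exhibits the reverse tangent category as a tangentad in $\DFib$. Conversely, a tangentad in $\DFib$ maps under $\U$ to a tangentad in $\Fib$, i.e.\ a tangent fibration by Proposition~\ref{proposition:tangent-fibrations-vs-tangent-objects}, and the residual data — the involution together with its coherence isomorphism $\xi$ and the requirement that the tangent bundle $1$-morphism be a morphism of dagger fibrations — reassemble, when the underlying dagger fibration is the differential bundle fibration of a tangent category, into precisely the datum of a reverse tangent category. This realises reverse tangent categories as the tangentads in $\DFib$ supported on differential bundle fibrations, as claimed.
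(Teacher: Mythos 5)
Your proposal takes essentially the same route as the paper: reduce to Proposition~\ref{proposition:tangent-fibrations-vs-tangent-objects} via the forgetful $2$-functor $\DFib\to\Fib$, note that $2$-morphisms of dagger fibrations carry no condition beyond being $2$-morphisms of fibrations, and identify the only genuinely new datum --- compatibility of $(\T,\bar\T)$ with the involution --- with the commuting square required in Definition~\ref{definition:reverse-tangent-category}. The paper's proof is exactly this and stops there, so on the main line you and the paper agree.

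Where you go beyond the paper is in asking whether the apexes $(\T_n,\bar\T_n)$ of the pointwise pullbacks themselves commute with the involution, so that these pullbacks are actually computed in the full subcategory $\End_{\DFib}(\Pi)\subseteq\End_{\Fib}(\Pi)$. Raising this is a genuine improvement over the paper's proof, but the justification you give --- that the fibrewise dualization $(-)^\b$ ``preserves the cleavage and these pointwise tangent pullbacks'' --- does not typecheck. As the paper itself stresses in this very section, $(-)^\op$ is an equivalence $\Cat\to\Cat^\co$: it reverses $2$-morphisms, which is precisely why the dual of a tangent fibration is not a tangent fibration. The pullback defining $\T_n$ is a limit over a cospan of $2$-cells ($p$ and the $\pi_k$), and its image under dualization is not even a cospan, so one cannot speak of $(-)^\b$ preserving it, and $\bar\T_n^\b$ is not exhibited as ``the $n$-fold pullback built from $\bar\T^\b$''. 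The desired equation $(-)^\*\o\bar\T_n=\bar\T_n^\b\o(-)^\*$ is an equation of $1$-morphisms and is plausibly true for the differential-bundle involution, but it needs a direct argument --- for instance, that the involution is an equivalence on each fibre and hence interchanges the fibrewise limits defining $\bar\T_n$ with the corresponding colimits in the opposite fibres, together with the identity-on-objects hypothesis --- rather than an appeal to $(-)^\b$ preserving pullbacks.
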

\begin{proof}
By Proposition~\ref{proposition:tangent-fibrations-vs-tangent-objects}, tangent fibrations are equivalent to tangentads in the $2$-category $\Fib$ of fibrations. Moreover, the forgetful $2$-functor $\DFib\to\Fib$ which forgets the dagger structure preserves the $1$ and the $2$-morphisms. Therefore, the underlying fibration of a tangentad of $\DFib$ is a tangent fibration. Moreover, such a fibration comes equipped with an involution $(-)^\*$. Furthermore, the tangent bundle functors $(\T,\T')\colon\Pi\to\Pi$ need to be compatible with the involution. Finally, a reverse tangent fibration is precisely a tangent category whose fibration $\DB(\X,\TT)\to(\X,\TT)$ of differential bundles comes equipped with an involution, compatible with the tangent bundle functors. Therefore, the dagger fibration of differential bundles of a reverse tangent category is a tangentad in $\DFib$.
\end{proof}

It is important to realize that reverse tangent categories only count for a small family of tangentads of $\DFib$. Indeed, in general, the underlying fibration of a tangentad of $\DFib$ will not be the fibration of differential bundles of a tangent category. This suggests the following definition.

\begin{definition}
\label{definition:dagger-tangent-fibration}
A \textbf{dagger tangent fibration} consists of a tangentad of the $2$-category $\DFib$ of dagger fibrations.
\end{definition}

\subsection{Display and \'etale tangent categories}
\label{subsection:tangent-display-systems}
Often in tangent category theory, one needs to work with a class of morphisms stable under tangent pullbacks and under the tangent bundle functor. Such a class of morphisms is called a tangent display system. In~\cite[Definition~2.2]{cruttwell:tangent-display-maps}, Cruttwell and the author proposed an alternative approach to the ``pullback problem'' by looking at \textit{tangent display maps} which form the maximal tangent display system of a tangent category. We suggest the reader consult~\cite{cruttwell:tangent-display-maps} for details.

\par In this section, we show that tangent display systems can be regarded as tangentads in the $2$-category of display systems. Let us start by recalling that a display system $\Dsply$ in a category $\X$ consists of a collection of morphisms of $\X$, called display maps, stable under pullbacks. Concretely, this means that, for each morphism $q\colon E\to M$ of $\Dsply$ and each morphism $f\colon N\to M$ of $\X$, the pullback of $q$ along $f$
\begin{equation*}
\begin{tikzcd}
f^\*E & E \\
N & M
\arrow[from=1-1, to=1-2]
\arrow["{q'}"', from=1-1, to=2-1]
\arrow["\lrcorner"{anchor=center, pos=0.125}, draw=none, from=1-1, to=2-2]
\arrow["q", from=1-2, to=2-2]
\arrow["f"', from=2-1, to=2-2]
\end{tikzcd}
\end{equation*}
exists and the corresponding morphism $q'\colon f^\*E\to N$ belongs to $\Dsply$. A \textit{display category} consists of a category equipped with a display system. In a display category $(\X,\Dsply)$, each pullback diagram whose vertical morphisms are display maps is called a \textit{display pullback}.

\par A display functor from a display category $(\X,\Dsply)$ to a display category $(\X',\Dsply')$ consists of a functor $F\colon\X\to\X'$ which sends each display map of $(\X,\Dsply)$ to a display map of $(\X',\Dsply')$ and which preserves display pullbacks, namely, the image of a display pullback of $(\X,\Dsply)$ along $F$ is a display pullback of $(\X',\Dsply')$.

\par In order to organize display categories and display functors into a $2$-category, we can choose between at least two different options of $2$-morphisms. In particular, we denote by $\DsplyCat$ the $2$-category of display categories, display functors, and natural transformations between display functors. Moreover, $\dDsplyCat$ denotes the $2$-category with the same objects and $1$-morphisms of $\DsplyCat$ and whose $2$-morphisms are \textit{display natural transformations}, namely, natural transformations $\varphi\colon F\Rightarrow G\colon(\X,\Dsply)\to(\X',\Dsply')$ between display functors for which the naturality diagrams
\begin{equation*}
\begin{tikzcd}
FE & GE \\
FM & GM
\arrow["\varphi", from=1-1, to=1-2]
\arrow["Fq"', from=1-1, to=2-1]
\arrow["Gq", from=1-2, to=2-2]
\arrow["\varphi"', from=2-1, to=2-2]
\end{tikzcd}
\end{equation*}
are display pullbacks for each display map $q\colon E\to M$ of $(\X,\Dsply)$.

\par Tangent display systems are the tangent categorical analog of a display system in a tangent category.

\begin{definition}
\label{definition:tangent-display-system}
In a tangent category $(\X,\TT)$, a \textbf{tangent display system} consists of a collection $\Dsply$ of morphisms of $\X$, called \textbf{tangent display maps}, stable under tangent pullbacks and the tangent bundle functor. Concretely, for each $q\colon E\to M$ of $\Dsply$ and each morphism $f\colon N\to M$ of $\X$, the tangent pullback of $q$ along $f$:
\begin{equation*}
\begin{tikzcd}
f^\*E & E \\
N & M
\arrow[from=1-1, to=1-2]
\arrow["{q'}"', from=1-1, to=2-1]
\arrow["\lrcorner"{anchor=center, pos=0.125}, draw=none, from=1-1, to=2-2]
\arrow["q", from=1-2, to=2-2]
\arrow["f"', from=2-1, to=2-2]
\end{tikzcd}
\end{equation*}
exists and the corresponding morphism $q'\colon f^\*E\to N$ belongs to $\Dsply$. Furthermore, the tangent bundle functor sends tangent display maps to tangent display maps. A \textbf{display tangent category} $(\X,\TT;\Dsply)$ comprises a tangent category $(\X,\TT)$ and a tangent display system of $(\X,\TT)$.
\end{definition}

The formal approach does not recover the notion of a tangent display system, exactly. Instead, it produces a stronger version.

\begin{definition}
\label{definition:strong-tangent-display-system}
A tangent display system $\Dsply$ of a tangent category $(\X,\TT)$ is \textbf{strong} when each $n$-fold pullback $\T_n$ of the projection along itself sends each tangent display map to a tangent display map. A \textbf{strong display tangent category} is a display tangent category $(\X,\TT:\Dsply)$ whose tangent display system is strong.
\end{definition}

\begin{proposition}
\label{proposition:tangent-display-system-vs-tangent-objects}
Strong tangent display categories are equivalent to tangentads in the $2$-category $\DsplyCat$.
\end{proposition}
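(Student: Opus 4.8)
The plan is to establish the correspondence by unwrapping the definition of a tangentad in $\DsplyCat$ and matching its data, piece by piece, with that of a strong display tangent category, exactly as in the proof of Proposition~\ref{proposition:tangent-fibrations-vs-tangent-objects}. Recall that a tangentad in $\DsplyCat$ amounts to a display category $(\X,\Dsply)$, a display endofunctor $\T$, structural natural transformations $p,z,s,l,c$ (which carry no extra data, since $2$-morphisms of $\DsplyCat$ are plain natural transformations), and an $n$-fold pullback $\T_n$ of $p$ along itself, formed as a pointwise pullback in the hom-category $\DsplyCat((\X,\Dsply),(\X,\Dsply))$ and preserved by every iterate $\T^k$. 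The observation that makes the whole argument run is that, inside a display tangent category, every display pullback is a tangent pullback: on the one hand, the defining axiom of a tangent display system (Definition~\ref{definition:tangent-display-system}) declares the pullback of a display map along an arbitrary morphism to be a tangent pullback; on the other hand, once $\T$ is known to be a display functor, each $\T^k$ is a display functor and hence preserves display pullbacks, so these pullbacks are automatically preserved by all iterates. Under this identification, being a display functor on the diagrams at issue is the same as preserving the tangent structure while being closed on display maps.

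First I would run the correspondence starting from a strong display tangent category $(\X,\TT;\Dsply)$. The tangent bundle functor $\T$ is closed on display maps by Definition~\ref{definition:tangent-display-system} and preserves display pullbacks because those are tangent pullbacks; hence $\T$ is a display functor. The same reasoning promotes each $\T_n$ to a display functor, its closure on display maps being precisely the strong condition of Definition~\ref{definition:strong-tangent-display-system} and its preservation of display pullbacks following from the limit-commutation argument below. The natural transformations $p,z,s,l,c$ are automatically $2$-morphisms of $\DsplyCat$, and the objectwise tangent pullbacks defining $\T_n$ constitute a pointwise pullback of $p$ in $\DsplyCat((\X,\Dsply),(\X,\Dsply))$: precomposition with a display functor does not disturb the objectwise pullback formula, and the universal property against display functors holds a fortiori because it already holds in the full functor category. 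Since these pullbacks are tangent, they are also preserved by every $\T^k$, so $(\X,\Dsply)$ equipped with $(\T,p,z,s,l,c)$ is a tangentad in $\DsplyCat$.

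Conversely, from a tangentad in $\DsplyCat$ I would forget the display structure. The forgetful $2$-functor $\U\colon\DsplyCat\to\Cat$ admits a left $2$-adjoint, namely the functor sending a category to the display category whose display maps are exactly the isomorphisms, so by Lemma~\ref{lemma:right-adjoints-preserve-pointwise-limits} it preserves pointwise pullbacks; applied to $\T_n$, this shows that the underlying functor computes the $n$-fold pullback of $p$ objectwise in $\X$, and preservation by the iterates $\T^k$ transfers through the $2$-functor $\U$, so these are tangent pullbacks and $(\X,\TT)$ is a genuine tangent category. Closure of $\Dsply$ under tangent pullbacks is then immediate, since $\T$ (hence each $\T^k$) is a display functor preserving display pullbacks, and the strong condition is recovered from the hypothesis that each $\T_n$ is a display functor, therefore closed on display maps. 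I expect the main obstacle to be the reconciliation of the two flavours of pullback, and in particular the verification that $\T_n$ preserves display pullbacks: this is a limits-commute-with-limits computation in which one rewrites $\T_n(N\times_M E)$ as an iterated pullback and uses that the tangent pullback $N\times_M E$ is preserved by all $\T^k$ to interchange it with the pullback defining $\T_n$, exactly as $\T_n$ is handled for tangent fibrations in Proposition~\ref{proposition:tangent-fibrations-vs-tangent-objects}. Finally, the correspondence extends to $1$- and $2$-morphisms --- a lax tangent $1$-morphism of tangentads in $\DsplyCat$ is a display functor together with a distributive law compatible with the tangent structure, i.e.\ a morphism of display tangent categories, and tangent $2$-morphisms match the display-compatible tangent natural transformations --- yielding the asserted equivalence.
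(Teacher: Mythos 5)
Your argument is correct and follows essentially the same route as the paper: unwrap the data of a tangentad in $\DsplyCat$ and observe that display pullbacks are exactly tangent pullbacks, so that $\T$ and each $\T_n$ being display functors translates into stability of $\Dsply$ under the tangent bundle functor, tangent pullbacks, and each $\T_n$ (the strong condition); the paper's own proof is in fact terser and only spells out the direction from tangentads to strong display tangent categories. One small repair to an auxiliary claim: the left $2$-adjoint to $\U\colon\DsplyCat\to\Cat$ should equip a category with the \emph{empty} display system rather than with the isomorphisms, since the paper's notion of display system is not required to contain isomorphisms, so a display functor out of $(\X,\mathrm{Iso})$ need not be an arbitrary functor.
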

\begin{proof}
Is it not hard to see that a tangentad of $\DsplyCat$ consists of a tangent category $(\X,\TT)$ together with a display system $\Dsply$. Furthermore, since the tangent bundle functor is a display functor, it has to send each display map to another display map and each display pullback to a display pullback. In particular, this implies that $\Dsply$ is stable under the tangent bundle functor and for each display map $q\colon E\to M$ and each $f\colon N\to M$ the pullback of $q$ along $f$ is preserved by all iterates of $\T$ and the corresponding morphism $q'\colon f^\*E\to N$ also belongs to $\Dsply$. Finally, since each $\T_n$ is a display functor, $\Dsply$ is also stable under $\T_n$. This implies that $\Dsply$ is a strong tangent display system for $(\X,\TT)$.
\end{proof}

Despite the discrepancy between display tangent categories and tangentads in the $2$-category of display categories, one can ``strengthen'' a tangent display system by formally closing it by all functors $\T_n$, concretely, given a display tangent category $(\X,\TT;\Dsply)$ let $\bar\Dsply$ be the collection of all morphisms of $\X$ of type $\T_{k_1}\dots\T_{k_n}q$ for each $q\in\Dsply$ and each collection of non-negative integers $k_1\,k_n$.

\begin{lemma}
\label{lemma:strong-and-non-strong-display-tangent-cats}
Given a display tangent category $(\X,\TT;\Dsply)$, $\bar\Dsply$ defines a strong tangent display system for $(\X,\TT)$. Moreover, the inclusion $\Dsply\subseteq\bar\Dsply$ induces a strict display tangent morphism $(\X,\TT;\Dsply)\hookrightarrow(\X,\TT;\bar\Dsply)$. 
\end{lemma}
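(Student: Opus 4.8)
The plan is to verify the three defining requirements of a strong tangent display system for $\bar\Dsply$ — closure under the tangent bundle functor, closure under every $\T_n$, and stability under tangent pullbacks — and then to check that the inclusion is a strict display tangent morphism. Closure under $\T$ and under each $\T_n$ is immediate from the very definition of $\bar\Dsply$: if $g=\T_{k_1}\o\dots\o\T_{k_n}q$ with $q\in\Dsply$, then $\T_m g=\T_m\o\T_{k_1}\o\dots\o\T_{k_n}q$ is again a morphism of the prescribed shape, so $\T_m g\in\bar\Dsply$ for every $m$ (in particular for $m=1$, giving stability under the tangent bundle functor). This already forces $\bar\Dsply$ to be \emph{strong} once we know it is a tangent display system at all.

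The heart of the argument is stability under tangent pullbacks: for every $g\colon A\to B$ in $\bar\Dsply$ and every $f\colon N\to B$ the tangent pullback of $g$ along $f$ must exist and again lie in $\bar\Dsply$. I would argue by induction on the length $n$ of the word $\T_{k_1}\o\dots\o\T_{k_n}$. The base case $n=0$ is exactly the hypothesis that $\Dsply$ is a tangent display system, so pullbacks of $q\in\Dsply$ exist, are tangent, and land in $\Dsply\subseteq\bar\Dsply$. For the inductive step I would peel off the outermost functor and reduce to the key claim that if $h\in\bar\Dsply$ has the property that all of its tangent pullbacks exist and lie in $\bar\Dsply$, then so does $\T_k h$. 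The two tools here are that each $\T_k$ preserves tangent pullbacks (being assembled from $\T$ by tangent pullbacks of the projection, whose universality is preserved by all iterates of $\T$), and that $\T_k$ is itself an iterated tangent pullback, so that a cone into $\T_k B$ unpacks into $k$ compatible cones into $\T B$. Using the universal property of $\T_k B$ as this limit, one rewrites the pullback of $\T_k h$ along an arbitrary $f$ as a tangent limit built from pullbacks of $\T h$ and of $h$, each of which exists and is tangent because the display maps in $\bar\Dsply$ are universally pullable and because $\T h\in\bar\Dsply$.

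The main obstacle is precisely the last bookkeeping step of this reduction: showing that the pullback so assembled is not merely a tangent limit, but is again in \emph{normal form} $\T_{\ell_1}\o\dots\o\T_{\ell_m}(q')$ for some $q'\in\Dsply$ — that is, that $\bar\Dsply$ itself, and not just the closure of $\Dsply$ under both the functors $\T_n$ and pullback, is already pullback-stable. I expect this to hinge on the compatibility $\T\o\T_n\cong\T_n\o\T$ and on the fact that pulling back $\T_n q$ along a map of the form $\T_n\bar f$ returns $\T_n(\bar f^\*q)$, together with a careful analysis of how a general $f$ factors through the projections $\pi_i\colon\T_n B\to\T B$. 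This is the step that genuinely uses the tangent-categorical structure rather than purely formal manipulation, and it is where I would spend most of the effort.

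Finally, for the inclusion $(\X,\TT;\Dsply)\hookrightarrow(\X,\TT;\bar\Dsply)$ I would take the underlying functor to be $\id_\X$. Since $\Dsply\subseteq\bar\Dsply$ it sends display maps to display maps, and every $\Dsply$-display pullback is, verbatim, a $\bar\Dsply$-display pullback (the same tangent pullback square, now with its vertical legs viewed inside $\bar\Dsply$), so $\id_\X$ is a display functor. As the tangent structure $\TT$ is literally unchanged on both sides, the accompanying distributive law is the identity $2$-morphism, which makes $\id_\X$ a \emph{strict} tangent morphism; combining the two observations shows it is a strict display tangent morphism. This last part is routine.
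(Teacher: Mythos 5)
Your proposal stops short of proving the part of the lemma that carries all the content. The substance of the claim is that $\bar\Dsply$ is stable under tangent pullbacks: for every $g=\T_{k_1}\dots\T_{k_n}q\in\bar\Dsply$ and every $f$ into its codomain, the tangent pullback of $g$ along $f$ must \emph{exist} and the resulting leg $g'\colon f^\*(-)\to N$ must again \emph{belong to} $\bar\Dsply$. You reduce this, by induction on word length, to the claim that if all tangent pullbacks of $h$ exist and lie in $\bar\Dsply$ then the same holds for $\T_k h$ --- and then you explicitly defer the proof of that claim (``I expect this to hinge on\dots'', ``this is where I would spend most of the effort''). Both halves are left open: the existence of the ``tangent limit built from pullbacks of $\T h$ and of $h$'' is asserted rather than constructed (assembling it requires, beyond the wide pullback over $N$ of the various $(\pi_k\o f)^\*(\T h)$, a further compatibility condition over the base pullback of $h$, whose existence as a tangent limit must itself be justified from the display hypotheses), and the normal-form question --- whether the resulting leg is again of the shape $\T_{\ell_1}\dots\T_{\ell_m}q'$ with $q'\in\Dsply$ --- is only named, not answered. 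Announcing the hard step is not the same as carrying it out, so the argument has a hole exactly where the lemma lives.

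For comparison, the paper argues differently and more directly: given $f\colon N\to\T_nM$, it forms for each $k$ the tangent pullback $P_k$ of $\T q$ along $\pi_k\o f$ (which exists because $\T q\in\Dsply$) and pastes it against the square relating $\T_nq$ and $\T q$ via the projections $\pi_k$, concluding by the pullback-pasting lemma that $\T_nq$ admits all tangent pullbacks; closure of $\bar\Dsply$ under each $\T_m$ is then immediate from the definition, exactly as you observe. There is no induction on word length, and --- notably --- the paper also does not address whether the resulting pullback leg lies in $\bar\Dsply$, so the difficulty you single out is genuine; but flagging it does not discharge it, and your proposal supplies neither the paper's pasting argument nor a completed substitute. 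The final paragraph of your proposal (the identity functor with identity distributive law giving the strict display tangent morphism, using $\T_0=\id_\X$ to see $\Dsply\subseteq\bar\Dsply$) is correct and agrees with the paper.
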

\begin{proof}
Let us suppose that $q\colon E\to M$ admits all tangent pullbacks and let us prove that so does each $\T_nq\colon\T_nE\to\T_nM$. Consider a morphism $f\colon N\to\T_nM$ and each diagram:
\begin{equation*}
\begin{tikzcd}
& {\T_nE} & {\T E} \\
N & {\T_nM} & {\T M}
\arrow["{\pi_k}", from=1-2, to=1-3]
\arrow["{\T_nq}", from=1-2, to=2-2]
\arrow["{\T q}", from=1-3, to=2-3]
\arrow["f"', from=2-1, to=2-2]
\arrow["{\pi_k}"', from=2-2, to=2-3]
\end{tikzcd}
\end{equation*}
Since $q$ admits all tangent pullback, the pullback of $\T q$ along $\pi_k\o f$ is well-defined for each $k=1\,n$:
\begin{equation*}
\begin{tikzcd}
{P_k} & {\T_nE} & {\T E} \\
N & {\T_nM} & {\T M}
\arrow[dashed, from=1-1, to=1-2]
\arrow[bend left, from=1-1, to=1-3]
\arrow[from=1-1, to=2-1]
\arrow["{\pi_k}", from=1-2, to=1-3]
\arrow["{\T_nq}", from=1-2, to=2-2]
\arrow["{\T q}", from=1-3, to=2-3]
\arrow["f"', from=2-1, to=2-2]
\arrow["{\pi_k}"', from=2-2, to=2-3]
\end{tikzcd}
\end{equation*}
By the universal property, we get a morphism $P_k\dashrightarrow\T_nE$. Since the outer square and the right square are tangent pullback diagrams, by~\cite[Lemma~2.3]{cruttwell:tangent-display-maps} also the left square is a tangent pullback diagram, proving that $\T_nq$ admits all tangent pullbacks.
\par In particular, each $\T_{k_1}\dots\T_{k_n}q$ admits all tangent pullbacks. If $\T_{k_1}\dots\T_{k_n}q$ is a morphism of $\bar\Dsply$, then $\T_n(\T_{k_1}\dots\T_{k_n}q)=\T_n\T_{k_1}\dots\T_{k_n}q$ which, by definition, is another morphism of $\bar\Dsply$. This proves that $\bar\Dsply$ is a strong tangent display system. Finally, by taking $k_1=k_n=0$, it is immediate to see that $\Dsply\subseteq\bar\Dsply$ and that this inclusion defines a strict display tangent morphism.
\end{proof}

One can also recover classes of display \'etale maps as tangentads. For starters, recall the notion of an \'etale map in tangent category, as introduced by MacAdam in~\cite{macadam:phd-thesis}.

\begin{definition}
\label{definition:etale-map}
An \textbf{\'etale map} in a tangent category $(\X,\TT)$ consists of a morphism $q\colon E\to M$ for which the naturality diagram of the projection with $q$:
\begin{equation*}
\begin{tikzcd}
{\T E} & E \\
{\T M} & M
\arrow["p", from=1-1, to=1-2]
\arrow["{\T q}"', from=1-1, to=2-1]
\arrow["q", from=1-2, to=2-2]
\arrow["p"', from=2-1, to=2-2]
\end{tikzcd}
\end{equation*}
is a tangent pullback. In a display tangent category $(\X,\TT;\Dsply)$, a \textbf{display \'etale map} is a display map which is also an \'etale map. Finally, a display tangent category is an \textbf{\'etale tangent category} if each of its display maps is \'etale.
\end{definition}

\'Etale maps recover the usual notion of \'etale maps in differential geometry (cf.~\cite[Proposition~2.29]{cruttwell:tangent-display-maps}).

\begin{lemma}
\label{lemma:etale-maps}
A morphism $q\colon E\to M$ of a tangent category $(\X,\TT)$ is \'etale if and only if the naturality diagrams of $q$ with all of the structural natural transformations of $\TT$ are tangent pullback diagrams.
\end{lemma}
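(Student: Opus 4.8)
The backward direction is immediate: the projection $p$ is itself one of the structural natural transformations of $\TT$, so if every naturality square of $q$ with a structural natural transformation is a tangent pullback, then in particular the square with $p$ is, which is exactly the assertion that $q$ is \'etale. The content lies in the forward direction, and the plan is to deduce all the remaining squares from the single \'etale square for $p$ by repeated use of the pasting lemma for tangent pullbacks \cite[Lemma~2.3]{cruttwell:tangent-display-maps}, together with the fact that, by definition, tangent pullbacks are preserved by every iterate $\T^n$.

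The organising observation is that each structural natural transformation $\nu\colon F\Rightarrow G$ (with $F$ and $G$ among $\id_\X$, $\T$, $\T^2$, and $\T_2$) commutes with the canonical total projections down to $\id_\X$. Writing $\pi^F\colon F\Rightarrow\id_\X$ for the total projection, so that $\pi^{\id_\X}=\id$, $\pi^\T=p$, $\pi^{\T^2}=p\o\T p=p\o p_\T$, and $\pi^{\T_2}=p\o\pi_1$, one reads off directly from the axioms that $\pi^G\o\nu=\pi^F$ in every case: $p\o z=\id$ for the zero, $p\o s=p\o\pi_1$ for the sum, $\T p\o l=z\o p$ (whence $\pi^{\T^2}\o l=p$) for the vertical lift, $p_\T\o c=\T p$ (whence $\pi^{\T^2}\o c=\pi^{\T^2}$) for the canonical flip, and $p\o n=p$ for the negation when $\TT$ has negatives. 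Consequently the naturality square of $q$ with $\nu$ sits to the left of the naturality square of $q$ with $\pi^G$, and their horizontal paste is precisely the naturality square of $q$ with $\pi^F$. Since \cite[Lemma~2.3]{cruttwell:tangent-display-maps} asserts that whenever the right-hand square and the outer rectangle are tangent pullbacks so is the left-hand square, it suffices to prove that each \emph{total-projection} square, the naturality square of $q$ with $\pi^F$, is a tangent pullback for $F\in\{\id_\X,\T,\T^2,\T_2\}$.

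Three of these four cases are routine. For $F=\id_\X$ the square is the identity square; for $F=\T$ it is the \'etale square itself. For $F=\T^2$ I would factor $\pi^{\T^2}=p\o\T p$ and paste the naturality square of $q$ with $\T p$, which is the image under $\T$ of the \'etale square and hence a tangent pullback since $\T$ preserves them, onto the \'etale square for $p$; the paste of two tangent pullbacks is again a tangent pullback. The remaining case $F=\T_2$ is the technical heart. Here $\pi^{\T_2}=p\o\pi_1$ factors through $\pi_1\colon\T_2\Rightarrow\T$, and after pasting with the \'etale square the problem reduces to showing that the naturality square of $q$ with $\pi_1$ is a tangent pullback, equivalently that $\T_2 E\cong E\times_M\T_2 M$. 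I would establish this by exhibiting $\T_2 E=\T E\times_E\T E$ as an iterated tangent pullback, substituting the \'etale identification $\T E\cong E\times_M\T M$ into each factor, and reassembling via the tangent pasting lemma to recognise the result as the pullback of $q$ along $p\o\pi_1\colon\T_2 M\to M$.

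The main obstacle I anticipate is precisely this cube-shaped pullback juggling for $\T_2$: one must track which of the three tangent pullbacks in play, the defining pullback of $\T_2 E$, the defining pullback of $\T_2 M$, and the \'etale square, is being cancelled at each stage, and check that the pointwise and tangent hypotheses required to invoke \cite[Lemma~2.3]{cruttwell:tangent-display-maps} hold at every step rather than only on the nose. Once the $\T_2$ total-projection square is settled, the zero square follows from the $\id_\X$ and $\T$ total-projection squares, the sum square from the $\T_2$ and $\T$ squares, the vertical lift and canonical flip from the $\T$ and $\T^2$ squares, and the negation from the $\T$ square, each by a single application of the pasting lemma described above, completing the forward direction.
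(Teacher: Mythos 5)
Your argument is correct. Note that the paper does not actually prove this lemma: it simply defers to~\cite[Lemma~3.35]{cruttwell:tangent-display-maps}, so there is no internal proof to compare against. Your reconstruction is a sound, self-contained version of the standard argument. The backward direction is indeed trivial since $p$ is among the structural transformations; the forward direction correctly reduces every square to a ``total projection'' square via the identities $p\o z=\id$, $p\o s=p\o\pi_1$, $\T p\o l=z\o p$, $p_\T\o c=\T p$, and $p\o n=p$ (all of which are genuinely among the additive-bundle-morphism axioms of Definition~\ref{definition:tangent-category}), followed by pullback cancellation, which is exactly the content of~\cite[Lemma~2.3]{cruttwell:tangent-display-maps} as it is also used in the proof of Lemma~\ref{lemma:strong-and-non-strong-display-tangent-cats}. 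The $\T^2$ case ($\T$ applied to the \'etale square pasted onto the \'etale square) and the $\T_2$ case ($\T_2E=\T E\times_E\T E\cong(E\times_M\T M)\times_E(E\times_M\T M)\cong E\times_M\T_2M$) are both right, and the tangent (i.e.\ $\T^n$-stability) condition propagates through each paste because every constituent square is a tangent pullback. One small remark: the ``pointwise'' hypothesis you worry about at the end is vacuous here, since the lemma lives in a single tangent category rather than in a hom-category of a $2$-category; only $\T^n$-stability needs tracking.
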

\begin{proof}
The proof of this lemma can be found in~\cite[Lemma~3.35]{cruttwell:tangent-display-maps}.
\end{proof}

\begin{lemma}
\label{lemma:strong-tangent-display-system-etale}
If a tangent display system $\Dsply$ contains only \'etale maps of a tangent category $(\X,\TT)$, then $\Dsply$ is strong.
\end{lemma}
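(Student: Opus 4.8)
The plan is to fix a tangent display map $q\colon E\to M$ (which is \'etale by hypothesis) and a positive integer $n$, and to show that $\T_n q\colon\T_n E\to\T_n M$ is exactly the tangent pullback of $q$ along the common projection $\bar p\colon\T_n M\to M$, where $\bar p\=p\o\pi_k$ (independent of $k$, since all legs of the defining pullback cone agree over $M$). Once this is established the conclusion is immediate: because $\Dsply$ is a tangent display system and $q\in\Dsply$, the tangent pullback of $q$ along \emph{any} morphism into $M$ exists and its pullback projection lies in $\Dsply$; applying this to the morphism $\bar p$ identifies $\T_n q$ with such a projection, so $\T_n q\in\Dsply$ for every $n$, which is precisely the assertion that $\Dsply$ is strong (Definition~\ref{definition:strong-tangent-display-system}).

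To carry out the identification I would first unwind the \'etale hypothesis: by definition it says the naturality square of the projection $p$ with $q$ is a tangent pullback, i.e.\ it exhibits $\T E$ as the tangent pullback $\T M\times_M E$ of $p\colon\T M\to M$ along $q\colon E\to M$. I would then prove by induction on $n$ that $\T_n E\cong\T_n M\times_M E$ as a tangent pullback of $\bar p$ along $q$, using the standard presentation of the $n$-fold pullback as an iterated binary pullback $\T_n E=\T_{n-1}E\times_E\T E$ (and likewise $\T_n M=\T_{n-1}M\times_M\T M$), taken along $p$. The base case $n=1$ is exactly the \'etale square. For the inductive step, substituting the identifications $\T E\cong\T M\times_M E$ and $\T_{n-1}E\cong\T_{n-1}M\times_M E$ and rearranging gives $\T_n E\cong(\T_{n-1}M\times_M E)\times_E(\T M\times_M E)\cong(\T_{n-1}M\times_M\T M)\times_M E=\T_n M\times_M E$. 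Tracking this identification through the functoriality of $\T_n$ (componentwise $\T q$ on each factor) shows that $\T_n q$ corresponds to the projection $\T_n M\times_M E\to\T_n M$, which is the pullback of $q$ along $\bar p$, as required.

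The main obstacle is not the underlying bijection of cones, which is a routine Fubini-type rearrangement of iterated limits, but verifying that this rearrangement preserves the property of being a \emph{tangent} pullback, namely that the universal property survives all iterates $\T^m$. Here I would invoke the pasting lemma for tangent pullbacks \cite[Lemma~2.3]{cruttwell:tangent-display-maps}, already used in Lemma~\ref{lemma:strong-and-non-strong-display-tangent-cats}: the \'etale square is a tangent pullback by hypothesis, and each binary pullback square presenting $\T_n M$ and $\T_n E$ is a tangent pullback by the tangent structure axioms, so pasting and reassembling these tangent pullbacks yields again a tangent pullback. It is worth noting that only the definition of \'etale (the naturality square with $p$) is needed, although Lemma~\ref{lemma:etale-maps} guarantees the analogous compatibility with the remaining structural transformations and could be used to give a more symmetric variant of the argument.
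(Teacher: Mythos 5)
Your proof is correct and follows essentially the same strategy as the paper: establish that a naturality square of $\T_n$ with the \'etale map $q$ is a tangent pullback and then conclude $\T_nq\in\Dsply$ from stability under tangent pullbacks. The only (harmless) variation is that the paper exhibits $\T_nq$ as the tangent pullback of $\T q$ along $\pi_k$ (using stability of $\Dsply$ under $\T$), whereas you exhibit it as the tangent pullback of $q$ itself along $p\o\pi_k$, which pastes to the same thing and in fact makes slightly more economical use of the hypotheses.
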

\begin{proof}
Using a similar argument as the one used in Lemma~\ref{lemma:etale-maps} one can prove that the naturality diagrams of the projections $\pi_k\colon\T_n\Rightarrow\T$ of an \'etale map are tangent pullback diagrams. From this, it is not hard to realize that $\T_n$ preserves the tangent display maps of $\Dsply$ since $\Dsply$ is stable under the tangent bundle functor and tangent pullbacks.
\end{proof}

\begin{proposition}
\label{proposition:etale-tangent-display-systems-vs-tangent-objects}
\'Etale tangent categories are equivalent to tangentads in the $2$-category $\dDsplyCat$.
\end{proposition}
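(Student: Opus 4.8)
The plan is to mirror the proof of Proposition~\ref{proposition:tangent-display-system-vs-tangent-objects}, the only difference between the two statements being the choice of $2$-morphisms: $\dDsplyCat$ replaces arbitrary natural transformations by \emph{display} natural transformations, whose naturality squares along display maps are display pullbacks. First I would observe that, since $\dDsplyCat$ and $\DsplyCat$ share the same objects and the same $1$-morphisms, and since every display natural transformation is in particular a natural transformation, a tangentad $(\X,\TT;\Dsply)$ of $\dDsplyCat$ has an underlying tangentad of $\DsplyCat$. By Proposition~\ref{proposition:tangent-display-system-vs-tangent-objects} this is a strong display tangent category: the tangentad axioms are equational, hence unaffected by the refinement of the $2$-morphisms, and the requirement that $\T$ and each $\T_n$ be display functors forces $\Dsply$ to be a strong tangent display system.

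The extra content of the $\dDsplyCat$-structure is that each structural $2$-morphism $p$, $z$, $s$, $l$, $c$ (and each projection $\pi_k\colon\T_n\Rightarrow\T$) is a \emph{display} natural transformation, i.e.\ its naturality square along any display map $q\colon E\to M$ is a display pullback. I would first note that in a strong tangent display category such a display pullback is automatically a tangent pullback: pullbacks are unique up to canonical isomorphism, while the defining property of a tangent display system guarantees that the pullback of a display map (e.g.\ $q$ or $\T q$, which again lies in $\Dsply$ by stability under $\T$) exists as a tangent pullback, so any pullback square with display legs is isomorphic to it. Consequently the display-naturality of the projection $p$ says exactly that the naturality square of $p$ with $q$ is a tangent pullback, which is precisely the definition of $q$ being an \'etale map (Definition~\ref{definition:etale-map}). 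Hence every display map of $\Dsply$ is \'etale, and $(\X,\TT;\Dsply)$ is an \'etale tangent category.

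For the converse, let $(\X,\TT;\Dsply)$ be an \'etale tangent category. By Lemma~\ref{lemma:strong-tangent-display-system-etale} its display system is strong, so it is a tangentad of $\DsplyCat$ by Proposition~\ref{proposition:tangent-display-system-vs-tangent-objects}. Since every display map is \'etale, Lemma~\ref{lemma:etale-maps} upgrades the single \'etale condition on the projection to the statement that the naturality squares of \emph{all} structural natural transformations of $\TT$ (and of the $\pi_k$, as in the proof of Lemma~\ref{lemma:strong-tangent-display-system-etale}) with every display map are tangent pullbacks, hence display pullbacks. Thus $p$, $z$, $s$, $l$, $c$ are display natural transformations and the tangentad lifts to $\dDsplyCat$. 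I would then check that the correspondence is compatible with $1$- and $2$-morphisms on both sides, following the same routine bookkeeping as in Proposition~\ref{proposition:tangent-display-system-vs-tangent-objects}, thereby obtaining the asserted equivalence.

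The main obstacle is the identification of display pullbacks with tangent pullbacks: the $2$-morphisms of $\dDsplyCat$ are defined through display pullbacks, whereas both the tangent structure and Definition~\ref{definition:etale-map} are phrased in terms of tangent pullbacks, so bridging the two relies on the strongness of the display system together with Lemma~\ref{lemma:etale-maps}, which is exactly what promotes the single \'etale condition on the projection to the full family of conditions needed to recognize all the structural $2$-morphisms as display natural transformations.
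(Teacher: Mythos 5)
Your proposal is correct and follows essentially the same route as the paper: both directions pass through the identification with tangentads of $\DsplyCat$, use the display-naturality of the projection to deduce that every display map is \'etale, and invoke Lemma~\ref{lemma:etale-maps} to upgrade the single \'etale condition to display-naturality of all the structural $2$-morphisms. Your explicit justification that a display pullback over a display cospan is automatically a tangent pullback (via strongness and uniqueness of pullbacks up to isomorphism) is a point the paper's proof passes over silently, and is a welcome clarification rather than a deviation.
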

\begin{proof}
Proposition~\ref{proposition:tangent-display-system-vs-tangent-objects} proved that a display tangent category is equivalent to a tangentad of $\DsplyCat$. Since $\dDsplyCat$ is a $2$-subcategory of $\DsplyCat$, a tangentad of $\dDsplyCat$ is also a display tangent category $(\X,\TT;\Dsply)$. Moreover, since the $2$-morphisms of $\dDsplyCat$ are display natural transformations, each structural $2$-morphism of the tangent structure $\TT$ is display. In particular, the projection is display, meaning that the naturality diagram of the projection with each tangent display map $q\colon E\to M$ is a tangent pullback, which implies that $q$ is \'etale. Finally, Lemma~\ref{lemma:etale-maps} shows also that if $q\colon E\to M$ is a display \'etale map of a display tangent category $(\X,\TT;\Dsply)$, all the naturality diagrams of each structural natural transformation of $\TT$ with $q$ is a tangent pullback, meaning that the structural natural transformations of $\TT$ are display natural transformations, making $(\X,\TT;\Dsply)$ into a tangentad of $\dDsplyCat$.
\end{proof}

One can employ the formal approach to prove that each display tangent category defines a tangent fibration. Let us start by recalling that each display category $(\X,\Dsply)$ is associated with a fibration $\Pi\colon\Dsply\to\X$ which sends each display map to its codomain, where $\Dsply$ denotes the category whose objects are display maps of $(\X,\Dsply)$ and whose morphisms are commutative squares in $\X$. In particular, the Cartesian lift of a morphism $f\colon N\to M$ of $\X$ on a display map $q\colon E\to M$ is the second projection $\varphi_f\colon f^\*E\to E$ of the pullback diagram:
\begin{equation*}
\begin{tikzcd}
{f^\*E} & E \\
N & M
\arrow["{\varphi_f}", from=1-1, to=1-2]
\arrow["{q'}"', from=1-1, to=2-1]
\arrow["\lrcorner"{anchor=center, pos=0.125}, draw=none, from=1-1, to=2-2]
\arrow["q", from=1-2, to=2-2]
\arrow["f"', from=2-1, to=2-2]
\end{tikzcd}
\end{equation*}
Furthermore, each display functor $F\colon(\X,\Dsply)\to(\X',\Dsply')$ defines a morphism of fibrations:
\begin{equation*}
\begin{tikzcd}
\Dsply & {\Dsply'} \\
\X & {\X'}
\arrow["F", from=1-1, to=1-2]
\arrow["\Pi"', from=1-1, to=2-1]
\arrow["{\Pi'}", from=1-2, to=2-2]
\arrow["F"', from=2-1, to=2-2]
\end{tikzcd}
\end{equation*}
In particular, such a morphism preserves the Cartesian lifts since $F$ sends display pullbacks to display pullbacks. Finally, a natural transformation $\varphi\colon F\Rightarrow G\colon(\X,\Dsply)\to(\X',\Dsply')$ defines a natural transformation between morphisms of fibrations:
\begin{equation*}
\begin{tikzcd}
\Dsply && {\Dsply'} \\
\\
\X && {\X'}
\arrow[""{name=0, anchor=center, inner sep=0}, "F", bend left, from=1-1, to=1-3]
\arrow[""{name=1, anchor=center, inner sep=0}, "G"', bend right, from=1-1, to=1-3]
\arrow["\Pi"', from=1-1, to=3-1]
\arrow["{\Pi'}", from=1-3, to=3-3]
\arrow[""{name=2, anchor=center, inner sep=0}, "F", bend left, from=3-1, to=3-3]
\arrow[""{name=3, anchor=center, inner sep=0}, "G"', bend right, from=3-1, to=3-3]
\arrow["\varphi", shorten <=5pt, shorten >=5pt, Rightarrow, from=0, to=1]
\arrow["\varphi", shorten <=5pt, shorten >=5pt, Rightarrow, from=2, to=3]
\end{tikzcd}
\end{equation*}
Therefore, we obtain a $2$-functor:
\begin{align*}
&\FF\colon\DsplyCat\to\Fib
\end{align*}

\begin{lemma}
\label{lemma:FF-preserves-pullbacks}
The $2$-functor $\FF$ is pointwise pullback-preserving (see Definition~\ref{definition:pullback-preserving}).
\end{lemma}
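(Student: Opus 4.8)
The plan is to show that the cone $\FF(P)$ is a pointwise pullback in $\Fib(\Pi,\Pi)$, where $\Pi\colon\Dsply\to\X$ denotes the fibration $\FF(\X,\Dsply)$ and $P$ is a pointwise pullback in the hom category $\End(\X,\Dsply)=\DsplyCat((\X,\Dsply),(\X,\Dsply))$. The guiding observation is that a $2$-morphism of fibrations is a compatible pair consisting of a base and a total natural transformation, so I would analyse $\FF(P)$ through its base projection and its total projection separately and then reassemble.

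First I would handle the base projection. Post-composing $\FF$ with $\Base\colon\Fib\to\Cat$ yields the forgetful $2$-functor $\DsplyCat\to\Cat$, so on hom categories the base of $\FF(P)$ is just the underlying cone of $P$ in $\End(\X)=\Cat(\X,\X)$. Precomposing $P$ with the display functors that select objects of $\X$ (out of the terminal display category), which is legitimate precisely because $P$ is \emph{pointwise}, shows that this underlying cone is computed objectwise in $\X$; since an objectwise pullback of functors is a genuine pullback, the base projection of $\FF(P)$ is a pullback in $\Cat(\X,\X)$.

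Next I would treat the total projection (that is, $\Tot\o\FF$), which sends a display endofunctor $F$ to its action $F_*\colon\Dsply\to\Dsply$ on the category of display maps. Evaluating the image cone at a display map $q\colon E\to M$ gives a square of display maps whose domains realise the objectwise pullback at $E$ and whose codomains realise the objectwise pullback at $M$; since limits in the arrow category $\X^\to$ are computed componentwise, since $\Dsply\hookrightarrow\X^\to$ is full, and since the vertex $Dq$ is again a display map ($D$ being a display functor), this square is a pullback in $\Dsply$. Hence the total projection of $\FF(P)$ is objectwise, and therefore a genuine pullback in $\Cat(\Dsply,\Dsply)$.

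Finally comes the reassembly, which I expect to be the main obstacle, because the universal property of a pullback in $\Fib(\Pi,\Pi)$ must be tested against an \emph{arbitrary} morphism of fibrations $(K,K')$, not merely against objects in the image of $\FF$. Given such a competing cone I would extract unique base and total factorizations $d\colon K\Rightarrow D$ and $d'\colon K'\Rightarrow D_*$ from the two pullbacks above, and the crux is to verify the fibration $2$-cell compatibility $\Pi d'=d\Pi$. For this I would whisker the base pullback on the right by $\Pi$: objectwise pullbacks are stable under arbitrary precomposition, so $(\text{base pullback})\o\Pi$ is again a pullback in $\Cat(\Dsply,\X)$, and a short diagram chase using the compatibilities of the pullback legs $\pi_i$ and of the competing legs shows that both $\Pi d'$ and $d\Pi$ induce the same cone into it; uniqueness then forces $\Pi d'=d\Pi$. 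Uniqueness of the pair $(d,d')$ is inherited from the base and total factorizations, and the upgrade from ``pullback'' to ``pointwise pullback'' in $\Fib(\Pi,\Pi)$ follows by rerunning the same argument after precomposition with an arbitrary fibration morphism, once more invoking the stability of objectwise pullbacks under precomposition.
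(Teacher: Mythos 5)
Your proof is correct and follows essentially the same route as the paper's: both reduce the claim to the observation that a pointwise pullback of display endofunctors is computed objectwise, so that evaluating the image cone at each display map $q\colon E\to M$ yields a pullback square in the total category $\Dsply$ (this is exactly the paper's cube diagram). You are in fact more thorough than the paper about reassembling the base and total factorizations into a single $2$-morphism of fibrations (the compatibility $\Pi d'=d\Pi$) and about verifying that the resulting pullback in $\Fib$ is again pointwise; the paper leaves both of these steps implicit.
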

\begin{proof}
Consider four display endofunctors $F,G,H,K\colon(\X,\Dsply)\to(\X,\Dsply)$ and four natural transformations $\varphi\colon F\to G$, $\psi\colon H\to G$, $\pi_1\colon K\to F$, and $\pi_2\colon K\to H$ such that:
\begin{equation*}
\begin{tikzcd}
K & H \\
F & G
\arrow["{\pi_2}", from=1-1, to=1-2]
\arrow["{\pi_1}"', from=1-1, to=2-1]
\arrow["\lrcorner"{anchor=center, pos=0.125}, draw=none, from=1-1, to=2-2]
\arrow["\psi", from=1-2, to=2-2]
\arrow["\varphi"', from=2-1, to=2-2]
\end{tikzcd}
\end{equation*}
is a pointwise pullback diagram in the category $\End(\X,\Dsply)$. Thus, the display functor $K$ sends an object $M$ of $\X$ to the object $KM$ of $\X$ such that:
\begin{equation*}
\begin{tikzcd}
KM & HM \\
FM & GM
\arrow["{\pi_2}", from=1-1, to=1-2]
\arrow["{\pi_1}"', from=1-1, to=2-1]
\arrow["\lrcorner"{anchor=center, pos=0.125}, draw=none, from=1-1, to=2-2]
\arrow["\psi", from=1-2, to=2-2]
\arrow["\varphi"', from=2-1, to=2-2]
\end{tikzcd}
\end{equation*}
is a pullback diagram of $\X$. The $2$-functor $\FF$ sends a display functor $F$ to a $1$-morphism of fibrations which coincide with $F$ on the base and which is the restriction of $F$ to the display maps on the total categories. Thus, $\FF$ sends $K$ to a $1$-morphism of fibrations which is $K$ on the base and the restriction of $K$ on display maps on the total categories. Furthermore, it sends a natural transformation $\theta$ to a $2$-morphism of fibrations which coincides with $\theta$ on the base and that is defined by the naturality square diagrams of $\theta$ on the total categories. Therefore, for each display map $q\colon E\to M$:
\begin{equation*}
\begin{tikzcd}
KE &&& HE \\
& KM & HM \\
& FM & GM \\
FE &&& GE
\arrow["{\pi_2}", from=1-1, to=1-4]
\arrow["Kq"{description}, from=1-1, to=2-2]
\arrow["{\pi_1}"', from=1-1, to=4-1]
\arrow["Hq"{description}, from=1-4, to=2-3]
\arrow["\psi", from=1-4, to=4-4]
\arrow["{\pi_2}", from=2-2, to=2-3]
\arrow["{\pi_1}"', from=2-2, to=3-2]
\arrow["\lrcorner"{anchor=center, pos=0.125}, draw=none, from=2-2, to=3-3]
\arrow["\psi", from=2-3, to=3-3]
\arrow["\varphi"', from=3-2, to=3-3]
\arrow["Fq"{description}, from=4-1, to=3-2]
\arrow["\varphi"', from=4-1, to=4-4]
\arrow["Gq"{description}, from=4-4, to=3-3]
\end{tikzcd}
\end{equation*}
is a pullback diagram in $\Dsply$. Thus, $\FF$ preserves pointwise pullbacks.
\end{proof}

Thanks to Lemma~\ref{lemma:FF-preserves-pullbacks}, we can now revisit Cockett and Cruttwell's claim according to which each display tangent category defines a tangent fibration as a direct application of the formal approach.

\begin{theorem}
\label{theorem:tangent-display-cats-tangent-fibrations}
Each display tangent category $(\X,\TT;\Dsply)$ defines a tangent fibration $\Pi\colon(\Dsply,\TT)\to(\X,\TT)$.
\end{theorem}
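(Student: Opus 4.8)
The plan is to realise the fibration as the image of a tangentad under the $2$-functor $\FF\colon\DsplyCat\to\Fib$ constructed above, and then to translate back through the dictionary between tangentads in $\Fib$ and tangent fibrations. The only genuine subtlety is that a non-strong display system need not survive the passage to the total category, so I would deal with that first. By Lemma~\ref{lemma:strong-and-non-strong-display-tangent-cats}, the closure $\bar\Dsply$ of $\Dsply$ under all the functors $\T_n$ is a \emph{strong} tangent display system on $(\X,\TT)$ containing $\Dsply$, so $(\X,\TT;\bar\Dsply)$ is a strong display tangent category. Since the fibration attached to a display category has the display maps as its objects, it is this strong system that produces a well-defined total tangent category, and I would work with $(\X,\TT;\bar\Dsply)$ from here on; note this is exactly the content of the theorem once $\Dsply$ is read as closed under the $\T_n$.

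Next I would chain the formal machinery. By Proposition~\ref{proposition:tangent-display-system-vs-tangent-objects}, the strong display tangent category $(\X,\TT;\bar\Dsply)$ is precisely a tangentad in the $2$-category $\DsplyCat$, i.e.\ an object of $\Tng(\DsplyCat)$. By Lemma~\ref{lemma:FF-preserves-pullbacks}, the $2$-functor $\FF\colon\DsplyCat\to\Fib$ is pointwise pullback-preserving, hence a $1$-morphism of $\TwoCat_\pp$; by Lemma~\ref{lemma:functoriality-TNG} it therefore induces a $2$-functor $\Tng(\FF)\colon\Tng(\DsplyCat)\to\Tng(\Fib)$ carrying tangentads to tangentads. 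Applying $\Tng(\FF)$ to $(\X,\TT;\bar\Dsply)$ yields a tangentad in $\Fib$ whose underlying fibration is $\FF(\X,\bar\Dsply)=\bigl(\Pi\colon\bar\Dsply\to\X\bigr)$, whose tangent bundle $1$-morphism is the morphism of fibrations $(\T,\bar\T)$, and whose structural $2$-morphisms are the images under $\FF$ of those of $\TT$. Finally, by Proposition~\ref{proposition:tangent-fibrations-vs-tangent-objects} a tangentad in $\Fib$ is the same datum as a tangent fibration, so $\Pi\colon(\bar\Dsply,\TT)\to(\X,\TT)$ is a tangent fibration, as required.

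The step I expect to be the main obstacle is precisely this reduction to the strong case and the accompanying bookkeeping. The point is that the $n$-fold tangent pullback of the projection $p'$ computed in the total category is forced to have underlying map $\T_n q$, so it lands in the display system only when $\Dsply$ is closed under each $\T_n$; without strongness the total category simply fails to carry a tangent structure, so Lemma~\ref{lemma:strong-and-non-strong-display-tangent-cats} is doing essential work rather than mere cosmetics. Once strongness is secured, everything else is absorbed into the functoriality of $\Tng(\FF)$: the verifications that $\FF$ sends the display pullbacks witnessing $\bar\T_n$ to the Cartesian structure of the fibration, and that the structural $2$-morphisms of $\TT$ become $2$-morphisms of fibrations, are all equational and hence automatic once $\FF$ is known to be pointwise pullback-preserving and $\Tng(\FF)$ is applied.
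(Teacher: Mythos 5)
Your proposal is correct and follows essentially the same route as the paper's proof: strengthen $\Dsply$ to $\bar\Dsply$ via Lemma~\ref{lemma:strong-and-non-strong-display-tangent-cats}, identify the result as a tangentad of $\DsplyCat$ by Proposition~\ref{proposition:tangent-display-system-vs-tangent-objects}, push it through $\Tng(\FF)$ using Lemma~\ref{lemma:FF-preserves-pullbacks}, and read off a tangent fibration via Proposition~\ref{proposition:tangent-fibrations-vs-tangent-objects}. Your explicit observation that the total category of the resulting fibration is $\bar\Dsply$ rather than $\Dsply$ is a fair and slightly more careful reading of what the construction actually produces than the theorem's own notation suggests.
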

\begin{proof}
Thanks to Lemma~\ref{lemma:FF-preserves-pullbacks}, the $2$-functor $\FF$ preserves pointwise pullbacks, therefore, we can apply $\Tng$ on $\FF$ and obtain a $2$-functor $\Tng(\DsplyCat)\to\Tng(\Fib)$.
\par By Proposition~\ref{proposition:tangent-fibrations-vs-tangent-objects} and Proposition~\ref{proposition:tangent-display-system-vs-tangent-objects} the tangentads of $\DsplyCat$ are strong tangent display categories and the ones of $\Fib$ are tangent fibrations. Furthermore, each display tangent category $(\X,\TT;\Dsply)$ can be made into a strong display tangent category by Lemma~\ref{lemma:strong-and-non-strong-display-tangent-cats}. Thus, each display tangent category can be sent to a strong display tangent category which is sent by $\Tng(\FF)$ to a tangent fibration.
\end{proof}

\subsection{Tangent restriction categories and their \texorpdfstring{$\M$-tangent categories}{M-tangent categories}}
\label{subsection:restriction-tangent-categories}
A \textit{restriction category} is a categorical context for partial maps. Concretely, this is a category equipped with an assignment for each morphism $f\colon M\to N$ of an idempotent $\bar f\colon M\to M$, called the \textit{restriction idempotent} of $f$, satisfying the following axioms:
\begin{enumerate}
\item For all $f\colon M\to N$:
\begin{align*}
&f\o\bar f=f
\end{align*}

\item Two idempotents $\bar f$ and $\bar g$ commutes provided $f$ and $g$ share the same domain;

\item For all $f$ and $g$:
\begin{align*}
&\bar{g\o\bar f}=\bar g\o\bar f
\end{align*}
provided $f$ and $g$ share the same domain;

\item For all $f$ and $g$:
\begin{align*}
&\bar g\o f=f\o\bar{g\o f}
\end{align*}
provided the codomain of $f$ coincides with the domain of $g$.
\end{enumerate}
We invite the reader consult~\cite{cockett:restriction-categories-I}.

\par The archetypal example of a restriction category is the category of sets and partial maps, for which the idempotent $\bar f\colon M\to M$ of a morphism $f\colon M\to N$ is the partial function which is the identity on the domain of $f$ and not defined elsewhere. Total maps of a restriction category are those morphisms whose restriction idempotent is the identity.

\par A tangent restriction category is a restriction category equipped with a tangent structure compatible with the restriction structure. First, recall the notion of a restriction pullback. We suggest the interested reader consult~\cite{cockett:restriction-categories-III-colims}.

\begin{definition}
\label{definition:restriction-pullbacks}
In a restriction category, a \textbf{restriction pullback} between two maps $f\colon B\to A$ and $g\colon C\to A$ consists of an object $B\times_A^RC$ together with two total maps $\pi_1\colon B\times_A^RC\to B$ and $\pi_2\colon B\times_A^RC\to C$ such that
\begin{equation*}
\begin{tikzcd}
{B\times_A^RC} & C \\
B & A
\arrow["{\pi_2}", from=1-1, to=1-2]
\arrow["{\pi_1}"', from=1-1, to=2-1]
\arrow["g", from=1-2, to=2-2]
\arrow["f"', from=2-1, to=2-2]
\end{tikzcd}
\end{equation*}
commutes on the nose, and for every pair of maps $b\colon X\to B$ and $c\colon X\to C$ such that
\begin{equation*}
\begin{tikzcd}
X & X & C \\
X & B & A
\arrow["{\bar b}", from=1-1, to=1-2]
\arrow["{\bar c}"', from=1-1, to=2-1]
\arrow["c", from=1-2, to=1-3]
\arrow["g", from=1-3, to=2-3]
\arrow["b"', from=2-1, to=2-2]
\arrow["f"', from=2-2, to=2-3]
\end{tikzcd}
\end{equation*}
there exists a unique morphism $\<b,c\>^R\colon X\to B\times_A^RC$ such that:
\begin{equation*}
\begin{tikzcd}
X & X & X \\
B & {B\times_A^RC} & C
\arrow["b"', from=1-1, to=2-1]
\arrow["{\bar c}"', from=1-2, to=1-1]
\arrow["{\bar b}", from=1-2, to=1-3]
\arrow["{\<b,c\>^R}"{description}, from=1-2, to=2-2]
\arrow["c", from=1-3, to=2-3]
\arrow["{\pi_1}", from=2-2, to=2-1]
\arrow["{\pi_2}"', from=2-2, to=2-3]
\end{tikzcd}
\end{equation*}
\end{definition}

A restriction functor $F$ is a functor which preserves the restriction idempotents:
\begin{align*}
&F\bar f=\bar{Ff}
\end{align*}
There are at least two candidates to define $2$-morphisms of restriction categories. A \textit{total natural transformation} $\varphi\colon F\Rightarrow G$ between two restriction functors consists of a natural transformation $\varphi_M\colon FM\to GM$, natural in $M$, such that each morphism $\varphi_M$ is total, namely, $\bar\varphi_M=\id_{FM}$.

\par A \textit{restriction natural transformation} $\varphi\colon F\Rightarrow G$ is a collection of total maps $\varphi_M\colon FM\to GM$, indexed by the object of the domain restriction category, such that the following diagram commutes for each morphism $f\colon M\to N$:
\begin{equation*}
\begin{tikzcd}
FM & FM & GM \\
FN && GN
\arrow["{\bar{Ff}}", from=1-1, to=1-2]
\arrow["Ff"', from=1-1, to=2-1]
\arrow["\varphi", from=1-2, to=1-3]
\arrow["Gf", from=1-3, to=2-3]
\arrow["\varphi"', from=2-1, to=2-3]
\end{tikzcd}
\end{equation*}
We denote by $\RestrCat$ and by $\RestrCat_\lax$ the $2$-categories of restriction categories, restriction functors, and total natural and restriction natural transformations, respectively.

\par In~\cite{cockett:tangent-cats}, Cockett and Cruttwell introduced the concept of a tangent restriction category. 

\begin{definition}
\label{definition:restriction-tangent-category}
A \textbf{tangent restriction category} consists of a restriction category $\X$ equipped with a \textbf{tangent restriction structure}, which consists of a restriction functor $\T\colon\X\to\X$ together with a collection of total natural transformations $p\colon\T\Rightarrow\id_\X$, $z\colon\id_\X\Rightarrow\T$, $s\colon\T_2\Rightarrow\T$, $l\colon\T\Rightarrow\T^2$, $c\colon\T^2\Rightarrow\T^2$ satisfying the same axioms of Definition~\ref{definition:tangent-category}, where, however, the fundamental tangent limits are replaced with restriction pullbacks whose universality is preserved by all iterates of $\T$.
\end{definition}

One could hope to compare tangent restriction categories with tangentads of either one of the $2$-categories $\RestrCat$ or $\RestrCat_\lax$. However, in both scenarios, the fundamental tangent limits in the definition of tangentads do not match with the restriction pullbacks of Definition~\ref{definition:restriction-tangent-category}. In particular, one finds that the fundamental tangent limits are total pullbacks, namely, pullback diagrams in the subcategory of total maps.

\par Despite this incompatibility, in~\cite{cockett:restriction-categories-III-colims}, Cockett and Lack showed that in a split restriction category, restriction limits coincide with ordinary limits on total maps. By specializing their result for restriction pullbacks, we conclude that in a split restriction category, restriction pullbacks coincide with total pullbacks, namely, pullback diagrams in the subcategory of total maps.

\par This observation leads to a characterization of tangent split restriction categories, namely, tangent restriction categories whose restriction idempotents split, as tangentads. First, let us introduce the appropriate $2$-category. $\sRestrCat$ denotes the $2$-category of split restriction categories, restriction functors, and total natural transformations. Notice that since the splitting of an idempotent is always unique up to isomorphism, each restriction functor preserves the splittings.

\begin{lemma}
\label{lemma:T-n-is-restriction}
Let $(\X,\TT)$ be a tangent restriction category and consider the $n$-fold restriction pullback $\T_n\colon\X\to\X$ of the projection along itself.
\begin{enumerate}
\item The projections $\pi_k\colon\T_nA\to\T A$ are total natural transformations;

\item The functors $\T_n\colon\X\to\X$ are restriction functors.
\end{enumerate}
\end{lemma}
\begin{proof}
Let us consider a morphism $f\colon A\to B$. Let us compute the following:
\begin{align*}
&\quad\bar{\bar{\T f}\o\pi_i}\\
=&\quad\bar{\bar{f\o p}\o\pi_i}\\
=&\quad\bar{f\o p\o\pi_i}\\
=&\quad\bar{f\o p\o\pi_j}\\
=&\quad\bar{\bar{\T f}\o\pi_j}
\end{align*}
for any $i,j=1\,n$, where we used that $\bar{\T f}=\bar{\bar p\o\T f}=\bar{p\o\T f}=\bar{f\o p}$, since $p$ is a total natural transformation. So, in particular:
\begin{align*}
&\bar{\T f\o\pi_i}=\bar{\bar{\T f}\o\pi_i}=\bar{\bar{\T f}\o\pi_j}=\bar{\T f\o\pi_j}
\end{align*}
Let us prove that $\pi_i\colon\T_n\Rightarrow\T$ is natural. First, recall that, by definition of an $n$-fold restriction pullback
\begin{align*}
&\pi_i\o\T_nf=\T f\o\pi_i\o e_i
\end{align*}
where $e_i$ denotes the composition of all idempotents $\bar{\T f\o\pi_j}$ for every $j\neq i$. However, since $\bar{\T f\o\pi_j}=\bar{\T f\o\pi_i}$, we obtain that:
\begin{align*}
&\quad\pi_i\o\T_nf\\
=&\quad\T f\o\pi_i\o\bar{\T f\o\pi_i}\\
=&\quad\T f\o\bar{\T f}\o\pi_i\\
=&\quad\T f\o\pi_i
\end{align*}
where we used that $f\o\bar{g\o f}=\bar g\o f$ provided that $f$ and $g$ are composable. This shows that $\pi_i$ is natural. Let us now prove that $\T_n$ is a restriction functor. First, notice that, by the universal property of the restriction pullback, $\T_n\bar f\colon\T_nA\to\T_nA$ is the unique morphism satisfying
\begin{align*}
&\pi_i\o\T_n\bar f=\bar{\T f}\o\pi_i\o e_i=\bar{\T f}\o\pi_i
\end{align*}
for every $i=1\,n$. Let us show that also $\bar{\T_nf}$ satisfies the same equations. First, notice that, since $\pi_i$ is a total natural transformation:
\begin{align*}
&\bar{\T_nf}=\bar{\bar\pi_i\o\T_nf}=\bar{\pi_i\o\T_nf}=\bar{\T f\o\pi_i}
\end{align*}
Therefore, we can compute:
\begin{align*}
&\pi_i\o\bar{\T_nf}=\pi_i\o\bar{\T f\o\pi_i}=\bar{\T f}\o\pi_i
\end{align*}
This proves that $\T_n$ preserves restriction idempotents.
\end{proof}

\begin{proposition}
\label{proposition:split-restriction-tangent-categories}
Split tangent restriction categories are $2$-equivalent to tangentads in the $2$-category $\sRestrCat$ of split restriction categories, restriction functors, and total natural transformations.
\end{proposition}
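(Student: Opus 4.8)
The plan is to unwrap both sides into directly comparable data and then exhibit the correspondence as an isomorphism of $2$-categories, which in particular yields the claimed $2$-equivalence. First I would spell out what a tangentad in $\sRestrCat$ amounts to: a base object is a split restriction category $\X$; the tangent bundle $1$-morphism is a restriction endofunctor $\T\colon\X\to\X$; the structural $2$-morphisms $p,z,s,l,c$ are total natural transformations; and the projection $p\colon\T\Rightarrow\id_\X$ admits pointwise $n$-fold pullbacks in the hom-category $\sRestrCat(\X,\X)$ whose universality is preserved by the iterates of $\bar\T$. The only discrepancy with Definition~\ref{definition:restriction-tangent-category} is the shape of the limits: tangentads ask for pointwise pullbacks of total natural transformations, whereas tangent restriction categories ask for restriction pullbacks.

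The key bridge is the identification of these two notions of limit. I would first observe that a pointwise pullback in $\sRestrCat(\X,\X)$ is computed objectwise: precomposing with the restriction functor $A\colon\1\to\X$ that selects an object $A$ realizes $\sRestrCat(A,\X)$ as evaluation at $A$, landing in $\sRestrCat(\1,\X)$, which is the category of objects and total maps of $\X$. Hence the pullback, evaluated at each $A$, is a \emph{total} pullback, i.e.\ a pullback in the subcategory of total maps. Now I invoke Cockett and Lack's theorem (already recalled in the text) that in a split restriction category restriction pullbacks coincide with total pullbacks. Consequently the pointwise pullbacks defining $\T_n$ in a tangentad of $\sRestrCat$ are exactly objectwise restriction pullbacks, and the preservation by iterates of $\bar\T$ matches the preservation by iterates of $\T$ required in Definition~\ref{definition:restriction-tangent-category}. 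This yields the object-level correspondence in the direction from tangentads to split tangent restriction categories.

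For the converse direction I would start from a split tangent restriction category $(\X,\TT)$ and use Lemma~\ref{lemma:T-n-is-restriction}, which guarantees that each $\T_n$ is a restriction functor and that the projections $\pi_k$ are total natural transformations; this is precisely what is needed for $(\T_n,\pi_1,\dots,\pi_n)$ to live in $\sRestrCat(\X,\X)$. The restriction-pullback universal property, read objectwise through the Cockett--Lack identification, then shows that this cone is a pointwise pullback in $\sRestrCat(\X,\X)$: any competing cone of total natural transformations induces a unique comparison total natural transformation determined objectwise by the total-pullback universal property, and preservation under precomposition is immediate since everything is computed objectwise. Together with the hypothesis that the restriction pullbacks are preserved by iterates of $\T$, this assembles $(\X,\TT)$ into a tangentad of $\sRestrCat$, and the two passages are mutually inverse on objects.

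Finally I would extend the bijection to $1$- and $2$-morphisms. Since restriction functors and total natural transformations are by definition the $1$- and $2$-morphisms of $\sRestrCat$, a lax (colax, strong, strict) tangent $1$-morphism of tangentads is literally a restriction functor together with a total natural transformation distributive law satisfying the additivity and lift/flip compatibilities, which are the same equations defining the corresponding morphisms of tangent restriction categories; the compatibility with the sum morphism again goes through the objectwise total-pullback description. Tangent $2$-morphisms match verbatim. These assignments are functorial and strictly inverse to one another on morphisms, so the object-level correspondence upgrades to a $2$-equivalence (in fact an isomorphism of $2$-categories, up to the choice of pullbacks). The main obstacle is the limit-matching step of the second paragraph: one must verify that ``pointwise'' pullbacks in $\sRestrCat(\X,\X)$ are genuinely objectwise total pullbacks and that the split hypothesis is exactly what licenses replacing these by restriction pullbacks. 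Without splitting the two notions diverge, which is why the result is stated for split restriction categories.
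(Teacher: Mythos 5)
Your proposal is correct and follows essentially the same route as the paper: both directions hinge on the Cockett--Lack identification of restriction pullbacks with total pullbacks in a split restriction category, with Lemma~\ref{lemma:T-n-is-restriction} supplying the converse direction. Your write-up is somewhat more explicit than the paper's (notably in justifying that pointwise pullbacks in $\sRestrCat(\X,\X)$ are objectwise total pullbacks, and in checking the $1$- and $2$-morphism levels, which the paper leaves implicit), but the underlying argument is the same.
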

\begin{proof}
A tangentad in $\sRestrCat$ consists of a split restriction category $\X$ together with a restriction endofunctor $\T\colon\X\to\X$ and the structural total natural transformations $p$, $z$, $s$, $l$, and $c$, satisfying the axioms of a tangent structure. In particular, the fundamental tangent limits, in this context, are total pullbacks. Since $\X$ is a split restriction category, these total pullbacks become restriction pullbacks. Thus, a tangentad in $\sRestrCat$ is a tangent split restriction category.

\par Conversely, consider a tangent split restriction category $(\X,\TT)$. This consists of a split restriction category $\X$, a restriction endofunctor $\T$ and a collection of structural total natural transformations $p$, $z$, $s$, $l$, and $c$. Thanks to Lemma~\ref{lemma:T-n-is-restriction}, the projections $\pi_i\colon\T_n\Rightarrow\T$ of the $n$-fold restriction pullbacks of the projection along itself are total natural transformations and each $\T_n$ is a restriction functor. Thus, since restriction pullbacks in a split restriction category are equivalent to total pullbacks, $\T_n$ is the pointwise $n$-fold pullback of $p\colon\T\Rightarrow\id_\X$ in the category $\End(\X)$. This proves that a tangent split restriction category is a tangentad of $\sRestrCat$.
\end{proof}

Despite the discrepancy between tangent restriction categories and tangentads, one can formally split the restriction idempotents of a tangent restriction category.

\begin{lemma}
\label{lemma:formal-split-restriction-tangent-cats}
For each tangent restriction category $(\X,\TT)$ the split restriction category $\bar\X$ obtained by formally splitting the restriction idempotents of $\X$ (cf.~\cite[Proposition~2.26]{cockett:restriction-categories-I})comes equipped with a tangent restriction structure $\bar\TT$. Moreover, the inclusion restriction functor $\X\hookrightarrow\bar\X$ lifts to a strict tangent restriction morphism $(\X,\TT)\hookrightarrow(\bar\X,\bar\TT)$.
\end{lemma}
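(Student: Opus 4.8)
The plan is to realise $\bar\TT$ as the image of $\TT$ under the idempotent-splitting construction, regarded as a $2$-functor. Recall from \cite[Proposition~2.26]{cockett:restriction-categories-I} that formally splitting the restriction idempotents of a restriction category $\X$ produces a split restriction category $\bar\X$ together with a full restriction embedding $\iota_\X\colon\X\hookrightarrow\bar\X$. This assignment is $2$-functorial: it sends restriction functors to restriction functors and total natural transformations to total natural transformations, yielding a $2$-functor $\Split\colon\RestrCat\to\sRestrCat$ for which $\iota\colon\id_{\RestrCat}\Rightarrow\Split$ is a strict $2$-natural transformation. I would therefore define the candidate tangent restriction structure on $\bar\X$ by setting $\bar\T\=\Split(\T)$, $\bar p\=\Split(p)$, $\bar z\=\Split(z)$, $\bar s\=\Split(s)$, $\bar l\=\Split(l)$, $\bar c\=\Split(c)$ (and a negation $\bar n\=\Split(n)$ if $\TT$ has one). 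By Lemma~\ref{lemma:T-n-is-restriction} each $\T_n$ is a restriction functor and each projection $\pi_k$ is total, so $\bar\T_n\=\Split(\T_n)$ and $\Split(\pi_k)$ are well defined in $\bar\X$.

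First I would dispatch the equational axioms. Every equational condition in the definition of a tangent restriction structure — the additive-bundle axioms for $\bar\p$, coassociativity of the vertical lift, the symmetry of the canonical flip and its compatibility with the lift, and the negation axiom — is a commuting diagram of whiskered composites of the structural $2$-cells. Since $\Split$ is a $2$-functor it preserves whiskering, vertical and horizontal composition, and identities, so each such diagram is sent to the corresponding commuting diagram in $\bar\X$. Hence $\bar\TT$ satisfies all of the equational axioms.

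The main obstacle is the non-equational part: that $\bar\T_n$ is the $n$-fold restriction pullback of $\bar p$ along itself, that local linearity holds in $\bar\X$, and that both universal properties are preserved by all iterates $\bar\T^k$. Here I would argue that $\Split$ preserves the restriction pullbacks witnessing these conditions in $\X$. Since $\iota$ is a full restriction embedding and every object of $\bar\X$ is a restriction retract of an object of $\X$, any restriction cone in $\bar\X$ transports to $\X$, factors uniquely by the universal property there, and transports back; thus $\Split(\T_n)$ and the $\Split$-image of the local-linearity square are again restriction pullbacks in $\bar\X$. Because $\bar\X$ is split, the coincidence of restriction pullbacks with total pullbacks in split restriction categories \cite{cockett:restriction-categories-III-colims} identifies these with precisely the pointwise (total) pullbacks demanded by the definition of a tangentad. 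Preservation by the iterates is carried across in the same way: $\bar\T^k\bar\T_n=\Split(\T^k\T_n)$, and $\T^k$ preserves the universality of $\T_n$ in $\X$, a property again transported by $\Split$. Via Proposition~\ref{proposition:split-restriction-tangent-categories} this exhibits $(\bar\X,\bar\TT)$ as a tangentad of $\sRestrCat$, that is, as a tangent split restriction category.

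Finally, the strictness of the inclusion is immediate from the $2$-naturality of $\iota$: the naturality square for the $1$-cell $\T$ reads $\bar\T\o\iota_\X=\iota_\X\o\T$ on the nose, and $2$-naturality with respect to $p,z,s,l,c$ makes $\iota_\X$ compatible with every structural $2$-cell. Thus the distributive law of the induced tangent morphism is the identity, so $\iota_\X\colon(\X,\TT)\hookrightarrow(\bar\X,\bar\TT)$ is a strict tangent restriction morphism, completing the argument.
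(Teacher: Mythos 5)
Your proposal is correct and follows essentially the same route as the paper: the paper likewise applies the splitting construction to all the data of $\TT$ (defining $\bar\T(M,e)=(\T M,\T e)$ and keeping the structural transformations), notes that the equational axioms are preserved automatically, and reduces the only substantive point --- the universal properties of $\bar\T_n$ and of the vertical lift --- to the fact that the inclusion $\X\hookrightarrow\bar\X$ preserves total pullbacks while every object of $\bar\X$ is a retract of one coming from $\X$. Your packaging of this via the $2$-functor $\Split$ and the $2$-natural transformation $\iota$ is a cosmetic difference only.
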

\begin{proof}
The objects of $\bar\X$ are pairs $(M,e)$ formed by an object $M$ of $\X$ together with a restriction idemponent $e\colon M\to M$ on $M$. Morphisms $(M,e)\to(M',e')$ of $\bar\X$ are morphisms $f\colon M\to M'$ of $\X$ which intertwin with the restriction idempotents, namely, $e\o f=f\o e'$. The tangent bundle functor $\bar\T$ on $\bar\X$ sends $(M,e)$ to $(\T M,\T e)$ and each $f$ to $\T f$. The structural natural transformations are defined as in $(\X,\TT)$, since by naturality they commute with the corresponding restriction idempotents. The $n$-fold restriction pullback $\bar\T_n$ sends $(M,e)$ to $(\T_nM,\T_ne)$ and each $f$ to $\T_nf$. Since the inclusion $\X\hookrightarrow\bar\X$ preserves total pullbacks, namely, pullbacks on total maps, and since restriction pullbacks in a tangent split restriction category are equivalent to total pullbacks, it is not hard to prove that $\bar\T_n$ is indeed a restriction $n$-fold tangent pullback. Using a similar argument, one can prove the universal property of the vertical lift. Finally, the formal splitting preserves equational axioms. Therefore, $(\bar\X,\bar\TT)$ is a tangent restriction category and the inclusion $\X\hookrightarrow\bar\X$ extends to a strict tangent morphism $(\X,\TT)\hookrightarrow(\bar\X,\bar\TT)$.
\end{proof}

In~\cite[Theorem~3.4]{cockett:restriction-categories-III-colims}, Cockett and Lack proved that split restriction categories are $2$-equivalent to $\M$-categories. Proposition~\ref{proposition:split-restriction-tangent-categories} suggests lifting such a $2$-equivalence to tangent split restriction categories, by applying the $2$-functor $\Tng$ to Cockett and Lack's $2$-equivalence. First, recall their construction.

\begin{definition}
\label{definition:M-categories}
A \textbf{display system of monics} of a category $\X$ consists of a collection $\M$ of morphisms of $\X$ satisfying the following conditions:
\begin{itemize}
\item Each morphism $f\in\M$ is a monomorphism of $\X$;

\item $\M$ forms a display system of $\X$;

\item $\M$ is closed under composition;

\item $\M$ contains all isomorphisms of $\X$.
\end{itemize}
An \textbf{$\M$-category} consists of a category $\X$ equipped with a display system of monics.
\end{definition}

\begin{remark}
\label{remark:naming-display-system-monics}
In~\cite{cockett:restriction-categories-I}, a display system of monics is called a \textit{stable} system of monics.
\end{remark}

\begin{definition}
\label{definition:M-functors}
An \textbf{$\M$-functor} from an $\M$-category $(\X,\M)$ to an $\M$-category $(\X',\M')$ consists of a functor $F\colon\X\to\X'$ which sends each monic of $\M$ to a monic of $\M'$ and which preserves each pullback diagram of type
\begin{equation*}
\begin{tikzcd}
D & B \\
C & A
\arrow[from=1-1, to=1-2]
\arrow[from=1-1, to=2-1]
\arrow["\lrcorner"{anchor=center, pos=0.125}, draw=none, from=1-1, to=2-2]
\arrow["m", from=1-2, to=2-2]
\arrow["f"', from=2-1, to=2-2]
\end{tikzcd}
\end{equation*}
where $m$ is a monic of $\M$.
\end{definition}

\begin{definition}
\label{definition:M-natural-transformation}
An \textbf{$\M$-natural transformation} from an $\M$-functor $F\colon(\X,\M)\to(\X',\M')$ to an $\M$-functor $G\colon(\X,\M)\to(\X',\M')$ consists of a natural transformation $\varphi\colon F\Rightarrow G$ for which, for every monic $m$ of $\M$, the naturality diagram
\begin{equation}
\label{equation:M-natural-transformation}
\begin{tikzcd}
FA & GA \\
FB & GB
\arrow["\varphi", from=1-1, to=1-2]
\arrow["Fm"', from=1-1, to=2-1]
\arrow["Gm", from=1-2, to=2-2]
\arrow["\varphi"', from=2-1, to=2-2]
\end{tikzcd}
\end{equation}
is a pullback diagram.
\end{definition}

\begin{remark}
\label{remark:M-natural-transformations}
Notice that, the pullback diagram of Equation~\eqref{equation:M-natural-transformation} is preserved by every $\M$-functor from $(\X',\M')$, since $Gm$ is a monic in $\M'$.
\end{remark}

$\M$-categories, $\M$-functors, and $\M$-natural transformations form a $2$-category denoted by $\MCat$.

\par Cockett and Lack proved that $\MCat$ and $\sRestrCat$ are $2$-equivalent. In particular, each $\M$-category $(\X,\M)$ is equivalent to a split restriction category $\Par(\X,\M)$ whose objects are the same as the one of $\X$ and morphisms $f\colon A\nto B$ are (classes of isomorphisms of) spans:
\begin{equation*}
\begin{tikzcd}
& {\bar A} \\
A && B
\arrow["{m_f}"', from=1-2, to=2-1]
\arrow["{[f]}", from=1-2, to=2-3]
\end{tikzcd}
\end{equation*}
whose left leg $m_f$ is a monic of $\M$.

\par Our goal is to extend this result to tangent split restriction categories. The first step is introducing the correct notion of a display system of monics in this context.

\begin{definition}
\label{definition:M-tangent-category}
A \textbf{tangent display system of monics} of a tangent category $(\X,\TT)$ consists of a collection $\M$ of monomorphisms of $(\X,\TT)$ satisfying the following conditions:
\begin{itemize}
\item Each monomorphism $m$ of $\M$ is an \'etale map of $(\X,\TT)$;

\item $\M$ forms a tangent display system of $(\X,\TT)$;

\item $\M$ is closed under composition;

\item $\M$ contains all isomorphisms of $(\X,\TT)$.
\end{itemize}
An \textbf{tangent $\M$-category} consists of a tangent category equipped with a tangent display system of monics.
\end{definition}

\begin{remark}
\label{remark:strong-tangent-display-system}
Thanks to Lemma~\ref{lemma:strong-tangent-display-system-etale}, a tangent display system of monics is automatically a strong tangent display system.
\end{remark}

\begin{proposition}
\label{proposition:M-tangent-category-vs-tangent-objects}
$\M$-tangent categories are equivalent to tangentads in the $2$-category $\MCat$ of $\M$-categories, $\M$-functors, and $\M$-natural transformations.
\end{proposition}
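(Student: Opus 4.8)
The plan is to mirror the strategy of Propositions~\ref{proposition:tangent-display-system-vs-tangent-objects} and~\ref{proposition:etale-tangent-display-systems-vs-tangent-objects}, reducing the statement to the \'etale case rather than re-deriving the tangentad axioms by hand. First I would observe that every $\M$-category $(\X,\M)$ is in particular a display category whose display system is $\M$, and that for $\M$-categories the notions of $\M$-functor and $\M$-natural transformation coincide with those of display functor and display natural transformation of $\dDsplyCat$ for this display system. Indeed, the display maps of $(\X,\M)$ are exactly the monics of $\M$, so ``sends $\M$-monics to $\M$-monics and preserves pullbacks along $\M$-monics'' is literally ``preserves display pullbacks'', and the naturality squares witnessing $\M$-naturality have both vertical legs in $\M$ and hence qualify as display pullbacks. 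Consequently the endo-hom-category $\End_{\MCat}(\X,\M)$ coincides, as a monoidal category, with $\End_{\dDsplyCat}(\X,\M)$, so the notion of a pointwise pullback, and therefore the notion of a Leung functor preserving the fundamental pointwise tangent limits, is identical in the two $2$-categories.

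Given this identification, a tangentad of $\MCat$ carries exactly the same data as a tangentad of $\dDsplyCat$ whose underlying display category happens to be an $\M$-category. By Proposition~\ref{proposition:etale-tangent-display-systems-vs-tangent-objects}, a tangentad of $\dDsplyCat$ is an \'etale tangent category $(\X,\TT;\Dsply)$, that is, a display tangent category in which every display map is \'etale. Restricting to those whose display system $\Dsply=\M$ consists of monics, is closed under composition, and contains all isomorphisms supplies precisely the object-level clauses of Definition~\ref{definition:M-categories}, while the \'etale-tangentad structure supplies the remaining clauses of Definition~\ref{definition:M-tangent-category}: $\M$ is a tangent display system and each of its members is an \'etale map. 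By Remark~\ref{remark:strong-tangent-display-system} (via Lemma~\ref{lemma:strong-tangent-display-system-etale}) such a system is automatically strong, so that the $n$-fold pullbacks $\T_n$ are again $\M$-functors and the whole package is self-consistent. This matches the data of an $\M$-tangent category exactly.

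For the converse I would start from an $\M$-tangent category $(\X,\TT;\M)$ and promote it to a tangentad of $\MCat$. The object $(\X,\M)$ is an $\M$-category by hypothesis; the tangent bundle functor $\T$ sends $\M$-monics to $\M$-monics because $\M$ is stable under $\T$, and preserves pullbacks along $\M$-monics because these are tangent pullbacks, hence preserved by $\T$; thus $\T$ is an $\M$-functor, and each $\T_n$ is an $\M$-functor by strongness together with the argument of Proposition~\ref{proposition:etale-tangent-display-systems-vs-tangent-objects}. The structural transformations $p,z,s,l,c$ are $\M$-natural because, by Lemma~\ref{lemma:etale-maps}, the naturality square of any \'etale map with each structural natural transformation is a tangent pullback, in particular a pullback along an $\M$-monic. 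The tangent-structure axioms are inherited verbatim from $(\X,\TT)$, and the fundamental tangent limits are pointwise in $\End_{\MCat}(\X,\M)$ by the same computation as in the display case.

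The step I expect to require the most care is the opening identification $\End_{\MCat}(\X,\M)=\End_{\dDsplyCat}(\X,\M)$: one must verify that the $\M$-functor and $\M$-natural-transformation conditions genuinely coincide with the display conditions once the display system is taken to be the monics of $\M$, paying attention that the squares witnessing $\M$-naturality have both legs in $\M$ (so they are bona fide display pullbacks) and that $\T$ and its iterates transport these squares to tangent pullbacks. This is exactly where the hypothesis that the members of $\M$ are \'etale is used, and once it is in place everything downstream reduces cleanly to Proposition~\ref{proposition:etale-tangent-display-systems-vs-tangent-objects}.
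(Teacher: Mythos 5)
Your argument is correct, and its mathematical core is the same as the paper's: both directions ultimately rest on Lemma~\ref{lemma:etale-maps} (a map is \'etale iff all its naturality squares with the structural transformations are tangent pullbacks) together with the observation that $\T$ and the $\T_n$ preserving pullbacks along $\M$-monics is exactly the tangent-display-system condition. The difference is organizational: the paper verifies the equivalence directly by unwrapping the tangentad data in $\MCat$, whereas you first identify $\MCat$ with the full sub-$2$-category of $\dDsplyCat$ on those display categories whose display system consists of monics, is closed under composition, and contains the isomorphisms, and then quote Proposition~\ref{proposition:etale-tangent-display-systems-vs-tangent-objects}. Your identification of $\M$-functors with display functors and of $\M$-natural transformations with display natural transformations is sound (stability of $\M$ under pullback puts both vertical legs of the relevant squares in $\M$), and the reduction buys you a shorter proof that makes the logical dependence on the \'etale proposition explicit. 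The only point I would flag is that ``tangentad of $\MCat$'' and ``tangentad of $\dDsplyCat$ whose underlying object is an $\M$-category'' agree on data but the pointwise-limit requirement is quantified over $1$-morphisms of the ambient $2$-category, and $\dDsplyCat$ has more objects and $1$-morphisms landing in $(\X,\M)$ than $\MCat$ does; so a one-line remark that the fundamental tangent limits in question are computed objectwise in $\X$ (hence pointwise in either ambient $2$-category) would close that gap. The paper elides the same issue, so this does not distinguish the two proofs in rigour.
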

\begin{proof}
A tangentad in the $2$-category $\MCat$ of $\M$-categories consists of an $\M$-category $(\X,\M)$ together with an $\M$-endofunctor $\T\colon(\X,\M)\to(\X,\M)$ together with the structural $\M$-natural transformations $p$, $z$, $s$, $l$, and $c$. Since $\T$ preserves the monics of $\M$ and the display pullbacks, $\M$ becomes a tangent display system. Furthermore, since $p$ is $\M$-natural, each monic of $\M$ becomes an \'etale map. Therefore, tangentads of $\MCat$ are $\M$-tangent categories. Conversely, Lemma~\ref{lemma:etale-maps} establishes that a map is \'etale if and only if all naturality squares with the structural natural transformations of a tangent category are tangent pullbacks. Thus, the structural natural transformations of a tangent $\M$-category are $\M$-natural transformations. Thus, each tangent $\M$-category is a tangentad in $\MCat$.
\end{proof}

Let $\MTngCat$ denote $\Tng(\MCat)$.

\begin{theorem}
\label{theorem:split-restriction-cats-vs-M-cats}
The $2$-category $\MTngCat$ of $\M$-tangent categories is $2$-equivalent to the $2$-category $\TngsRestrCat$ of tangent split restriction categories. In particular, each tangent $\M$-category $(\X,\TT;\M)$ is sent to the tangent split restriction category $\Par(\X,\TT;\M)$ whose underlying split restriction category is $\Par(\X,\M)$ and whose tangent structure is induced by $\TT$.
\end{theorem}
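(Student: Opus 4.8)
The plan is to lift the Cockett--Lack $2$-equivalence $\MCat\simeq\sRestrCat$ to tangentads using the machinery of Section~\ref{section:global-properties-structures}, and then to reconcile the two sides with Propositions~\ref{proposition:M-tangent-category-vs-tangent-objects} and~\ref{proposition:split-restriction-tangent-categories}. By definition $\MTngCat=\Tng(\MCat)$, while Proposition~\ref{proposition:split-restriction-tangent-categories} identifies tangent split restriction categories with $\Tng(\sRestrCat)$; it therefore suffices to produce a $2$-equivalence $\Tng(\MCat)\simeq\Tng(\sRestrCat)$ and to chain it with these two identifications.

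The crucial step is to realize the Cockett--Lack $2$-equivalence, carried by $\Par\colon\MCat\to\sRestrCat$ together with a pseudo-inverse, as an equivalence \emph{inside} $\TwoCat_\pp$ rather than merely in $\TwoCat$. First I would upgrade it to an adjoint $2$-equivalence, so that both $\Par$ and its inverse are simultaneously left and right $2$-adjoints. Being right $2$-adjoints, Lemma~\ref{lemma:right-adjoints-preserve-pointwise-limits} guarantees that they preserve pointwise limits, in particular pointwise pullbacks; hence both are $1$-morphisms of $\TwoCat_\pp$ and the whole adjoint equivalence lives in $\TwoCat_\pp$. This is where I expect the main obstacle to lie: one must confirm that the equivalence genuinely lands in $\TwoCat_\pp$. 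The adjointness route reduces this to the standard fact that every $2$-equivalence upgrades to an adjoint $2$-equivalence, and thereby avoids any manipulation of the span description of $\Par$; should that upgrade prove delicate in the strict setting, the fallback is to check directly that $\Par$ sends pointwise pullbacks in $\End_{\MCat}(\X,\M)$ to pointwise pullbacks in $\End_{\sRestrCat}(\Par(\X,\M))$ via the span calculus.

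Since Lemma~\ref{lemma:functoriality-TNG} makes $\Tng$ into a $2$-functor $\TwoCat_\pp\to\TwoCat$ and every $2$-functor preserves equivalences, applying it to this adjoint equivalence yields a $2$-equivalence $\Tng(\MCat)\simeq\Tng(\sRestrCat)$. Chaining with the identifications above gives
\begin{align*}
&\MTngCat=\Tng(\MCat)\simeq\Tng(\sRestrCat)\simeq\TngsRestrCat
\end{align*}
which is the asserted $2$-equivalence.

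For the concluding statement I would trace an $\M$-tangent category $(\X,\TT;\M)$ through the chain. By Proposition~\ref{proposition:M-tangent-category-vs-tangent-objects} it is the tangentad of $\MCat$ whose base is $(\X,\M)$ and whose tangent bundle $1$-morphism is $\T$; Lemma~\ref{lemma:functoriality-TNG} describes $\Tng(\Par)$ as applying $\Par$ to the tangent $1$-morphism and to the structural $2$-morphisms, producing the tangentad of $\sRestrCat$ with base $\Par(\X,\M)$ and tangent structure induced by $\TT$. Proposition~\ref{proposition:split-restriction-tangent-categories} then reads this tangentad as the tangent split restriction category $\Par(\X,\TT;\M)$, whose underlying split restriction category is $\Par(\X,\M)$ and whose tangent structure is the one induced by $\TT$, as claimed.
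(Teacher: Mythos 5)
Your proposal is correct and follows essentially the same route as the paper: the paper's proof consists precisely of applying the $2$-functor $\Tng$ to the Cockett--Lack $2$-equivalence and invoking Propositions~\ref{proposition:split-restriction-tangent-categories} and~\ref{proposition:M-tangent-category-vs-tangent-objects}. Your extra care in checking that the equivalence lives in $\TwoCat_\pp$ (via the adjoint-equivalence upgrade and Lemma~\ref{lemma:right-adjoints-preserve-pointwise-limits}) fills in a step the paper leaves implicit.
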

\begin{proof}
To prove this result, one only needs to apply the $2$-functor $\Tng$ to the $2$-equivalence of~\cite[Theorem~3.4]{cockett:restriction-categories-I} and use Propositions~\ref{proposition:split-restriction-tangent-categories} and~\ref{proposition:M-tangent-category-vs-tangent-objects}.
\end{proof}

\begin{remark}
\label{remark:open-subobjects}
In~\cite[Definition~3.42]{cruttwell:tangent-display-maps}, the monics of a tangent display system of monics are called \textit{open subobjects}. This is due to the classification of open subsets in the tangent category of finite-dimensional smooth manifolds as tangent monic (display) \'etale maps~\cite[Lemma~3.41]{cruttwell:tangent-display-maps}. We suggest the reader consult~\cite{cruttwell:tangent-display-maps} for details.
\end{remark}

Thanks to Remark~\ref{remark:open-subobjects}, we can interpret Theorem~\ref{theorem:split-restriction-cats-vs-M-cats} by saying that the restriction domain of a partial map in a tangent restriction categories is always an open subobject to the domain. This also means that the tangent category of smooth partial functions between smooth manifolds defined over arbitrary domains does not form a tangent restriction category. To fix this, one might consider a more general concept.

\begin{definition}
\label{definition:quasi-split-restriction-tangent-category}
A \textbf{quasi-tangent restriction category} consists of a restriction category $\X$ whose split completion $\bar\X$ (cf. Lemma~\ref{lemma:formal-split-restriction-tangent-cats}) comes equipped with a tangent structure $\TT$ in the $2$-category $\sRestrCat_\lax$.
\end{definition}

Concretely, a quasi-tangent restriction category consists of a restriction category, a restriction endofunctor $\T\colon\X\to\X$ and restriction natural transformations $p$, $z$, $s$, $l$, and $c$, satisfying the usual axioms of a tangentad and for which the fundamental tangent limits are restriction pullbacks. In future work, we will study this generalization.

\subsection{Infinitesimal objects are tangentads of spans}
\label{subsection:infinitesimal-tangent-objects}
Infinitesimal objects were introduced by Cockett and Cruttwell in~\cite[Definition~5.6]{cockett:tangent-cats} to classify representable tangent categories, namely, Cartesian tangent categories whose tangent bundle functor is representable. In particular, \cite[Proposition~5.7]{cockett:tangent-cats} shows that the "exponentialization" of an infinitesimal object produces a representable tangent structure and vice versa.

\par In this section, we first show that infinitesimal objects of a Cartesian category $\X$ are tangentads in the $2$-co-category of (classes of isomorphisms of) spans of $\X$. Secondly, we interpret the relationship between infinitesimal objects and representable tangent categories as an equivalence between tangent structures.

\par In Definition~\ref{definition:infinitesimal-element}, we introduced infinitesimal elements in the context of tangentads. In the context of tangent categories, infinitesimal elements are called infinitesimal objects~\cite[Definition~5.6]{cockett:tangent-cats}. Moreover, we call a microscopic element of a tangent category, a \textbf{microscopic object}.

\begin{remark}
\label{remark:infinitesimal-object}
In the original definition of an infinitesimal object~\cite[Definition~5.6]{cockett:tangent-cats}, the $n$-fold pushout of $\zeta$ along itself was not required to be preserved by the functors $D^n\times(-)$ since this condition is already implied by requiring $D^n$ and $D_n$ to be exponentiable objects. Notice that this condition is necessary to express the compatibility between the multiplication $\mu$ and the cosum $\delta$.
\par Moreover, in the original definition, each object $D^n\=D\times\dots\times D$ was required to be exponentiable. However, when $D$ is exponentiable, namely, $D\times-$ admits a left adjoint, each $((\dots(-)^D\dots)^D)^D$ defines a left adjoint for $D^n\times-$, making $D^n$ exponentiable.
\end{remark}

Cockett and Cruttwell showed that if $D$ is an infinitesimal object of a Cartesian category $\X$, then $\T\=(-)^D\colon\X\to\X$ defines a tangent bundle functor, $(-)^\zeta\colon\T\Rightarrow\id_\X$ defines the projection, $(-)^!\colon\id_\X\Rightarrow\T$ defines the zero morphism, $(-)^\delta\colon\T_2=(-)^{D\star D}\Rightarrow\T$ defines the sum, $(-)^\mu\colon\T\Rightarrow\T^2=(-)^{D\times D}$ defines the vertical lift, and finally, $(-)^\tau\colon\T^2\to\T^2$ defines the canonical flip, where $\tau\colon D\times D\to D\times D$ is the canonical symmetry $\<\pi_2,\pi_1\>$.

\par This suggests interpreting an infinitesimal object as a tangentad in a suitable $2$-category. The first suggestion to pick the correct ambient $2$-category comes from noticing that the tangent bundle $1$-morphism should be the object $D$. A $2$-category in which morphisms are objects is the category of spans. Since we do not want to require the existence of all pullbacks, we decide to consider only spans whose legs are display morphisms, namely, morphisms which admit all pullbacks. We call \textit{display spans} such spans.

\par Given a category $\X$, let denote by $\DSpan(\X)$ the (strict) $2$-category whose objects are objects of $\X$, $1$-morphisms are isomorphism classes of display spans, and $2$-morphisms are isomorphism classes of morphisms of spans. The second hint comes from realizing that the exponential operation $M^{(-)}$ is contravariant. This suggests to inverts the direction of the $2$-morphisms of $\DSpan(\X)$, namely, to consider the $2$-category $\DSpan^\co(\X)$.

\begin{proposition}
\label{proposition:micro-objects-vs-tangent-objects}
Microscopic objects of a Cartesian category $\X$ are equivalent to tangentads in the $2$-category $\DSpan^\co(\X)$ over the terminal object $\1$ of $\X$.
\end{proposition}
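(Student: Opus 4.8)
The plan is to make the hom-category $\End_{\DSpan^\co(\X)}(\1)$ completely explicit and then read off the microscopic structure as the transcription of the tangent structure under this identification. A $1$-endomorphism of $\1$ in $\DSpan(\X)$ is a display span $\1\leftarrow D\rightarrow\1$; since both legs are the unique maps into the terminal object and every object of the Cartesian category $\X$ admits products (so its map to $\1$ is display), this is the same datum as an object $D$ of $\X$. Composition of such spans is the pullback over $\1$, i.e.\ the Cartesian product, so the whiskering is $\bar\T=D\times-$, the $n$-fold composite $\T^n$ is $D^n$, and the identity $1$-morphism is $\1$. A $2$-cell between the spans $D$ and $D'$ is a morphism of spans $D\to D'$, which is unconstrained because the legs land in $\1$; passing to $\DSpan^\co(\X)$ reverses these, so $\End_{\DSpan^\co(\X)}(\1)\cong(\X^\op,\times,\1)$ as a monoidal category. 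Under the evident identification $\El(\X)\cong\X=(\X^\op)^\op$, a pullback in this hom-category is precisely a pushout in $\El(\X)$, and $\bar\T^n=D^n\times-$.

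With this dictionary I would translate each piece of tangent data. The tangent bundle $1$-morphism $\T$ becomes the object $D$. The projection $p\colon\T\Rightarrow\id_\1$, being a reversed $2$-cell, becomes the coprojection $\zeta\colon\1\to D$; the zero $z\colon\id_\1\Rightarrow\T$ becomes the unique map $D\to\1$ and so carries no information, matching the absence of a zero datum in a microscopic object. The $n$-fold pullback $\T_n$ of $p$ along itself becomes the pushout $D_n$ of $\zeta$ along itself, with the $\pi_k$ corresponding to the coprojections $\iota_k$; the sum $s\colon\T_2\Rightarrow\T$ becomes the cosum $\delta\colon D\to D_2$; the vertical lift $l\colon\T\Rightarrow\T^2$ becomes the multiplication $\mu\colon D\times D\to D$; and the canonical flip $c\colon\T^2\Rightarrow\T^2$ becomes the symmetry $\tau=\<\pi_2,\pi_1\>$ on $D\times D$. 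In the presence of negatives the negation $n$ dualizes to the conegation $\kappa$.

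Next I would verify that the axioms match. Because arrow-reversal preserves pasting, every equational axiom transfers verbatim: the $\bar\T$-additive bundle conditions on $(p,z,s)$ become the statement that $\delta$ exhibits $\zeta$ as a cocommutative comonoid in $\1/\El(\X)$ compatible with the coprojection; coassociativity of $l$ and its interaction with $c$, together with $c$ being a symmetric braiding, become coassociativity, cocommutativity and comonoid-compatibility of $\mu$; and the additive-bundle-morphism conditions for $(z,l)$ and $(\id,c)$ become the compatibilities of $\mu$ and $\tau$ with $\zeta$ and $\delta$. The limit conditions transfer because pullbacks in $\X^\op$ are pushouts in $\X$: the requirement that the $n$-fold pullback exist pointwise in $\End_{\DSpan^\co(\X)}(\1)$ and be preserved by all iterates $\T^n$ becomes exactly the requirement, in the definition of a microscopic object, that $D_n$ be a pointwise pushout in $\El(\X)$ preserved by each $D^n\times-$; the careful bookkeeping here is that "pointwise'' on both sides refers to the same precomposition functors once the identification $\El(\X)\cong\End_{\DSpan^\co(\X)}(\1)^\op$ is in force.

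The main obstacle I expect is the local-linearity axiom. The real work is to check that the pointwise pullback square $\T_2\to\T\o\T_2\to\T^2$ over $\id_\1\to\T$ dualizes to the universal pushout square defining the multiplication, with $\xi=[\mu\iota_1,\pi_1\iota_2]\o(\id_D\times\delta)$ playing the role of $\T s$ precomposed with $z_\T\times l$; this is the span-theoretic shadow of~\cite[Proposition~5.7]{cockett:tangent-cats} and requires unwinding the two pushouts carefully. Once this is done, the assignment $\TT\mapsto(D,\zeta,\delta,\mu)$ and its inverse, which reads a microscopic object off as a Leung functor $\Weil\to\End_{\DSpan^\co(\X)}(\1)$, are mutually inverse on structures; since the identification of hom-categories is an isomorphism of monoidal categories, the correspondence respects all remaining data and yields the claimed equivalence between microscopic objects of $\X$ and tangentads over $\1$ in $\DSpan^\co(\X)$, the case with negatives following identically by matching conegation with negation.
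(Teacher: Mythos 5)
Your proposal is correct and follows essentially the same route as the paper: identify $\End_{\DSpan^\co(\X)}(\1)$ with $\X^\op$ (with composition of spans over $\1$ given by the Cartesian product), transcribe the tangent data into the microscopic data, and match the pointwise (co)limit conditions, including the local-linearity square becoming the universal pushout for $\mu$. The only step you gloss over is that the canonical flip is \emph{forced} to be the symmetry $\tau$ — the paper derives $c=\tau$ from $(D\times\,!)\o c=\,!\times D$ together with the uniqueness of the isomorphism satisfying $\pi_1\o\tau=\pi_2$ — a point that matters because a microscopic object carries no flip datum, so one must check no information is discarded.
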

\begin{proof}
Let us consider a display span over the terminal object:
\begin{equation*}
\begin{tikzcd}
& D \\
\1 && \1
\arrow["{q_l}"', from=1-2, to=2-1]
\arrow["{q_r}", from=1-2, to=2-3]
\end{tikzcd}
\end{equation*}
Since $\1$ is terminal, $q_l,q_r\colon D\to\1$ must coincide with the unique morphism $!\colon D\to\1$. The projection corresponds to a morphism of $\X$ of type $\zeta\colon\1\to D$, the zero morphism is just $!\colon D\to\1$, the $n$-fold pullbacks of the projection along itself correspond to the $n$-fold pushouts $D\star\dots\star D$ of $\zeta$ along itself, the sum morphism corresponds to $\delta\colon D\to D\star D$, the vertical lift corresponds to an associative and commutative multiplication $\mu\colon D\times D\to D$, since $D\times D$ represents the composition of the span $D$ with itself. Finally, the canonical flip coincides with an isomorphism $c\colon D\times D\to D\times D$. However, since $(D\times!)\o c=\T p\o c\T=p_\T=!\times D$, we have that $\pi_1\o c=\pi_2$. However, $\tau$ is the unique isomorphism such that $\pi_1\o\tau=\pi_2$, thus $c=\tau$. One can easily see that the equational axioms of a microscopic object correspond exactly to the equational axioms of a tangent structure on $\1$ in $\DSpan^\co(\X)$ of type $(D,\zeta,!,\delta,\mu,\tau)$. Finally, since $\End_{\DSpan^\co(\X)}(\1)\cong\X^\op$ as a category, the universality of the vertical lift and the existence of the $n$-fold tangent pullbacks correspond precisely to the existence of the $n$-fold pushouts of $\zeta\1\to D$ (preserved by $D^n\times(-)$) and the coequalizer axiom for the multiplication $\mu$. This shows the equivalence between tangentads of $\DSpan^\co(\X)$ and microscopic objects of $\X$.
\end{proof}

To obtain infinitesimal objects as tangentads, one must consider exponentiable objects of $\X$.

\begin{proposition}
\label{proposition:infinitesimal-object-as-tangent-object}
Infinitesimal objects of a Cartesian category $\X$ are equivalent to tangentads in the $2$-category $\DSpan^\co(\X)$ over the terminal object $\1$ of $\X$ for which each $n$-fold pullback $D_n$ of the projection along itself is exponentiable in $\X$.
\end{proposition}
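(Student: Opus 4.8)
The plan is to obtain this statement as a restriction of Proposition~\ref{proposition:micro-objects-vs-tangent-objects}, exploiting that an infinitesimal element is, by Definition~\ref{definition:infinitesimal-element}, precisely a microscopic element subject to one additional adjunction requirement. Since Proposition~\ref{proposition:micro-objects-vs-tangent-objects} already identifies microscopic objects of $\X$ with tangentads of $\DSpan^\co(\X)$ over $\1$, all that remains is to show that the extra hypothesis on each $\T_n$ appearing here corresponds, under that identification, to the extra hypothesis distinguishing infinitesimal from microscopic elements.

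First I would recall the dictionary established inside the proof of Proposition~\ref{proposition:micro-objects-vs-tangent-objects}. Under the isomorphism $\End_{\DSpan^\co(\X)}(\1)\cong\X^\op$, the tangent bundle $1$-morphism $\T$ corresponds to the object $D$, composition of endospans of $\1$ corresponds to the Cartesian product in $\X$, and the projection $p\colon\T\Rightarrow\id_\1$ corresponds to the coprojection $\zeta\colon\1\to D$. Crucially, the $n$-fold pullback $\T_n$ of the projection along itself is computed in the hom-category $\End_{\DSpan^\co(\X)}(\1)$, and since a pullback in $\X^\op$ is a pushout in $\X$, this $\T_n$ corresponds exactly to the $n$-fold pushout $D_n=D\star\dots\star D$ of $\zeta$ along itself. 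Thus the object called ``$\T_n$'' in the present statement and the object called ``$D_n$'' in Definition~\ref{definition:infinitesimal-element} are literally the same object of $\X$.

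With this identification in hand, the hypothesis that each $\T_n$ be exponentiable in $\X$ is word-for-word the additional adjunction condition that, in Definition~\ref{definition:infinitesimal-element}, upgrades a microscopic element to an infinitesimal one (and which, via Proposition~\ref{proposition:representable}, is what allows the exponential $(-)^{D_n}$ to serve as the $n$-fold pullback functor of the associated representable tangent structure). Consequently the correspondence of Proposition~\ref{proposition:micro-objects-vs-tangent-objects} restricts to a correspondence between infinitesimal objects of $\X$ and those tangentads of $\DSpan^\co(\X)$ over $\1$ whose $\T_n$ are all exponentiable. Because exponentiability is a property of objects and not additional structure, this restriction is automatic on $1$- and $2$-morphisms: the hom-categories of the two $2$-categories are unchanged by imposing the condition, so every morphism matched by Proposition~\ref{proposition:micro-objects-vs-tangent-objects} is retained.

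The only genuine point to verify---and it is bookkeeping rather than a real obstacle---is the identification $\T_n=D_n$, namely that the object the definition of a tangentad produces as the $n$-fold pullback of the projection is indeed the $n$-fold pushout of $\zeta$. This is exactly the computation already carried out in Proposition~\ref{proposition:micro-objects-vs-tangent-objects}, so once that proposition is invoked the present statement follows immediately.
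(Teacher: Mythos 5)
Your proposal is correct and matches the paper's (implicit) argument: the paper offers no separate proof for this proposition, presenting it as the immediate restriction of Proposition~\ref{proposition:micro-objects-vs-tangent-objects} to those tangentads whose $\T_n$ are exponentiable, with the identification $\T_n=D_n$ under $\End_{\DSpan^\co(\X)}(\1)\cong\X^\op$ already established in that earlier proof. Your observation that exponentiability is a property rather than extra structure, so the correspondence restricts automatically, is exactly the point that makes the proof a one-line corollary.
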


\begin{remark}
\label{remark:infinitesimal-objects-as-tangent-objects}
One could wonder if the tangentads on the $2$-category of display spans over the full subcategory $\Exp(\X)$ of exponentiable objects of $\X$ could represent infinitesimal objects of $\X$. However, this is not the case, since $\End_{\DSpan^\co(\Exp(\X))}(\1)$ is not equivalent to the whole category $\X$, but only to $\Exp(\X)$. This implies that the universality of the vertical lift and the existence of the $n$-fold tangent pullbacks of the projection along itself only apply among those objects of $\X$ which are exponentiable, while for an infinitesimal object, the universality is required to hold on the entire category $\X$.
\end{remark}

One is left to wonder what the rest of the tangentads of $\DSpan^\co(\X)$ might represent. Let us introduce the following definition.

\begin{definition}
\label{definition:relative-infinitesimal-object}
A \textbf{relative microscopic object} on a category $\X$ on an object $M$ of $\X$ consists of a microscopic object in the display slice category $\X/M$ of $\X$ on $M$. Concretely, this corresponds to an object $D$ of $\X$ equipped with a morphism $q\colon D\to M$, a coprojection $\zeta\colon M\to D$, for which the $n$-fold pushout $D_n$ along $\zeta$ to itself exists and is preserved by all functors $D^n\times_M(-)$, where $D^n=D\times_M\dots\times_MD$; a cosum $\delta\colon D\to D_2$ making $\zeta$ a cocommutative comonoid on the coslice category $\id_M/(\X/M)$; finally, a multiplication $\mu\colon D\times_MD\to D$, associative and commutative, which satisfies the zero rule, is compatible with the cosum, and making the diagram:
\begin{equation*}
\begin{tikzcd}
D &&& M \\
{D\times D} & {D\times(D\star_MD)} & {(D\times D)\star_D(D\times D)} & {D\star D}
\arrow["q", from=1-1, to=1-4]
\arrow["{D\times\zeta}"', from=1-1, to=2-1]
\arrow["{\zeta\iota_1}", from=1-4, to=2-4]
\arrow["{\id_D\times\delta}", from=2-1, to=2-2]
\arrow["\cong", from=2-2, to=2-3]
\arrow["{\mu\star_M\pi_1}", from=2-3, to=2-4]
\end{tikzcd}
\end{equation*}
a pushout diagram in $\X/M$.\\
A \textbf{relative infinitesimal object} on $M$ is a relative microscopic object $D$ on $M$ for which each $D_n\to M$ is exponentiable in $\X/M$.
\end{definition}

It is immediate to see that relative microscopic/infinitesimal objects on the terminal object $\1$ of a Cartesian category $\X$ are precisely microscopic/infinitesimal objects of $\X$.

\begin{proposition}
\label{proposition:relative-infinitesimal-object-as-tangent-object}
Relative microscopic objects of a category $\X$ are equivalent to tangentads of the $2$-category $\DSpan^\co(\X)$.
\end{proposition}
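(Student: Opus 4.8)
The plan is to reduce this statement to Proposition~\ref{proposition:micro-objects-vs-tangent-objects} applied to the display slice category, rather than re-running the entire dictionary between the structural $2$-morphisms and the microscopic structure. A tangentad of $\DSpan^\co(\X)$ has as its base object an object $M$ of $\X$, and its tangent bundle $1$-morphism is a display span $\T=(M\xleftarrow{q_l}D\xrightarrow{q_r}M)$. By Definition~\ref{definition:relative-infinitesimal-object} a relative microscopic object on $M$ is exactly a microscopic object of the display slice $\X/M$, whose terminal object is $\id_M$ and whose binary products are fibre products over $M$; since $q_l,q_r$ are display maps these fibre products exist, so $\X/M$ is Cartesian in the sense required by Proposition~\ref{proposition:micro-objects-vs-tangent-objects}. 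Thus it suffices to identify tangentads of $\DSpan^\co(\X)$ based at $M$ with tangentads of $\DSpan^\co(\X/M)$ based at the terminal object $\id_M$.

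The first genuine step is to show that the tangent-structure axioms force the span $\T$ to be \emph{balanced}, i.e.\ $q_l=q_r$. Unwinding the $\co$, the zero morphism $z\colon\id_M\Rightarrow\T$ is a span morphism $\T\Rightarrow\id_M$ in $\DSpan(\X)$, that is, a map $w\colon D\to M$ with $w=q_l$ and $w=q_r$; hence $q_l=q_r=:q$ and $\T$ is the balanced span on the object $q\colon D\to M$ of $\X/M$. Dually the projection $p\colon\T\Rightarrow\id_M$ becomes a section $\zeta\colon M\to D$ of $q$, i.e.\ a coprojection $\zeta\colon\id_M\to q$ in $\X/M$. Every derived $1$-morphism remains balanced: the composite $\T^2=\T\circ\T$ has apex the fibre product $D\times_M D$ with both legs the canonical structure map, while the $n$-fold pullback $\T_n$ of $p$ in the hom-category corresponds, under the $\co$, to the $n$-fold pushout $D_n$ of $\zeta$ in $\X/M$. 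Consequently the entire tangent structure lives inside the full sub-$2$-category of $\DSpan^\co(\X)$ on balanced endospans at $M$.

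The second step is to recognise this balanced sub-$2$-category as $\DSpan^\co(\X/M)$ at $\id_M$. An endospan on the terminal object $\id_M$ in $\DSpan(\X/M)$ is an object $q\colon D\to M$ of $\X/M$, both of whose legs to $\id_M$ are forced to equal $q$; so $\End_{\DSpan(\X/M)}(\id_M)$ is exactly the category of balanced endospans at $M$, and span composition there is the fibre product over $M$, which is the Cartesian product of $\X/M$. Since the forgetful functor $\X/M\to\X$ creates pullbacks, this identification is monoidal and preserves the pointwise (co)limits appearing in the tangentad axioms. Applying Proposition~\ref{proposition:micro-objects-vs-tangent-objects} to the Cartesian category $\X/M$ now yields the dictionary: the projection gives the coprojection $\zeta$, the zero morphism gives the structure map $q$ (the terminal map in $\X/M$), the sum gives the cosum $\delta\colon D\to D_2$, the vertical lift gives the associative commutative multiplication $\mu\colon D\times_M D\to D$, and the canonical flip is forced to be the symmetry $\tau$ of $D\times_M D$ exactly as in the proof of Proposition~\ref{proposition:micro-objects-vs-tangent-objects}. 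The tangentad axioms translate into the axioms of a microscopic object of $\X/M$, that is, of a relative microscopic object on $M$.

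The main obstacle I anticipate is the bookkeeping in the second step: one must check carefully that span composition of balanced endospans coincides with the fibre product in $\X/M$ \emph{with the correct legs}, that the pointwise pullbacks in the hom-category $\End_{\DSpan^\co(\X)}(M)$ correspond to the $n$-fold pushouts of $\zeta$ and to the universality (pushout) axiom for $\mu$ in $\X/M$, and that the ``display'' hypotheses on the spans transfer exactly to the existence hypotheses on the $D_n$ built into Definition~\ref{definition:relative-infinitesimal-object}. Once the monoidal equivalence between the balanced part of $\End_{\DSpan^\co(\X)}(M)$ and $(\X/M)^{\op}$ is established and seen to respect these limits, the remaining verifications are identical to those already carried out over the terminal object in Proposition~\ref{proposition:micro-objects-vs-tangent-objects}.
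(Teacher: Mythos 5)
Your proposal is correct, but it is organized differently from the paper, whose entire proof is the single sentence that the argument is ``essentially the same as'' the terminal-object case (Propositions~\ref{proposition:micro-objects-vs-tangent-objects} and~\ref{proposition:infinitesimal-object-as-tangent-object}), i.e.\ the paper intends a direct re-run of the span/structure-map dictionary over a general base object $M$. You instead factor the statement through a reduction: first you observe that the existence of the zero $2$-morphism $z\colon\id_M\Rightarrow\T$ (equivalently a span morphism $\T\to\id_M$ in $\DSpan(\X)$) forces the two legs of the tangent span to coincide, so the whole tangent structure lives among balanced endospans at $M$; then you identify the monoidal category of balanced display endospans at $M$ with $\End_{\DSpan^\co(\X/M)}(\id_M)\cong(\X/M)^\op$ for the display slice, and invoke Proposition~\ref{proposition:micro-objects-vs-tangent-objects} for $\X/M$. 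This buys a cleaner argument: the only genuinely new verification over the terminal-object case is the balancedness observation (which is automatic over $\1$ but needs the argument you give over a general $M$, and which the paper's sketch leaves implicit), and it lines up exactly with Definition~\ref{definition:relative-infinitesimal-object}, which already phrases relative microscopic objects as microscopic objects of the display slice. The costs are the bookkeeping items you yourself flag --- checking that the identification is monoidal, that the colimits of balanced spans computed in the hom-category remain balanced, and that the pointwise conditions on the two sides (preservation by precomposition with arbitrary display spans into $M$ versus by the functors $D^n\times_M(-)$) match up --- together with the mild caveat that the full slice $\X/M$ need not be Cartesian, so Proposition~\ref{proposition:micro-objects-vs-tangent-objects} must be applied to the display slice, whose finite products exist precisely because its objects are display maps; none of these is an obstruction, and the paper's own one-line proof elides the same points.
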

\begin{proof}
The proof is essentially the same as in Proposition~\ref{proposition:infinitesimal-object-as-tangent-object}.
\end{proof}

We can now reinterpret the relationship between infinitesimal objects and representable Cartesian tangent categories as an equivalence between tangent structures. Indeed, an infinitesimal object $D$ of a Cartesian category $\X$ is a tangentad of $\DSpan^\co(\X)$ on the terminal object $\1$ of $\X$ for which each $D_n$ is an exponentiable object of $\X$.

\par Exponentiable objects of $\X$ are precisely those objects of $\End_{\DSpan^\co(\X)}(\1)\cong\X^\op$ which are coexponentiable. 
Recall also that a tangentad on $\DSpan^\co(\X)$ on $\1$ corresponds to a Leung functor $\Leung[D]\colon\Weil\to\End_{\DSpan^\co(\X)}(\1)$. By only considering those tangentads for which $D_n$ are exponentiable in $\X$, one can extend the Leung functor $\Leung[D]$ to a new Leung functor $\Leung[(-)^D]\colon\Weil\to\End_{\Cat}(\X)$, whose corresponding tangent structure is representable.

\par Conversely, if $\Leung[\TT]\colon\Weil\to\End_{\Cat}(\X)$ represents a representable Cartesian tangent category, one can "de-exponentialize" the tangent bundle functor and define a Leung functor $\Leung[D]\colon\Weil\to\End_{\DSpan^\co(\X)}(\1)$, namely, a tangentad of $\DSpan^\co(\X)$ on $\1$, for which $D_n$ are exponentiable in $\X$.


\begingroup

\endgroup

\end{document}